\newtheorem{theorem}{Theorem}[section]
\newtheorem{remark}{Remark}[section]
\newtheorem{lemma}[theorem]{Lemma}
\newtheorem{proposition}[theorem]{Proposition}
\newtheorem{define}{Definition}[section]  
\newtheorem{hypothesis}{Hypothesis}[section]
\newcommand{\joinR}{\hspace{-.1em}}
\newcommand{\RomanI}{I}
\newcommand{\RomanII}{\mbox{\RomanI\joinR\RomanI}}
\newcommand{\RomanIII}{\mbox{\RomanI\joinR\RomanII}}
\newcommand\pv{\mathop{\mathrm{P.V.}}}
\DeclareMathOperator*{\Tr}{Tr}
\DeclareMathOperator*{\Id}{Id}
\DeclareMathOperator*{\esssup}{esssup}
\DeclareMathOperator*{\supp}{supp}
\DeclareMathOperator*{\divergence}{div}
\begin{document}
\title[Non-uniqueness in law for the 2-d QG equations]{Non-uniqueness in law of the \\two-dimensional surface quasi-geostrophic equations forced by random noise}

\subjclass[2010]{35A02; 35R60; 76W05}
 
\author[Kazuo Yamazaki]{Kazuo Yamazaki}  
\address{Texas Tech University, Department of Mathematics and Statistics, Lubbock, TX, 79409-1042, U.S.A.; Phone: 806-834-6112; Fax: 806-742-1112}
\email{kyamazak@ttu.edu}
\date{}
\keywords{Convex integration; Fractional Laplacian; Non-uniqueness; Surface quasi-geostrophic equations; Random noise.}

\begin{abstract}
Via probabilistic convex integration, we prove non-uniqueness in law of the two-dimensional surface quasi-geostrophic equations forced by random noise of additive type. In its proof we work on the equation of the momentum rather than the temperature, which is new in the study of the stochastic surface quasi-geostrophic equations. We also generalize the classical Calder$\acute{\mathrm{o}}$n commutator estimate to the case of fractional Laplacians.
\end{abstract}

\maketitle

\section{Introduction}\label{Section 1}
\subsection{Motivation from physics and mathematics}\label{Motivation from physics and mathematics}
Initially investigated in 1948 by Charney \cite{C48}, the two-dimensional (2-d) surface quasi-geostrophic (QG) equations  can be derived as the motion of potential temperature on the boundary upon considering Laplace's equation on $\mathbb{R}_{+}^{3}$ with the temperature as the partial derivative of the stream function with respect to (w.r.t.) the vertical variable (see \cite[equation (2)]{HPGS95} and \cite[equation (1.2)]{K10}). Its dissipative term in the form of a fractional Laplacian appears naturally and models Ekman pumping effect in the boundary layer (see \cite[equation (1)]{C02}). It has always attracted attention from physicists and engineers due to a wide breadth of applications in atmospheric sciences, meteorology, and oceanography, as well as mathematicians,  especially since Constantin, Majda, and Tabak \cite{CMT94} demonstrated similarities between the three-dimensional (3-d) Euler equations and the non-dissipative 2-d QG equations analytically and numerically. On the other hand, the study of partial differential equations (PDEs) in general hydrodynamics forced by random noise can be traced  back at least to \cite{LL56} by Landau and Lifschitz in 1957 (also \cite{BT73, N65}). The purpose of this manuscript is to study the QG equations forced by random noise of additive type and prove its non-uniqueness in law via probabilistic convex integration.

\subsection{Previous works}\label{Previous works}
We denote by $\mathbb{N} \triangleq \{1, 2, \hdots, \}$, $\mathbb{N}_{0} \triangleq \{0\} \cup \mathbb{N}$, $A \overset{(\cdot)}{\lesssim}_{a,b}B$ to imply the existence of $C(a,b) \geq 0$ such that $A\leq CB$ due to an equation $(\cdot)$ and $A \approx B$ if $A\lesssim B$ and $B\lesssim A$. For any vector $x$, we denote its $j$-th component by $x_{j}$ for $j \in \{1,\hdots, n\}$ and $x^{\bot} \triangleq (-x_{2}, x_{1})$ in case $n = 2$. For any square matrix $A$, we denote its $(m,l)$-entry by $A^{ml}$ and its trace-free part by $\mathring{A}$; in particular, we denote the trace-free part of a tensor product $A\otimes B$ by $A\mathring{\otimes}B$. We define $\partial_{t} \triangleq \frac{\partial}{\partial t}$ and $\partial_{i} \triangleq \frac{\partial}{\partial x_{i}}$ for $i \in \{1,\hdots, n\}$. The spatial domain of our primary interest is $\mathbb{T}^{2} = (\mathbb{R} /[-\pi,\pi])^{2}$. For full generality, let us introduce the generalized QG equations, which was introduced by Kiselev (see \cite[equation (1.3)]{K10}) and studied by many others (e.g., \cite{MX11}). We denote by $\theta: \mathbb{R}_{+} \times \mathbb{T}^{2} \mapsto \mathbb{R}$ the potential temperature, $\mathcal{R}$ the Riesz transform vector, $\Lambda^{r} \triangleq (-\Delta)^{\frac{r}{2}}$ for any $r \in \mathbb{R}$ to be the Fourier operator with its Fourier symbol $\lvert k \rvert^{r}$; i.e., $\Lambda^{r} f(x) \triangleq \sum_{k \in \mathbb{Z}^{2}} \lvert k \rvert^{r} \hat{f}(k) e^{ik\cdot x}$ where $\hat{f}$ denotes the Fourier transform of $f$ and $\check{f}$ will denote the Fourier inverse of $f$. Under these notations, with $\gamma_{1} \in (0, 2]$, $\gamma_{2} \in [1, 2]$, and $\nu \geq 0$, the generalized QG equations consist of 
\begin{equation}\label{est 1}
\partial_{t} \theta + u\cdot\nabla \theta + \nu \Lambda^{\gamma_{1}} \theta = 0, \hspace{3mm} u \triangleq \Lambda^{1-\gamma_{2}} \mathcal{R}^{\bot} \theta, 
\end{equation} 
given $\theta^{\text{in}}(x) \triangleq \theta(0,x)$ as initial data. Hereafter, for simplicity we assume $\nu = 1$ in case $\nu > 0$. We observe that if the initial data $\theta^{\text{in}}$ is mean-zero, which is a standard convention in the case of a spatial domain being a torus, the solution $\theta(t)$ remains mean-zero for all $t \geq 0$. The generalized QG equations reduce to the modified QG (MQG) equations introduced by Constantin, Iyer, and Wu \cite{CIW08} when $\gamma_{2} = 2-\gamma_{1}$ for $\gamma_{1} \in (0, 1]$, the classical QG equations when $\gamma_{2} = 1$ to which we refer  simply as the QG equations, and the Euler equations when $\gamma_{2} = 2, \nu = 0$. Due to the fact that the $L^{\infty}(\mathbb{T}^{2})$-norm seems to be the most useful bounded quantity in estimates that any smooth solution to \eqref{est 1} possesses and the rescaling property that $\lambda^{\gamma_{1} + \gamma_{2} - 2} \theta(\lambda^{\gamma_{1}} t, \lambda x)$ for any $\lambda \in \mathbb{R}_{+}$ solves the generalized QG equations if $\theta(t,x)$ does, we call the three cases $\gamma_{1} + \gamma_{2} \in (2, 4], \gamma_{1} + \gamma_{2} = 2,$ and $\gamma_{1} + \gamma_{2} \in (1, 2)$, sub-critical, critical, and super-critical cases w.r.t. $L^{\infty}(\mathbb{T}^{2})$-norm, respectively. The issue of global existence of the unique solution to the initial value problem of the QG equations has attracted much attention. Resnick in \cite[Theorem 2.1]{R95} was able to construct a global weak solution to the QG equations, even in case $\nu = 0$, starting from $\theta^{\text{in}} \in L^{2}(\mathbb{T}^{2})$ (see \cite[Theorem 1.1]{M08} for the case $\theta^{\text{in}} \in L^{p}$ for $p > \frac{4}{3}$). Within a class of smooth solutions, the global existence was settled in the sub-critical case (e.g., \cite{CW99}) and the critical case by breakthrough works \cite{CV10, KNV07} (see also \cite{CV12, KN10}). While this problem remains unsolved in the super-critical case, some properties of the solution were studied in various works (e.g., \cite{CW08, CW09, K11}). 

Many researchers have studied the following stochastic Navier-Stokes (NS) equations 
\begin{equation}\label{est 15} 
du + [(u\cdot\nabla) u + \nabla p -\nu \Delta u] dt = G(u) dB, \hspace{5mm} \nabla\cdot u = 0, 
\end{equation}  
where $u: \mathbb{R}_{+} \times \mathbb{T}^{n} \mapsto \mathbb{R}^{n}$, $p: \mathbb{R}_{+} \times \mathbb{T}^{n}\mapsto \mathbb{R}$, and $G(u)dB$ represent velocity field, pressure field, and random noise to be described subsequently in detail, respectively. The case $\nu  =0$ reduces \eqref{est 15} to the stochastic Euler equations. Flandoli and Romito \cite{FR08} proved the global existence of a Leray-Hopf weak solution to the NS equation forced by an additive noise; we call their solution Leray-Hopf because it satisfies the appropriate energy inequality (see \cite[MP3 on p. 421]{FR08}). However, path-wise uniqueness of their solution remains unknown, and their solution was not probabilistically strong, which requires that the solution is adapted to the canonical filtration generated by the noise. In general, upon any probabilistic Galerkin approximation (e.g., \cite[Appendix A]{FR08}), if one takes mathematical expectation to obtain uniform estimates, then the resulting solution becomes probabilistically weak. In case the noise is additive, one can consider an Ornstein-Uhlenbeck process and work on a random PDE satisfied by the difference between the solution and this Ornstein-Uhlenbeck process; nonetheless the solution obtained via this approach as a converging subsequence will depend on the fixed realization and hence not be probabilistically strong (see \cite[p. 84]{F08}). By Yamada-Watanabe theorem, path-wise uniqueness will lead to the solution becoming probabilistically strong. However, the general consensus was that the proof of the path-wise uniqueness seems to be no easier than its counterpart in the deterministic case and hence a significant amount of effort has been devoted to prove uniqueness in law (e.g., \cite[p. 878--879]{DD03}), which is weaker than path-wise uniqueness according to Yamada-Watanabe theorem and Tanaka's counterexample \cite[Example 2.2]{C03}. Due to Cherny's theorem, uniqueness in law and the existence of probabilistically strong solution will imply path-wise uniqueness; nonetheless, even the uniqueness in law of the Leray-Hopf weak solution constructed in \cite{FR08} has also remained open. 

For the QG equations forced by random noise, Zhu in \cite[Definition 4.2.1]{Z12b} defined its solution to require in particular the regularity of $L_{T}^{\infty} L_{x}^{2} \cap L_{T}^{2}H_{x}^{\gamma_{1}}$ and proved the global existence of a solution in case $\gamma_{1} > 0$ and the noise is additive in \cite[Theorem 4.2.4]{Z12b}. Its proof consisted of analysis on the random PDE satisfied by the difference between the solution to the QG equation forced by a certain random noise and an Ornstein-Uhlenbeck process so that the solution that was constructed therein was probabilistically weak. In \cite[Definition 4.3.1 and Theorem 4.3.2]{Z12b} Zhu also defined and constructed a martingale solution via the Galerkin approximation, and thus the solution is also probabilistically weak. The path-wise uniqueness of such a martingale solution in the sub-critical case with a multiplicative noise was also proven in \cite[Theorem 4.4.4]{Z12b} under the hypothesis that its initial data $\theta^{\text{in}}$ is in $L^{p}(\mathbb{T}^{2})$ for $p$ sufficiently large. We refer to large deviation principle in the sub-critical case in \cite[Sections 4.5 and 4.6]{Z12b}, the existence of Markov selections for any $\gamma_{1} > 0$,  strong Feller property, support theorem, the existence of a unique invariant measure, and exponential convergence if $\gamma_{1} > \frac{4}{3}$ respectively in  \cite[Theorems 4.2.5, 4.3.3, 4.3.8, 4.3.10, and 4.4.5]{Z12a} (also \cite{Y22a}) where the fourth result was improved to $\gamma_{1} > 1$ in \cite[Theorems 5.9 and 5.13]{RZZ15}, as well as regularization of multiplicative noise in \cite{BNSW18}. 

Next, we review recent developments on the convex integration that was fueled in the past decade with the goal to prove Onsager's conjecture \cite{O49}, the positive direction being that every weak solution $u \in C^{\alpha}(\mathbb{T}^{3})$ to the 3-d Euler equations for $\alpha > \frac{1}{3}$ conserves its energy and the negative direction being the existence of a weak solution $u \in C^{\alpha}(\mathbb{T}^{3})$ for $\alpha < \frac{1}{3}$ that fails to conserve its energy. While Constantin, E, and Titi \cite{CET94} and Eyink \cite{E94} in 1994 proved its positive direction, De Lellis and Sz$\acute{\mathrm{e}}$kelyhidi Jr. \cite{DS09}, by partially using ideas from \cite{MS03}, proved the existence of a solution $u \in L_{t,x}^{\infty}$ to the $n$-d Euler equations for $n \in \mathbb{N}\setminus \{1\}$ with compact support in space and time, extending previous works of Scheffer \cite{S93} and Shnirelman \cite{S97} that proved analogous results with regularity in $L_{t,x}^{2}$ in the 2-d case. After further extensions (e.g., \cite{DS10, DS13, BDIS15}), Isett \cite{I18} completely proved the negative direction of the Onsager's conjecture in any dimension $n \geq 3$. Via an introduction of intermittent Beltrami waves, Buckmaster and Vicol \cite{BV19a} proved the non-uniqueness of weak solutions to the 3-d NS equations, and it was followed by many more: \cite{BCV18, LQ20, CDD18, D19, LT20} on the NS equations; \cite{BMS21} on power-law model; \cite{LTZ20} on Boussinesq system;  \cite{BBV20, FLS21} on magnetohydrodynamics (MHD) system; \cite{CGSW15, MS20, MS18} on transport equation.

Concerning active scalar equations, C$\acute{\mathrm{o}}$rdoba, Faraco, and Gancedo \cite{CFG11} applied the convex integration approach of \cite{DS09} to the incompressible porous media equations. Subsequently, Isett and Vicol \cite{IV15} proved the existence of weak solutions with compact support for general active scalar equations $\partial_{t} \theta +u \cdot \nabla \theta = 0$ as long as $u = T[\theta]$ is divergence-free and $T$ is a Fourier operator with a Fourier symbol that is not an odd function of frequency; consequently, their result excluded the QG equations \eqref{est 1}. By defining $v \triangleq \Lambda^{-1} u$ as a potential velocity, Buckmaster, Shkoller, and Vicol in \cite{BSV19} worked on the QG momentum equations
\begin{subequations}\label{est 2} 
\begin{align}
&\partial_{t} v + (u \cdot \nabla) v - (\nabla v)^{T} \cdot u + \nabla p + \Lambda^{\gamma_{1}} v = 0, \hspace{3mm} \nabla\cdot v = 0, \\
&u = \Lambda v; 
\end{align}
\end{subequations} 
the relationship between $v$ and the solution $\theta$ to \eqref{est 1} is 
\begin{equation}\label{est 41}
\theta = - \nabla^{\bot} \cdot v. 
\end{equation} 
Considering that the solution to \eqref{est 2} starting from $v(0) \in \dot{H}^{\frac{1}{2}}(\mathbb{T}^{2})$ satisfies $\lVert v (t) \rVert_{\dot{H}_{x}^{\frac{1}{2}}}^{2} \leq \lVert v(0) \rVert_{\dot{H}_{x}^{\frac{1}{2}}}^{2}$, which can be seen from the identity of 
\begin{equation}\label{est 38}  
(u\cdot\nabla) v - (\nabla v)^{T} \cdot u = u^{\bot} (\nabla^{\bot} \cdot v)
\end{equation} 
(see \cite[Theorem 1.2 (ii)]{M08} and \cite[Section 2.2]{R95}), the authors in \cite{BSV19} were able to construct a solution with prescribed $\dot{H}^{\frac{1}{2}}(\mathbb{T}^{2})$-norm and thereby conclude non-uniqueness; let us describe its proof in detail in Subsection \ref{Subsection 2.2}. Subsequently, Isett and Ma \cite{IM21} provided a direct approach by working on the actual QG equations; moreover, Cheng, Kwon, and Li \cite{CKL20} proved non-uniqueness of steady-state weak solution to the QG equations. 

The impact of the development of convex integration has spilled over to the community of researchers on stochastic PDEs recently. First, Breit, Feireisl, and Hofmanov$\acute{\mathrm{a}}$ \cite{BFH20} and Chiodaroli, Feireisl, and Flandoli \cite{CFF19} proved path-wise non-uniqueness of certain compressible Euler equations using some ideas of convex integration from \cite{DS10} (see \cite{HZZ20} on the incompressible Euler equations). Second, Hofmanov$\acute{\mathrm{a}}$, Zhu, and Zhu proved non-uniqueness in law of the 3-d NS equations via approach similar to \cite[Section 7]{BV19b} where non-uniqueness was proven via opposite energy inequality. This result inspired more to follow: \cite{RS21, Y20a, Y20c, Y21a, Y21c, Y21d}. Furthermore, Hofmanov$\acute{\mathrm{a}}$, Zhu, and Zhu \cite{HZZ21a} were able to prove non-uniqueness in law of the 3-d NS equations with prescribed initial data or prescribed energy by adapting the approach of \cite{BMS21}; we emphasize that \cite[Theorem 1.1]{HZZ21a} does not imply \cite[Theorem 1.2]{HZZ19} due to the difference in the solutions' regularity. We also refer to \cite{HZZ21b} on the 3-d NS equations forced by space-time white noise and \cite{KY22} on transport equation forced by random noise of three types: additive; linear multiplicative; transport. 

\section{Statement of main results and new ideas to overcome difficulty}\label{Section 2}
\subsection{Statement of main results}\label{Subsection 2.1}
Following \eqref{est 2} we consider 
\begin{subequations}\label{est 30}
\begin{align}
&dv + [(u\cdot\nabla) v - (\nabla v)^{T} \cdot u + \nabla p + \Lambda^{\gamma_{1}} v]dt = G(v) dB, \hspace{3mm} \nabla\cdot v = 0,  \label{est 30a}\\
&u = \Lambda v, \label{est 30b}
\end{align}
\end{subequations}
where $G(v)$ becomes a $GG^{\ast}$-Wiener process that is independent of $v$ to be described in detail subsequently; here, we denoted an adjoint of $G$ by $G^{\ast}$. Given any deterministic initial data $v^{\text{in}} \in \dot{H}^{\frac{1}{2}}(\mathbb{T}^{2})$, via a Galerkin approximation under a suitable condition on $G$, one can construct a solution $v$ to \eqref{est 30} that satisfies
\begin{equation}\label{est 31}
\mathbb{E}^{\textbf{P}} [\lVert v(t) \rVert_{\dot{H}_{x}^{\frac{1}{2}}}^{2}] \leq \lVert v^{\text{in}} \rVert_{\dot{H}_{x}^{\frac{1}{2}}}^{2} + t \Tr ((-\Delta)^{\frac{1}{2}} GG^{\ast})
\end{equation} 
where $\mathbb{E}^{\textbf{P}}$ represents the mathematical expectation w.r.t. a probability measure $\textbf{P}$.
\begin{theorem}\label{Theorem 2.1} 
Suppose that  $\gamma_{1} \in (0, \frac{3}{2})$ and $G(v) dB$ in \eqref{est 30} is $dB$, a $GG^{\ast}$-Wiener process such that 
\begin{equation}\label{est 161} 
\Tr ((-\Delta)^{4- \frac{\gamma_{1}}{2} + 2 \sigma} GG^{\ast}) < \infty
\end{equation} 
for some $\sigma > 0$. Then, given any $T > 0, K > 1$, and $\kappa \in (0,1)$, there exist a sufficiently small $\iota \in \left(0, \frac{1}{2} - \frac{\gamma_{1}}{3}\right)$ and a $\textbf{P}$-almost surely (a.s.) strictly positive stopping time $\mathfrak{t}$ such that  
\begin{equation}\label{est 148}
\textbf{P} (\{\mathfrak{t} \geq T \}) > \kappa 
\end{equation} 
and the following is additionally satisfied. There exists a $(\mathcal{F}_{t})_{t\geq 0}$-adapted process $v$ that is a weak solution of \eqref{est 30} starting from a deterministic initial data $v^{\text{in}}$, satisfies 
\begin{equation}\label{est 129}
\esssup_{\omega \in \Omega} \lVert v(\omega) \rVert_{C_{\mathfrak{t}} C_{x}^{\frac{1}{2} + \iota}} + \esssup_{\omega \in \Omega} \lVert v(\omega) \rVert_{C_{\mathfrak{t}}^{\eta}C_{x}}  < \infty \hspace{3mm} \forall \hspace{1mm} \eta \in \left(0, \frac{1}{3}\right], 
\end{equation} 
and on a set $\{ \mathfrak{t} \geq T \}$, 
\begin{equation}\label{est 146}
\lVert v(T) \rVert_{\dot{H}_{x}^{\frac{1}{2}}} > K \left[ \lVert v^{\text{in}} \rVert_{\dot{H}_{x}^{\frac{1}{2}}} + \sqrt{ T \Tr ((-\Delta)^{\frac{1}{2}} GG^{\ast} ) } \right]. 
\end{equation} 
\end{theorem}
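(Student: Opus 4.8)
The plan is to run a probabilistic convex integration scheme on the momentum formulation \eqref{est 30}, combining the deterministic construction of Buckmaster--Shkoller--Vicol for \eqref{est 2} with the stopping-time technology developed by Hofmanov\'a--Zhu--Zhu for the stochastic Navier--Stokes equations. Since the noise is additive, the first step is to remove it: let $z$ solve the linear stochastic problem $dz + \Lambda^{\gamma_1} z\,dt = dB$ with $z(0)=0$, where $dB$ denotes the $GG^{\ast}$-Wiener process. Hypothesis \eqref{est 161} is calibrated so that, by the standard stochastic-convolution (factorization) estimates, $z$ is almost surely far smoother than the object being built --- in particular $z$ lies in $C_tC_x^{N}$ for $N$ as large as the iteration requires, together with some H\"older regularity in time --- so that on each sample path $z$ may be treated as a fixed smooth forcing. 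Writing $v = y + z$, one checks that the linear dissipation of $z$ cancels the noise, so that $y$ solves the \emph{random} PDE $\partial_t y + (u\cdot\nabla)v - (\nabla v)^T\cdot u + \nabla p + \Lambda^{\gamma_1} y = 0$, with $v = y+z$, $u = \Lambda v$, $\nabla\cdot y = 0$, in which $z$ enters only as a smooth, $(\mathcal{F}_t)$-adapted coefficient. Finally, let $\mathfrak{t} = \mathfrak{t}_L$ be the first time a suitable norm of $z$ reaches a threshold $L$; since $\mathfrak{t}_L\uparrow\infty$ a.s.\ as $L\uparrow\infty$, we fix $L=L(T,\kappa)$ so large that \eqref{est 148} holds, and carry out the construction below on the random interval $[0,\mathfrak{t}_L]$.

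On $[0,\mathfrak{t}_L]$ one builds inductively a sequence $(v_q,\mathring{R}_q)_{q\in\mathbb{N}_0}$, $v_q = y_q + z$, solving the relaxed system
\begin{equation*}
\partial_t v_q + (u_q\cdot\nabla)v_q - (\nabla v_q)^T\cdot u_q + \nabla p_q + \Lambda^{\gamma_1} v_q = \divergence\mathring{R}_q, \quad u_q = \Lambda v_q, \quad \nabla\cdot v_q = 0,
\end{equation*}
together with inductive bounds $\lVert v_{q+1}-v_q\rVert_{C_{\mathfrak{t}}C_x}\lesssim\delta_{q+1}^{1/2}$, $\lVert v_q\rVert_{C_{\mathfrak{t}}C_x^1}\lesssim\lambda_q$ and $\lVert\mathring{R}_q\rVert_{C_{\mathfrak{t}}L_x^1}\lesssim\delta_{q+1}\lambda_q^{-\alpha}$ along geometric sequences $\lambda_q\uparrow\infty$, $\delta_q\downarrow 0$, with implicit constants depending on the sample path only through $L$. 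The perturbation $w_{q+1}=v_{q+1}-v_q$ is assembled from (intermittent) building blocks, rendered divergence-free via the scalar-potential correction dictated by \eqref{est 41}, with amplitudes slaved to $\mathring{R}_q$ so that the principal oscillation error cancels $\mathring{R}_q$; in addition one inserts a low-frequency block whose amplitude is tuned so that the increment of $\lVert v(t)\rVert_{\dot{H}^{1/2}_x}^2$ over $[0,T]$ may be prescribed, which is exactly what produces \eqref{est 146}. The delicate term is the transport-type nonlinearity: expanding $(u_q\cdot\nabla)w_{q+1}-(\nabla w_{q+1})^T\cdot u_q$ and its symmetric counterparts with $u_q=\Lambda v_q$ generates commutators between $\Lambda^{\gamma_1}$-type operators and the fast oscillations of the amplitudes, and these are controlled precisely by the generalized Calder\'on commutator estimate for fractional Laplacians established earlier in the paper --- the same estimate that makes the inverse-divergence step applicable despite the nonlocal coupling $u=\Lambda v$. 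Balancing the oscillation gain against the cost $\lambda_{q+1}^{\gamma_1}$ of the dissipative error $\Lambda^{\gamma_1}w_{q+1}$ closes the induction precisely for $\gamma_1<\tfrac{3}{2}$ and determines the admissible range $\iota\in(0,\tfrac{1}{2}-\tfrac{\gamma_1}{3})$ of the final H\"older exponent.

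The iteration is initialized from a smooth divergence-free $v_0$ with $v_0(0)=v^{\text{in}}$, where we are free to take, for instance, $v^{\text{in}}=0$, so that the limit solution emanates from the prescribed deterministic datum; and since each $v_q$ is obtained from $z$ and $v^{\text{in}}$ by a pathwise deterministic algorithm, it is $(\mathcal{F}_t)$-adapted. Summability of $\sum_q\delta_{q+1}^{1/2}\lambda_{q+1}^{1/2+\iota}$ for $\iota$ small enough, and of the time increments up to exponent $\tfrac{1}{3}$, yields $v=\lim_q v_q\in C_{\mathfrak{t}}C_x^{1/2+\iota}\cap C_{\mathfrak{t}}^{\eta}C_x$ for every $\eta\in(0,\tfrac{1}{3}]$, with $\mathring{R}_q\to 0$ in $C_{\mathfrak{t}}L_x^1$; hence $v$ solves \eqref{est 30} in the analytically weak sense on $[0,\mathfrak{t}]$, and one extends it globally by gluing on a solution of \eqref{est 30} emanating from $v(\mathfrak{t})$, obtained via a Galerkin/measurable-selection argument, while retaining adaptedness. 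This yields the uniform bound \eqref{est 129} --- again because all constants depend on $\omega$ only through $L$ --- the probability bound \eqref{est 148} by the choice of $L$, and the energy lower bound \eqref{est 146} on $\{\mathfrak{t}\geq T\}$ from the prescribed $\dot{H}^{1/2}$-profile.

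The main difficulties are twofold. First, the velocity is recovered from the momentum through the \emph{nonlocal} operator $u=\Lambda v$, so the standard convex-integration manipulations --- the inverse-divergence operator and the cancellation of the oscillation error --- do not apply verbatim; the remedy is the fractional Calder\'on commutator estimate, and it is precisely in order to exploit it that one works with the momentum equation \eqref{est 30} rather than with the temperature equation \eqref{est 1}, which moreover grants direct access to the coercive quantity $\lVert v\rVert_{\dot{H}^{1/2}_x}$ one wishes to prescribe. Second, relative to the scaling that governs the iteration the dissipation $\Lambda^{\gamma_1}v$ behaves like a supercritical perturbation once $\gamma_1$ is not small, which forces both the ceiling $\gamma_1<\tfrac{3}{2}$ and the strong smoothing hypothesis \eqref{est 161} on the noise --- needed so that $\Lambda^{\gamma_1}z$ and the remaining $z$-dependent errors stay negligible at every level $q$; and arranging the stopping-time estimate \eqref{est 148} so that $L$ can be chosen \emph{before} the scheme is run, i.e.\ uniformly in $q$, is the genuinely probabilistic point of the argument.
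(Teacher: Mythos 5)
Your overall architecture (additive-noise reduction $v=y+z$ with $z$ from \eqref{est 152}, a stopping time built from norms of $z$ chosen so that \eqref{est 148} holds before the iteration is run, and a pathwise BSV-type iteration on the random PDE) matches the paper, but the proposal skips over the two points that the paper identifies as the actual obstructions, and as written both of your shortcuts fail. First, you cannot ``treat $z$ as a fixed smooth forcing'': hypothesis \eqref{est 161} gives $z$ high \emph{spatial} regularity but only $C_{t}^{\frac12-\delta}$ regularity in \emph{time}, and the BSV scheme you propose to run verbatim hinges on the inductive material-derivative bound \eqref{est 22} on $\mathring{R}_{q}$, which is what controls $\mathring{R}_{q}-\mathring{R}_{q,j}$ in the oscillation error. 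Since the stochastic Reynolds stress contains $z$, it is not differentiable in time and \eqref{est 22} cannot be propagated, so your oscillation estimate has no proof. The paper's resolution is precisely the new ingredient you omit: mollify $\mathring{R}_{q}$ (and $y_q$, $z_q$) in space and time as in \eqref{est 33}, work with the mollified equation, absorb the price into the new commutator errors $R_{\mathrm{Com1}}$ in \eqref{est 181} and $R_{\mathrm{Com2}}$ in \eqref{est 191e}, and replace $z$ by the frequency-truncated $z_{q}$ of \eqref{est 150} so that the convection field and $\mathring{R}_q$ keep frequency support $\lesssim\lambda_q$ (otherwise spatial derivatives cost $l^{-1}=\lambda_{q+1}^{\alpha}\gg\lambda_q$ and the estimates do not close). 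Without some substitute for this step your induction does not close, independently of the Calder\'on commutator estimate, which in the paper is used for the weak formulation and the Galerkin/martingale existence (Propositions \ref{Proposition 4.1} and \ref{Proposition 6.3}), not to control the convex-integration errors.

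Second, your mechanism for \eqref{est 146} --- initialize with $v_0(0)=v^{\mathrm{in}}=0$ and ``tune a low-frequency block so the increment of $\lVert v\rVert_{\dot H^{1/2}}^2$ may be prescribed'' --- is exactly the route the paper rules out (difficulty (D) in Subsection \ref{Subsection 2.2}): in this scheme the cutoff $\chi_0$ is active at $t=0$, the perturbations $w_{q+1}$ do not vanish there, so the initial datum of the limit is an \emph{output} of the construction, not something you may prescribe; and prescribing an energy profile \`a la \eqref{est 19} requires knowing $v^{\mathrm{in}}$ a priori. The paper instead abandons energy prescription altogether (amplitudes $a_{k,j}$ in \eqref{est 196} use $\delta_{q+1}$, not a profile $\rho_j$), starts from the nonzero shear flow $y_0$ in \eqref{est 172} carrying the exponential factor $M_0(t)=L^4e^{4Lt}$ from \eqref{est 135}, and shows $y$ stays $\dot H^{1/2}$-close to $y_0$, so that $\lVert v(T)\rVert_{\dot H^{1/2}}$ beats $K\bigl[\lVert v^{\mathrm{in}}\rVert_{\dot H^{1/2}}+\sqrt{T\Tr((-\Delta)^{1/2}GG^{\ast})}\bigr]$ for \emph{whatever} deterministic $v^{\mathrm{in}}$ the limit produces; the comparison with the Galerkin solution satisfying \eqref{est 31} is then deferred to Theorem \ref{Theorem 2.2}. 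You would need either this growth mechanism or a genuinely different scheme with cutoffs vanishing near $t=0$ (\`a la the prescribed-data constructions), neither of which appears in your sketch. Minor further inaccuracies: the scheme here is $C_x$-based, not an intermittent $L^1$-based one (the paper explains why intermittent jets do not adapt), and the gluing past $\mathfrak t$ belongs to Theorem \ref{Theorem 2.2}, not to this statement.
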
 
\noindent The hypothesis \eqref{est 161} is needed to gain sufficiently high spatial regularity of $z$ from \eqref{est 152} in Proposition \ref{Proposition 4.4} and  handle some terms in the Reynolds stress (e.g., $R_{\text{Com2}}$ in \eqref{est 191e}). 

\begin{theorem}\label{Theorem 2.2} 
Suppose that  $\gamma_{1} \in (0, \frac{3}{2})$ and $G(v) dB$ in \eqref{est 30} is $dB$, a $GG^{\ast}$-Wiener process such that \eqref{est 161} holds for some $\sigma > 0$. Then non-uniqueness in law for \eqref{est 30} holds on $[0,\infty)$. Moreover, for all $T > 0$ fixed, non-uniqueness in law holds for \eqref{est 30} on $[0,T]$. 
\end{theorem}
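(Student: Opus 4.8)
The plan is to deduce Theorem~\ref{Theorem 2.2} from Theorem~\ref{Theorem 2.1} by the now-standard device (as in \cite{HZZ19}) of exhibiting, starting from a single deterministic datum $v^{\text{in}}\in\dot{H}^{\frac{1}{2}}(\mathbb{T}^{2})$, two distinct martingale solutions of \eqref{est 30}: a ``Galerkin'' solution obeying the energy inequality \eqref{est 31}, and a ``convex integration'' solution whose $\dot{H}_{x}^{\frac{1}{2}}$-norm at time $T$ is inflated as in \eqref{est 146}. These two solutions disagree already in the law of $\|\xi(T)\|_{\dot{H}_{x}^{\frac{1}{2}}}$ (with $\xi$ the canonical process), so non-uniqueness in law follows on $[0,\infty)$, and --- since that functional depends only on the path over $[0,T]$ --- also on $[0,T]$ for every fixed $T>0$.

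First I would fix the martingale-problem formulation of \eqref{est 30} on the canonical path space with prescribed initial law, and record the soft ingredients it rests on: (i) for every deterministic $v^{\text{in}}$ there is a martingale solution $\textbf{P}_{\mathrm{GK}}$ starting from $v^{\text{in}}$ and satisfying \eqref{est 31} at $t=T$, furnished by the Galerkin construction mentioned below \eqref{est 30} (legitimate since \eqref{est 161} is far stronger than $\Tr((-\Delta)^{\frac{1}{2}}GG^{\ast})<\infty$); (ii) the regular conditional probability of a martingale solution given $\mathcal{B}_{\tau}$ is, for a.e.\ realization, again a martingale solution started at time $\tau$ from the then-current state; and (iii) gluing a martingale solution on $[0,\tau]$ to a measurable selection of martingale solutions issued at time $\tau$ produces a martingale solution on $[0,\infty)$. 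Facts (ii)--(iii) are the ones used for the Navier--Stokes and transport equations in the works cited in Subsection~\ref{Previous works}, and are needed only for the extension step.

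Next I would invoke Theorem~\ref{Theorem 2.1} with the fixed choice $K=2$, $\kappa=\tfrac{3}{4}$ (so that $\kappa K^{2}>1$) and with the above $v^{\text{in}}$, obtaining an $(\mathcal{F}_{t})$-adapted weak solution $v$ with the regularity \eqref{est 129} and an a.s.\ strictly positive stopping time $\mathfrak{t}$ with $\textbf{P}(\{\mathfrak{t}\geq T\})>\tfrac{3}{4}$, on which \eqref{est 146} holds. Set $\tau\triangleq\mathfrak{t}\wedge T$; by \eqref{est 129} (and the embedding $C_{x}^{\frac{1}{2}+\iota}\hookrightarrow\dot{H}_{x}^{\frac{1}{2}}$) one has $v(\tau)\in\dot{H}^{\frac{1}{2}}(\mathbb{T}^{2})$, so I may take $\textbf{P}_{\mathrm{CI}}$ to be $\mathrm{Law}(v)$ if that solution is already global, or otherwise its extension to $[0,\infty)$ obtained by grafting on a measurably selected martingale solution after $\tau$ via (ii)--(iii); in either case $\textbf{P}_{\mathrm{CI}}$ starts from $v^{\text{in}}$ and its restriction to $[0,T]$ coincides with $\mathrm{Law}(v|_{[0,T]})$ on $\{\mathfrak{t}\geq T\}$. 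Writing $\mathcal{E}\triangleq\|v^{\text{in}}\|_{\dot{H}_{x}^{\frac{1}{2}}}^{2}+T\Tr((-\Delta)^{\frac{1}{2}}GG^{\ast})$ and letting $A\triangleq\{\,\|\xi(T)\|_{\dot{H}_{x}^{\frac{1}{2}}}>K(\|v^{\text{in}}\|_{\dot{H}_{x}^{\frac{1}{2}}}+\sqrt{T\Tr((-\Delta)^{\frac{1}{2}}GG^{\ast})}\,)\,\}$, \eqref{est 146} gives $\textbf{P}_{\mathrm{CI}}(A)\geq\textbf{P}(\{\mathfrak{t}\geq T\})>\kappa$, and on $A$ the elementary bound $(a+b)^{2}\geq a^{2}+b^{2}$ for $a,b\geq 0$ yields $\|\xi(T)\|_{\dot{H}_{x}^{\frac{1}{2}}}^{2}>K^{2}\mathcal{E}$; hence
\begin{equation*}
\mathbb{E}^{\textbf{P}_{\mathrm{CI}}}\big[\|\xi(T)\|_{\dot{H}_{x}^{\frac{1}{2}}}^{2}\big]\;\geq\;K^{2}\mathcal{E}\,\textbf{P}_{\mathrm{CI}}(A)\;>\;\kappa K^{2}\mathcal{E},
\end{equation*}
whereas \eqref{est 31} gives $\mathbb{E}^{\textbf{P}_{\mathrm{GK}}}[\|\xi(T)\|_{\dot{H}_{x}^{\frac{1}{2}}}^{2}]\leq\mathcal{E}$. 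Since $\kappa K^{2}>1$, these expectations differ whenever $\mathcal{E}>0$; in the degenerate case $\mathcal{E}=0$ one has $v^{\text{in}}=0$ and $\Tr((-\Delta)^{\frac{1}{2}}GG^{\ast})=0$, whence $GG^{\ast}=0$ because $(-\Delta)^{\frac{1}{2}}$ is positive on mean-zero fields, so $\textbf{P}_{\mathrm{GK}}$ is the Dirac mass at the zero path while $\textbf{P}_{\mathrm{CI}}(\{\|\xi(T)\|_{\dot{H}_{x}^{\frac{1}{2}}}>0\})>\kappa$. Either way $\textbf{P}_{\mathrm{CI}}\neq\textbf{P}_{\mathrm{GK}}$, and as both carry initial law $\delta_{v^{\text{in}}}$ this is non-uniqueness in law on $[0,\infty)$; restricting both to $C([0,T];\cdot)$, under which $\xi\mapsto\|\xi(T)\|_{\dot{H}_{x}^{\frac{1}{2}}}$ is still measurable, yields it on $[0,T]$ as well.

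As for the main obstacle: the genuine analytic work is concentrated entirely in Theorem~\ref{Theorem 2.1}, which is assumed here, so the remaining steps are soft; the only points that require care are measure-theoretic --- that the Galerkin limit in (i) really is a martingale solution of the momentum formulation \eqref{est 30} with the correct quadratic variation and inherits \eqref{est 31} at the \emph{specific} time $T$ (rather than merely for a.e.\ $t$), that $v(\tau)$ is an admissible initial datum, and, the most delicate bookkeeping, that the post-$\tau$ selection of martingale solutions can be carried out measurably, exactly as in the Navier--Stokes and transport references cited above.
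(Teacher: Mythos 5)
Your proposal is correct and takes essentially the same route as the paper: there, too, one compares the Galerkin solution satisfying \eqref{est 31} with the law of the convex-integration solution from Theorem \ref{Theorem 2.1}, extended beyond the stopping time by the gluing machinery (Lemmas \ref{Lemma 4.2}--\ref{Lemma 4.3} and Propositions \ref{Proposition 4.5}--\ref{Proposition 4.6}, with the stopping time realized on path space as $\tau_{L}$ via \eqref{est 126a}--\eqref{est 132}), and the conclusion is drawn from $\kappa K^{2}\geq 1$ exactly as in your display, cf. \eqref{est 147}--\eqref{est 149}. Your explicit choice $K=2$, $\kappa=\tfrac34$ and the separate treatment of the degenerate case $\mathcal{E}=0$ are harmless refinements of the same argument.
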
  
\noindent To the best of the author's knowledge, this is the first manuscript to study the QG momentum equations forced by random noise.

\subsection{Difficulties and new ideas to overcome them}\label{Subsection 2.2}
Let us elaborate on the proof of non-uniqueness of the deterministic QG equations in \cite{BSV19}. The authors fixed an arbitrary smooth function $\mathcal{H}: [0,T] \mapsto \mathbb{R}_{+}$ with compact support, defined 
\begin{equation}\label{est 3}
\lambda_{q} \triangleq \lambda_{0}^{q} \text{ for } \lambda_{0} \in 5 \mathbb{N} \text{ sufficiently large and } \delta_{q} \triangleq \lambda_{0}^{2} \lambda_{q}^{-2\beta}, 
\end{equation} 
constructed a sequence of solutions $(v_{0}, p_{0}, \mathring{R}_{0}) \equiv (0,0,0)$ and $(v_{q}, p_{q}, \mathring{R}_{q})_{q\in\mathbb{N}}$ to 
\begin{subequations}\label{est 6}
\begin{align}
& \partial_{t} v_{q} + (u_{q} \cdot \nabla) v_{q} - (\nabla v_{q})^{T} \cdot u_{q} + \nabla p_{q} + \Lambda^{\gamma_{1}} v_{q} = \nabla\cdot \mathring{R}_{q}, \hspace{3mm} \nabla\cdot v_{q} = 0, \\
&u_{q} = \Lambda v_{q},  
\end{align}
\end{subequations}
where $\mathring{R}_{q}$ is a symmetric trace-free matrix in $\mathbb{R}^{2\times 2}$ such that for universal constants $C_{0} > 0$ and $\epsilon_{R} > 0$, they satisfy the following inductive hypothesis. 
\begin{hypothesis}\label{Hypothesis 2.1} \rm{(\cite[equations (3.3)-(3.9)]{BSV19})} For all $t \in [0,T]$, 
\noindent
\begin{enumerate}[(a)]
\item\label{BSV19 Hypo a} $\supp \hat{v}_{q} \subset B(0, 2 \lambda_{q})$, a ball of radius $2 \lambda_{q}$ centered at the origin,  
\begin{equation}\label{est 4}
\lVert v_{q} \rVert_{C_{t} C_{x}^{1}} + \lVert u_{q} \rVert_{C_{t,x}} \leq C_{0} \lambda_{q} \delta_{q}^{\frac{1}{2}}, 
\end{equation} 
\item\label{BSV19 Hypo b} $\supp \hat{\mathring{R}}_{q} \subset B(0, 4\lambda_{q})$, 
\begin{equation}\label{est 5}
\lVert \mathring{R}_{q} \rVert_{C_{t,x}} \leq \epsilon_{R} \lambda_{q+1} \delta_{q+1}, 
\end{equation} 
\item\label{BSV19 Hypo c} 
\begin{equation}\label{est 20}
\lVert (\partial_{t} + u_{q} \cdot \nabla) v_{q} \rVert_{C_{t,x}} \leq C_{0} \lambda_{q}^{2} \delta_{q}, 
\end{equation} 
\item\label{BSV19 Hypo d} 
\begin{equation}\label{est 21}
\lVert (\partial_{t} + u_{q} \cdot \nabla) u_{q} \rVert_{C_{t,x}} \leq C_{0} \lambda_{q}^{3} \delta_{q}, 
\end{equation} 
\item\label{BSV19 Hypo e} 
\begin{equation}\label{est 22}
\lVert (\partial_{t} + u_{q} \cdot \nabla) \mathring{R}_{q} \rVert_{C_{t,x}} \leq \lambda_{q}^{2} \delta_{q}^{\frac{1}{2}} \lambda_{q+1} \delta_{q+1}, 
\end{equation} 
\item\label{BSV19 Hypo f} 
\begin{equation}\label{est 23}
0 \leq \mathcal{H}(t) - \lVert v_{q}(t) \rVert_{\dot{H}_{x}^{\frac{1}{2}}}^{2} \leq \lambda_{q+1} \delta_{q+1}, 
\end{equation} 
\item\label{BSV19 Hypo g} 
\begin{equation}\label{est 24}
\mathcal{H}(t) - \lVert v_{q}(t) \rVert_{\dot{H}_{x}^{\frac{1}{2}}}^{2} \leq \frac{\lambda_{q+1} \delta_{q+1}}{8} \Rightarrow \mathring{R}_{q}(t) \equiv 0. 
\end{equation} 
\end{enumerate} 
\end{hypothesis}
\noindent The hypothesis \ref{BSV19 Hypo a} gives the spatial regularity of the solution $v$ to \eqref{est 2} that is derived by taking limit $q\nearrow + \infty$ and observing the hypothesis \ref{BSV19 Hypo b}. The hypothesis \ref{BSV19 Hypo c} gives the temporal regularity of $v$ while the hypothesis \ref{BSV19 Hypo f} guarantees that the $\dot{H}^{\frac{1}{2}}(\mathbb{T}^{2})$-norm of $v(t)$ is $\mathcal{H}(t)$. To explain the role of other hypothesis, let us explain the construction of the solution $v_{q+1}$ to \eqref{est 6} that satisfies the Hypothesis \ref{Hypothesis 2.1}. The authors in \cite{BSV19} let $\chi_{j} \in [0,1]$ be certain smooth cutoff function such that 
\begin{equation}\label{est 7}
\sum_{j\in\mathbb{Z}} \chi^{2}(t-j) = 1 \hspace{3mm} \forall \hspace{1mm} t \in \mathbb{R}
\end{equation} 
and defined 
\begin{equation}\label{est 8}
\chi_{j}(t) \triangleq \chi(\tau_{q+1}^{-1} t - j).  
\end{equation}  
They selected $\tau_{q+1}$ in \cite[equation (4.7)]{BSV19} carefully (as we will too in \eqref{est 210}), and defined $\Phi_{j}$ and $\mathring{R}_{q,j}$ to be respectively the solutions to 
\begin{subequations}\label{est 10}
\begin{align}
(\partial_{t}+  u_{q} \cdot\nabla) \Phi_{j} =& 0, \\
 \Phi_{j} (\tau_{q+1} j, x) =& x, 
\end{align}
\end{subequations}
and 
\begin{subequations}\label{est 11} 
\begin{align}
(\partial_{t} + u_{q} \cdot \nabla) \mathring{R}_{q,j} =& 0, \label{est 25}\\
\mathring{R}_{q,j} (\tau_{q+1}j, x) =& \mathring{R}_{q}(\tau_{q+1}j, x). \label{est 26}  
\end{align}
\end{subequations} 
Postponing some details of notations, we mention that the authors of \cite{BSV19} defined 
\begin{equation}\label{est 16}
v_{q+1} \triangleq v_{q}  + w_{q+1}
\end{equation} 
where the perturbation $w_{q+1}$ is defined via 
\begin{equation}\label{est 17}
w_{q+1}(t,x) \triangleq \sum_{j \in \mathbb{Z}} \sum_{k\in\Gamma_{j}}  \chi_{j}(t) \mathbb{P}_{q+1, k} (a_{k,j} (t,x) b_{k}(\lambda_{q+1} \Phi_{j}(t,x)) 
\end{equation} 
with 
\begin{equation}\label{est 18} 
a_{k,j}(t,x) \triangleq 
\begin{cases}
\rho_{j}^{\frac{1}{2}} \gamma_{k} \left( \Id - \frac{\mathring{R}_{q,j}}{\lambda_{q+1}\rho_{j}} \right) &\text{ if } \rho_{j} \neq 0, \\
0 & \text{ if } \rho_{j} = 0, 
\end{cases}
\end{equation} 
\begin{equation}\label{est 19} 
\rho(t) \triangleq \frac{1}{(2\pi)^{2} \lambda_{q+1}} \max\left\{ \mathcal{H}(t) - \lVert v_{q}(t) \rVert_{\dot{H}^{\frac{1}{2}}}^{2} - \frac{\lambda_{q+2} \delta_{q+2}}{2}, 0 \right\}, \hspace{2mm} \text{ and } \hspace{2mm} \rho_{j} \triangleq \rho(\tau_{q+1} j) 
\end{equation} 
(see Lemma \ref{Geometric Lemma} for $\gamma_{k}$ and $\Gamma_{j}$, \eqref{est 192} for $\mathbb{P}_{q+1, k}$, and \eqref{est 178} for $b_{k}$). It is shown in \cite[Lemma 6.1]{BSV19} that the hypothesis \ref{BSV19 Hypo g} guarantees that if $\rho_{j} = 0$, then $\mathring{R}_{q}(\cdot, t) \equiv 0$ for all $t \in \supp \chi_{j}$ which, along with \eqref{est 7}-\eqref{est 8}, justified 
\begin{align}
\divergence \mathring{R}_{q}(t) = \sum_{j\in\mathbb{Z}: \rho_{j} \neq 0} \chi_{j}^{2}(t) \divergence (\mathring{R}_{q} - \mathring{R}_{q,j}) (t) + \sum_{j\in\mathbb{Z}: \rho_{j} \neq 0} \chi_{j}^{2}(t) \divergence \mathring{R}_{q,j}(t). \label{est 9}
\end{align}
The decomposition \eqref{est 9} is absolutely crucial in the estimate of the difficult Reynolds oscillation term because \eqref{est 11} indicates that 
\begin{subequations}\label{est 14} 
\begin{align}
(\partial_{t} + u_{q} \cdot \nabla) (\mathring{R}_{q} - \mathring{R}_{q,j}) =& (\partial_{t} + u_{q} \cdot \nabla) \mathring{R}_{q}, \label{est 12}\\
(\mathring{R}_{q} - \mathring{R}_{q,j}) (\tau_{q+1} j, x) =& 0, \label{est 13}
\end{align}
\end{subequations} 
so that we can estimate 
\begin{equation}\label{est 27}  
\lVert (\mathring{R}_{q} - \mathring{R}_{q,j})(t) \rVert_{C_{x}} \overset{\eqref{est 197a}}{\leq} \int_{\tau_{q+1} j}^{t} \lVert (\partial_{s} + u_{q} \cdot \nabla) \mathring{R}_{q} (s) \rVert_{C_{x}} ds \overset{\eqref{est 22}}{\leq} (t - \tau_{q+1} j) \lambda_{q}^{2} \delta_{q}^{\frac{1}{2}} \lambda_{q+1} \delta_{q+1}, 
\end{equation} 
and thereby estimate $\sum_{j\in\mathbb{Z}: \rho_{j} \neq 0} \chi_{j}^{2}(t) (\mathring{R}_{q} - \mathring{R}_{q,j}) (t)$ in \eqref{est 9}. Finally, having used the hypothesis \ref{BSV19 Hypo e} at level $q$ to get the necessary bound on the Reynolds oscillation term and thereby prove the hypothesis \ref{BSV19 Hypo b} at level $q+1$, one now needs to prove the hypothesis \ref{BSV19 Hypo e} at level $q+1$ and the hypothesis \ref{BSV19 Hypo d} is needed for that purpose. 

Let us now list some immediate difficulties in adapting this proof to the stochastic case.  
\begin{enumerate}[label=(\Alph*)]
\item The primary difficulty in adapting this approach of \cite{BSV19} comes from the hypothesis \ref{BSV19 Hypo e}. We need to transform the problem of \eqref{est 30a} to a random PDE, add a Reynolds stress as a force, and consider its solution inductively (see \eqref{est 134}). E.g., in the case of the 3-d NS equations \eqref{est 15}  forced by an additive noise, authors in \cite{HZZ19} let $z$ be a solution to a Stokes problem forced by the same noise (cf. \eqref{est 152}), defined $y \triangleq u-z$ to obtain a random PDE solved by $y$ that has the nonlinear term of 
\begin{equation}\label{est 410} 
(u\cdot\nabla) u = \divergence ((y+z) \mathring{\otimes} (y+z)) + \nabla \frac{\lvert y+z \rvert^{2}}{3} 
\end{equation} 
so that $\frac{\lvert y +z\rvert^{2}}{3}$ can be part of its pressure term. We observe that $(y+z) \mathring{\otimes} (y+z)$ in \eqref{est 410} is trace-free and symmetric and hence can become a part of the Reynolds stress. However, this implies that the Reynolds stress $\mathring{R}_{q}$ will be in $C_{t}^{\gamma}$ only for $\gamma < \frac{1}{2}$ due to the presence of $z$ (see \eqref{est 152} and Proposition \ref{Proposition 4.4}). Therefore, the corresponding Reynolds stress cannot possibly satisfy the hypothesis \ref{BSV19 Hypo e}. 
\item With a direct application of Hypothesis \ref{Hypothesis 2.1} to the stochastic case out of the picture, it is tempting to try to adapt to \eqref{est 2} the convex integration scheme of \cite[Section 7]{BV19b} on the 3-d NS equations that was proven to be successful in the stochastic case (e.g., \cite{HZZ19}). However, this ends up futile, quite clearly because the nonlinear term within \eqref{est 2} is one derivative more singular than that of the NS equations \eqref{est 15}; cf. $(u\cdot\nabla)u$ in \eqref{est 15} and $(\Lambda v \cdot \nabla) v - (\nabla v)^{T} \cdot \Lambda v$ in \eqref{est 2}. More specifically, the convex integration in \cite[Section 7]{BV19b} is based on intermittent jets (see \cite[equation (7.17)]{BV19b}) that allow delicate $L_{x}^{p}$-estimates whereas the convex integration in \cite{BSV19} such as $w_{q+1}$ and $a_{k,j}$ in \eqref{est 17}-\eqref{est 18} is only fit for $L_{x}^{\infty}$-estimate. Similarly, adapting the convex integration on the 2-d NS equations in \cite{LQ20, Y20c} to \eqref{est 2} seems difficult. 

In fact, upon a closer look, one realizes additional disadvantages in the case of the QG equations. The term $(\nabla v)^{T} \cdot \Lambda v$ cannot be written in a divergence form and an estimate on $\mathcal{B} ((\nabla v)^{T} \cdot \Lambda v)$ where $\mathcal{B}$ is an inverse of a divergence from Lemma \ref{Inverse-divergence lemma} is essentially as difficult as $(\nabla v)^{T} \cdot \Lambda v$ because the frequency support of $(\nabla v)^{T} \cdot \Lambda v$ is not compactly supported away from the origin in general and hence $\mathcal{B} ((\nabla v)^{T} \cdot \Lambda v)$ is actually two derivatives worse than the counterpart $u \mathring{\otimes} u$ in the case of the NS equations. Even for $(\Lambda v \cdot \nabla) v$, an analogous attempt to \eqref{est 410} leads us to 
\begin{equation}
(\Lambda v\cdot\nabla) v = \divergence ( \Lambda (y+z)  \mathring{\otimes } (y+z)) + \nabla \frac{ \lvert \Lambda(y+z) \cdot (y+z) \rvert^{2}}{2} 
\end{equation} 
and we realize that while $\Lambda (y+z)  \mathring{\otimes } (y+z)$ is  trace-free, it is not symmetric. 
\item Wishful thinking would be to essentially just drop the hypothesis \ref{BSV19 Hypo e} and see if the proof can be modified to go through; this is not impossible if one's goal is merely to prove non-uniqueness so that we should not have to perform a full-fledged proof of \cite{BSV19} that verifies high regularity of the solution with a prescribed $\dot{H}^{\frac{1}{2}}(\mathbb{T}^{2})$-norm. Unfortunately, this hope is destroyed the moment we realize that the hypothesis \ref{BSV19 Hypo e} at level $q$ is used to prove the hypothesis \ref{BSV19 Hypo b} at level $q+1$, as we described in \eqref{est 9}-\eqref{est 27}, and the hypothesis \ref{BSV19 Hypo b} is indispensable as that is needed to prove that $\mathring{R}_{q}$ vanishes in the limit $q\nearrow + \infty$ and conclude that the limiting solution $v$ solves \eqref{est 2}.  
\item\label{Difficulty D} Besides these issues, there is another fundamental problem in adapting deterministic convex integration to the stochastic case. The authors in \cite{BSV19} fixed an arbitrary smooth function $\mathcal{H}: [0,T]\mapsto \mathbb{R}_{+}$ with compact support such as $[a,b] \subsetneq [0,T]$, constructed a solution $v$ to \eqref{est 2} such that $\lVert v(t) \rVert_{\dot{H}_{x}^{\frac{1}{2}}}^{2} = \mathcal{H}(t)$ which implies that $v$ has compact support in time, and thus conclude non-uniqueness arguing that $v \equiv 0$ is not the only weak solution that vanishes on the complement of $[a,b]$. In case the QG equations are forced by an additive noise, this argument breaks down because even if its solution vanishes at time $t  = a$, a zero function would not be its solution for $t > a$ anyway. Considering the inequality \eqref{est 31} that is satisfied by the solution $v$ constructed via a Galerkin approximation, one may be tempted to fix $\mathcal{H}(t) \triangleq 2 \left( \lVert v^{\text{in}} \rVert_{\dot{H}_{x}^{\frac{1}{2}}}^{2} + t \Tr ((-\Delta)^{\frac{1}{2}} GG^{\ast})\right)$ to prove non-uniqueness; unfortunately, while $\mathcal{H}(t)$ must be fixed \emph{a priori}, the initial data $v^{\text{in}}$ cannot be prescribed in the convex integration scheme of \cite{BSV19}. 
\end{enumerate} 

Our new ideas to overcome these difficulties are as follows.
\begin{enumerate}[label=(\Alph*)]
\item The first idea is to replace $\rho_{j}$ in \eqref{est 19}, and hence in the definition of $a_{k,j}$ in \eqref{est 18} and ultimately  $w_{q+1}$ in \eqref{est 17} by $\delta_{q+1}$ (compare \eqref{est 18} and \eqref{est 196}). In particular, this implies that $\rho_{j}$ in \eqref{est 19} is always strictly positive, removing the necessity to consider distinct sets $\{j: \rho_{j} \neq 0\}$ and $\{j: \rho_{j} = 0\}$ in contrast to \cite{BSV19} and thus the hypothesis \ref{BSV19 Hypo g}. We also do not need to include the hypothesis \ref{BSV19 Hypo f} because we are willing to prove non-uniqueness by constructing solutions with opposite $\dot{H}^{\frac{1}{2}}(\mathbb{T}^{2})$-inequality. 
\item Second, we will mollify $\mathring{R}_{q}$ in space and time; merely doing this will not make any difference because $\mathring{R}_{q}$ remains non-differentiable in time. Specifically, we define 
\begin{equation}\label{est 32}
l \triangleq \lambda_{q+1}^{-\alpha} 
\end{equation} 
(see \eqref{est 154c} for the range of $\alpha$), let $\{ \phi_{l} \}_{l> 0}$ and $\{\varphi_{l}\}_{l> 0}$ be families of standard mollifiers with mass one and compact support respectively on $\mathbb{R}^{2}$ and in $(\tau_{q+1}, 2\tau_{q+1}] \subset \mathbb{R}_{+}$ (see \eqref{est 210} for a precise definition of $\tau_{q+1}$). Then we extend $\mathring{R}_{q}$ for $q \in \mathbb{N}_{0}$ to $t < 0$ by its value at $t = 0$, mollify it in space and time to obtain 
\begin{equation}\label{est 33} 
\mathring{R}_{l} \triangleq \mathring{R}_{q} \ast_{x} \phi_{l} \ast_{t} \varphi_{l}, 
\end{equation} 
and replace $\mathring{R}_{q}(\tau_{q+1}j, x)$ in \eqref{est 26} by $\mathring{R}_{l}(\tau_{q+1}j, x)$ (see \eqref{est 195b}). This will allow us to obtain \eqref{est 9} and \eqref{est 14} with $\mathring{R}_{q}$ therein replaced by $\mathring{R}_{l}$ (see \eqref{est 259}) leading us to \eqref{est 27} in which we do not even need \eqref{est 22} because we can estimate within \eqref{est 27} 
\begin{equation}\label{est 34}  
\lVert (\partial_{s} + u_{q} \cdot \nabla) \mathring{R}_{l} (s) \rVert_{C_{x}} \leq \lVert \partial_{s} \mathring{R}_{l} \rVert_{C_{s,x}} + \lVert u_{q} \rVert_{C_{s,x}} \lVert \nabla \mathring{R}_{l} \rVert_{C_{s,x}} \lesssim l^{-1} \lambda_{q+1} \delta_{q+1} + \lambda_{q} \delta_{q}^{\frac{1}{2}} \lambda_{q} \lambda_{q+1} \delta_{q+1}
\end{equation}
by mollifier estimates, Hypothesis \ref{Hypothesis 2.1} \ref{BSV19 Hypo a} and \ref{BSV19 Hypo b} (see \eqref{est 266}), completely removing the necessity for the hypothesis \ref{BSV19 Hypo e} and consequently also the hypothesis \ref{BSV19 Hypo d}. 
\item The two major changes we suggested above have many consequences. One important ingredient of any convex integration scheme is to cancel out the most difficult term in the Reynolds stress oscillation term. The changes we suggested, along with many others, must be made carefully to preserve the crucial cancellation (see \eqref{est 382}). Moreover, replacing $\divergence \mathring{R}_{q}$ in \eqref{est 9} by $\divergence \mathring{R}_{l}$ leads to the necessity of considering the mollified equation; i.e., we mollify not just $\mathring{R}_{q}$ but all other functions and work on the equation satisfied by the mollified functions (see \eqref{est 184}). This leads to an additional Reynolds stress error that was absent in \cite{BSV19}, informally
\begin{equation*}
R_{\text{Com1}} \triangleq \text{Nonlinear term mollified} - \text{Nonlinear term with each term therein mollified}
\end{equation*} 
(see \eqref{est 181}) for which we can apply mollifier estimates and end up with a bound that is smaller if we take large $\alpha > 0$. On the other hand, \eqref{est 34} already reveals that larger $\alpha > 0$ implies a worse bound, and so do the bounds for other Reynolds stress errors. Thus, there is the game of whether we can find a non-empty interval for $\alpha$ such that the addition of the new Reynolds stress error $R_{\text{Com1}}$ still leads to the closure of the necessary estimates; in fact, we will also see another error $R_{\text{Com2}}$ (see \eqref{est 191e}). In this regard, we will consider $z_{q}$ (see \eqref{est 150}) rather than $z$ at every step which is the only way to guarantee the frequency support of $\mathring{R}_{q}$ to be contained inside $B(0, 4 \lambda_{q})$ and control the errors; otherwise, we get $l^{-1} = \lambda_{q+1}^{\alpha}$ instead of $\lambda_{q}$ for any spatial derivative on $\mathring{R}_{l}$ and we will not be able to close our estimates as $\alpha > 1$ from \eqref{est 154c}. As we are going to prove non-uniqueness by constructing a solution with opposite $\dot{H}^{\frac{1}{2}}(\mathbb{T}^{2})$-inequality, in contrast to the deterministic case in \cite{BSV19} (recall the difficulty \ref{Difficulty D}), we also need to come up with an appropriate solution to \eqref{est 6} at step $q = 0$ rather than just $(v_{0}, \mathring{R}_{0}) \equiv (0,0)$ in \cite{BSV19}. We also change the definition of $\lambda_{q}$ and $\delta_{q}$ from \eqref{est 3} to 
\begin{equation}\label{est 35}
\lambda_{q} \triangleq a^{b^{q}} \text{ for } a \in 5 \mathbb{N}, b \in \mathbb{N} \text{ sufficiently large and } \delta_{q} \triangleq \lambda_{q}^{-2\beta}
\end{equation}  
because the definition in \eqref{est 3} does not distinguish between $\lambda_{0}$ and $\lambda_{1}$ which makes the necessary computations at the step $q= 0$ very difficult. Finally, we will replace the convections by the velocity field $u_{q}$ within the transport equations \eqref{est 10}-\eqref{est 11} to the convections by a sum of $u$ and a mollified Ornstein-Uhlenbeck process (see \eqref{est 193a} and \eqref{est 195a}); this allows us to better handle the Reynolds stress transport error, as the author discovered in \cite{Y21c} (see also \cite{RS21}). 
\end{enumerate} 

In Section \ref{Preliminaries}, we give a minimum amount notations and previous results on convex integration, leaving the rest to the Appendix. In Section \ref{Section 4} we prove Theorems \ref{Theorem 2.1}-\ref{Theorem 2.2}. In fact, we prove the main steps of convex integration, specifically Propositions \ref{Proposition 4.7}-\ref{Proposition 4.10}, for the generalized QG equations with the velocity field $u = \Lambda^{1-\gamma_{2}} \mathcal{R}^{\bot} \theta$ according to \eqref{est 1} or equivalently the momentum equation \eqref{est 30a} with 
\begin{equation}\label{est 133}
u = \Lambda^{2-\gamma_{2}} v. 
\end{equation} 
This can be derived following the same reasoning in \cite[Sections 1.4 and A.1]{BSV19} (see also \cite[Section 2.2]{R95}); i.e., the self-adjoint positive-definite operator that is used to define the metric on the Lie algebra associated to the group of volume-preserving diffeomorphisms would be $A \triangleq \Lambda^{\gamma_{2} -2}$ so that the cases $\gamma_{2} = 1$ and $\gamma_{2} = 2$ give the QG and the Euler equations, respectively. The corresponding solutions emanating from initial data $v^{\text{in}} \in \dot{H}^{1- \frac{\gamma_{2}}{2}} (\mathbb{T}^{2})$ constructed via Galerkin approximation would satisfy 
\begin{equation}\label{est 39}
\mathbb{E}^{\textbf{P}} [\lVert v(t) \rVert_{\dot{H}_{x}^{1- \frac{\gamma_{2}}{2}}}^{2}] \leq \lVert v^{\text{in}} \rVert_{\dot{H}_{x}^{1- \frac{\gamma_{2}}{2}}}^{2} + t \Tr ((-\Delta)^{1 - \frac{\gamma_{2}}{2}} GG^{\ast}), 
\end{equation} 
(cf. \eqref{est 31}). The proof of Proposition \ref{Proposition 4.1} concerning the existence of martingale solutions via a Galerkin approximation relies on Proposition \ref{Proposition 6.3} of Calder$\acute{\mathrm{o}}$n commutator which we can also generalize (see Proposition \ref{Proposition 6.3} (2)). Unfortunately, our proof in the case of the generalized QG equations ultimately break down at the last step of the proof of Theorem \ref{Theorem 2.1} upon relying on the Calder$\acute{\mathrm{o}}$n commutator estimate. Especially due to \eqref{est 151b}, we need to reduce $\beta > 1 - \frac{\gamma_{2}}{2}$ sufficiently close to $1- \frac{\gamma_{2}}{2}$ and this makes $\iota \in (0, \frac{1}{2} - \frac{\gamma_{1}}{3})$ quite small. If the size of $\iota > 0$ was nontrivial, then we can deduce the existence of the solution to the generalized QG equations constructed via convex integration by relying on Proposition \ref{Proposition 6.3} (2). Regardless, we present in such a general way in hope that these computations may be of use in future works on the generalized QG equations.

\begin{remark}
Significant progress on this manuscript was made in early May 2022. Then, on 05/26/2022 Hofmanov$\acute{\mathrm{a}}$, Zhu, and Zhu \cite{HZZ22} posted a paper on ArXiV in which, among several results, they elegantly proved the non-uniqueness of the QG equations forced by random noise. Our work was completed entirely independently of \cite{HZZ22}, as can be seen clearly from the difference in results and proofs. While \cite{HZZ22} works directly on the equation of the temperature $\theta$, based on the convex integration technique of \cite{CKL20} on the steady-state weak solution to the deterministic QG equations with the probabilistic approach from \cite{HZZ21a}, we work on the equation of the momentum $v$, based on the convex integration technique of \cite{BSV19} with the probabilistic strategy from \cite{HZZ19} and some key ideas from \cite{Y21c}. Furthermore, the random noise tackled is white in space in \cite{HZZ22} while white in time in this manuscript. 
\end{remark}

\section{Preliminaries}\label{Preliminaries}
\subsection{Notations and assumptions}\label{Subsection 3.1}
We write $C_{0}^{\infty} \triangleq \{f \in C^{\infty} (\mathbb{T}^{2}): \fint_{\mathbb{T}^{2}} f dx = 0 \}$,  $C_{0,\sigma}^{\infty} \triangleq \{f \in C_{0}^{\infty} (\mathbb{T}^{2}): \nabla\cdot f = 0 \}$, and $\dot{H}_{\sigma}^{s} \triangleq \{f \in \dot{H}^{s}(\mathbb{T}^{2}): \fint_{\mathbb{T}^{2}} f dx = 0, \nabla\cdot f = 0 \}$ where $\fint_{\mathbb{T}^{2}} f dx \triangleq \lvert \mathbb{T}^{2} \rvert^{-1} \int_{\mathbb{T}^{2}} f(x) dx$, and $H_{\sigma}^{s}$ similarly for any $s \in \mathbb{R}$. We define $\mathbb{P} \triangleq \Id + \mathcal{R} \otimes \mathcal{R}$ to be the Leray projection onto the space of divergence-free vector fields. We define $\lVert \cdot \rVert_{C_{t,x}^{N}} \triangleq \sum_{0 \leq k + \lvert \alpha \rvert \leq N} \lVert \partial_{t}^{k} D^{\alpha} \cdot \rVert_{L_{t,x}^{\infty}}$ where $k \in \mathbb{N}_{0}$ and $\alpha$ is a multi-index. For any Polish space $H$, we write $\mathcal{B}(H)$ to denote the $\sigma$-algebra of Borel sets in $H$. We denote by $L^{2}(\mathbb{T}^{2})$-inner products by $\langle \cdot, \cdot \rangle$, a duality pairing of $\dot{H}^{-\frac{1}{2}}(\mathbb{T}^{2})-\dot{H}^{\frac{1}{2}}(\mathbb{T}^{2})$ by $\langle \cdot, \cdot \rangle_{\dot{H}_{x}^{-\frac{1}{2}}-\dot{H}_{x}^{\frac{1}{2}}}$, a quadratic variation of $A$ and $B$ by $\langle\langle A, B \rangle \rangle$, as well as $\langle \langle A \rangle \rangle \triangleq \langle \langle A, A \rangle \rangle$. We let  
\begin{equation}\label{est 87}
\Omega_{t} \triangleq C( [t,\infty); (H_{\sigma}^{4})^{\ast} ) \cap L_{\text{loc}}^{\infty} ([t,\infty); \dot{H}_{\sigma}^{1 - \frac{\gamma_{2}}{2}}), \hspace{3mm} t \geq 0 
\end{equation} 
where $(H_{\sigma}^{4})^{\ast}$ denotes the dual of $H_{\sigma}^{4}$. We also denote by $\mathcal{P} (\Omega_{0})$ the set of all probability measures on $(\Omega, \mathcal{B})$ where $\mathcal{B}$ is the Borel $\sigma$-field of $\Omega_{0}$ from the topology of locally uniform convergence on $\Omega_{0}$. We define the canonical process $\xi: \Omega_{0} \mapsto (H_{\sigma}^{4})^{\ast}$ by $\xi_{t}(\omega) \triangleq \omega(t)$. For general $t \geq 0$ we equip $\Omega_{t}$ with Borel $\sigma$-field $\mathcal{B}^{t} \triangleq \sigma \{\xi(s): s \geq t \}$, and additionally define 
\begin{equation}\label{est 413}
\mathcal{B}_{t}^{0} \triangleq \sigma \{\xi(s): s \leq t \} \text{ and } \mathcal{B}_{t} \triangleq \cap_{s > t} \mathcal{B}_{s}^{0}, \hspace{3mm} t \geq 0.
\end{equation} 
For any Hilbert space $U$, we denote by $L_{2}(U, \dot{H}_{\sigma}^{s})$ with $s \in \mathbb{R}_{+}$ the space of all Hilbert-Schmidt operators from $U$ to $\dot{H}_{\sigma}^{s}$ with norm $\lVert \cdot \rVert_{L_{2}(U, \dot{H}_{\sigma}^{s})}$. We impose on $G: \dot{H}_{\sigma}^{1- \frac{\gamma_{2}}{2}} \mapsto L_{2} (U, \dot{H}_{\sigma}^{1 - \frac{\gamma_{2}}{2}})$ to be $\mathcal{B} (\dot{H}_{\sigma}^{1- \frac{\gamma_{2}}{2}})/\mathcal{B} (L_{2} (U, \dot{H}_{\sigma}^{1- \frac{\gamma_{2}}{2}}))$-measurable and satisfy 
\begin{equation}\label{est 60} 
\lVert G(\phi ) \rVert_{L_{2}(U, \dot{H}_{\sigma}^{1- \frac{\gamma_{2}}{2}})} \leq C(1+ \lVert \phi \rVert_{\dot{H}_{x}^{1- \frac{\gamma_{2}}{2}}}), \hspace{3mm} \lim_{j\to\infty} \lVert G(\psi^{j})^{\ast} \phi - G(\psi)^{\ast} \phi \rVert_{U} = 0 
\end{equation} 
for all $\phi, \psi^{j}, \psi \in C^{\infty} (\mathbb{T}^{2}) \cap \dot{H}_{\sigma}^{1- \frac{\gamma_{2}}{2}}$ such that $\lim_{j\to\infty} \lVert \psi^{j} - \psi \rVert_{\dot{H}_{x}^{1- \frac{\gamma_{2}}{2}}} = 0$. 
 
\subsection{Convex integration}\label{Subsection 3.2}
Most of the following preliminaries come from \cite{BSV19}.
\begin{lemma}\label{Inverse-divergence lemma} 
\rm{(Inverse divergence)} Let $f$ be divergence-free and mean-zero. Then we define the $(i,j)$-th entry of $\mathcal{B}f$ for $i, j \in \{1,2\}$ by 
\begin{equation}
(\mathcal{B}f)^{ij} \triangleq - \partial_{j} \Lambda^{-2} f_{i} - \partial_{i}\Lambda^{-2} f_{j}.
\end{equation} 
For $f$ that is not divergence-free, we define $\mathcal{B} f \triangleq \mathcal{B} \mathbb{P} f$; for $f$ that is not mean-zero, we define $\mathcal{B} f \triangleq \mathcal{B} \left( f - \fint_{\mathbb{T}^{2}} f dx \right)$. It follows that $\divergence (\mathcal{B} f) = \mathbb{P} f$, and $\mathcal{B} f$ is a symmetric and trace-free matrix. One property that we will frequently rely on in estimates is $\lVert \mathcal{B} \rVert_{C_{x} \mapsto C_{x}} \lesssim 1$ (e.g., \cite[p. 127]{BV19a} and \cite[Lemma 7.3]{LQ20}). 
\end{lemma}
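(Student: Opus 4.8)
The plan is to read the symmetry and trace-free properties and the divergence identity directly off the defining formula in the divergence-free mean-zero case, to reduce the general case to it via the definitions $\mathcal{B}f = \mathcal{B}\mathbb{P}f$ and $\mathcal{B}f = \mathcal{B}\bigl(f - \fint_{\mathbb{T}^{2}} f\,dx\bigr)$, and finally to obtain the $C_x \mapsto C_x$ bound from an explicit kernel representation.

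First I would take $f$ divergence-free and mean-zero. Since the expression $-\partial_{j}\Lambda^{-2}f_{i} - \partial_{i}\Lambda^{-2}f_{j}$ is manifestly invariant under $i \leftrightarrow j$, the matrix $\mathcal{B}f$ is symmetric; summing its diagonal entries gives $\sum_{i=1}^{2}(\mathcal{B}f)^{ii} = -2\Lambda^{-2}(\divergence f) = 0$, so $\mathcal{B}f$ is trace-free. For the divergence I would compute, for each $i$,
\[
(\divergence \mathcal{B}f)_{i} = \sum_{j=1}^{2}\partial_{j}(\mathcal{B}f)^{ij} = -\Delta\Lambda^{-2}f_{i} - \partial_{i}\Lambda^{-2}(\divergence f),
\]
and then use that $-\Delta\Lambda^{-2}$ acts as the identity on mean-zero functions (its Fourier symbol being $\lvert k\rvert^{2}\lvert k\rvert^{-2}=1$ for $k\neq 0$) together with $\divergence f = 0$ to conclude $\divergence \mathcal{B}f = f = \mathbb{P}f$, the last equality because $\mathbb{P}$ fixes divergence-free fields.

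For a general $f$, by definition $\mathcal{B}f = \mathcal{B}\bigl(\mathbb{P}(f - \fint_{\mathbb{T}^{2}}f\,dx)\bigr)$; since $f - \fint_{\mathbb{T}^{2}}f\,dx$ is mean-zero and $\mathbb{P} = \Id + \mathcal{R}\otimes\mathcal{R}$ maps mean-zero fields to divergence-free mean-zero fields (the Riesz symbol is set to zero at $k=0$, so $\mathbb{P}$ preserves the frequency-zero mode), the previous paragraph applies verbatim and yields symmetry, trace-freeness, and $\divergence \mathcal{B}f = \mathbb{P}(f - \fint_{\mathbb{T}^{2}}f\,dx)$, which coincides with $\mathbb{P}f$ whenever $\mathbb{P}f$ is mean-zero — the only situation occurring in our applications, where $\mathcal{B}$ is always applied to divergences or to fields of vanishing mean.

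It remains to establish $\lVert \mathcal{B}\rVert_{C_x \mapsto C_x}\lesssim 1$. Each entry of $\mathcal{B}f$ is a finite sum of terms $\partial_{j}\Lambda^{-2}f_{i}$, and $\partial_{j}\Lambda^{-2}$ is the Fourier multiplier on $\mathbb{T}^{2}$ with symbol $m_{j}(k)= ik_{j}\lvert k\rvert^{-2}$ for $k\in\mathbb{Z}^{2}\setminus\{0\}$ and $m_{j}(0)=0$. I would identify its convolution kernel as the periodization of $\partial_{j}$ of the Newtonian potential $\propto \log\lvert x\rvert$, i.e. a kernel that near the diagonal equals a constant multiple of $x_{j}\lvert x\rvert^{-2}$ plus a smooth correction; since $x_{j}\lvert x\rvert^{-2} = O(\lvert x\rvert^{-1})$ is locally integrable in two dimensions, $\partial_{j}\Lambda^{-2}$ is convolution against an $L^{1}(\mathbb{T}^{2})$ kernel, hence bounded on $C(\mathbb{T}^{2})$ by Young's inequality, and summing over the finitely many indices gives the claim (this is the computation of \cite[p. 127]{BV19a} and \cite[Lemma 7.3]{LQ20}). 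The one step that requires genuine care, rather than bookkeeping, is precisely this last one: an operator whose symbol were merely homogeneous of degree $0$ would be a Calder\'on--Zygmund operator and need not be bounded on $C_x$, so it is essential that $\mathcal{B}$ gains exactly one derivative, which in dimension two makes the kernel integrable; the mean-zero reduction is what renders the multiplier well-defined and ensures no non-integrable low-frequency contribution.
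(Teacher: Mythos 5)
Your proposal is correct: the symmetry, trace-free property, and $\divergence(\mathcal{B}f)=\mathbb{P}f$ follow exactly by the Fourier-symbol computations you give, and the $C_{x}\mapsto C_{x}$ bound via the locally integrable kernel $\sim x_{j}\lvert x \rvert^{-2}$ of $\partial_{j}\Lambda^{-2}$ in two dimensions is the standard argument. The paper itself offers no proof of this lemma, simply citing \cite[p. 127]{BV19a} and \cite[Lemma 7.3]{LQ20}, and your verification is essentially the argument given in those references, including the correct observation that the gain of one derivative (rather than a degree-zero symbol) is what makes the kernel integrable and hence the operator bounded on $C_{x}$.
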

Next, for $k \in \mathbb{S}^{1}$ we define 
\begin{equation}\label{est 178} 
b_{k}(\xi) \triangleq i k^{\bot} e^{ik\cdot \xi} \hspace{1mm} \text{ and } \hspace{1mm} c_{k}(\xi) \triangleq e^{ik\cdot \xi}
\end{equation} 
which satisfies 
\begin{equation}\label{est 179} 
b_{k} = \nabla_{\xi}^{\bot} c_{k}, \hspace{3mm} c_{k} = - \nabla_{\xi}^{\bot} \cdot b_{k}, \hspace{3mm} \Lambda_{\xi}^{\alpha} b_{k}(\xi) = b_{k}(\xi) \hspace{1mm} \forall \hspace{1mm} \alpha \geq 0, \hspace{3mm} b_{k}(\xi)^{\bot} = - \nabla_{\xi} c_{k}(\xi).
\end{equation}  
For any finite family of vectors $\Gamma \subset \mathbb{S}^{1}$ and $\{ a_{k}  \} \subset \mathbb{C}$ such that $a_{-k} = \bar{a}_{k}$, we set $W(\xi) \triangleq \sum_{k \in \Gamma} a_{k} b_{k} (\xi)$ and $W_{k}(\xi) \triangleq a_{k} b_{k}(\xi)$ so that  
\begin{equation}
\divergence (W \otimes W) = \frac{1}{2} \nabla_{\xi} \lvert W \rvert^{2} + ( \nabla_{\xi}^{\bot} \cdot W) W^{\bot} \hspace{2mm}\text{ and } \hspace{2mm}  \sum_{k \in \Gamma} W_{k} \otimes W_{-k} = \sum_{k\in \Gamma} \lvert a_{k} \rvert^{2} k^{\bot} \otimes k^{\bot}. 
\end{equation} 

\begin{lemma}\label{Geometric Lemma} 
\rm{(Geometric lemma from \cite[Lemma 4.2]{BSV19})} Let $B(\Id, \epsilon)$ denote the ball of symmetric $2\times 2$ matrices, centered at $\Id$ of radius $\epsilon > 0$. Then there exists $\epsilon_{\gamma} > 0$ with which there exist disjoint finite subsets $\Gamma_{j} \subset \mathbb{S}^{1}$ for $j \in \{1,2\}$ and smooth positive functions 
\begin{equation}
\gamma_{k} \in C^{\infty} (B(\Id, \epsilon_{\gamma})), \hspace{3mm} j \in \{1,2\}, k \in \Gamma_{j},
\end{equation} 
such that 
\begin{enumerate}
\item $5 \Gamma_{j} \subset \mathbb{Z}^{2}$ for each $j$, 
\item if $k \in \Gamma_{j}$, then $- k \in \Gamma_{j}$ and $\gamma_{k} =  \gamma_{-k}$, 
\item 
\begin{equation}\label{est 265}
R = \frac{1}{2} \sum_{k \in \Gamma_{j}} (\gamma_{k}(R))^{2} (k^{\bot} \otimes k^{\bot})  \hspace{3mm} \forall \hspace{1mm} R \in B(\Id, \epsilon_{\gamma}),  
\end{equation} 
\item $\lvert k + k' \rvert \geq \frac{1}{2}$ for all $k, k' \in \Gamma_{j}$ such that $k + k' \neq 0$. 
\end{enumerate} 
\end{lemma}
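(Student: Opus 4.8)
The plan is to reduce the assertion to a finite-dimensional fact about the cone of positive definite symmetric $2\times2$ matrices and then upgrade it to smooth dependence near $\Id$ by an elementary perturbation argument.

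First I would fix the two families. I look for disjoint finite sets $\Gamma_{1},\Gamma_{2}\subset\mathbb{S}^{1}$, each closed under $k\mapsto-k$ and consisting of directions $k$ with $5k\in\mathbb{Z}^{2}$; up to sign the admissible directions are precisely the six antipodal pairs $\pm(1,0)$, $\pm(0,1)$, $\pm\tfrac15(3,4)$, $\pm\tfrac15(3,-4)$, $\pm\tfrac15(4,3)$, $\pm\tfrac15(4,-3)$ obtained from the twelve lattice points on $\{\lvert x\rvert=5\}$. Each $\Gamma_{j}$ is further required to satisfy: (i) the rank one matrices $\{k^{\bot}\otimes k^{\bot}:k\in\Gamma_{j}\}$ span the $3$-dimensional space of symmetric $2\times2$ matrices; (ii) there exist $\bar c_{k}>0$ with $\bar c_{k}=\bar c_{-k}$ and $\Id=\tfrac12\sum_{k\in\Gamma_{j}}\bar c_{k}\,k^{\bot}\otimes k^{\bot}$, i.e. $\Id$ lies in the interior of the convex cone they generate; and (iii) $\lvert k+k'\rvert\geq\tfrac12$ whenever $k,k'\in\Gamma_{j}$ with $k+k'\neq0$. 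I would take $\Gamma_{1}\triangleq\{\pm(1,0),\,\pm\tfrac15(3,4),\,\pm\tfrac15(3,-4)\}$ and let $\Gamma_{2}$ be its image under the reflection $(x_{1},x_{2})\mapsto(x_{2},x_{1})$, namely $\Gamma_{2}\triangleq\{\pm(0,1),\,\pm\tfrac15(4,3),\,\pm\tfrac15(4,-3)\}$, so that $\Gamma_{1}\cap\Gamma_{2}=\emptyset$. For (i)--(ii) with $j=1$, a short computation gives linear independence of the three matrices together with the strictly positive representation
\begin{equation*}
\Id=\tfrac{7}{16}\,(0,1)\otimes(0,1)+\tfrac{1}{32}\,(-4,3)\otimes(-4,3)+\tfrac{1}{32}\,(4,3)\otimes(4,3),
\end{equation*}
which is of the required form since $(0,1)=(1,0)^{\bot}$, $(-4,3)=(3,4)^{\bot}$ and $(4,3)=(3,-4)^{\bot}$; for (iii) one verifies numerically that the only pairs of admissible directions at distance $<\tfrac12$ are $\{\tfrac15(3,4),\tfrac15(4,3)\}$ and $\{\tfrac15(3,-4),\tfrac15(4,-3)\}$, each of which is split between $\Gamma_{1}$ and $\Gamma_{2}$, and all properties for $j=2$ then follow from the reflection symmetry.

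Next I would produce the smooth positive coefficients. Fix $j$ and let $L_{j}$ denote the linear map $(c_{k})_{k\in\Gamma_{j}}\mapsto\tfrac12\sum_{k\in\Gamma_{j}}c_{k}\,k^{\bot}\otimes k^{\bot}$ with values in the space of symmetric $2\times2$ matrices; by (i) it is surjective, hence admits a linear right inverse $S_{j}$, which I may take invariant under $k\mapsto-k$ by averaging over this involution and using $k^{\bot}\otimes k^{\bot}=(-k)^{\bot}\otimes(-k)^{\bot}$. Setting $c_{k}(R)\triangleq\bar c_{k}+(S_{j}(R-\Id))_{k}$ yields an affine (hence $C^{\infty}$) function of $R$ with $L_{j}(c(R))=R$ and $c_{k}(\Id)=\bar c_{k}>0$, so by continuity there is $\epsilon_{\gamma}>0$ with $c_{k}(R)>0$ for all $k\in\Gamma_{j}$, $j\in\{1,2\}$, and $R\in B(\Id,\epsilon_{\gamma})$. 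Then $\gamma_{k}(R)\triangleq\sqrt{c_{k}(R)}$ is smooth and strictly positive on $B(\Id,\epsilon_{\gamma})$, satisfies $\gamma_{k}=\gamma_{-k}$, and $R=L_{j}(c(R))=\tfrac12\sum_{k\in\Gamma_{j}}\gamma_{k}(R)^{2}\,k^{\bot}\otimes k^{\bot}$, which is \eqref{est 265}; items (1), (2), (4) are built into the construction of $\Gamma_{1},\Gamma_{2}$ and of the symmetric $S_{j}$.

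I expect the main obstacle to be the first step: the arithmetic constraint $5\Gamma_{j}\subset\mathbb{Z}^{2}$, the interior/positivity condition (ii) — which is exactly what forces the coefficients $c_{k}(R)$ in the second step to remain positive in a neighbourhood of $\Id$ — and the separation (iii) must be arranged simultaneously with $\Gamma_{1}$ and $\Gamma_{2}$ disjoint, whereas the supply of admissible directions is limited to the six antipodal pairs on $\{\lvert x\rvert=5\}$; once this explicit split is checked, everything else is soft.
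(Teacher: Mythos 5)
Your construction is correct and complete: the explicit choice $\Gamma_{1}=\{\pm(1,0),\pm\tfrac15(3,4),\pm\tfrac15(3,-4)\}$, $\Gamma_{2}=\{\pm(0,1),\pm\tfrac15(4,3),\pm\tfrac15(4,-3)\}$ verifies (1), (2), (4) and disjointness, your positive representation of $\Id$ checks out (it is the unit-vector representation up to rescaling the two non-axis coefficients by $25$), and the affine right-inverse-plus-square-root argument gives the smooth positive $\gamma_{k}$ on a small ball $B(\Id,\epsilon_{\gamma})$ with $\gamma_{k}=\gamma_{-k}$. Note that the paper does not prove this lemma at all; it is quoted verbatim from \cite[Lemma 4.2]{BSV19}, and your argument is essentially the standard one behind that cited result (explicit directions on the radius-$5$ circle together with a perturbation of a strictly positive decomposition of the identity). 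One minor imprecision: among the twelve admissible directions there are four pairs at distance $\sqrt{2}/5<\tfrac12$, not two, but the extra two are the negatives of the ones you list, so they are automatically split between $\Gamma_{1}$ and $\Gamma_{2}$ by the $\pm$-symmetry of your sets and the conclusion for (4) stands.
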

Next, we recall that $\mathbb{P}$ denotes the Leray projection and define for $k \in \mathbb{S}^{1}$, 
\begin{equation}\label{est 192} 
\mathbb{P}_{q+1, k} \triangleq \mathbb{P} P_{\approx k \lambda_{q+1}}
\end{equation} 
where $P_{\approx k\lambda_{q+1}}$ is a Fourier operator with a Fourier symbol $\hat{K}_{\approx k \lambda_{q+1}}(\xi) = \hat{K}_{\approx 1} \left( \frac{ \xi}{\lambda_{q+1}} - k \right)$; i.e., 
\begin{equation*}
\widehat{ P_{\approx \lambda_{q+1} k} f} (\xi) = \hat{K}_{\approx k\lambda_{q+1}}(\xi) \hat{f}(\xi) = \hat{K}_{\approx 1} \left( \frac{\xi}{\lambda_{q+1}} - k \right) \hat{f}(\xi) 
\end{equation*}
where $\hat{K}_{\approx 1}$ is a smooth bump function such that $\supp \hat{K}_{\approx 1} \subset \{\xi:  \lvert \xi \rvert \leq \frac{1}{8} \}$ and $\hat{K}_{\approx 1} \rvert_{\{\xi:  \lvert \xi \rvert \leq \frac{1}{16} \}} \equiv 1$. We see that within the support of $\hat{K}_{\approx 1} \left( \frac{\cdot}{\lambda_{q+1}} - k \right)$, $\frac{7}{8} \lambda_{q+1} \leq \lvert \xi \rvert \leq \frac{9}{8} \lambda_{q+1}$. This implies that $\supp \widehat{ \mathbb{P}_{q+1, k} f} \subset \{\xi: \frac{7}{8} \lambda_{q+1} \leq \lvert \xi \rvert \leq \frac{9}{8} \lambda_{q+} \}$ and for any $0 \leq a, b$, for some suitable constant $C_{a,b}$ that is independent of $\lambda_{q+1}$, 
\begin{align*}
\sup_{\xi \in\mathbb{R}^{2}} \lvert \xi \rvert^{a} \left\lvert   \nabla_{\xi} ^{b} \hat{K}_{\approx k \lambda_{q+1} } \right\rvert \leq C_{a,b} \lambda_{q+1}^{a-b},
\end{align*}
and similarly 
\begin{align*}
\lVert \lvert x \rvert^{b} \nabla_{x}^{a} K_{\approx k \lambda_{q+1}} \rVert_{L^{1}(\mathbb{R}^{2})} \leq C_{a,b} \lambda_{q+1}^{a-b} \hspace{3mm} \forall \hspace{1mm} a, b \in [0,2]. 
\end{align*}
It follows that for all $f$ that is $\mathbb{T}^{2}$-periodic, we can write $\mathbb{P}_{q+1, k} f(x) = \int_{\mathbb{R}^{2}} K_{q+1, k} (y) f(x-y) dy$ where the kernel $K_{q+1, k}$ satisfies for all $a, b \geq 0$, 
\begin{equation}\label{est 416}  
\lVert \lvert x \rvert^{b} \nabla_{x}^{a} K_{q+1, k} (x) \rVert_{L^{1}(\mathbb{R}^{2})}  \leq C_{a,b} \lambda_{q+1}^{a-b},
\end{equation}
which allows one to deduce 
\begin{equation}\label{est 202}
\lVert \mathbb{P}_{q+1, k} \rVert_{C_{x} \mapsto C_{x}} \leq C_{1}. 
\end{equation}  
Finally, we define $\tilde{P}_{\approx \lambda_{q+1}}$ to be the Fourier operator with a symbol supported in $\{\xi: \frac{\lambda_{q+1}}{4} \leq \lvert \xi \rvert \leq 4 \lambda_{q+1} \}$ and is identically one on $\{\xi: \frac{3 \lambda_{q+1}}{8} \leq \lvert \xi \rvert \leq 3 \lambda_{q+1}\}$. As the last item in this Preliminaries section, following \cite[Section 5]{BV19b} (also \cite{RS21, Y21c}), we split $[0, T_{L}]$ into finitely many subintervals of size $\tau_{q+1}$ and let $0 \leq \chi \leq 1$ be a smooth cut-off function with support in $(-1, 1)$ such that $\chi \equiv 1$ on $(-\frac{1}{4}, \frac{1}{4})$, and for $l = \lambda_{q+1}^{-\alpha}$ from \eqref{est 32} and  
\begin{equation}\label{est 210}
\tau_{q+1}^{-1} \triangleq l^{-\frac{1}{2}} \lambda_{q+1}^{\frac{ 3- \gamma_{2}}{2}} \delta_{q+1}^{\frac{1}{4}}
\end{equation} 
which is different from \cite[equation (4.7)]{BSV19}, 
\begin{equation}\label{est 209} 
\chi_{j}(t) \triangleq \chi(\tau_{q+1}^{-1} t - j) \hspace{3mm} \text{ for } j \in \{0, 1, \hdots, \lceil \tau_{q+1}^{-1}T_{L} \rceil\}, 
\end{equation}
where we suppressed the dependence of $\chi_{j}$ on $q$ following \cite[p. 1828]{BSV19}, satisfies 
\begin{equation}\label{est 211}
\sum_{j=0}^{\lceil \tau_{q+1}^{-1} T_{L} \rceil} \chi_{j}^{2}(t)= 1 \hspace{3mm} \forall \hspace{1mm} t \in [0, T_{L}]. 
\end{equation} 
By comparing the definitions of $\chi_{j}$ in \eqref{est 209} and previous works on probabilistic convex integration such as ``$\chi_{j}(t) = \chi(l^{-1} t - j)$'' in \cite[equation (70)]{Y21c} (see also \cite[equation (5.20)]{BV19b}), we see that the convex integration on the QG equations requires one to carefully select $\tau_{q+1}$ rather than just take $\tau_{q+1} = l$, basically optimize over the errors of $R_{T}$ in \eqref{est 301} and $R_{O, \text{approx}}$ in \eqref{est 266}. On many occasions when no confusion arises, we will write for brevity 
\begin{equation}\label{est 401}
\sum_{j} \triangleq \sum_{j=0}^{\lceil \tau_{q+1}^{-1} T_{L} \rceil }, \hspace{1mm} \sum_{k} \triangleq \sum_{k\in\Gamma_{j}} \text{ for fixed } j \in \{0, \hdots, \lceil \tau_{q+1}^{-1} T_{L} \rceil \}, \hspace{1mm} \text{ and } \sum_{j,k} \triangleq \sum_{j=0}^{\lceil \tau_{q+1}^{-1} T_{L} \rceil} \sum_{k\in\Gamma_{j}}. 
\end{equation}
 
\section{Proofs of Theorems \ref{Theorem 2.1}-\ref{Theorem 2.2}}\label{Section 4}
We give the general definitions of a solution to the stochastic momentum equation \eqref{est 30}. From here to Proposition \ref{Proposition 4.6}, we only consider the case $\gamma_{2} = 1$ because Proposition \ref{Proposition 4.1} can be proven only for this special case.  
\begin{define}\label{Definition 4.1}
Fix $\iota \in (0,1)$. Let $s \geq 0$ and $\xi^{\text{in}} \in \dot{H}_{\sigma}^{\frac{1}{2}}$. Then $P \in \mathcal{P} (\Omega_{0})$ is a martingale solution to \eqref{est 30} with initial data $\xi^{\text{in}}$ at initial time $s$ if 
\begin{enumerate}
\item [](M1) $P (\{ \xi(t) = \xi^{\text{in}} \hspace{1mm} \forall \hspace{1mm} t \in [0,s] \}) = 1$ and for all $l \in \mathbb{N}$, 
\begin{equation}
P \left( \left\{ \xi\in\Omega_{0}: \int_{0}^{l} \lVert G(\xi(r)) \rVert_{L_{2} (U, \dot{H}_{\sigma}^{\frac{1}{2}})}^{2} dr < \infty \right\} \right) = 1, 
\end{equation} 
\item [](M2) for all $\psi^{k} \in C^{\infty} (\mathbb{T}^{2}) \cap \dot{H}_{\sigma}^{\frac{1}{2}}$ and $t\geq s$, the process 
\begin{align}
M_{t,s}^{k} =& \langle \xi(t) - \xi^{\text{in}}, \psi^{k} \rangle \label{est 54}\\
-& \int_{s}^{t} \sum_{i,j=1}^{2} \langle \Lambda \xi_{i}, \partial_{i} \psi_{j}^{k} \xi_{j}\rangle_{\dot{H}_{x}^{-\frac{1}{2}} - \dot{H}_{x}^{\frac{1}{2}}} - \frac{1}{2} \langle \partial_{i} \xi_{j}, [ \Lambda, \psi_{i}^{k} ] \xi_{j} \rangle_{\dot{H}_{x}^{-\frac{1}{2}} -\dot{H}_{x}^{\frac{1}{2}}} - \langle \xi, \Lambda^{\gamma_{1}} \psi^{k} \rangle dr \nonumber 
\end{align} 
is a continuous, square-integrable $(\mathcal{B}_{t})_{t\geq s}$-martingale under $P$ such that $\langle \langle M_{t,s}^{k} \rangle \rangle = \int_{s}^{t} \lVert G(\xi(r))^{\ast} \psi^{k} \rVert_{U}^{2} dr$,
\item [](M3) for any $q \in \mathbb{N}$, there exists a function $t \mapsto C_{t,q} \in\mathbb{R}_{+}$ for all $t \geq s$ such that  
\begin{equation}\label{est 53}
\mathbb{E}^{P} \left[ \sup_{r \in [0,t]} \lVert \xi(r) \rVert_{\dot{H}_{x}^{\frac{1}{2}}}^{2q} + \int_{s}^{t} \lVert \xi(r) \rVert_{\dot{H}_{x}^{\frac{1}{2} + \iota}}^{2} dr\right] \leq C_{t,q} (1+ \lVert \xi^{\text{in}} \rVert_{\dot{H}_{x}^{\frac{1}{2}}}^{2q}). 
\end{equation} 
The set of all such martingale solutions with the same constant $C_{t,q}$ in \eqref{est 53} for every $q \in \mathbb{N}$ and $t\geq s$ will be denoted by $\mathcal{C} ( s, \xi^{\text{in}}, \{C_{t,q} \}_{q\in\mathbb{N}, t \geq s})$. 
\end{enumerate} 
\end{define} 
In \eqref{est 54} we implicitly relied on the the identity of 
\begin{equation}\label{est 59} 
-\int_{\mathbb{T}^{2}} (\nabla \xi)^{T} \cdot \Lambda \xi \cdot \psi^{k} dx = \sum_{i,j=1}^{2}\frac{1}{2} \int_{\mathbb{T}^{2}} \partial_{i} \xi_{j} [\Lambda, \psi_{i}^{k} ] \xi_{j} dx
\end{equation}
and Calder$\acute{\mathrm{o}}$n's commutator estimate from \eqref{est 50} of Proposition \ref{Proposition 6.3}. If $\{\psi^{k}\}_{k=1}^{\infty}$ is a complete orthonormal basis of $\dot{H}_{\sigma}^{\frac{1}{2}}$ that consists of eigenvectors of $GG^{\ast}$, then $M_{t,s} \triangleq \sum_{k=1}^{\infty} M_{t,s}^{k} \psi^{k}$ becomes a $GG^{\ast}$-Wiener process starting from initial time $s$ w.r.t. $(\mathcal{B}_{t})_{t\geq s}$ under $P$. 

\begin{define}\label{Definition 4.2}
Fix $\iota \in (0,1)$. Let $s \geq 0$, $\xi^{\text{in}} \in \dot{H}_{\sigma}^{\frac{1}{2}}$, and $\tau: \Omega_{0} \mapsto [s, \infty]$ be a stopping time of $(\mathcal{B}_{t})_{t\geq s}$. Define the space of trajectories stopped at $\tau$ by 
\begin{equation}\label{est 92} 
\Omega_{0,\tau} \triangleq \{\omega( \cdot \wedge \tau(\omega)): \omega \in \Omega_{0}  \} = \{ \omega \in \Omega_{0}: \xi(t,\omega) = \xi(t\wedge \tau(\omega), \omega) \hspace{1mm} \forall \hspace{1mm} t \geq 0 \}. 
\end{equation} 
Then $P \in \mathcal{P} (\Omega_{0,\tau})$ is a martingale solution to \eqref{est 30} on $[s,\tau]$ with initial data $\xi^{\text{in}}$ at initial time $s$ if 
\begin{enumerate}
\item [](M1) $P (\{ \xi(t) = \xi^{\text{in}} \hspace{1mm} \forall \hspace{1mm} t \in [0,s] \}) = 1$ and for all $l \in \mathbb{N}$, 
\begin{equation}
P \left( \left\{ \xi\in\Omega_{0}: \int_{0}^{l\wedge \tau} \lVert G(\xi(r)) \rVert_{L_{2} (U, \dot{H}_{\sigma}^{\frac{1}{2}})}^{2} dr < \infty \right\} \right) = 1, 
\end{equation} 
\item [](M2) for all $\psi^{k} \in C^{\infty} (\mathbb{T}^{2}) \cap \dot{H}_{\sigma}^{\frac{1}{2}}$ and $t\geq s$, the process 
\begin{align*}
M_{t\wedge \tau,s}^{k} =& \langle \xi(t\wedge \tau) - \xi^{\text{in}}, \psi^{k} \rangle\\
-& \int_{s}^{t\wedge \tau} \sum_{i,j=1}^{2} \langle \Lambda \xi_{i}, \partial_{i} \psi_{j}^{k} \xi_{j}\rangle_{\dot{H}_{x}^{-\frac{1}{2}} - \dot{H}_{x}^{\frac{1}{2}}} - \frac{1}{2} \langle \partial_{i} \xi_{j}, [ \Lambda, \psi_{i}^{k} ] \xi_{j} \rangle_{\dot{H}_{x}^{-\frac{1}{2}} -\dot{H}_{x}^{\frac{1}{2}}} - \langle \xi, \Lambda^{\gamma_{1}} \psi^{k} \rangle dr 
\end{align*} 
is a continuous, square-integrable $(\mathcal{B}_{t})_{t\geq s}$-martingale under $P$ such that $\langle \langle M_{t \wedge \tau ,s}^{k} \rangle \rangle = \int_{s}^{t\wedge \tau} \lVert G(\xi(r))^{\ast} \psi^{k} \rVert_{U}^{2} dr$, 
\item [](M3) for any $q \in \mathbb{N}$, there exists a function $t \mapsto C_{t,q} \in\mathbb{R}_{+}$ for all $t \geq s$ such that 
\begin{equation}
\mathbb{E}^{P} \left[ \sup_{r \in [0,t\wedge \tau]} \lVert \xi(r) \rVert_{\dot{H}_{x}^{\frac{1}{2}}}^{2q} + \int_{s}^{t\wedge \tau} \lVert \xi(r) \rVert_{\dot{H}_{x}^{\frac{1}{2} + \iota}}^{2} dr \right] \leq C_{t,q} (1+ \lVert \xi^{\text{in}} \rVert_{\dot{H}_{x}^{\frac{1}{2}}}^{2q}). 
\end{equation} 
\end{enumerate} 
\end{define} 

The following result is concerned with the existence and stability of martingale solutions to \eqref{est 30} in the case of an additive noise. Because the QG momentum equations forced by random noise has never been studied before, we give details of its proof in the Appendix.  
\begin{proposition}\label{Proposition 4.1}
\begin{enumerate} 
\item For every $(s, \xi^{\text{in}}) \in [0,\infty) \times \dot{H}_{\sigma}^{\frac{1}{2}}$, there exists a martingale solution $P \in \mathcal{P} (\Omega_{0})$  to \eqref{est 30} with initial data $\xi^{\text{in}}$ at initial time $s$ according to Definition \ref{Definition 4.1}. 
\item Moreover, if there exists a family $\{(s_{l}, \xi_{l}) \}_{l\in\mathbb{N}} \subset [0,\infty) \times \dot{H}_{\sigma}^{\frac{1}{2}}$ such that $\lim_{l\to\infty} \lVert (s_{l},\xi_{l}) - (s, \xi^{\text{in}}) \rVert_{\mathbb{R} \times \dot{H}_{x}^{\frac{1}{2}}} = 0$ and $P_{l} \in \mathcal{C} ( s_{l}, \xi_{l}, \{ C_{t,q} \}_{q\in\mathbb{N}, t \geq s_{l}})$ is the martingale solution corresponding to $(s_{l}, \xi_{l})$, then there exists a subsequence $\{P_{l_{k}} \}_{k\in\mathbb{N}}$ 
and $P \in \mathcal{C} ( s, \xi^{\text{in}}$, $\{C_{t,q} \}_{q\in\mathbb{N}, t \geq s})$ such that $P_{l_{k}}$ converges weakly to $P$.
\end{enumerate}
\end{proposition}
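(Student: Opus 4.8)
\emph{Proof proposal.} The plan is to construct the martingale solution by a Galerkin approximation together with a stochastic compactness argument, with the fractional Calder\'on commutator estimate of Proposition~\ref{Proposition 6.3} serving as the device that controls the singular nonlinearity $(\nabla v)^{T}\cdot\Lambda v$. Throughout we are in the additive-noise case $\gamma_{2}=1$, so $\dot{H}_{\sigma}^{1-\frac{\gamma_{2}}{2}}=\dot{H}_{\sigma}^{\frac{1}{2}}$ and the forcing reduces to a fixed $GG^{\ast}$-Wiener process. Let $\{e_{k}\}_{k\in\mathbb{N}}$ be the eigenbasis of the Stokes operator, let $\Pi_{n}$ be the $L^{2}$-projection onto $\mathrm{span}\{e_{1},\dots,e_{n}\}$, and consider on a fixed stochastic basis the finite-dimensional It\^o system
\begin{equation*}
dv_{n}+\Pi_{n}\mathbb{P}\bigl[(\Lambda v_{n}\cdot\nabla)v_{n}-(\nabla v_{n})^{T}\cdot\Lambda v_{n}+\Lambda^{\gamma_{1}}v_{n}\bigr]\,dt=\Pi_{n}G\,dB,\qquad v_{n}(s)=\Pi_{n}\xi^{\text{in}}.
\end{equation*}
On $\Pi_{n}\dot{H}_{\sigma}^{\frac{1}{2}}$ all coefficients are smooth, so a unique local solution $v_{n}$ exists, and it is global by the a priori bound below.

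\textbf{Uniform estimates.} Apply It\^o's formula to $\lVert v_{n}(t)\rVert_{\dot{H}_{x}^{\frac{1}{2}}}^{2q}$. The contribution of $(\Lambda v_{n}\cdot\nabla)v_{n}$ is treated by integration by parts using $\nabla\cdot\Lambda v_{n}=0$, and the contribution of $(\nabla v_{n})^{T}\cdot\Lambda v_{n}$ is rewritten through the commutator identity \eqref{est 59} and estimated by Proposition~\ref{Proposition 6.3}, which permits absorbing it into the dissipative term $\lVert v_{n}\rVert_{\dot{H}_{x}^{\frac{1}{2}+\frac{\gamma_{1}}{2}}}^{2}$ by Young's inequality; this is precisely the mechanism underlying the formal bound \eqref{est 31}. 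Together with the growth bound \eqref{est 60} on $G$, the Burkholder--Davis--Gundy inequality, and Gronwall's lemma, this yields for every $q$ and $t$ an estimate of the form \eqref{est 53} with $C_{t,q}$ independent of $n$ and depending on the datum only through $\lVert\xi^{\text{in}}\rVert_{\dot{H}_{x}^{\frac{1}{2}}}$; note this uses $\iota$ small relative to $\gamma_{1}$, e.g.\ $2\iota\le\gamma_{1}$, so that the $\dot{H}_{x}^{\frac{1}{2}+\iota}$ gain is controlled by the $\dot{H}_{x}^{\frac{1}{2}+\frac{\gamma_{1}}{2}}$ dissipation.

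\textbf{Tightness and identification.} From these bounds, plus a fractional-in-time H\"older estimate for $v_{n}$ valued in a negative-order Sobolev space (the drift is controlled in $(H_{\sigma}^{4})^{\ast}$ again via Proposition~\ref{Proposition 6.3}, and the stochastic part lies in $C_{t}^{\eta}\dot{H}_{x}^{\frac{1}{2}}$ by Kolmogorov's criterion, uniformly in $n$ since the noise is additive), an Aubin--Lions--Simon compactness argument gives tightness of the laws of $v_{n}$ on $\Omega_{0}$. By Prokhorov's and Skorokhod's theorems we obtain, on a new probability space, a.s.\ convergent copies $\tilde v_{n_{k}}\to\tilde v$ and pass to the limit in the Galerkin equations: the linear and dissipative terms converge by the Sobolev bounds, the term $\langle\Lambda\xi_{i},\partial_{i}\psi_{j}^{k}\xi_{j}\rangle_{\dot{H}_{x}^{-\frac{1}{2}}-\dot{H}_{x}^{\frac{1}{2}}}$ converges using the compact embedding $\dot{H}_{x}^{\frac{1}{2}+\iota}\hookrightarrow\dot{H}_{x}^{\frac{1}{2}}$ on compact time sets, and the delicate commutator term $\langle\partial_{i}\xi_{j},[\Lambda,\psi_{i}^{k}]\xi_{j}\rangle_{\dot{H}_{x}^{-\frac{1}{2}}-\dot{H}_{x}^{\frac{1}{2}}}$ converges by the continuity of $[\Lambda,\psi^{k}]$ supplied by Proposition~\ref{Proposition 6.3}. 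The standard martingale characterization identifies the limiting noise, giving (M2) with the stated quadratic variation; (M1) is immediate and (M3) follows from lower semicontinuity of the Sobolev norms and Fatou's lemma, which proves part (1). For part (2), given $(s_{l},\xi_{l})\to(s,\xi^{\text{in}})$ and $P_{l}\in\mathcal{C}(s_{l},\xi_{l},\{C_{t,q}\}_{q\in\mathbb{N},t\ge s_{l}})$ with the \emph{same} constants, the bound \eqref{est 53} uniform in $l$ yields tightness of $\{P_{l}\}$ on $\Omega_{0}$ exactly as above (the shift in initial time being harmless since $s_{l}\to s$), and Prokhorov, Skorokhod, and the same identification show that any weak limit $P$ is a martingale solution with initial data $\xi^{\text{in}}$ at time $s$; \eqref{est 53} for $P$ with the same $C_{t,q}$ follows from lower semicontinuity, so $P\in\mathcal{C}(s,\xi^{\text{in}},\{C_{t,q}\}_{q\in\mathbb{N},t\ge s})$.

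\textbf{Main obstacle.} The difficulty, concentrated in the uniform estimate and in the passage to the limit in the nonlinearity, is that $(\nabla v)^{T}\cdot\Lambda v$ is one derivative more singular than the Navier--Stokes nonlinearity and is not in divergence form, so neither the $n$-uniform a priori bound nor the limit identification is routine; both become available only after rewriting this term as a Calder\'on commutator via \eqref{est 59} and invoking the generalized commutator estimate of Proposition~\ref{Proposition 6.3}. The price is the restriction relating $\iota$ to $\gamma_{1}$, which together with $\gamma_{1}<\tfrac{3}{2}$ is ultimately responsible for the smallness of the exponent $\iota$ in Theorem~\ref{Theorem 2.1}.
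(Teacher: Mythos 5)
Your overall architecture is sound and, for part (2), essentially identical to the paper's: tightness of $\{P_{l}\}$ from the uniform bound \eqref{est 53} plus a H\"older-in-time estimate in $(H_{\sigma}^{4})^{\ast}$ (the paper's Proposition \ref{Proposition 6.2} together with Kolmogorov's test), then Prokhorov and Skorokhod, identification of the limit using the product estimate \eqref{est 79} and the commutator bound \eqref{est 50} for the non-divergence term tested against smooth $\psi^{k}$, and lower semicontinuity for (M3). For part (1) you differ in route: rather than running the Galerkin scheme by hand, the paper invokes the abstract martingale-existence theorem of \cite{GRZ09} with $\mathbb{H}=\mathbb{Y}=\dot{H}_{\sigma}^{\frac{1}{2}}$, $\mathbb{X}=(H_{\sigma}^{4})^{\ast}$, and verifies its demi-continuity/growth hypotheses through Proposition \ref{Proposition 6.1} (which is where \eqref{est 59} and Proposition \ref{Proposition 6.3} enter) and its coercivity hypothesis through $\mathcal{N}_{1}$; your hand-rolled Galerkin construction is the content of that abstract theorem, so the two are morally equivalent, with the paper's version outsourcing the compactness bookkeeping at the price of checking the (C1)--(C3) conditions.

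There is, however, one step in your uniform estimate that would fail as written. In the It\^o computation for $\lVert v_{n}\rVert_{\dot{H}_{x}^{\frac{1}{2}}}^{2q}$ you propose to rewrite the contribution of $(\nabla v_{n})^{T}\cdot\Lambda v_{n}$ via \eqref{est 59} and then ``absorb it into the dissipation'' using Proposition \ref{Proposition 6.3} and Young's inequality. But in the energy estimate the role of the multiplier $\phi$ in \eqref{est 50} is played by $u_{n}=\Lambda v_{n}$ itself, whose $W_{x}^{2,\infty}$ norm is not under control, and in any case the dissipative gain is only $\frac{\gamma_{1}}{2}$ derivatives with $\gamma_{1}$ possibly arbitrarily small, so a quadratic high-norm term cannot be absorbed this way; the commutator estimate is the right tool only when testing against a fixed smooth $\psi^{k}$ (as in (M2), Proposition \ref{Proposition 6.1}, and the limit identification), not in the energy identity. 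The correct mechanism, which is what actually produces \eqref{est 31} and is encoded in the coercivity condition the paper checks, is the exact cancellation coming from \eqref{est 38}: the full nonlinearity equals $(\Lambda v)^{\bot}(\nabla^{\bot}\cdot v)$ and is pointwise orthogonal to $\Lambda v$, so its $\dot{H}_{x}^{\frac{1}{2}}$ pairing with $v_{n}$ vanishes identically on the Galerkin level (the projection $\Pi_{n}$ commutes with $\Lambda$ and fixes $v_{n}$), and no absorption is needed. With this replacement the remainder of your argument (BDG, Gronwall, tightness, identification) goes through.
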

\noindent Proposition \ref{Proposition 4.1} leads to the following two results, which are only slight modifications of \cite[Propositions 3.2 and 3.4]{HZZ19} to which we refer interested readers for details. 

\begin{lemma}\label{Lemma 4.2}
\rm{(cf. \cite[Proposition 3.2]{HZZ19})} Let $\tau$ be a bounded stopping time of $(\mathcal{B}_{t})_{t\geq 0}$. Then, for every $\omega \in \Omega_{0}$, there exists $Q_{\omega} \triangleq \delta_{\omega} \otimes_{\tau(\omega)} R_{\tau(\omega), \xi(\tau(\omega), \omega)} \in \mathcal{P} (\Omega_{0})$ where $\delta_{\omega}$ is a point-mass at $\omega$ such that for $\omega \in \{\xi(\tau) \in \dot{H}_{\sigma}^{\frac{1}{2}} \}$ 
\begin{subequations}
\begin{align} 
& Q_{\omega} ( \{ \omega' \in \Omega_{0}:\hspace{1mm}  \xi(t, \omega') = \omega(t) \hspace{1mm} \forall \hspace{1mm} t \in [0, \tau(\omega) ] \}) = 1,  \label{est 88}\\
& Q_{\omega}(A) = R_{\tau (\omega), \xi(\tau(\omega), \omega)} (A) \hspace{1mm} \forall \hspace{1mm} A \in \mathcal{B}^{\tau(\omega)}, \label{est 89}  
\end{align}
\end{subequations} 
where $R_{\tau(\omega), \xi(\tau(\omega), \omega)} \in \mathcal{P}(\Omega_{0})$ is a martingale solution to \eqref{est 30} with initial data $\xi(\tau(\omega), \omega)$ at initial time $\tau(\omega)$, and the mapping $\omega \mapsto Q_{\omega}(B)$ is $\mathcal{B}_{\tau}$-measurable for every $B \in \mathcal{B}$.
\end{lemma}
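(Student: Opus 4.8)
\emph{Proof proposal.} The plan is to run the classical Stroock--Varadhan concatenation argument in the form used for \cite[Proposition 3.2]{HZZ19}. There are three ingredients: a Borel-measurable selection of martingale solutions indexed by their initial time and initial data; the gluing of the point mass $\delta_{\omega}$ with this selection along the stopping time $\tau$; and the verification of \eqref{est 88}--\eqref{est 89} together with the $\mathcal{B}_{\tau}$-measurability assertion. None of these steps asserts that $Q_{\omega}$ itself solves \eqref{est 30}; that is a separate matter handled later.

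First I would produce a Borel map $(s,\xi^{\text{in}}) \mapsto R_{s,\xi^{\text{in}}} \in \mathcal{P}(\Omega_{0})$ such that each $R_{s,\xi^{\text{in}}}$ is a martingale solution of \eqref{est 30} with initial data $\xi^{\text{in}}$ at initial time $s$ lying in $\mathcal{C}(s,\xi^{\text{in}},\{C_{t,q}\}_{q\in\mathbb{N},t\geq s})$. Proposition \ref{Proposition 4.1}(1) gives nonemptiness of this set for every $(s,\xi^{\text{in}}) \in [0,\infty)\times\dot{H}_{\sigma}^{\frac{1}{2}}$, while the uniform bounds (M3) of Definition \ref{Definition 4.1} together with the stability statement Proposition \ref{Proposition 4.1}(2) show that the set-valued map $(s,\xi^{\text{in}}) \mapsto \mathcal{C}(s,\xi^{\text{in}},\{C_{t,q}\})$ has sequentially weakly compact values and closed graph. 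Since $\dot{H}_{\sigma}^{\frac{1}{2}}$ and the trajectory space $\Omega_{0}$ of \eqref{est 87} are Polish, hence so is $\mathcal{P}(\Omega_{0})$ under weak convergence, a standard measurable selection theorem (e.g.\ Kuratowski--Ryll-Nardzewski, applied to the closed-graph, compact-valued correspondence) yields such an $R_{s,\xi^{\text{in}}}$. On the exceptional event $\{\xi(\tau) \notin \dot{H}_{\sigma}^{\frac{1}{2}}\}$ one simply sets $Q_{\omega} \triangleq \delta_{\omega}$, since nothing is claimed there.

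Next I would set $Q_{\omega} \triangleq \delta_{\omega} \otimes_{\tau(\omega)} R_{\tau(\omega),\xi(\tau(\omega),\omega)}$. The map $\omega \mapsto (\tau(\omega),\xi(\tau(\omega),\omega))$ is $\mathcal{B}_{\tau}$-measurable because $\tau$ is a bounded $(\mathcal{B}_{t})$-stopping time and $\xi$ is adapted, so composing with the selection above makes $\omega \mapsto R_{\tau(\omega),\xi(\tau(\omega),\omega)}$ a $\mathcal{B}_{\tau}$-measurable $\mathcal{P}(\Omega_{0})$-valued map; moreover, by (M1) of Definition \ref{Definition 4.1}, $R_{\tau(\omega),\xi(\tau(\omega),\omega)}$ is carried by trajectories frozen at $\xi(\tau(\omega),\omega) = \omega(\tau(\omega))$ on $[0,\tau(\omega)]$, so it is consistent with $\delta_{\omega}$ at time $\tau(\omega)$ and agrees with $\delta_{\omega}$ on the $\sigma$-field generated by $\{\xi(s\wedge\tau(\omega)):s\geq 0\}$. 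A standard measure-theoretic gluing lemma (cf.\ \cite{HZZ19} and the references therein) then produces a unique $Q_{\omega} \in \mathcal{P}(\Omega_{0})$ that restricts to $\delta_{\omega}$ on $\mathcal{B}_{\tau(\omega)}$ and to $R_{\tau(\omega),\xi(\tau(\omega),\omega)}$ on $\mathcal{B}^{\tau(\omega)}$, with $\omega \mapsto Q_{\omega}(B)$ inheriting $\mathcal{B}_{\tau}$-measurability for every $B\in\mathcal{B}$. Properties \eqref{est 88} and \eqref{est 89} are then read off directly: the former is exactly $Q_{\omega}|_{\mathcal{B}_{\tau(\omega)}} = \delta_{\omega}|_{\mathcal{B}_{\tau(\omega)}}$, i.e.\ the $Q_{\omega}$-trajectory coincides with $\omega$ on $[0,\tau(\omega)]$; the latter is the defining restriction on $\mathcal{B}^{\tau(\omega)}$.

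I expect the main obstacle to be the measurable selection step, and within it the verification that $\mathcal{C}(s,\xi^{\text{in}},\{C_{t,q}\})$ has weakly compact values and closed graph uniformly in $(s,\xi^{\text{in}})$ --- precisely the content for which Proposition \ref{Proposition 4.1}(2) was recorded; once the Polish-space framework is in place, both the selection theorem and the gluing lemma are standard and the remaining points are bookkeeping. A secondary technical matter is confirming that $\Omega_{0}$ in \eqref{est 87} is Polish in the topology of locally uniform convergence into $(H_{\sigma}^{4})^{\ast}$ combined with the bounded-weak structure of $L_{\text{loc}}^{\infty}([0,\infty);\dot{H}_{\sigma}^{1-\frac{\gamma_{2}}{2}})$, which is handled exactly as in \cite{HZZ19}.
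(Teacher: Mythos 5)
Your proposal is correct and follows essentially the same route as the paper, which does not spell out a proof but defers to \cite[Proposition 3.2]{HZZ19}: there the argument is precisely your combination of a Borel measurable selection of martingale solutions (justified by the existence and stability statements of Proposition \ref{Proposition 4.1}, i.e.\ compact values and closed graph of $(s,\xi^{\text{in}})\mapsto\mathcal{C}(s,\xi^{\text{in}},\{C_{t,q}\})$) with the Stroock--Varadhan gluing of $\delta_{\omega}$ and $R_{\tau(\omega),\xi(\tau(\omega),\omega)}$ along the stopping time, plus the $\mathcal{B}_{\tau}$-measurability bookkeeping. Your handling of the exceptional set $\{\xi(\tau)\notin\dot{H}_{\sigma}^{\frac{1}{2}}\}$ by taking $Q_{\omega}=\delta_{\omega}$ is also consistent with that reference.
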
 

\begin{lemma}\label{Lemma 4.3}
\rm{(cf. \cite[Proposition 3.4]{HZZ19})} Let $\tau$ be a bounded stopping time of $(\mathcal{B}_{t})_{t\geq 0}$, $\xi^{\text{in}} \in \dot{H}_{\sigma}^{\frac{1}{2}}$, and $P$ be a martingale solution to \eqref{est 30} on $[0,\tau]$ with initial data $\xi^{\text{in}}$ at initial time 0 that satisfies Definition \ref{Definition 4.2}. Suppose that there exists a Borel set $\mathcal{N} \subset \Omega_{0,\tau}$ such that $P(\mathcal{N}) = 0$ and $Q_{\omega}$ from Lemma \ref{Lemma 4.2} satisfies for every $\omega \in \Omega_{0} \setminus \mathcal{N}$ 
\begin{equation}\label{est 94}
Q_{\omega} (\{\omega' \in \Omega_{0}:\hspace{1mm}  \tau(\omega') = \tau(\omega) \}) = 1. 
\end{equation} 
Then the probability measure $P \otimes_{\tau}R \in \mathcal{P}(\Omega_{0})$ defined by 
\begin{equation}\label{est 93}
P\otimes_{\tau} R (\cdot) \triangleq \int_{\Omega_{0}} Q_{\omega} (\cdot) P(d\omega) 
\end{equation} 
satisfies $P \otimes_{\tau}R  = P$ on the $\sigma$-algebra $\sigma \{\xi(t\wedge \tau), t \geq 0 \}$ and it is a martingale solution to \eqref{est 30} on $[0,\infty)$ with initial data $\xi^{\text{in}}$ at initial time 0. 
\end{lemma}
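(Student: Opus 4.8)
The statement I am planning to prove is Lemma~\ref{Lemma 4.3}, which is a standard concatenation/restart result for martingale solutions in the spirit of Stroock--Varadhan, adapted here to the QG momentum equation; it is essentially \cite[Proposition 3.4]{HZZ19}. The plan is as follows. First I would verify that $P\otimes_\tau R$ as defined in \eqref{est 93} is indeed a well-defined probability measure on $(\Omega_0,\mathcal B)$: this uses that $\omega\mapsto Q_\omega(B)$ is $\mathcal B_\tau$-measurable for every $B\in\mathcal B$ from Lemma~\ref{Lemma 4.2}, so the integral makes sense, and that each $Q_\omega$ is a probability measure, so total mass is $1$. Next, for the assertion that $P\otimes_\tau R=P$ on $\sigma\{\xi(t\wedge\tau):t\ge0\}$, I would take any $B$ in that $\sigma$-algebra and use \eqref{est 88}: since $Q_\omega$ agrees with $\delta_\omega$ up to time $\tau(\omega)$, we get $Q_\omega(B)=\mathbf 1_B(\omega)$ for such $B$ (here \eqref{est 94}, valid $P$-a.s., is what guarantees $\tau$ itself is not altered under $Q_\omega$, so that the stopped trajectory $\omega(\cdot\wedge\tau)$ is genuinely frozen); integrating against $P$ gives $P\otimes_\tau R(B)=P(B)$. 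In particular (M1) and the $\dot H_x^{1/2}$-part of the energy bounds transfer from $P$ restricted to $[0,\tau]$.

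Then comes the main point: showing $P\otimes_\tau R$ is a martingale solution to \eqref{est 30} on $[0,\infty)$ with initial data $\xi^{\mathrm{in}}$ at initial time $0$ in the sense of Definition~\ref{Definition 4.1}. Condition (M1) is immediate from the previous paragraph and the fact that each $R_{\tau(\omega),\xi(\tau(\omega),\omega)}$ is a martingale solution, combined with the $l$-integrability condition on $G$ which holds for $P$ before $\tau$ and for $R$ after $\tau$. Condition (M3) follows by splitting the time integral at $\tau$, applying the bound for $P$ on $[0,\tau]$ and the bound for $R_{\tau(\omega),\xi(\tau(\omega),\omega)}$ on $[\tau,\infty)$, and using the defining property that all the restart solutions lie in the class $\mathcal C$ with the \emph{same} constants $\{C_{t,q}\}$; one also invokes the tower property of conditional expectation via \eqref{est 93} and the $\mathcal B_\tau$-measurability of $\omega\mapsto Q_\omega$, together with a routine bound $\mathbb E^P\|\xi(\tau)\|_{\dot H_x^{1/2}}^{2q}\le\mathbb E^P[\sup_{r\le t}\|\xi(r)\|_{\dot H_x^{1/2}}^{2q}]<\infty$ to control the new initial data for the restart. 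The core is condition (M2): I need to show that for each test function $\psi^k\in C^\infty(\mathbb T^2)\cap\dot H_\sigma^{1/2}$ the process $M_{t,0}^k$ from \eqref{est 54} is a continuous square-integrable $(\mathcal B_t)$-martingale under $P\otimes_\tau R$ with the prescribed quadratic variation. The argument is the classical one: it suffices to check that for $s'\le t'$ and any bounded $\mathcal B_{s'}$-measurable $g$, $\mathbb E^{P\otimes_\tau R}[(M_{t',0}^k-M_{s',0}^k)g]=0$, and similarly for $(M_{t',0}^k)^2-\langle\langle M^k\rangle\rangle_{t'}$. One decomposes over $\{s'<\tau\}$ and $\{s'\ge\tau\}$; on the first part the martingale property comes from $P$ (which is a solution on $[0,\tau]$) together with optional stopping, and on the second part from $R_{\tau(\omega),\xi(\tau(\omega),\omega)}$ via \eqref{est 89} and the disintegration \eqref{est 93}, using that under $Q_\omega$ the trajectory restarts at $\tau(\omega)$ from $\xi(\tau(\omega),\omega)$ and that \eqref{est 94} pins $\tau$ down $P$-a.s. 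The continuity of $M_{t,0}^k$ is inherited piecewise and matches at $t=\tau$ because $\xi$ is continuous into $(H_\sigma^4)^\ast$ and the drift integrand is locally integrable in time.

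The step I expect to be the main obstacle is the careful bookkeeping in condition (M2), specifically justifying the interchange of the expectation $\mathbb E^{P\otimes_\tau R}$ with the disintegration \eqref{est 93} when the stopping time $s'$ in the martingale increment is itself random and may straddle $\tau$; one must restrict attention to the event $\{s'\ge\tau\}\in\mathcal B_{s'}$, use the strong Markov-type property encoded in Lemma~\ref{Lemma 4.2}, and invoke \eqref{est 94} to ensure $\tau(\omega')=\tau(\omega)$ $Q_\omega$-a.s. so that the pasted process has no jump of its quadratic variation at the splice point. All of this is genuinely standard once the measurability in Lemma~\ref{Lemma 4.2} and the uniform constants in $\mathcal C$ are in hand, so rather than reproduce the full Stroock--Varadhan-type argument I would follow \cite[Proposition 3.4]{HZZ19} essentially verbatim, indicating only the minor modifications forced by the different drift (the Calder\'on-commutator term $\tfrac12\langle\partial_i\xi_j,[\Lambda,\psi_i^k]\xi_j\rangle$ in place of the Navier--Stokes nonlinearity) and by the weaker ambient regularity $\dot H_\sigma^{1/2}$ in our setting, both of which affect only the integrability estimates already subsumed in (M3) and in the hypotheses \eqref{est 60} on $G$.
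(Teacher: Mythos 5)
Your proposal is correct and follows exactly the route the paper takes: the paper offers no independent proof of Lemma \ref{Lemma 4.3}, but refers to \cite[Proposition 3.4]{HZZ19}, and your outline (well-posedness of the glued measure via the $\mathcal{B}_{\tau}$-measurability from Lemma \ref{Lemma 4.2}, agreement with $P$ on $\sigma\{\xi(t\wedge\tau)\}$ via \eqref{est 88} and \eqref{est 94}, and verification of (M1)--(M3) by splitting at $\tau$, with the only modifications being the Calder\'on-commutator drift and the $\dot{H}_{\sigma}^{1/2}$ setting) is precisely that argument.
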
 

Now we fix a $GG^{\ast}$-Wiener process $B$ on $(\Omega, \mathcal{F}, \textbf{P})$ with $(\mathcal{F}_{t})_{t\geq 0}$ as the canonical filtration of $B$ augmented by all the $\textbf{P}$-negligible sets. We let $\mathcal{B}_{\tau}$ represent the $\sigma$-algebra associated to the stopping time $\tau$ and consider 
\begin{subequations}\label{est 152}
\begin{align}
& dz + [\nabla p_{1} +\Lambda^{\gamma_{1}} z] dt = dB,  \hspace{3mm} \nabla\cdot z = 0, \\
& z(0,x) \equiv 0, \label{est 152b}
\end{align}
\end{subequations}
so that $y \triangleq v - z$, together with $p_{2} \triangleq p - p_{1}$ satisfies
\begin{equation}\label{est 113}
\partial_{t} y + (\Lambda (y+z) \cdot \nabla) (y+z) - (\nabla (y+z))^{T} \cdot \Lambda (y+z) + \nabla p_{2} + \Lambda^{\gamma_{1}} y = 0
\end{equation} 
due to \eqref{est 30}. We see that 
\begin{equation}\label{est 114} 
z(t) = \int_{0}^{t} e^{-(t-r) (-\Delta)^{\frac{\gamma_{1}}{2}}} \mathbb{P} d B(r) 
\end{equation} 
where $e^{- r(-\Delta)^{\frac{\gamma_{1}}{2}}}$ is a semigroup generated by $- (-\Delta)^{\frac{\gamma_{1}}{2}}$. The following proposition informs us the regularity of $z$ from the hypothesis of Theorem \ref{Theorem 2.1}. 

\begin{proposition}\label{Proposition 4.4}
Suppose that $\gamma_{1} \in (0, \frac{3}{2})$ and $B$ is a $GG^{\ast}$-Wiener process that satisfies \eqref{est 161} for some $\sigma > 0$. Then, for all $T > 0$, $\delta \in (0, \frac{1}{2})$, and $l \in \mathbb{N}$,  
\begin{equation}\label{est 115}
\mathbb{E}^{\textbf{P}} \left[ \lVert z \rVert_{C_{T} \dot{H}_{x}^{4 + \frac{\sigma}{2}}}^{2l} + \lVert z \rVert_{C_{T}^{\frac{1}{2} - \delta} \dot{H}_{x}^{4 - \frac{\gamma_{1}}{2} + \frac{\sigma}{2} }}^{2l} \right] < \infty. 
\end{equation} 
\end{proposition}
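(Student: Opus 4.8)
\emph{Proof proposal.} Since $z$ solves the linear problem \eqref{est 152} with $z(0)\equiv 0$, it is the stochastic convolution \eqref{est 114} and hence a centered Gaussian process. Because $z(t)$ and $z(t)-z(s)$ are Gaussian random variables in $\dot{H}_{x}^{a}$, their $2l$-th moments are dominated by $l$-th powers of their second moments; so the plan is to (i) bound $\mathbb{E}^{\textbf{P}}\lVert z(t)\rVert_{\dot{H}_{x}^{a}}^{2}$ and $\mathbb{E}^{\textbf{P}}\lVert z(t)-z(s)\rVert_{\dot{H}_{x}^{a}}^{2}$ at the two spatial orders $a\in\{4+\tfrac{\sigma}{2},\,4-\tfrac{\gamma_{1}}{2}+\tfrac{\sigma}{2}\}$ occurring in \eqref{est 115}; (ii) raise these to $2l$-th moments and invoke the Garsia--Rodemich--Rumsey lemma (equivalently, the quantitative Kolmogorov continuity theorem) to produce a modification of $z$ with the claimed a.s.\ time-regularity and a finite $2l$-th moment of the relevant norm; (iii) handle the remaining values of $l$ by Jensen's inequality.

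\textbf{The $L^{2}(\Omega)$-estimates.} Write $c_{k}\triangleq\lVert G^{\ast}\mathbb{P}e_{k}\rVert_{U}^{2}$ for the mean-zero Fourier basis $\{e_{k}\}_{k\in\mathbb{Z}^{2}\setminus\{0\}}$, recalling that $(-\Delta)^{\gamma_{1}/2}$ has Fourier symbol $\lvert k\rvert^{\gamma_{1}}$. Stochastic Fubini, It\^{o}'s isometry, and Plancherel give, after using that $\mathbb{P}$ commutes with $(-\Delta)$ and contracts the trace,
\begin{equation*}
\mathbb{E}^{\textbf{P}}\lVert z(t)\rVert_{\dot{H}_{x}^{a}}^{2}=\sum_{k\neq 0}\lvert k\rvert^{2a}c_{k}\int_{0}^{t}e^{-2(t-r)\lvert k\rvert^{\gamma_{1}}}\,dr\ \leq\ \tfrac{1}{2}\,\Tr\big((-\Delta)^{a-\gamma_{1}/2}GG^{\ast}\big).
\end{equation*}
For the increments I would split $z(t)-z(s)=\int_{s}^{t}e^{-(t-r)(-\Delta)^{\gamma_{1}/2}}\mathbb{P}\,dB(r)+\int_{0}^{s}\big(e^{-(t-r)(-\Delta)^{\gamma_{1}/2}}-e^{-(s-r)(-\Delta)^{\gamma_{1}/2}}\big)\mathbb{P}\,dB(r)$ into two independent Gaussians, and, inserting $1-e^{-x}\le x^{\theta}$ ($x\ge 0$, $\theta\in[0,1]$) into the Fourier symbols, obtain for all $\theta,\theta'\in[0,1]$
\begin{equation*}
\mathbb{E}^{\textbf{P}}\lVert z(t)-z(s)\rVert_{\dot{H}_{x}^{a}}^{2}\ \lesssim_{\theta,\theta'}\ \lvert t-s\rvert^{\theta}\,\Tr\big((-\Delta)^{a+\frac{\gamma_{1}}{2}(\theta-1)}GG^{\ast}\big)+\lvert t-s\rvert^{2\theta'}\,\Tr\big((-\Delta)^{a+\gamma_{1}\theta'-\frac{\gamma_{1}}{2}}GG^{\ast}\big).
\end{equation*}

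\textbf{Choosing the exponents.} For $a=4+\tfrac{\sigma}{2}$ the trace in the first display has order $4-\tfrac{\gamma_{1}}{2}+\tfrac{\sigma}{2}$, and for $a=4-\tfrac{\gamma_{1}}{2}+\tfrac{\sigma}{2}$ it has order $4-\gamma_{1}+\tfrac{\sigma}{2}$; both are $\le 4-\tfrac{\gamma_{1}}{2}+2\sigma$, so \eqref{est 161} yields $\sup_{t\in[0,T]}\mathbb{E}^{\textbf{P}}\lVert z(t)\rVert_{\dot{H}_{x}^{a}}^{2}<\infty$ at both orders. In the increment estimate, for $a=4-\tfrac{\gamma_{1}}{2}+\tfrac{\sigma}{2}$ and $\theta=1$, $\theta'=\tfrac{1}{2}$ both traces have order $4-\tfrac{\gamma_{1}}{2}+\tfrac{\sigma}{2}\le 4-\tfrac{\gamma_{1}}{2}+2\sigma$, so $\mathbb{E}^{\textbf{P}}\lVert z(t)-z(s)\rVert_{\dot{H}_{x}^{4-\gamma_{1}/2+\sigma/2}}^{2}\lesssim\lvert t-s\rvert$; for $a=4+\tfrac{\sigma}{2}$ and $\theta=\min\{1,\tfrac{3\sigma}{\gamma_{1}}\}$, $\theta'=\min\{1,\tfrac{3\sigma}{2\gamma_{1}}\}$ both traces again have order $\le 4-\tfrac{\gamma_{1}}{2}+2\sigma$, so $\mathbb{E}^{\textbf{P}}\lVert z(t)-z(s)\rVert_{\dot{H}_{x}^{4+\sigma/2}}^{2}\lesssim\lvert t-s\rvert^{\theta^{\ast}}$ for $\lvert t-s\rvert\le 1$, with $\theta^{\ast}\triangleq\min\{1,\tfrac{3\sigma}{\gamma_{1}}\}>0$. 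Raising these to $2l$-th moments and applying Garsia--Rodemich--Rumsey gives, for $l>1/\theta^{\ast}$, a modification of $z$ lying in $C_{T}\dot{H}_{x}^{4+\sigma/2}$ with finite $2l$-th moment of its norm (using $z(0)=0$ to dominate the sup by the oscillation), and, for every $\delta\in(0,\tfrac{1}{2})$ and $l>1/(2\delta)$, a modification in $C_{T}^{\frac{1}{2}-\delta}\dot{H}_{x}^{4-\gamma_{1}/2+\sigma/2}$ with finite $2l$-th moment; all $l\in\mathbb{N}$ then follow by Jensen's inequality. (Equivalently, one may replace the increment bounds by the Da Prato--Zabczyk factorization method applied to $z$.)

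\textbf{Expected main obstacle.} Each individual step is routine once the estimates are set up; the one genuinely delicate point is the exponent bookkeeping, namely checking that the margin $2\sigma-\tfrac{\sigma}{2}=\tfrac{3\sigma}{2}$ built into hypothesis \eqref{est 161} is simultaneously sufficient to (a) carry the $\tfrac{\sigma}{2}$ of spatial order beyond the ``baseline'' $4-\tfrac{\gamma_{1}}{2}$ and (b) pay for the $\tfrac{\gamma_{1}}{2}$ of spatial smoothing one must spend, by the parabolic scaling of $(-\Delta)^{\gamma_{1}/2}$, in order to buy $\tfrac{1}{2}$ of temporal H\"older regularity; this is exactly what keeps the admissible windows for $\theta,\theta'$ nonempty and lets them (together with $l$) be chosen appropriately. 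The restriction $\gamma_{1}<\tfrac{3}{2}$ of Theorem \ref{Theorem 2.1} plays no role here.
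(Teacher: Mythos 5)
Your proposal is correct, and its exponent bookkeeping checks out: at spatial order $4-\tfrac{\gamma_{1}}{2}+\tfrac{\sigma}{2}$ the choices $\theta=1$, $\theta'=\tfrac{1}{2}$ give increments of size $\lvert t-s\rvert$ in $L^{2}(\Omega)$ and hence, after Gaussian hypercontractivity and Kolmogorov/Garsia--Rodemich--Rumsey, the $C_{T}^{\frac{1}{2}-\delta}$ bound for $l>1/(2\delta)$, while at order $4+\tfrac{\sigma}{2}$ the admissible window $\theta\le\tfrac{3\sigma}{\gamma_{1}}$, $\theta'\le\tfrac{3\sigma}{2\gamma_{1}}$ is exactly the $\tfrac{3\sigma}{2}$ margin built into \eqref{est 161}. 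The paper, however, takes a genuinely different (equally standard) route: the Da Prato--Zabczyk factorization method. It introduces $Y(s)=\tfrac{\sin(\pi\alpha)}{\pi}\int_{0}^{s}e^{-(-\Delta)^{\gamma_{1}/2}(s-r)}(s-r)^{-\alpha}\mathbb{P}\,dB(r)$ with $\alpha\in(0,\tfrac{1}{2})$, bounds $\mathbb{E}^{\textbf{P}}[\lVert(-\Delta)^{\eta}Y(s)\rVert_{L_{x}^{2}}^{2l}]$ by Gaussian hypercontractivity and It\^{o}'s isometry under the constraint \eqref{est 120}, and then recovers $z$ through the identity \eqref{est 117}, using that the factorization integral maps $L^{2l}(0,T;H_{x}^{s})$ boundedly into $C([0,T];H_{x}^{s})$, respectively into $C^{\delta}([0,T];H_{x}^{s})$ for $\delta<\alpha-\tfrac{1}{2l}$ by \cite{DZ92, D13}; the two claims of \eqref{est 115} then follow from the choices $\eta=2+\tfrac{\sigma}{4}$ with $\alpha<\tfrac{3\sigma}{2\gamma_{1}}\wedge\tfrac{1}{2}$, and $\eta=\tfrac{8+\sigma-\gamma_{1}}{4}$ with $\alpha$ arbitrary in $(0,\tfrac{1}{2})$. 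What the paper's route buys is that a single $L^{2}(\Omega)$-estimate on $Y$ plus a deterministic mapping lemma yield both the sup-in-time and the H\"older claims, the temporal regularity being read off from the weight $(t-s)^{\alpha-1}$; what your route buys is self-containedness (only two-piece increment estimates and GRR, no factorization lemma) at the cost of the $\theta,\theta'$ bookkeeping you flag. The budget spent on \eqref{est 161} coincides in the two arguments (your $\theta\le\tfrac{3\sigma}{\gamma_{1}}$ mirrors the paper's $\alpha<\tfrac{3\sigma}{2\gamma_{1}}$), both pass from second to $2l$-th moments via Gaussianity and dispose of small $l$ trivially, and your remark that the restriction $\gamma_{1}<\tfrac{3}{2}$ is not used here is consistent with the paper's proof.
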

\noindent Proposition \ref{Proposition 4.4} is a straight-forward extension of previous works such as \cite[Proposition 3.6]{HZZ19} (also \cite[Proposition 4.4]{Y20a}); thus, we sketch its proof in the Appendix for completeness. Next, we recall $\Omega_{0}$ from \eqref{est 87} and define for every $\omega \in \Omega_{0}$ 
\begin{subequations}\label{est 124} 
\begin{align}
&M_{t,0}^{\omega} \triangleq \omega(t) - \omega(0) + \int_{0}^{t} \mathbb{P} \divergence (\Lambda \omega (r) \otimes \omega (r)) - \mathbb{P} ((\nabla \omega)^{T} (r) \cdot \Lambda \omega (r)) + \Lambda^{\gamma_{1}}\omega(r) dr,\label{est 122} \\
& Z^{\omega}(t) \triangleq M_{t,0}^{\omega} - \int_{0}^{t} \mathbb{P} \Lambda^{\gamma_{1}} e^{- (t-r) \Lambda^{\gamma_{1}}} M_{r,0}^{\omega} dr.\label{est 123} 
\end{align}
\end{subequations} 
If $P$ is a martingale solution to \eqref{est 30}, then the mapping $\omega \mapsto M_{t,0}^{\omega}$ is a $GG^{\ast}$-Wiener process under $P$ and we can show using \eqref{est 124} and \eqref{est 87} that 
\begin{equation}\label{est 130}
Z(t) = \int_{0}^{t} e^{- (t-r)\Lambda^{\gamma_{1}}} \mathbb{P} dM_{r,0}
\end{equation} 
which implies due to Proposition \ref{Proposition 4.4} that for any $\delta \in (0, \frac{1}{2})$ and $T > 0$, 
\begin{equation}\label{est 125}
z \in C_{T} \dot{H}_{x}^{4+ \frac{\sigma}{2}} \cap C_{T}^{\frac{1}{2} - \delta} \dot{H}_{x}^{4 - \frac{\gamma_{1}}{2} + \frac{\sigma}{2} } \hspace{3mm} \textbf{P}\text{-a.s.} 
\end{equation} 
Next, we define for $\delta \in (0, \frac{1}{4})$ and the Sobolev constant $C_{S_{1}} \geq 0$ such that $\lVert f \rVert_{L_{x}^{\infty}} \leq C_{S_{1}} \lVert f \rVert_{\dot{H}_{x}^{1+ \frac{\sigma}{2}}}$ for all $f \in \dot{H}^{1+ \frac{\sigma}{2}} (\mathbb{T}^{2})$ that is mean-zero, 
\begin{subequations}
\begin{align}
\tau_{L}^{\lambda}(\omega) \triangleq& \inf \left\{t \geq 0: C_{S_{1}} \lVert Z^{\omega} (t) \rVert_{\dot{H}_{x}^{4+ \frac{\sigma}{2}}} > \left(L - \frac{1}{\lambda} \right)^{\frac{1}{4}} \right\} \nonumber \\
& \wedge \inf \left\{ t \geq 0: C_{S_{1}} \lVert Z^{\omega} \rVert_{C_{t}^{\frac{1}{2} - 2 \delta} \dot{H}_{x}^{4 - \frac{\gamma_{1}}{2} + \frac{\sigma}{2}  }} > \left(L - \frac{1}{\lambda} \right)^{\frac{1}{2}} \right\} \wedge L, \label{est 126a}  \\
\tau_{L}(\omega) \triangleq& \lim_{\lambda \to \infty} \tau_{L}^{\lambda}(\omega) \label{est 126b} 
\end{align}
\end{subequations} 
so that $\tau_{L}$ is a stopping time due to \cite[Lemma 3.5]{HZZ19}. Additionally, for $L > 1$ and $\delta \in (0, \frac{1}{4})$ we define 
\begin{equation}\label{est 128} 
T_{L} \triangleq \inf\{ t \geq 0: C_{S_{1}} \lVert z(t) \rVert_{\dot{H}_{x}^{4+ \frac{\sigma}{2}}} \geq L^{\frac{1}{4}} \} 
 \wedge \inf\{t \geq 0: C_{S_{1}} \lVert z \rVert_{C_{t}^{\frac{1}{2} - 2 \delta} \dot{H}_{x}^{4- \frac{\gamma_{1}}{2} + \frac{\sigma}{2}}} \geq L^{\frac{1}{2}} \} \wedge L; 
\end{equation}
due to Proposition \ref{Proposition 4.4} we know that $T_{L} > 0$ and $\lim_{L\to\infty} T_{L} = \infty$ $\textbf{P}$-a.s. Next, we assume Theorem \ref{Theorem 2.1} on a probability space $(\Omega, \mathcal{F}, (\mathcal{F}_{t})_{t\geq 0}, \textbf{P})$, denote by $P = \mathcal{L}(v)$ the law of $v$ constructed from Theorem \ref{Theorem 2.1}, and obtain the following two results. 

\begin{proposition}\label{Proposition 4.5}
Let $\tau_{L}$ be defined by \eqref{est 126b}. Then $P = \mathcal{L}(v)$ is a martingale solution on $[0,\tau_{L}]$ according to Definition \ref{Definition 4.2}. 
\end{proposition}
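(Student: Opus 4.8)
The plan is to verify that $P = \mathcal{L}(v)$ satisfies the three conditions (M1)--(M3) of Definition \ref{Definition 4.2} with $s = 0$, $\tau = \tau_L$, and $\xi^{\text{in}} = \mathcal{L}(v^{\text{in}})$, i.e.\ a point mass at the prescribed deterministic initial datum. First, observe that since $v$ is an $(\mathcal{F}_t)$-adapted weak solution to \eqref{est 30} on the interval $[0,\mathfrak{t}]$ with $\mathfrak{t} \geq T$ on a large-probability set, and since $\tau_L$ is built only from the Ornstein--Uhlenbeck type process $Z$ (equivalently $z$), one should first check that $\tau_L \le \mathfrak{t}$ $\textbf{P}$-a.s., or at least that $v$ restricted to $[0,\tau_L]$ is genuinely a solution; this uses \eqref{est 129} together with the comparison between the cutoff time $T_L$ (defined via $z$ in \eqref{est 128}) and the bounds defining $\tau_L$ in \eqref{est 126a}--\eqref{est 126b}, exploiting that under $P$ the process $Z^\omega$ from \eqref{est 123} coincides in law with $z$ from \eqref{est 152} (this is the content of \eqref{est 130} and \eqref{est 125}).

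Next I would handle (M1): the initial-data condition $P(\{\xi(t) = v^{\text{in}}\ \forall t \in [0,0]\}) = 1$ is immediate since $v$ starts from the deterministic datum $v^{\text{in}}$, and the integrability $\int_0^{l\wedge\tau_L}\lVert G(\xi(r))\rVert_{L_2(U,\dot H_\sigma^{1/2})}^2\,dr < \infty$ follows from the linear-growth bound \eqref{est 60} on $G$ combined with the uniform bound \eqref{est 129} (which controls $\lVert v\rVert_{C_{\mathfrak t}C_x^{1/2+\iota}}$, hence $\lVert v(r)\rVert_{\dot H_x^{1/2}}$, on $[0,\tau_L]$). For (M2), the point is that $v$ solves \eqref{est 30a} in the analytically weak sense, so testing against $\psi^k \in C^\infty(\mathbb T^2)\cap \dot H_\sigma^{1/2}$ and using the identity \eqref{est 38}/\eqref{est 59} together with the Calder\'on commutator estimate (Proposition \ref{Proposition 6.3}, i.e.\ \eqref{est 50}) rewrites the nonlinearity into exactly the form appearing in \eqref{est 54}; the stopped process $M^k_{t\wedge\tau_L,0}$ is then, up to the deterministic drift, equal to $\langle \int_0^{t\wedge\tau_L} G(v)\,dB, \psi^k\rangle$, which is a continuous square-integrable martingale with the stated quadratic variation $\int_0^{t\wedge\tau_L}\lVert G(\xi(r))^*\psi^k\rVert_U^2\,dr$ by It\^o isometry, and the optional stopping theorem preserves the martingale property up to the bounded stopping time $\tau_L$. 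Finally (M3): the moment bound $\mathbb E^P[\sup_{r\le t\wedge\tau_L}\lVert\xi(r)\rVert_{\dot H_x^{1/2}}^{2q} + \int_0^{t\wedge\tau_L}\lVert\xi(r)\rVert_{\dot H_x^{1/2+\iota}}^2\,dr] < \infty$ follows directly from the essential-sup bound \eqref{est 129}: $\lVert v\rVert_{C_{\mathfrak t}C_x^{1/2+\iota}}$ is almost surely finite and bounded uniformly in $\omega$, and since $C_x^{1/2+\iota} \hookrightarrow \dot H_x^{1/2+\iota}$ on $\mathbb T^2$ (with $\iota$ chosen small enough, using that we work on a bounded domain), the left side is in fact bounded by a deterministic constant depending on $T$, $q$, $\iota$, giving the required $C_{t,q}(1 + \lVert v^{\text{in}}\rVert_{\dot H_x^{1/2}}^{2q})$.

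The main obstacle I anticipate is the careful bookkeeping in (M2): one must confirm that every term produced by testing the momentum equation against $\psi^k$ lands in the correct duality pairing $\dot H_x^{-1/2}$--$\dot H_x^{1/2}$ and that the algebraic rewriting of $(u\cdot\nabla)v - (\nabla v)^T\cdot u$ via \eqref{est 38} together with the commutator identity \eqref{est 59} is valid at the regularity level guaranteed by \eqref{est 129} — in particular that $\Lambda v_i \in \dot H_x^{-1/2}$ pairs against $\partial_i\psi_j^k\, v_j \in \dot H_x^{1/2}$, and that $[\Lambda,\psi_i^k]$ maps $\dot H_x^{1/2} \to \dot H_x^{1/2}$ so that $\langle \partial_i v_j, [\Lambda,\psi_i^k] v_j\rangle_{\dot H_x^{-1/2}-\dot H_x^{1/2}}$ makes sense; this is exactly where Proposition \ref{Proposition 6.3} is invoked. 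A secondary subtlety is to ensure that the stopping time $\tau_L$, although defined through the canonical objects $Z^\omega$, $z$, is a genuine $(\mathcal B_t)$-stopping time under $P = \mathcal L(v)$ and that it does not exceed the existence time $\mathfrak t$ of the convex-integration solution; but since \eqref{est 128} already shows $T_L \le L$ and $T_L$ grows to infinity, while $\mathfrak t \ge T$ with probability $> \kappa$ for every prescribed $T$, one chooses the parameters so that $\tau_L \le \mathfrak t$, and the remaining argument is then a routine adaptation of \cite[Proposition 3.4]{HZZ19}. Everything else — (M1) and (M3) — is a direct consequence of the explicit bounds \eqref{est 60} and \eqref{est 129} already established for $v$.
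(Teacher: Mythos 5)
Your overall plan coincides with the paper's: (M1) is checked via \eqref{est 60} together with the uniform bound \eqref{est 129}, (M3) via the deterministic bounds on $y$ and $z$ (the paper writes $v=y+z$ and uses \eqref{est 137} and \eqref{est 136}, which is exactly what \eqref{est 129} encodes), and (M2) by observing that, since $v$ is an adapted analytically weak solution of \eqref{est 30} with additive noise, the process $M^{k}_{t\wedge\tau_L,0}$ built from the drift rewritten through \eqref{est 38}, \eqref{est 59} and \eqref{est 50} is (the projection of) the stopped Wiener process, hence a continuous square-integrable $(\mathcal{B}_t)$-martingale under $P=\mathcal{L}(v)$ with quadratic variation $\int_0^{t\wedge\tau_L}\lVert G^{\ast}\psi^{k}\rVert_U^2\,dr$.

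The one step that does not hold as you state it is the identification of $\tau_L$ with the lifetime of the convex-integration solution. You invoke that ``$Z^{\omega}$ coincides \emph{in law} with $z$'' and that ``one chooses the parameters so that $\tau_L\le\mathfrak{t}$''; neither suffices. Equality in law of $Z^{v}$ and $z$ does not let you conclude that, $\textbf{P}$-a.s., the bounds \eqref{est 129}, \eqref{est 136}, \eqref{est 137} — which are only known on $[0,\mathfrak{t}]$ with $\mathfrak{t}=T_L$ — hold on the random interval $[0,\tau_L(v(\omega))]$, which is what every one of (M1)–(M3) requires; and there is no parameter to choose: the comparison is forced. What the paper proves is the \emph{pathwise} identity $Z^{v}(t)=z(t)$ for all $t\in[0,T_L]$ $\textbf{P}$-a.s. (equation \eqref{est 131}): because $v$ solves \eqref{est 30} driven by the additive noise $B$, the canonical functional $M^{v}_{t,0}$ from \eqref{est 122} reconstructs $B$ itself, so \eqref{est 123}, \eqref{est 130} and \eqref{est 114} give $Z^{v}=z$ as processes. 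Since $\tau_L$ in \eqref{est 126a}--\eqref{est 126b} and $T_L$ in \eqref{est 128} apply the same thresholds to $Z$ and to $z$ respectively (and the relevant trajectories are continuous by Proposition \ref{Proposition 4.4}), this yields the exact equality $\tau_L(v)=T_L$ $\textbf{P}$-a.s., equation \eqref{est 132}. This pathwise statement is the crux: it is what transfers the convex-integration bounds to $[0,\tau_L]$ under $P$, and it is also used verbatim later in \eqref{est 147} in the proof of Theorem \ref{Theorem 2.2}, so a law-level substitute would not do. With that step repaired, the rest of your argument is the paper's proof.
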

\begin{proposition}\label{Proposition 4.6}
Let $\tau_{L}$ be defined by \eqref{est 126b} and $P = \mathcal{L}(v)$. Then $P \otimes_{\tau_{L}} R$ defined by  \eqref{est 93} is a martingale solution on $[0,\infty)$ according to Definition \ref{Definition 4.1}. 
\end{proposition}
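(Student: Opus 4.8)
The plan is to verify that $P \otimes_{\tau_{L}} R$ satisfies the three conditions (M1)--(M3) of Definition \ref{Definition 4.1} by combining Proposition \ref{Proposition 4.5} (which gives that $P = \mathcal{L}(v)$ is a martingale solution on $[0,\tau_{L}]$) with the concatenation machinery of Lemma \ref{Lemma 4.3}. The essential point is to check the hypothesis of Lemma \ref{Lemma 4.3}: one must exhibit a $P$-null Borel set $\mathcal{N} \subset \Omega_{0,\tau_{L}}$ such that for every $\omega \in \Omega_{0} \setminus \mathcal{N}$ the measure $Q_{\omega}$ from Lemma \ref{Lemma 4.2} does not see the stopping time $\tau_{L}$ change, i.e.\ $Q_{\omega}(\{\omega' : \tau_{L}(\omega') = \tau_{L}(\omega)\}) = 1$. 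Once this is in place, Lemma \ref{Lemma 4.3} directly yields that $P \otimes_{\tau_{L}} R$ is a martingale solution on $[0,\infty)$ with initial data $v^{\text{in}}$ at initial time $0$, which is the claim.

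First I would recall the structure of $\tau_{L}$ from \eqref{est 126a}--\eqref{est 126b}: it is built as a limit of the hitting times $\tau_{L}^{\lambda}$, each of which is defined through norms of the process $Z^{\omega}$ constructed in \eqref{est 123} out of the path $\omega$ on the time interval $[0,t]$. The key observation is that $Z^{\omega}$, and hence the thresholds defining $\tau_{L}^{\lambda}(\omega)$, depends only on the restriction of $\omega$ to $[0, \tau_{L}(\omega)]$ --- more precisely, on the noise path $M^{\omega}_{\cdot, 0}$ up to that time --- because before $\tau_{L}(\omega)$ the quantities are already determined by the trajectory, and $Q_{\omega}$ agrees with $\delta_{\omega}$ on $[0,\tau_{L}(\omega)]$ by \eqref{est 88}. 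Thus under $Q_{\omega}$, almost every $\omega'$ coincides with $\omega$ on $[0, \tau_{L}(\omega)]$, so $Z^{\omega'}(t) = Z^{\omega}(t)$ for $t \le \tau_{L}(\omega)$ and the hitting-time structure forces $\tau_{L}(\omega') = \tau_{L}(\omega)$ for $Q_{\omega}$-a.e.\ $\omega'$; this is exactly \eqref{est 94}. The $P$-null exceptional set $\mathcal{N}$ is the set where $Z^{\omega}$ fails to have the regularity \eqref{est 125}, together with the set where the measurable-selection construction of Lemma \ref{Lemma 4.2} degenerates; by Proposition \ref{Proposition 4.4} and \eqref{est 125} this set is $P$-null. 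This is essentially identical to \cite[Proposition 3.8]{HZZ19} and I would carry out the argument exactly as there, merely substituting our $\tau_{L}$, our $Z^{\omega}$ from \eqref{est 123}, and the regularity estimate from Proposition \ref{Proposition 4.4}.

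The remaining bookkeeping is routine: conditions (M1) and the martingale property in (M2) for $P \otimes_{\tau_{L}} R$ follow from the corresponding properties of $P$ on $[0,\tau_{L}]$ (Proposition \ref{Proposition 4.5}) glued to the properties of the martingale solutions $R_{\tau_{L}(\omega), \xi(\tau_{L}(\omega),\omega)}$ beyond $\tau_{L}$ supplied by Proposition \ref{Proposition 4.1}(1), while the moment bound (M3) is inherited because each $R_{\tau_{L}(\omega),\cdot}$ lies in a class $\mathcal{C}(\tau_{L}(\omega), \xi(\tau_{L}(\omega),\omega), \{C_{t,q}\})$ with uniform constants and the strong Markov-type gluing in \eqref{est 93} preserves such bounds after taking expectations; I would invoke Proposition \ref{Proposition 4.1}(2) for the stability/measurability needed to make the integral in \eqref{est 93} well defined. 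The main obstacle, and the only step requiring genuine care, is verifying \eqref{est 94}: one must be sure that $\tau_{L}$ is measurable with respect to the "past up to $\tau_{L}$" in the precise sense that its value is unchanged under the concatenation, and this hinges on the fact --- guaranteed by \eqref{est 123} and the semigroup formula \eqref{est 130} --- that $Z^{\omega}$ restricted to $[0,t]$ depends only on $\omega|_{[0,t]}$, so that the defining hitting times are genuinely $(\mathcal{B}_t)$-adapted; everything else is a transcription of \cite{HZZ19}.
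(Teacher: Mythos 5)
Your proposal is correct and follows essentially the paper's own route: Proposition \ref{Proposition 4.5} together with Lemma \ref{Lemma 4.3} reduces the whole statement to verifying \eqref{est 94}, and that verification is carried out by adapting \cite[Proposition 3.8]{HZZ19} with the present $Z^{\omega}$ from \eqref{est 123}, the stopping time $\tau_{L}$ from \eqref{est 126b}, and the regularity supplied by Proposition \ref{Proposition 4.4}. The one point to flag is that the key identity $\tau_{L}(\omega')=\tau_{L}(\omega)$ is not purely a statement about $\omega'\rvert_{[0,\tau_{L}(\omega)]}$ as your sketch suggests: the substantive step in the paper is the decomposition \eqref{est 138}--\eqref{est 140} of $Z^{\omega'}$ into a $\mathcal{B}^{\tau_{L}(\omega)}$-measurable piece $\mathbb{Z}_{\tau_{L}(\omega)}^{\omega'}$ and a past-determined piece, with the $R_{\tau_{L}(\omega),\xi(\tau_{L}(\omega),\omega)}$-a.s. regularity \eqref{est 141} (i.e.\ Proposition \ref{Proposition 4.4} applied under $R$, not under $P$) yielding that $Z^{\omega'}$ is $Q_{\omega}$-a.s. globally regular, which is what allows the identification of $\tau_{L}$ with the non-approximated hitting time $\bar{\tau}_{L}$ in \eqref{est 144}--\eqref{est 145} and hence \eqref{est 143}; since you explicitly defer to the HZZ19 argument with the right substitutions this is acceptable, but it is the core of the adaptation rather than routine bookkeeping.
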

\noindent For completeness, we sketch the proof of Propositions \ref{Proposition 4.5}-\ref{Proposition 4.6} in the Appendix referring to \cite[Propositions 3.7-3.8]{HZZ19} respectively for details. 

\begin{proof}[Proof of Theorem \ref{Theorem 2.2} assuming Theorem \ref{Theorem 2.1}]
We fix $T > 0$ arbitrarily, $K > 1$, and $\kappa \in (0,1)$ such that $\kappa K^{2} \geq 1$, rely on Theorem \ref{Theorem 2.1} and Proposition \ref{Proposition 4.6} to deduce the existence of $L > 1$ and a martingale solution $P \otimes_{\tau_{L}} R$ to \eqref{est 30} on $[0, \infty)$ such that $P \otimes_{\tau_{L}} R = P$ on $[0, \tau_{L}]$ where $P =  \mathcal{L} (v)$. Hence, $P \otimes_{\tau_{L}} R$ has a deterministic initial data $v^{\text{in}}$ from Theorem \ref{Theorem 2.1} and satisfies 
\begin{equation}\label{est 147} 
P \otimes_{\tau_{L} } R ( \{ \tau_{L} \geq T \}) \overset{\eqref{est 93} \eqref{est 143} }{=} P(\{\tau_{L} \geq T \})  \overset{\eqref{est 132} \eqref{est 92} }{=} \textbf{P} ( \{ T_{L} \geq T \}) \overset{\eqref{est 148}}{>} \kappa. 
\end{equation} 
This implies 
\begin{equation}\label{est 149} 
\mathbb{E}^{P \otimes_{\tau_{L}} R} [ \lVert \xi(T) \rVert_{\dot{H}_{x}^{\frac{1}{2}}}^{2}] \geq \mathbb{E}^{P \otimes_{\tau_{L}} R}[ 1_{\{ \tau_{L} \geq T \}} \lVert \xi(T) \rVert_{\dot{H}_{x}^{\frac{1}{2}}}^{2} ]  >\kappa K^{2}\left[\lVert v^{\text{in}} \rVert_{\dot{H}_{x}^{\frac{1}{2}}}^{2} + T \Tr ((-\Delta)^{\frac{1}{2}} GG^{\ast}) \right]
\end{equation} 
where the second inequality is due to \eqref{est 146} and \eqref{est 147}. On the other hand, we can construct another martingale solution $\textbf{P}$ starting from same initial data $v^{\text{in}}$ via a Galerkin approximation that satisfies \eqref{est 31}.
\end{proof} 
Next, in addition to $\lambda_{q}$ and $\delta_{q}$ in \eqref{est 35} we define 
\begin{equation}\label{est 135}
M_{0}(t) \triangleq L^{4} e^{4Lt}. 
\end{equation} 
We see from \eqref{est 128} and Sobolev embeddings in $\mathbb{T}^{2}$ that for any $\delta \in (0, \frac{1}{4})$ and $t \in [0, T_{L}]$, 
\begin{equation}\label{est 136}
\lVert z(t) \rVert_{\dot{W}_{x}^{k,\infty}} \leq L^{\frac{1}{4}} \hspace{1mm} \forall \hspace{1mm} k \in \{0, 1,2,3\}, \hspace{1mm} \text{ and }\hspace{1mm}  \lVert z \rVert_{C_{t}^{\frac{1}{2} - 2\delta} \dot{W}_{x}^{3 - \frac{\gamma_{1}}{2}, \infty}} \leq L^{\frac{1}{2}}.
\end{equation}  
Because it is not true in general that $\supp \hat{z} \subset B(0, 2 \lambda_{q})$, via Littlewood-Paley theory we convolute $z$ in space by a smooth function $\tilde{\phi}_{q}$ with frequency support in $B(0, \frac{\lambda_{q}}{4})$, $q \in \mathbb{N}_{0}$, such that $z_{q} \triangleq z \ast_{x} \tilde{\phi}_{q} \to z$ as $q\to \infty$ in distributions so that 
\begin{equation}\label{est 150} 
\supp \hat{z}_{q}  \subset B\left(0,  \frac{\lambda_{q}}{4}\right). 
\end{equation} 
Here we chose $\frac{\lambda_{q}}{4}$ rather than $2\lambda_{q}$ for the convenience of estimating $R_{\text{Com2}}$ in Section \ref{Section 4.1.6}. We will construct solutions $(v_{q}, \mathring{R}_{q})$ for $q \in \mathbb{N}_{0}$ to \eqref{est 113} with an error iteratively. In fact, as we discussed in Subsection \ref{Subsection 2.2}, we shall consider the generalized QG equations with $u = \Lambda^{2-\gamma_{2}}v$ from \eqref{est 133} in the following Propositions \ref{Proposition 4.7}-\ref{Proposition 4.10}, which brings us to
\begin{align} 
&\partial_{t} y_{q} + (\Lambda^{2-\gamma_{2}} (y_{q}+z_{q}) \cdot \nabla) (y_{q}+z_{q}) - (\nabla (y_{q}+z_{q}))^{T} \cdot \Lambda^{2-\gamma_{2}} (y_{q}+z_{q}) + \nabla p_{q} + \Lambda^{\gamma_{1}} y_{q} = \divergence \mathring{R}_{q}, \nonumber \\
&\nabla\cdot y_{q} = 0,\label{est 134} 
\end{align}
where $\mathring{R}_{q}$ is a symmetric trace-free matrix. The range of parameters we shall consider in the following Propositions \ref{Proposition 4.7}-\ref{Proposition 4.10} for the generalized QG equations described by \eqref{est 133} are  
\begin{subequations}\label{est 154} 
\begin{align}
& \gamma_{2} \in [1, 2), \label{est 154a} \\
&\begin{cases}
\frac{1}{2} < \beta < \frac{8}{15} & \text{ if } \gamma_{2} = 1,  \\
1 - \frac{\gamma_{2}}{2} < \beta < 
\min\{ \frac{15 - 7 \gamma_{2}}{15}, \frac{8- 4\gamma_{2}}{7}, \frac{3- \gamma_{2}}{4} \} & \text{ if } \gamma_{2} > 1,
\end{cases}  
\label{est 154b} \\
&
\begin{cases}
-2 + 6 \beta< \alpha < \frac{ 8 - 5 \beta}{4} & \text{ if } \gamma_{2} = 1, \\
1< \alpha <  7- 3 \gamma_{2} - 5 \beta & \text{ if } \gamma_{2} > 1,  
\end{cases} 
\label{est 154c} \\
&\gamma_{1} + 2 \gamma_{2} < 5 - 3\beta.  \label{est 154d} 
\end{align}
\end{subequations}
We emphasize that for all $\gamma_{2} \in [1, 2)$, we have $\alpha > 1$ from \eqref{est 154c}. We also point out that $\beta < \frac{3- \gamma_{2}}{4}$ for $\gamma_{2} > 1$ is needed in \eqref{est 308} and it may seem to lead to a contradiction because $\frac{3-\gamma_{2}}{4} = \frac{1}{2}$ when $\gamma_{2} = 1$ and we have $\frac{1}{2} < \beta$ in \eqref{est 154b} in case $\gamma_{2} = 1$; however, these conditions are not continuous in $\gamma_{2}$ due to our convenient choice of $\alpha > 1$ in \eqref{est 154c} (see e.g., \eqref{est 223} concerning its convenience). Next, we denote specific universal non-negative constants $C_{G_{1}}, C_{G_{2}}$, and $C_{S_{2}}$ due to Gagliardo-Nirenberg inequality and Sobolev's inequality: for any $f$ that is sufficiently smooth in $x \in \mathbb{T}^{2}$ and mean-zero,  
\begin{subequations}\label{est 155}
\begin{align}
&\lVert f \rVert_{C_{x}^{2-\gamma_{2}}} + \lVert \Lambda^{2-\gamma_{2}} f \rVert_{L_{x}^{\infty}} \leq C_{G_{1}} \lVert f\rVert_{L_{x}^{2}}^{\frac{\gamma_{2}}{3}} \lVert \Lambda^{3} f \rVert_{L_{x}^{2}}^{1- \frac{\gamma_{2}}{3}}, \hspace{3mm} \lVert f \rVert_{L_{x}^{\infty}} \leq C_{G_{2}} \lVert f \rVert_{L_{x}^{2}}^{\frac{1}{2}} \lVert \Delta f \rVert_{L_{x}^{2}}^{\frac{1}{2}}, \label{est 155a} \\
&\lVert f \rVert_{L_{x}^{\infty}} \leq C_{S_{2}} \lVert f \rVert_{\dot{H}_{x}^{3-\gamma_{1}}}. \label{est 155b}
\end{align}
\end{subequations} 
For a large universal constant $C_{0} > 0$ from the proof of Proposition \ref{Proposition 4.8}, we fix another universal constant $\bar{C} > 0$ such that 
\begin{equation}\label{est 419}
\frac{2^{\frac{5}{2} - \frac{\gamma_{2}}{2}} C_{0} \sqrt{1+ C_{G_{1}} C_{G_{2}} 8}}{\pi} < \sqrt{\bar{C}}. 
\end{equation} 
Similarly to \eqref{est 33}, we extend $y_{q}$, $\mathring{R}_{q}$, and $z_{q}$  to $t < 0$ by their respective values at $t = 0$ and mollify them to obtain 
\begin{equation}\label{est 180}
y_{l} \triangleq y_{q} \ast_{x} \phi_{l} \ast_{t} \varphi_{l}, \hspace{2mm} \mathring{R}_{l} \triangleq \mathring{R}_{q} \ast_{x} \phi_{l} \ast_{t} \varphi_{l}, \hspace{2mm} \text{ and } \hspace{2mm} z_{l} \triangleq z_{q} \ast_{x} \phi_{l} \ast_{t} \varphi_{l}. 
\end{equation} 
We informally convoluted $z$ twice in space; this is for the convenience upon estimates that $y_{q}, \mathring{R}_{q},$ and $z_{q}$ all have the same structure. For a subsequent purpose in \eqref{est 174} we assume 
\begin{equation}\label{est 428} 
\frac{2^{\frac{5}{2} - \frac{\gamma_{2}}{2}}C_{0} L^{\frac{1}{2}}}{\pi} < a^{-b(1- \frac{\gamma_{2}}{2} - \beta)}.
\end{equation} 
Now we set a convention that $\sum_{1 \leq j \leq 0} \triangleq 0$ and consider the following inductive hypothesis for the universal constants $C_{0} \geq 1$. 
\begin{hypothesis}\label{Hypothesis 4.1}
\noindent For all $t \in [0, T_{L}]$, 
\begin{enumerate}[(a)] 
\item\label{Hypothesis 4.1 (a)} $\supp \hat{y}_{q} \subset B(0, 2 \lambda_{q})$,  
\begin{equation}\label{est 165}
\lVert y_{q} \rVert_{C_{t,x}} \leq C_{0} \left(1+ \sum_{1 \leq j \leq q} \delta_{j}^{\frac{1}{2}} \right) L^{\frac{1}{2}}M_{0}(t)^{\frac{1}{2}}, 
\end{equation} 
\begin{equation}\label{est 166} 
\lVert y_{q} \rVert_{C_{t} C_{x}^{2-\gamma_{2}}} + \lVert \Lambda^{2-\gamma_{2}} y_{q} \rVert_{C_{t,x}} \leq C_{0} L^{\frac{1}{2}}M_{0}(t)^{\frac{1}{2}} \lambda_{q}^{2-\gamma_{2}} \delta_{q}^{\frac{1}{2}}, 
\end{equation} 
\item\label{Hypothesis 4.1 (b)}  $\supp \hat{\mathring{R}}_{q} \subset B(0, 4 \lambda_{q})$, 
\begin{equation}\label{est 167}
\lVert \mathring{R}_{q} \rVert_{C_{t,x}} \leq \bar{C} LM_{0}(t) \lambda_{q+1}^{2-\gamma_{2}} \delta_{q+1}, 
\end{equation} 
\item\label{Hypothesis 4.1 (c)}  
\begin{equation}\label{est 168}
\lVert (\partial_{t} + (\Lambda^{2-\gamma_{2}} y_{l} + \Lambda^{2-\gamma_{2}} z_{l}) \cdot \nabla) y_{q} \rVert_{C_{t,x}} \leq C_{0} LM_{0}(t) \lambda_{q}^{3-\gamma_{2}} \delta_{q}.
\end{equation} 
\end{enumerate}  
\end{hypothesis}

\begin{remark}\label{Remark 4.1}
We need to consider $z_{q}$ in \eqref{est 150} because, as we discussed in Subsection \ref{Subsection 2.2}, in order to handle a Reynolds stress transport error, we will replace the convection by velocity $u_{q}$ in \eqref{est 10}-\eqref{est 11} by $\Lambda^{2-\gamma_{2}} v_{l} + \Lambda^{2- \gamma_{2}}z_{l}$ (see  \eqref{est 193a} and \eqref{est 195a}), and it will be crucial that this convection term as a whole has the frequency support in $B(0, 2 \lambda_{q})$ (see \eqref{est 219}); although $v_{l}$ has such a frequency support due to Hypothesis \ref{Hypothesis 4.1} \ref{Hypothesis 4.1 (a)}, $z\ast _{x} \phi_{l} \ast_{t} \varphi_{l} $ does not have such a frequency support in general due to our choice of $\alpha$ (recall \eqref{est 154c} which implies $\alpha > 1$). We also note that if we consider $z$ in \eqref{est 134} rather than $z_{q}$, then we cannot include the inductive hypothesis of $\supp \hat{\mathring{R}}_{q} \subset B(0, 4 \lambda_{q})$ in Hypothesis \ref{Hypothesis 2.1} \ref{BSV19 Hypo b} which will make our estimates too difficult because $\lambda_{q+1}^{\alpha} \overset{\eqref{est 32}}{=} l^{-1} \gg \lambda_{q}$ again due to $\alpha > 1$ in \eqref{est 154c}.   
\end{remark}

\begin{proposition}\label{Proposition 4.7}
Suppose that $\gamma_{2}, \beta, \alpha$, and $\gamma_{1}$ satisfy \eqref{est 154}. Let $L > 1$ and define 
\begin{equation}\label{est 172} 
y_{0} (t,x) \triangleq \frac{L^{2} e^{2Lt}}{2\pi}  
\begin{pmatrix}
\sin(x_{2}) \\
0
\end{pmatrix}.
\end{equation} 
Then, together with 
\begin{align}
\mathring{R}_{0}&(t,x) \triangleq \frac{L^{3} e^{2Lt}}{\pi} 
\begin{pmatrix}
0 & - \cos(x_{2}) \\
-\cos(x_{2}) & 0 
\end{pmatrix}  + \mathcal{B} (\Lambda^{\gamma_{1}} y_{0})(t,x) \label{est 156} \\
& + \mathcal{B}\left(- (\nabla y_{0})^{T} \cdot\Lambda^{2-\gamma_{2}} y_{0}  +  \Lambda^{2-\gamma_{2}} y_{0}^{\bot} \nabla^{\bot} \cdot z_{0} + \Lambda^{2-\gamma_{2}} z_{0}^{\bot} \nabla^{\bot} \cdot y_{0} + \Lambda^{2-\gamma_{2}} z_{0}^{\bot} \nabla^{\bot} \cdot z_{0} \right)(t,x),  \nonumber
\end{align} 
the pair $(y_{0}, \mathring{R}_{0})$ solves \eqref{est 134} and satisfies the Hypothesis \ref{Hypothesis 4.1} at level $ q = 0$ on $[0,T_{L}]$ provided that 
\begin{subequations}\label{est 151} 
\begin{align}
&\frac{2L M_{0}(t)^{\frac{1}{2}}}{\pi} + M_{0}(t)^{\frac{1}{2}}C_{S_{2}} 2^{-\frac{1}{2}} + C_{G_{2}}16 \sqrt{2}  L^{\frac{1}{4}} M_{0}(t)^{\frac{1}{2}}  + C_{G_{2}} 32 L^{\frac{1}{2}} \leq M_{0}(t), \label{est 151a}\\
& \frac{2^{\frac{5}{2} - \frac{\gamma_{2}}{2}} C_{0} L^{\frac{1}{2}}}{\pi} < a^{-b(1- \frac{\gamma_{2}}{2} - \beta)} \leq \frac{ \sqrt{\bar{C}} L^{\frac{1}{2}}}{ \sqrt{1+ C_{G_{1}} C_{G_{2}} 8}}, \label{est 151b} 
\end{align}
\end{subequations}  
where the first inequality of \eqref{est 151b} guarantees \eqref{est 428}. Finally, $y_{0}(0,x)$ and $\mathring{R}_{0}(0,x)$ are both deterministic. 
\end{proposition}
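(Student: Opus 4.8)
The plan is to verify separately the two assertions of Proposition \ref{Proposition 4.7}: that $(y_{0},\mathring{R}_{0})$ solves \eqref{est 134} at $q=0$ for an appropriate pressure $p_{0}$, and that it satisfies Hypothesis \ref{Hypothesis 4.1} at $q=0$ on $[0,T_{L}]$, the only subtlety being the numerical calibration of the constants.

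\emph{The equation.} I would expand the full nonlinear term
$N:=(\Lambda^{2-\gamma_{2}}(y_{0}+z_{0})\cdot\nabla)(y_{0}+z_{0})-(\nabla(y_{0}+z_{0}))^{T}\cdot\Lambda^{2-\gamma_{2}}(y_{0}+z_{0})$
bilinearly into its $y_{0}$--$y_{0}$, $y_{0}$--$z_{0}$, $z_{0}$--$y_{0}$ and $z_{0}$--$z_{0}$ parts, apply the pointwise identity \eqref{est 38} to the three parts containing $z_{0}$ to turn them into $\Lambda^{2-\gamma_{2}}y_{0}^{\bot}\nabla^{\bot}\cdot z_{0}+\Lambda^{2-\gamma_{2}}z_{0}^{\bot}\nabla^{\bot}\cdot y_{0}+\Lambda^{2-\gamma_{2}}z_{0}^{\bot}\nabla^{\bot}\cdot z_{0}$, and note for the $y_{0}$--$y_{0}$ part that the only active frequency of $y_{0}$ has $|k|=1$ so $\Lambda^{2-\gamma_{2}}y_{0}=y_{0}$, while $(\Lambda^{2-\gamma_{2}}y_{0}\cdot\nabla)y_{0}=(y_{0,1}\partial_{1}+y_{0,2}\partial_{2})y_{0}=0$ since $y_{0,2}\equiv0$ and $y_{0,1}$ depends only on $x_{2}$; hence $N$ equals exactly the argument of $\mathcal{B}$ on the last line of \eqref{est 156}. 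A direct differentiation shows that $\divergence$ of the trigonometric matrix on the first line of \eqref{est 156} equals $\partial_{t}y_{0}$ (recall $\partial_{t}y_{0}=2Ly_{0}$), and Lemma \ref{Inverse-divergence lemma} gives $\divergence\mathcal{B}f=\mathbb{P}f$ with $\mathcal{B}f$ symmetric and trace-free; since $y_{0}$ is divergence-free and mean-zero, $\mathbb{P}\Lambda^{\gamma_{1}}y_{0}=\Lambda^{\gamma_{1}}y_{0}$. Therefore $\divergence\mathring{R}_{0}=\partial_{t}y_{0}+\Lambda^{\gamma_{1}}y_{0}+\mathbb{P}N$, and \eqref{est 134} holds upon setting $p_{0}:=-\Delta^{-1}\divergence N$, so that $\nabla p_{0}=-(\Id-\mathbb{P})N$; that the right-hand side is a genuine gradient follows because $N$ has zero spatial mean, which one checks by integration by parts using $\nabla\cdot(\Lambda^{2-\gamma_{2}}y_{0})=\nabla\cdot z_{0}=0$ and the self-adjointness of $\Lambda^{2-\gamma_{2}}$ (the mean of the $y_{0}$--$z_{0}$ and $z_{0}$--$y_{0}$ contributions cancel against each other, and the $y_{0}$--$y_{0}$ and $z_{0}$--$z_{0}$ ones vanish individually). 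Finally $\mathring{R}_{0}$ is symmetric and trace-free (the trigonometric matrix is, and so is every output of $\mathcal{B}$); $y_{0}(0,\cdot)$ is deterministic; and $\mathring{R}_{0}(0,\cdot)$ is deterministic because $z(0,\cdot)\equiv0$ by \eqref{est 152b} forces $z_{0}(0,\cdot)\equiv0$, annihilating every $z_{0}$-dependent term of \eqref{est 156} at $t=0$.

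\emph{Hypothesis \ref{Hypothesis 4.1}.} For \ref{Hypothesis 4.1 (a)}, $\supp\hat{y}_{0}=\{|k|=1\}\subset B(0,2\lambda_{0})$ since $\lambda_{0}=a\ge5$, and the norm bounds \eqref{est 165}--\eqref{est 166} reduce, after computing $\|y_{0}(t)\|_{L_{x}^{2}}=\|\Lambda^{3}y_{0}(t)\|_{L_{x}^{2}}=2^{-1/2}M_{0}(t)^{1/2}$, $\|y_{0}(t)\|_{C_{x}}=\frac{L^{2}e^{2Lt}}{2\pi}$, and $\|y_{0}\|_{C_{t}C_{x}^{2-\gamma_{2}}}+\|\Lambda^{2-\gamma_{2}}y_{0}\|_{C_{t,x}}\le C_{G_{1}}2^{-1/2}M_{0}(t)^{1/2}$ via \eqref{est 155a}, to inequalities that hold for $C_{0}\ge1$ and $a,L$ large once one observes that $\lambda_{0}^{2-\gamma_{2}}\delta_{0}^{1/2}=a^{2-\gamma_{2}-\beta}\ge1$ because \eqref{est 154b} forces $\beta<2-\gamma_{2}$. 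For \ref{Hypothesis 4.1 (b)}, each summand of $\mathring{R}_{0}$ is a product of factors with frequency support in $B(0,1)$ with at most two factors of $z_{0}$, whose support lies in $B(0,\lambda_{0}/4)$ by \eqref{est 150}, and $\mathbb{P},\Lambda^{r},\mathcal{B}$ preserve frequency supports, so $\supp\hat{\mathring{R}}_{0}\subset B(0,\lambda_{0})\subset B(0,4\lambda_{0})$; estimating each summand crudely with $\|\mathcal{B}\|_{C_{x}\mapsto C_{x}}\lesssim1$, with the norms of $y_{0}$ above, and with $\|z\|_{\dot{W}_{x}^{k,\infty}}\le L^{1/4}$ for $k\le3$ from \eqref{est 136} together with \eqref{est 155a} for $\|\Lambda^{2-\gamma_{2}}z_{0}\|_{C_{x}}$, the dominant term is $\mathcal{B}(-(\nabla y_{0})^{T}\cdot\Lambda^{2-\gamma_{2}}y_{0})$, of size $\lesssim\|y_{0}\|_{C_{x}}^{2}\asymp M_{0}(t)$ with explicit $C_{G_{1}},C_{G_{2}}$-dependence, so that \eqref{est 167}, since $\lambda_{1}^{2-\gamma_{2}}\delta_{1}=a^{2b(1-\frac{\gamma_{2}}{2}-\beta)}$, amounts precisely to the upper bound $a^{-b(1-\frac{\gamma_{2}}{2}-\beta)}\le\frac{\sqrt{\bar{C}}L^{1/2}}{\sqrt{1+C_{G_{1}}C_{G_{2}}8}}$ of \eqref{est 151b}. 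For \ref{Hypothesis 4.1 (c)}, the same cancellation $(\Lambda^{2-\gamma_{2}}y_{l}\cdot\nabla)y_{0}=0$ holds because $y_{l}$ is again a scalar multiple of $(\sin x_{2},0)$ while $\partial_{1}y_{0}=0$, so the left-hand side equals $\partial_{t}y_{0}+(\Lambda^{2-\gamma_{2}}z_{l}\cdot\nabla)y_{0}$, of size $\lesssim L^{3}e^{2Lt}$ by \eqref{est 136}, while the right-hand side $C_{0}LM_{0}(t)\lambda_{0}^{3-\gamma_{2}}\delta_{0}=C_{0}L^{5}e^{4Lt}a^{3-\gamma_{2}-2\beta}$ dominates it for $a,L$ large since $3-\gamma_{2}-2\beta>0$ by \eqref{est 154b}. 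Throughout, the first inequality of \eqref{est 151b} and the definition \eqref{est 419} of $\bar{C}$ guarantee that the admissible window for $a^{-b(1-\frac{\gamma_{2}}{2}-\beta)}$ is nonempty, and \eqref{est 151a} is the largeness condition on $L$ ensuring that $M_{0}(t)$ dominates the lower-order quantities $\|\partial_{t}y_{0}\|$, $C_{S_{2}}\|y_{0}\|_{\dot{H}_{x}^{3-\gamma_{1}}}$ and the $z$-dependent terms above.

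\emph{Main obstacle.} The algebraic verification of the equation is elementary once one uses \eqref{est 38} and the vanishing of $(\Lambda^{2-\gamma_{2}}y_{0}\cdot\nabla)y_{0}$, and every estimate is a routine application of \eqref{est 136}, \eqref{est 155} and $\|\mathcal{B}\|_{C_{x}\mapsto C_{x}}\lesssim1$; the only genuinely delicate point is bookkeeping the universal constants so as to see that the prescribed conditions \eqref{est 151a}--\eqref{est 151b}, calibrated through \eqref{est 419}, are exactly what is needed to close \eqref{est 165}--\eqref{est 168} with the stated constants $C_{0}$ and $\bar{C}$. In particular one must keep track of the fact that, because $\beta>1-\tfrac{\gamma_{2}}{2}$, the factor $\lambda_{1}^{2-\gamma_{2}}\delta_{1}$ is a negative power of $a$, so that \eqref{est 167} imposes a genuine relation between $a^{b}$ and $L$ rather than being automatic.
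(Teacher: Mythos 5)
Your proposal is correct and takes essentially the same route as the paper's proof: verify the equation through \eqref{est 38} together with the cancellation $(\Lambda^{2-\gamma_{2}}y_{0}\cdot\nabla)y_{0}=0$, then check Hypothesis \ref{Hypothesis 4.1} at $q=0$ via the same frequency-support observations and the Gagliardo--Nirenberg/Sobolev bounds with $\lVert\mathcal{B}\rVert_{C_{x}\mapsto C_{x}}\lesssim1$ and \eqref{est 136}, with \eqref{est 151a}--\eqref{est 151b} calibrated exactly as in the paper to close \eqref{est 167}. Your more careful choice of pressure ($p_{0}=-\Delta^{-1}\divergence N$ instead of the paper's $p_{0}=0$) and your use of the convection cancellation also in Hypothesis \ref{Hypothesis 4.1} \ref{Hypothesis 4.1 (c)} (where the paper simply bounds that term crudely) are only cosmetic refinements of the same argument.
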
  

\begin{proof}[Proof of Proposition \ref{Proposition 4.7}]
First, we observe that $y_{0}$ is divergence-free, mean-zero while $\mathring{R}_{0}$ is symmetric and trace-free due to Lemma \ref{Inverse-divergence lemma}. Because $(\Lambda^{2-\gamma_{2}} y_{0} \cdot \nabla) y_{0} = 0$, with $p_{0} =0$ we see due to \eqref{est 38} that $(y_{0}, \mathring{R}_{0})$ is a solution to \eqref{est 134}. Next, direct computations give 
\begin{align}
& \int_{\mathbb{T}^{2}} \lvert \sin(x_{2}) \rvert^{2} dx = \int_{\mathbb{T}^{2}} \lvert \cos(x_{2}) \rvert^{2}dx = 2\pi^{2}, \nonumber \\
& \supp \widehat{\sin(x_{2})} \subset \{(0,1), (0, -1)\}, \hspace{2mm} \widehat{\sin(x_{2}))}((0,1)) = -i2\pi^{2}, \hspace{2mm} \widehat{ \sin(x_{2})}((0, -1)) = i2\pi^{2}, \nonumber\\
& \supp \widehat{\cos(x_{2})} \subset \{(0,1), (0,-1) \}, \hspace{2mm}\widehat{\cos(x_{2})}((0,1)) = \widehat{\cos(x_{2})}((0,-1)) = 2 \pi^{2}, \label{est 171}
\end{align}
and hence $\supp \hat{y}_{0} \subset B(0, 2 \lambda_{0})$ if we take $\lambda_{0} \overset{\eqref{est 35}}{=} a > \frac{1}{2}$. Due to the frequency support of $y_{0}$ and $z_{0}$, it follows from \eqref{est 156} that $\supp \hat{\mathring{R}}_{0} \subset B(0, 4\lambda_{0})$. Now as long as we choose $C_{0} \geq \frac{1}{2\pi}$, we clearly have $\lVert y_{0} \rVert_{C_{t,x}} \leq \frac{M_{0}(t)^{\frac{1}{2}}}{2\pi} \leq C_{0} L^{\frac{1}{2}}M_{0}(t)^{\frac{1}{2}}$ as desired. Next, using the Gagliardo-Nirenberg inequality in \eqref{est 155a}, and observing that $\beta$ in \eqref{est 154} guarantees that $2 - \gamma_{2} - \beta > 0$, we can take $a \in 5 \mathbb{N}$ sufficiently large to deduce 
\begin{equation}\label{est 157}
\lVert y_{0} \rVert_{C_{t}C_{x}^{2-\gamma_{2}}} + \lVert \Lambda^{2-\gamma_{2}} y_{0} \rVert_{C_{t,x}}  \overset{\eqref{est 172} \eqref{est 155a}}{\leq} C_{G_{1}} 2^{-\frac{1}{2}}M_{0}(t)^{\frac{1}{2}} \leq C_{0}L^{\frac{1}{2}} M_{0}(t)^{\frac{1}{2}} \lambda_{0}^{2-\gamma_{2}} \delta_{0}^{\frac{1}{2}}.
\end{equation} 
Thus, the hypothesis \ref{Hypothesis 4.1 (a)} holds. To verify the hypothesis \ref{Hypothesis 4.1 (b)} we compute 
\begin{equation}\label{est 159}
\lVert \mathring{R}_{0}(t) \rVert_{C_{x}} \overset{\eqref{est 156}}{\leq} \sum_{k=1}^{3} \RomanII_{k}(t) 
\end{equation}
where  
\begin{subequations}\label{est 158} 
\begin{align} 
\RomanII_{1}(t) \triangleq& \frac{2L M_{0}(t)^{\frac{1}{2}}}{\pi}, \hspace{3mm} \RomanII_{2}(t) \triangleq \lVert \mathcal{B}\Lambda^{\gamma_{1}} y_{0}(t) \rVert_{C_{x}}, \label{est 158b} \\
\RomanII_{3}(t) \triangleq& \lVert \mathcal{B} ( ( \nabla y_{0})^{T} \cdot \Lambda^{2-\gamma_{2}} y_{0} )(t) \rVert_{C_{x}} + \lVert \mathcal{B} ( \Lambda^{2-\gamma_{2}} y_{0}^{\bot} \nabla^{\bot} \cdot z_{0} ) (t) \rVert_{C_{x}}  \nonumber\\
&+ \lVert \mathcal{B} (\Lambda^{2-\gamma_{2}} z_{0}^{\bot} \nabla^{\bot} \cdot y_{0})(t) \rVert_{C_{x}}  + \lVert \mathcal{B}(\Lambda^{2-\gamma_{2}} z_{0}^{\bot} \nabla^{\bot} \cdot z_{0} )(t) \rVert_{C_{x}}.   \label{est 158c} 
\end{align}
\end{subequations} 
We can estimate 
\begin{subequations}\label{est 160}
\begin{align}
&  \RomanII_{2}(t) \overset{\eqref{est 155b}}{\leq} \frac{M_{0}(t)^{\frac{1}{2}}}{2\pi} C_{S_{2}} \lVert \mathcal{B} \Lambda^{\gamma_{1}} \sin(x_{2}) \rVert_{\dot{H}_{x}^{3- \gamma_{1}}}   = M_{0}(t)^{\frac{1}{2}}C_{S_{2}} 2^{-\frac{1}{2}},  \label{est 160a} \\
& \lVert \mathcal{B} (( \nabla y_{0})^{T} \cdot \Lambda^{2-\gamma_{2}} y_{0} )(t) \rVert_{C_{x}} \overset{\eqref{est 155a}}{\leq} C_{G_{2}}C_{G_{1}} 8M_{0}(t), \label{est 160c} \\
& \lVert \mathcal{B} (\Lambda^{2-\gamma_{2}} y_{0}^{\bot} \nabla^{\bot} \cdot z_{0})(t) \rVert_{C_{x}} \overset{\eqref{est 155a} \eqref{est 136}}{\leq} C_{G_{2}} 8\sqrt{2} L^{\frac{1}{4}}M_{0}(t)^{\frac{1}{2}}, \label{est 160d} \\
& \lVert \mathcal{B} ( \Lambda^{2-\gamma_{2}} z_{0}^{\bot} \nabla^{\bot} \cdot y_{0} )(t) \rVert_{C_{x}} \overset{\eqref{est 155a}\eqref{est 136}}{\leq}C_{G_{2}} 8\sqrt{2} L^{\frac{1}{4}} M_{0}(t)^{\frac{1}{2}}, \label{est 160e} \\
& \lVert \mathcal{B} (\Lambda^{2-\gamma_{2}} z_{0}^{\bot} \nabla^{\bot} \cdot z_{0}  )(t) \rVert_{C_{x}} \overset{\eqref{est 155a}\eqref{est 136}}{\leq} C_{G_{2}}32 L^{\frac{1}{2}}, \label{est 160f} 
\end{align}
\end{subequations} 
where the four terms \eqref{est 160c}-\eqref{est 160f} within $\RomanII_{3}(t)$ were computed using the definition of $\mathcal{B}$ from Lemma \ref{Inverse-divergence lemma}.  Applying \eqref{est 160} to \eqref{est 158} and \eqref{est 159} allows us to compute 
\begin{align*}
\lVert \mathring{R}_{0}(t) \rVert_{C_{x}} \overset{\eqref{est 159} \eqref{est 158} \eqref{est 160}\eqref{est 151a}}{\leq}& [1+ C_{G_{1}} C_{G_{2}} 8] M_{0}(t)  \\
\overset{\eqref{est 151b}}{\leq}& \bar{C} L M_{0}(t) a^{b(2- \gamma_{2} - 2 \beta)} \overset{\eqref{est 35}}{=} \bar{C} L M_{0}(t) \lambda_{1}^{2-\gamma_{2}} \delta_{1};
\end{align*} 
thus, the Hypothesis \ref{Hypothesis 4.1} \ref{Hypothesis 4.1 (b)} was verified. The verification of Hypothesis \ref{Hypothesis 4.1} \ref{Hypothesis 4.1 (c)} is immediate because $\beta$ in \eqref{est 154b} guarantees that $3 - \gamma_{2} - 2 \beta > 0$ allowing us to estimate  
\begin{align*}
&\lVert (\partial_{t} + (\Lambda^{2-\gamma_{2}} y_{\lambda_{1}^{-\alpha}} + \Lambda^{2-\gamma_{2}} z_{\lambda_{1}^{-\alpha}} ) \cdot \nabla) y_{0} \rVert_{C_{t,x}} \\
\overset{\eqref{est 155a} \eqref{est 136}}{\leq}& \frac{ L}{\pi} M_{0}(t)^{\frac{1}{2}} + C_{G_{1}} \frac{M_{0}(t)}{2\sqrt{2} \pi} + L^{\frac{1}{4}} \frac{M_{0}(t)^{\frac{1}{2}}}{2\pi} \leq C_{0} LM_{0}(t) a^{3 - \gamma_{2} - 2 \beta} \overset{\eqref{est 35}}{=} C_{0} LM_{0}(t) \lambda_{0}^{3- \gamma_{2}} \delta_{0} 
\end{align*} 
for $a \in 5 \mathbb{N}$ sufficiently large where we denoted 
\begin{equation*}
y_{\lambda_{1}^{-\alpha}}\triangleq  y_{0} \ast_{x}\phi_{\lambda_{1}^{-\alpha}} \ast_{t} \varphi_{\lambda_{1}^{-\alpha}} \hspace{1mm} \text{  and } \hspace{1mm}z_{\lambda_{1}^{-\alpha}} \triangleq z_{0} \ast_{x} \phi_{\lambda_{1}^{-\alpha}} \ast_{t} \varphi_{\lambda_{1}^{-\alpha}}. 
\end{equation*}
Finally, $y_{0}(0,x)$ is deterministic and $\mathring{R}_{0}(0,x)$ is deterministic due to $z(0,x) \equiv 0$ from \eqref{est 152}. 
\end{proof} 

In contrast to \eqref{est 16} we shall define 
\begin{equation}\label{est 164} 
y_{q+1} \triangleq y_{l} + w_{q+1}. 
\end{equation} 

\begin{proposition}\label{Proposition 4.8}
Fix $L > 1$. Suppose that $\gamma_{2}, \beta,  \alpha$, and $\gamma_{1}$ satisfy \eqref{est 154} and $(y_{q}, \mathring{R}_{q})$  is a $(\mathcal{F}_{t})_{t\geq 0}$-adapted solution to \eqref{est 134} that satisfies Hypothesis \ref{Hypothesis 4.1}. Then there exist $a \in 5 \mathbb{N}$ sufficiently large and $\beta > 1 - \frac{\gamma_{2}}{2}$ in \eqref{est 154b} sufficiently close to $1- \frac{\gamma_{2}}{2}$ that satisfies \eqref{est 151b} and $(\mathcal{F}_{t})_{t\geq 0}$-adapted processes $(y_{q+1}, \mathring{R}_{q+1})$ that solves \eqref{est 134}, satisfies the Hypothesis \ref{Hypothesis 4.1} at level $q+1$, and for all $t \in [0, T_{L}]$, 
\begin{equation}\label{est 162} 
\lVert y_{q+1}(t) - y_{q}(t) \rVert_{C_{x}} \leq C_{0}L^{\frac{1}{2}} M_{0}(t)^{\frac{1}{2}}  \delta_{q+1}^{\frac{1}{2}}.
\end{equation} 
Moreover, $w_{q+1} = y_{q+1} - y_{l}$ satisfies 
\begin{equation}\label{est 163} 
\supp\hat{w}_{q+1} \subset \left\{ \xi: \frac{\lambda_{q+1}}{2} \leq \lvert \xi \rvert \leq 2 \lambda_{q+1} \right\}.
\end{equation} 
Finally, if $y_{q}(0,x)$ and $\mathring{R}_{q}(0,x)$ are deterministic, then so are $y_{q+1}(0,x)$ and $\mathring{R}_{q+1}(0,x)$. 
\end{proposition}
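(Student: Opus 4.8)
The plan is to run the deterministic convex integration of \cite{BSV19} for the momentum equation, carrying out the modifications announced in Subsection \ref{Subsection 2.2}: amplitudes built from $\delta_{q+1}$ instead of $\rho_{j}$, a space-time mollified Reynolds stress, and transport by $\Lambda^{2-\gamma_{2}}y_{l}+\Lambda^{2-\gamma_{2}}z_{l}$. First I would mollify as in \eqref{est 180} and record the mollified equation \eqref{est 184} for $(y_{l},\mathring{R}_{l})$ -- this produces the genuinely new stress $R_{\mathrm{Com1}}$ of \eqref{est 181}, measuring the failure of $(\cdot)\ast_{x}\phi_{l}\ast_{t}\varphi_{l}$ to commute with the nonlocal quadratic nonlinearity of \eqref{est 134}, together with the error $R_{\mathrm{Com2}}$ of \eqref{est 191e} coming from the replacement of $z$ by $z_{q}$ and the remaining frequency cutoffs. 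Then I would set up the time partition $\chi_{j}$ of \eqref{est 209} at scale $\tau_{q+1}$ from \eqref{est 210}, the flow maps $\Phi_{j}$ solving \eqref{est 193a} (transport by $\Lambda^{2-\gamma_{2}}(y_{l}+z_{l})$ with $\Phi_{j}(\tau_{q+1}j,x)=x$), and the transported stresses $\mathring{R}_{q,j}$ of \eqref{est 195a}--\eqref{est 195b} with datum $\mathring{R}_{l}(\tau_{q+1}j,\cdot)$. A short a priori estimate -- Hypothesis \ref{Hypothesis 4.1} \ref{Hypothesis 4.1 (a)}--\ref{Hypothesis 4.1 (b)}, the bound \eqref{est 136} on $z$, and the choice \eqref{est 210} of $\tau_{q+1}$ -- gives $\nabla\Phi_{j}\approx\Id$ on $\supp\chi_{j}$, which is what makes the building blocks below frequency-localized.

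Next I would define $w_{q+1}$ as in \eqref{est 196}, namely $w_{q+1}=\sum_{j,k}\chi_{j}\,\mathbb{P}_{q+1,k}\bigl(a_{k,j}\,b_{k}(\lambda_{q+1}\Phi_{j})\bigr)$ with $a_{k,j}=\delta_{q+1}^{1/2}\gamma_{k}\bigl(\Id-\mathring{R}_{q,j}/(\lambda_{q+1}\delta_{q+1})\bigr)$ -- now unconditionally well-defined, since the argument lies in $B(\Id,\epsilon_{\gamma})$ once $a$ is large by Hypothesis \ref{Hypothesis 4.1} \ref{Hypothesis 4.1 (b)}, so there is no $\rho_{j}=0$ case and the hypothesis \ref{BSV19 Hypo g} disappears -- $b_{k}$ from \eqref{est 178}, $\mathbb{P}_{q+1,k}$ from \eqref{est 192}, plus a divergence corrector to enforce $\nabla\cdot w_{q+1}=0$; then $y_{q+1}\triangleq y_{l}+w_{q+1}$ as in \eqref{est 164}. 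The frequency localization \eqref{est 163} follows because $a_{k,j}$ has frequency support $\lesssim\lambda_{q}$ and $b_{k}(\lambda_{q+1}\Phi_{j})$ sits in an $O(\lambda_{q}\|\nabla\Phi_{j}-\Id\|)$ neighbourhood of $\lambda_{q+1}k$, so $\mathbb{P}_{q+1,k}$ keeps only the annulus $\{\tfrac{\lambda_{q+1}}{2}\le|\xi|\le 2\lambda_{q+1}\}$, and the corrector inherits this support. Hypothesis \ref{Hypothesis 4.1} \ref{Hypothesis 4.1 (a)} at level $q+1$ and the bound \eqref{est 162} then follow from $\|w_{q+1}\|_{C_{t,x}}\lesssim\delta_{q+1}^{1/2}$ (amplitude size, $|b_{k}|\approx 1$, \eqref{est 202}) together with $\|y_{l}\|_{C_{t,x}}\le\|y_{q}\|_{C_{t,x}}$, the telescoping bound \eqref{est 165} closing because $\sum_{q}\delta_{q}^{1/2}<\infty$; the $C_{x}^{2-\gamma_{2}}$ and $\Lambda^{2-\gamma_{2}}$ bounds of \eqref{est 166} cost an extra $\lambda_{q+1}^{2-\gamma_{2}}$ from a derivative hitting the phase $\lambda_{q+1}\Phi_{j}$ and are absorbed after enlarging $C_{0}$ and $a$.

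The core is $\mathring{R}_{q+1}$. I would substitute $y_{q+1}=y_{l}+w_{q+1}$ into \eqref{est 134} with $u=\Lambda^{2-\gamma_{2}}v$, use \eqref{est 38} to put the quadratic momentum terms in the form $\Lambda^{2-\gamma_{2}}(\cdot)^{\bot}\,\nabla^{\bot}\cdot(\cdot)$, apply $\mathcal{B}$ of Lemma \ref{Inverse-divergence lemma} to everything not already a divergence or a gradient, and collect $\mathring{R}_{q+1}$ as a sum of a linear error $\mathcal{B}\Lambda^{\gamma_{1}}w_{q+1}$, a Nash-type error (interaction of $w_{q+1}$ with $y_{l}$ and $z_{l}$), a transport error $R_{T}$ as in \eqref{est 301}, an oscillation error $R_{O}$, the commutator errors $R_{\mathrm{Com1}},R_{\mathrm{Com2}}$, and the $z_{l}$-generated terms. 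The decisive point is the cancellation inside $R_{O}$: because $\rho_{j}\equiv\delta_{q+1}>0$, the decomposition $\divergence\mathring{R}_{l}=\sum_{j}\chi_{j}^{2}\divergence(\mathring{R}_{l}-\mathring{R}_{q,j})+\sum_{j}\chi_{j}^{2}\divergence\mathring{R}_{q,j}$ -- the analogue of \eqref{est 9}, namely \eqref{est 259} -- holds outright, and the Geometric Lemma \ref{Geometric Lemma} identity \eqref{est 265} makes the zero-frequency part of the self-interaction of $w_{q+1}$ cancel $\sum_{j}\chi_{j}^{2}\divergence\mathring{R}_{q,j}$ exactly; this is the crucial identity \eqref{est 382}, which has to be re-verified with all modifications in place. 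What survives is high-frequency, so $\mathcal{B}$ gains $\lambda_{q+1}^{-1}$, plus the term $\sum_{j}\chi_{j}^{2}(\mathring{R}_{l}-\mathring{R}_{q,j})$, which by \eqref{est 12}--\eqref{est 13} with $\mathring{R}_{l}$ in place of $\mathring{R}_{q}$ is bounded as in \eqref{est 27} by $(t-\tau_{q+1}j)\,\|(\partial_{s}+(\Lambda^{2-\gamma_{2}}y_{l}+\Lambda^{2-\gamma_{2}}z_{l})\cdot\nabla)\mathring{R}_{l}\|_{C_{x}}$, the material derivative being estimated purely by mollifier bounds and Hypothesis \ref{Hypothesis 4.1} \ref{Hypothesis 4.1 (a)}--\ref{Hypothesis 4.1 (b)} as in \eqref{est 34} (see also \eqref{est 266}); thus the hypotheses \ref{BSV19 Hypo d} and \ref{BSV19 Hypo e} of \cite{BSV19} are never needed.

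Finally I would estimate every error in $C_{t,x}$. Each carries a power of $\lambda_{q+1}$ fixed by $\alpha$ (through $l=\lambda_{q+1}^{-\alpha}$ in \eqref{est 32} and through $\tau_{q+1}$ in \eqref{est 210}), $\beta$, $\gamma_{1}$, $\gamma_{2}$, and demanding all of them to be $\le\bar{C}LM_{0}(t)\lambda_{q+2}^{2-\gamma_{2}}\delta_{q+2}$ forces $\alpha$ into the window \eqref{est 154c}. I expect the main obstacle to be precisely this bookkeeping -- showing that \eqref{est 154c} is nonempty for every admissible $(\gamma_{1},\gamma_{2},\beta)$ satisfying \eqref{est 154a}, \eqref{est 154b}, \eqref{est 154d}, while keeping $\alpha>1$, which is mandatory so that $\mathring{R}_{l}$ retains frequency support in $B(0,4\lambda_{q})$ (Remark \ref{Remark 4.1}) and hence each spatial derivative on $\mathring{R}_{l}$ costs only $\lambda_{q}$ rather than $l^{-1}$ -- together with keeping \eqref{est 382} exact after the modifications and controlling the new errors $R_{\mathrm{Com1}},R_{\mathrm{Com2}}$. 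Hypothesis \ref{Hypothesis 4.1} \ref{Hypothesis 4.1 (c)} at level $q+1$ follows by computing the material derivative of $y_{q+1}$ and using that $b_{k}(\lambda_{q+1}\Phi_{j})$ and $\mathring{R}_{q,j}$ are transported by $\Lambda^{2-\gamma_{2}}(y_{l}+z_{l})$. Adaptedness is inherited since mollification, the transport ODEs, the Fourier projections and $\mathcal{B}$ are all deterministic and causal, and since $z(0,x)\equiv0$ by \eqref{est 152b} every mollified and transported quantity at $t=0$ is deterministic whenever $y_{q}(0,x),\mathring{R}_{q}(0,x)$ are, so $y_{q+1}(0,x)$ and $\mathring{R}_{q+1}(0,x)$ are deterministic. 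Taking $a\in5\mathbb{N}$ large absorbs all implicit constants, and choosing $\beta$ close to $1-\tfrac{\gamma_{2}}{2}$ secures \eqref{est 151b}.
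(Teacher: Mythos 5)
Your plan follows the paper's proof essentially step for step: mollify to get \eqref{est 184} with the new errors $R_{\text{Com1}}$, $R_{\text{Com2}}$, transport $\Phi_{j}$ and $\mathring{R}_{q,j}$ by $\Lambda^{2-\gamma_{2}}(y_{l}+z_{l})$, build $w_{q+1}$ from the Geometric Lemma amplitudes with $\rho_{j}$ replaced by a fixed multiple of $\delta_{q+1}$, exploit the exact cancellation \eqref{est 382} in the oscillation error, bound $R_{O,\text{approx}}$ via the material derivative of $\mathring{R}_{l}$ as in \eqref{est 266}, and close the parameter bookkeeping in \eqref{est 154}; the adaptedness/determinism argument is also the paper's.

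One concrete point needs fixing. The amplitude you wrote, $a_{k,j}=\delta_{q+1}^{1/2}\gamma_{k}\bigl(\Id-\mathring{R}_{q,j}/(\lambda_{q+1}\delta_{q+1})\bigr)$, is mis-normalized: by Hypothesis \ref{Hypothesis 4.1} \ref{Hypothesis 4.1 (b)} the stress is only bounded by $\bar{C}LM_{0}(t)\lambda_{q+1}^{2-\gamma_{2}}\delta_{q+1}$, so for $\gamma_{2}=1$ your argument of $\gamma_{k}$ has size of order $\bar{C}LM_{0}(t)$ and does \emph{not} lie in $B(\Id,\epsilon_{\gamma})$ ``once $a$ is large''; the normalization must be relative to the full inductive bound, with the compensating prefactor $\sqrt{\bar{C}}L^{1/2}M_{0}(\tau_{q+1}j)^{1/2}/(\sqrt{\epsilon_{\gamma}}\sqrt{\gamma_{2}})$ as in \eqref{est 196} (see \eqref{est 384}), and these $L$, $M_{0}$, $\lambda_{q+1}^{2-\gamma_{2}}$, $\epsilon_{\gamma}$ factors are exactly what makes the low-frequency self-interaction reproduce $\mathring{R}_{q,j}$ in \eqref{est 382} and what produces the constants in \eqref{est 162} and \eqref{est 165}--\eqref{est 166}. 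A minor further remark: no separate divergence corrector is needed, since $\mathbb{P}_{q+1,k}$ in \eqref{est 192} already contains the Leray projection, so $w_{q+1}$ is divergence-free and frequency-localized by construction; otherwise your outline matches the paper's argument, with the quantitative verification of the error bounds (Propositions \ref{Proposition 4.9}--\ref{Proposition 4.10} and Subsections following them) being the part you have deferred.
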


\begin{proof}[Proof of Theorem \ref{Theorem 2.1} assuming Proposition \ref{Proposition 4.8}]
Fix $T > 0, K > 1$, and $\kappa \in (0,1)$. We take $L > 1$ sufficiently large that satisfies \eqref{est 151a}, $2C_{0} L^{\frac{1}{2}} \geq \pi$, and \begin{subequations}
\begin{align}
&\sqrt{2} \pi^{2} L^{2} e^{2L T} > (\sqrt{2} \pi^{2} L^{2} + \sqrt{8} \pi^{2} L^{2} + L) e^{LT}, \label{est 175} \\
&e^{LT} > K, \hspace{2mm} \text{ and } \hspace{2mm} L e^{LT} \geq L^{\frac{1}{4}} + K \sqrt{ T \Tr ((-\Delta)^{\frac{1}{2}} GG^{\ast} )}. \label{est 176} 
\end{align}
\end{subequations} 
Because $\lim_{L\to\infty} T_{L} = \infty$ $\textbf{P}$-a.s. due to \eqref{est 128}, for the fixed $T > 0$  and $\kappa > 0$, we increase $L$ larger if necessary to attain \eqref{est 148}. We fix such $L > 1$ and rely on Proposition \ref{Proposition 4.8} to find $a \in 5 \mathbb{N}$ and $\beta$ in \eqref{est 154b}, allowing us to start from $(y_{0}, \mathring{R}_{0})$ in Proposition \ref{Proposition 4.7} and continue with $(y_{q}, \mathring{R}_{q})$ for all $q \in \mathbb{N}$
that solves \eqref{est 134} and satisfies the Hypothesis \ref{Hypothesis 4.1}, as well as \eqref{est 162}. Now, because $z\in C_{T} \dot{H}_{x}^{4+ \frac{\sigma}{2}}$ due to \eqref{est 115}, it is immediate that $z_{q} \to z$ in $L_{T_{L}}^{2} \dot{H}_{x}^{1}$ as $q\to \infty$ which is all we need for our subsequent purpose. We compute for all $\beta ' \in (\frac{1}{2}, \beta)$, and $t \in [0, T_{L}]$ by using the interpolation inequality (e.g.,  \cite[p. 88]{CDS12b}), as well as the fact that $b^{q+1} \geq b(q+1)$ for all $q \geq 0$ and $b \geq 2$, 
\begin{align} 
&\sum_{q\geq 0} \lVert y_{q+1}(t) - y_{q}(t) \rVert_{C_{x}^{\beta'}} 
\overset{\eqref{est 162} }{\lesssim} \sum_{q\geq 0} ( L^{\frac{1}{2}} M_{0}(t)^{\frac{1}{2}} \delta_{q+1}^{\frac{1}{2}})^{1- \beta'} ( \lVert y_{q+1} \rVert_{C_{t}C_{x}^{1}} + \lVert y_{q} \rVert_{C_{t}C_{x}^{1}})^{\beta'} \label{est 328} \\
\overset{\eqref{est 166}}{\lesssim}& \sum_{q\geq 0} (L^{\frac{1}{2}}M_{0}(t)^{\frac{1}{2}} \delta_{q+1}^{\frac{1}{2}})^{1- \beta'} ( L^{\frac{1}{2}}M_{0}(t)^{\frac{1}{2}} \delta_{q+1}^{\frac{1}{2}} \lambda_{q+1})^{\beta'} \lesssim L^{\frac{1}{2}}M_{0}(t)^{\frac{1}{2}}\sum_{q\geq 0} a^{b(q+1) (-\beta + \beta')} < \infty.  \nonumber 
\end{align} 
Therefore, $\{y_{q}\}_{q \in \mathbb{N}_{0}}$ is Cauchy in $C_{t}C_{x}^{\beta'}$ for $\beta' \in (\frac{1}{2}, \beta)$. This is why $\frac{1}{2} + \iota$ in \eqref{est 129} requires $\frac{1}{2} + \iota < \beta \overset{\eqref{est 154d} }{<} \frac{3-\gamma_{1}}{3}$ so that $\iota < \frac{1}{2} - \frac{\gamma_{1}}{3}$. We can also prove the following temporal regularity estimate; because it will be useful in the proof of Proposition \ref{Proposition 4.8}, we prove this estimate for general $\gamma_{2} \in [1,2)$ although $\gamma_{2} = 1$ in the current proof of Theorem \ref{Theorem 2.1}. Because \eqref{est 154} guarantees that $\beta < 2- \gamma_{2}$, by \eqref{est 168}, \eqref{est 166}, and \eqref{est 136}, we estimate 
\begin{align}
\lVert \partial_{t} y_{q}(t) \rVert_{C_{x}} \leq& \lVert ( \partial_{t} + (\Lambda^{2- \gamma_{2}} y_{l} + \Lambda^{2- \gamma_{2}} z_{l}) \cdot \nabla) y_{q} \rVert_{C_{t,x}} + \lVert (\Lambda^{2- \gamma_{2}}  y_{l} + \Lambda^{2- \gamma_{2}} z_{l}) \cdot \nabla y_{q} \rVert_{C_{t,x}} \nonumber \\
&\lesssim LM_{0}(t) \lambda_{q}^{3-\gamma_{2}} \delta_{q} +  ( L^{\frac{1}{2}} M_{0}(t)^{\frac{1}{2}} \delta_{q}^{\frac{1}{2}} \lambda_{q}^{2-\gamma_{2}} + L^{\frac{1}{4}}) \lambda_{q}^{\gamma_{2} -1} (L^{\frac{1}{2}}M_{0}(t)^{\frac{1}{2}} \delta_{q}^{\frac{1}{2}} \lambda_{q}^{2-\gamma_{2}}) \nonumber\\
&\lesssim LM_{0}(t) \lambda_{q}^{3-\gamma_{2}} \delta_{q}. \label{est 169} 
\end{align}
Considering \eqref{est 169} in case $\gamma_{2} = 1$, by interpolation inequality again, \eqref{est 162} and \eqref{est 169}, this leads to, for all $q \in \mathbb{N}$ and $\eta \in [0, \frac{\beta}{2-\beta})$ where $\beta < 2$ due to \eqref{est 154b}, 
\begin{equation}\label{est 329}
\lVert y_{q} - y_{q-1} \rVert_{C_{t}^{\eta} C_{x}} \lesssim \lVert y_{q} - y_{q-1} \rVert_{C_{t,x}}^{1 - \eta} \lVert y_{q} - y_{q-1} \rVert_{C_{t}^{1}C_{x}}^{\eta} \lesssim L^{\frac{1+ \eta}{2}}M_{0}(t)^{\frac{1+\eta}{2}} \lambda_{q}^{2\eta - \beta (1+ \eta)} \to 0 
\end{equation} 
as $q\to \infty$. Hence, there exists a limiting solution $\lim_{q\to\infty} y_{q} \triangleq y \in C([0,T_{L}]; C^{\beta'} (\mathbb{T}^{2})) \cap C^{\eta} ( [0, T_{L}]; C(\mathbb{T}^{2}))$ for which  
there exists deterministic constant $C_{L} > 0$ such that 
\begin{equation}\label{est 137}
\lVert y \rVert_{C_{T_{L}}C_{x}^{\beta'}} + \lVert y \rVert_{C_{T_{L}}^{\eta} C_{x}} \leq C_{L} \hspace{3mm}\forall \hspace{1mm} \beta' \in \left(\frac{1}{2}, \beta \right), \eta \in \left[0, \frac{\beta}{2-\beta} \right), 
\end{equation}  
where $\frac{1}{3}< \frac{\beta}{2-\beta}$; we denote its initial data as $y^{\text{in}} \triangleq y\rvert_{t=0}$. Because each $y_{q}$ is $(\mathcal{F}_{t})_{t\geq 0}$-adapted due to Propositions \ref{Proposition 4.7}-\ref{Proposition 4.8}, we deduce that $y$ is $(\mathcal{F}_{t})_{t\geq 0}$-adapted. Moreover, because the range of $\beta$ in \eqref{est 154b} guarantees that $1 - 2 \beta < 0$, we deduce that  
\begin{equation}\label{est 331}
\lVert \mathring{R}_{q}\rVert_{C_{T_{L}} C_{x}} \overset{\eqref{est 167}}{\leq} \bar{C} LM_{0}(L) \lambda_{q+1} \delta_{q+1} \overset{\eqref{est 35}}{=} \bar{C}  L^{5} e^{4L^{2}} \lambda_{q+1}^{1 - 2\beta} \to 0  
\end{equation} 
as $q\to\infty$. Using the convergence of $y_{q}$ to $y$ in $C_{T_{L}}C_{x}^{\beta'}$ for $\beta' \in (\frac{1}{2}, \beta)$, as well as \eqref{est 79} and \eqref{est 50}, it follows that $(y,z)$ solves \eqref{est 113} weakly; i.e. for all $\psi \in C^{\infty}(\mathbb{T}^{2}) \cap \dot{H}_{\sigma}^{\frac{1}{2}}$ and $t \in [0,T_{L}]$ 
\begin{align}
& \langle y(t) - y(0)), \psi \rangle - \int_{0}^{t} \sum_{i,j=1}^{2} \langle \Lambda (y+z)_{i}(s), \partial_{i} \psi_{j} (y+z)_{j}(s)\rangle_{\dot{H}_{x}^{-\frac{1}{2}} - \dot{H}_{x}^{\frac{1}{2}}}  - \langle y(s), \Lambda^{\gamma_{1}} \psi \rangle \nonumber\\
& \hspace{30mm} - \frac{1}{2} \sum_{i,j=1}^{2} \langle  \partial_{i} (y+z)_{j}(s), [\Lambda, \psi_{i}] (y+z)_{j}(s) \rangle_{\dot{H}_{x}^{-\frac{1}{2}} - \dot{H}_{x}^{\frac{1}{2}}} ds = 0. \label{est 170}
\end{align}  
It follows from \eqref{est 152} and definitions of $y = v-z$ and $p_{2} = p - p_{1}$ that $v$ solves \eqref{est 30} weakly. As $z(0,x) = 0$ from \eqref{est 152}, we denote $v^{\text{in}} \triangleq y^{\text{in}}$. Next, we compute using Parseval theorem 
\begin{equation}\label{est 173}
\lVert \Lambda^{\frac{1}{2}} y_{0}(t) \rVert_{L_{x}^{2}}^{2} \overset{\eqref{est 172} \eqref{est 171}}{=} M_{0}(t) 8 \pi^{4}. 
\end{equation} 
Because $z(t)$ from \eqref{est 114} and $y$ are $(\mathcal{F}_{t})_{t\geq 0}$ -adapted, we see that $v$ is $(\mathcal{F}_{t})_{t\geq 0}$-adapted. Moreover, the bounds from \eqref{est 137} on $y_{q}$ and \eqref{est 136} on $z$ give the regularity of $v$ in \eqref{est 129} as follows: for almost every (a.e.) $\omega \in \Omega$, $\beta' \in (\frac{1}{2}, \beta)$, and $\eta \in (0, \frac{1}{3} ]$, 
\begin{align*}
\lVert v(\omega) \rVert_{C_{T_{L}} C_{x}^{\beta'}} + \lVert v(\omega) \rVert_{C_{T_{L}}^{\eta} C_{x}} \leq \lVert y(\omega) \rVert_{C_{T_{L}} C_{x}^{\beta'}} + \lVert z (\omega) \rVert_{C_{T_{L}} C_{x}^{\beta'}} + \lVert y(\omega) \rVert_{C_{T_{L}}^{\eta} C_{x}} + \lVert z(\omega) \rVert_{C_{T_{L}}^{\eta} C_{x}} < \infty. 
\end{align*} 
Next, we compute for all $t \in [0, T_{L}]$ using the fact that $\supp (y_{q+1} - y_{q}) \hspace{1mm} \hat{} \subset B(0, 2\lambda_{q+1})$ due to Hypothesis \ref{Hypothesis 4.1} \ref{Hypothesis 4.1 (a)}, the fact that $b^{q+1} \geq b(q+1)$ for all $q \geq 0$ and $b \geq 2$, and $\frac{1}{2} < \beta$ from \eqref{est 154b}, 
\begin{align}
& \lVert y(t) - y_{0}(t) \rVert_{\dot{H}_{x}^{\frac{1}{2}}} \overset{ \eqref{est 163} }{\leq} \sum_{q \geq 0} ( 2\lambda_{q+1})^{\frac{1}{2}} \lVert (y_{q+1} - y_{q})(t) \rVert_{L_{x}^{2}} \label{est 174}\\
\overset{\eqref{est 162}}{\leq}& 2^{\frac{3}{2}}\pi C_{0} L^{\frac{1}{2}} M_{0}(t)^{\frac{1}{2}} \sum_{q \geq 0}  a^{b(q+1) (\frac{1}{2} - \beta)}  = 2^{\frac{3}{2}}\pi C_{0} L^{\frac{1}{2}} M_{0}(t)^{\frac{1}{2}} \frac{ a^{b (\frac{1}{2}- \beta)}}{1- a^{b(\frac{1}{2} - \beta)}} \overset{\eqref{est 151b}}{<} \sqrt{2} \pi^{2} M_{0}(t)^{\frac{1}{2}}. \nonumber 
\end{align}
Then we can compute 
\begin{align}
\lVert y(T) \rVert_{\dot{H}_{x}^{\frac{1}{2}}} 
\overset{\eqref{est 173}}{\geq}& M_{0}(T)^{\frac{1}{2}} \sqrt{8} \pi^{2} - \lVert y(T) - y_{0}(T) \rVert_{\dot{H}_{x}^{\frac{1}{2}}}  \label{est 177}\\
\overset{\eqref{est 174}}{\geq}&  \sqrt{2} \pi^{2} M_{0}(T)^{\frac{1}{2}} 
\overset{\eqref{est 175}}{>} ( \sqrt{2} \pi^{2} L^{2} + \sqrt{8} \pi^{2} L^{2} + L) e^{LT} \nonumber \\
\overset{\eqref{est 173} \eqref{est 174}}{\geq}& ( \lVert y(0) - y_{0}(0) \rVert_{\dot{H}_{x}^{\frac{1}{2}}} + \lVert y_{0} (0) \rVert_{\dot{H}_{x}^{\frac{1}{2}}} + L) e^{LT} \geq ( \lVert y(0) \rVert_{\dot{H}_{x}^{\frac{1}{2}}} + L) e^{LT}. \nonumber 
\end{align} 
Therefore, we can deduce \eqref{est 146} as follows: on $\{T_{L} \geq T \}$
\begin{align*}
\lVert v(T) \rVert_{\dot{H}_{x}^{\frac{1}{2}}} \geq& \lVert y(T) \rVert_{\dot{H}_{x}^{\frac{1}{2}}} - \lVert z(T) \rVert_{\dot{H}_{x}^{\frac{1}{2}}} \\
\overset{\eqref{est 177} \eqref{est 136}}{>}& (\lVert y(0) \rVert_{\dot{H}_{x}^{\frac{1}{2}}} + L)e^{LT} - L^{\frac{1}{4}} \overset{\eqref{est 176}}{\geq}  K \left[ \lVert v^{\text{in}} \rVert_{\dot{H}_{x}^{\frac{1}{2}}} + \sqrt{ T \Tr ((-\Delta)^{\frac{1}{2}} GG^{\ast} ) } \right].  
\end{align*}
Finally, because $y_{0} (0,x)$ is deterministic due to Proposition \ref{Proposition 4.7}, Proposition \ref{Proposition 4.8} implies that $y(0,x)$ is deterministic. As $z (0,x) \equiv 0$ by \eqref{est 152b}, this implies that $v^{\text{in}}$ is deterministic. 
\end{proof} 

\subsection{Proof of Proposition \ref{Proposition 4.8}}
Similarly to previous works on probabilistic convex integration, there will be various functions of $L$ in estimates and we will take $b = b(L, \gamma_{2}) \in \mathbb{N}$ sufficiently large (e.g., \eqref{138 star}, \eqref{est 201}, \eqref{est 223}, \eqref{est 266}, and \eqref{est 284}) to bound them. Here, we emphasize that this $b$ can be chosen independently from $a$ and $\beta$. Considering such $b \in \mathbb{N}$ fixed not depending on $a$ or $\beta$, we notice that due to \eqref{est 419} we can take $a \in 5 \mathbb{N}$ sufficiently large such that $a \geq e^{4}$ and $\beta > 1 -\frac{\gamma_{2}}{2}$ sufficiently close to $1 -\frac{\gamma_{2}}{2}$ and achieve \eqref{est 151b}. It follows from \eqref{est 134}, \eqref{est 180}, \eqref{est 180}, and \eqref{est 38}  that
\begin{align}
&\partial_{t} y_{l} + ( \Lambda^{2- \gamma_{2}} (y_{l} + z_{l}) \cdot \nabla) (y_{l} + z_{l}) - (\nabla (y_{l} + z_{l}))^{T} \cdot \Lambda^{2- \gamma_{2}} (y_{l} + z_{l}) \nonumber\\
&\hspace{45mm} + \nabla p_{l} + \Lambda^{\gamma_{1}} y_{l} = \divergence (\mathring{R}_{l} + R_{\text{Com1}}), \label{est 184}
\end{align}
where  
\begin{align}
& p_{l} \triangleq  p_{q} \ast_{x} \phi_{l} \ast_{t} \varphi_{l}, \label{est 181}\\
& R_{\text{Com1}} \triangleq \mathcal{B} [ \Lambda^{2- \gamma_{2}} (y_{l} + z_{l})^{\bot} \nabla^{\bot} \cdot (y_{l} + z_{l})  - [\Lambda^{2- \gamma_{2}} (y_{q} + z_{q})^{\bot} \nabla^{\bot} \cdot (y_{q} + z_{q})] \ast_{x} \phi_{l} \ast_{t} \varphi_{l} ].  \nonumber 
\end{align}
Because \eqref{est 154b} guarantees that $\beta < 2- \gamma_{2}$, for $a \in 5 \mathbb{N}$ sufficiently large we see that 
\begin{equation}\label{est 203} 
\lVert y_{l} - y_{q} \rVert_{C_{t,x}} \lesssim l \lVert y_{q} \ast_{x} \phi_{l} \rVert_{C_{t}^{1}C_{x}} + l \lVert y_{q} \rVert_{C_{t}C_{x}^{1}} 
\overset{\eqref{est 169} \eqref{est 166}}{\lesssim} l L M_{0}(t) \lambda_{q}^{3-\gamma_{2}} \delta_{q}.
\end{equation} 
Now we strategically decompose the Reynolds stress at level $q+1$; we note that especially due to $z_{q+1}$ and $z_{q}$, there will be an extra layer of complication in comparison to previous works (e.g., \cite{HZZ19, Y20a, Y20c}). First, due to  \eqref{est 134}, \eqref{est 164}, and \eqref{est 184}, 
\begin{align}
& \divergence \mathring{R}_{q+1} - \nabla p_{q+1} =  - (\Lambda^{2- \gamma_{2}} y_{l} \cdot \nabla) z_{l} - (\Lambda^{2- \gamma_{2}} z_{l} \cdot \nabla) y_{l} - (\Lambda^{2- \gamma_{2}} z_{l} \cdot \nabla) z_{l} \nonumber\\
& + (\nabla y_{l})^{T} \cdot \Lambda^{2- \gamma_{2}} z_{l} + (\nabla z_{l})^{T} \cdot \Lambda^{2- \gamma_{2}} y_{l} + (\nabla z_{l})^{T} \cdot \Lambda^{2- \gamma_{2}} z_{l} - \nabla p_{l} + \divergence \mathring{R}_{l} + \divergence R_{\text{Com1}} \nonumber\\
&+  \partial_{t} w_{q+1}  + (\Lambda^{2- \gamma_{2}} y_{l} \cdot \nabla) w_{q+1} + (\Lambda^{2- \gamma_{2}} y_{l} \cdot \nabla) z_{q+1} \nonumber\\
&+ (\Lambda^{2- \gamma_{2}} w_{q+1} \cdot \nabla) y_{l} + (\Lambda^{2- \gamma_{2}} w_{q+1} \cdot \nabla) w_{q+1} + (\Lambda^{2- \gamma_{2}} w_{q+1} \cdot \nabla) z_{q+1} \nonumber\\
&+ (\Lambda^{2- \gamma_{2}} z_{q+1} \cdot \nabla) y_{l} + (\Lambda^{2- \gamma_{2}} z_{q+1} \cdot \nabla) w_{q+1} + (\Lambda^{2- \gamma_{2}} z_{q+1} \cdot \nabla) z_{q+1} \nonumber\\
& - (\nabla y_{l})^{T} \cdot \Lambda^{2- \gamma_{2}} w_{q+1} - (\nabla y_{l})^{T} \cdot \Lambda^{2- \gamma_{2}} z_{q+1} \nonumber\\
& - (\nabla w_{q+1})^{T} \cdot \Lambda^{2- \gamma_{2}} y_{l} - (\nabla w_{q+1})^{T} \cdot \Lambda^{2- \gamma_{2}} w_{q+1} - (\nabla w_{q+1})^{T} \cdot \Lambda^{2- \gamma_{2}} z_{q+1} \nonumber \\
& - (\nabla z_{q+1})^{T} \cdot \Lambda^{2- \gamma_{2}} y_{l} - (\nabla z_{q+1})^{T} \cdot \Lambda^{2- \gamma_{2}} w_{q+1} - (\nabla z_{q+1})^{T} \cdot \Lambda^{2- \gamma_{2}} z_{q+1} + \Lambda^{\gamma_{1}} w_{q+1}.  \label{est 189}
\end{align}
We rewrite some of these terms as follows: 
\begin{align}
& - (\Lambda^{2- \gamma_{2}} y_{l} \cdot \nabla) z_{l} - (\Lambda^{2- \gamma_{2}} z_{l} \cdot \nabla) y_{l} - ( \Lambda^{2- \gamma_{2}} z_{l} \cdot \nabla) z_{l} \label{est 185}\\
&+ (\Lambda^{2- \gamma_{2}} y_{l} \cdot \nabla) z_{q+1} + (\Lambda^{2- \gamma_{2}} w_{q+1} \cdot \nabla) z_{q+1} + (\Lambda^{2- \gamma_{2}} z_{q+1} \cdot \nabla) y_{l} \nonumber\\
&+ (\Lambda^{2- \gamma_{2}} z_{q+1} \cdot \nabla) w_{q+1} + (\Lambda^{2- \gamma_{2}} z_{q+1} \cdot \nabla) z_{q+1} \nonumber\\
\overset{\eqref{est 164} }{=}&  \Lambda^{2- \gamma_{2}} y_{q+1} \cdot\nabla (z_{q+1} - z_{q}) + \Lambda^{2- \gamma_{2}} y_{q+1} \cdot\nabla (z_{q} - z_{l}) + \Lambda^{2- \gamma_{2}} w_{q+1} \cdot\nabla z_{l}  \nonumber\\
&+ \Lambda^{2- \gamma_{2}} (z_{q+1} - z_{q}) \cdot\nabla y_{q+1} + \Lambda^{2- \gamma_{2}} (z_{q} - z_{l}) \cdot\nabla y_{q+1} +\Lambda^{2- \gamma_{2}} z_{l} \cdot\nabla w_{q+1} \nonumber \\
&+ \Lambda^{2- \gamma_{2}} (z_{q+1} - z_{q}) \cdot\nabla z_{q+1} + \Lambda^{2- \gamma_{2}} (z_{q} - z_{l}) \cdot\nabla z_{q+1} \nonumber\\
&+ \Lambda^{2-\gamma_{2}} z_{l} \cdot\nabla (z_{q+1} - z_{q}) + \Lambda^{2- \gamma_{2}} z_{l} \cdot\nabla (z_{q} - z_{l} ), \nonumber 
\end{align}
\begin{align}
& (\nabla y_{l})^{T} \cdot \Lambda^{2- \gamma_{2}} z_{l} + (\nabla z_{l})^{T} \cdot \Lambda^{2- \gamma_{2}} y_{l} + (\nabla z_{l})^{T} \cdot \Lambda^{2- \gamma_{2}} z_{l} \label{est 186}  \\
& - (\nabla y_{l})^{T} \cdot \Lambda^{2- \gamma_{2}} z_{q+1} - (\nabla z_{q+1})^{T} \cdot \Lambda^{2- \gamma_{2}} y_{l} - (\nabla z_{q+1})^{T} \cdot \Lambda^{2- \gamma_{2}} z_{q+1}\nonumber \\
=& (\nabla y_{l})^{T} \cdot \Lambda^{2- \gamma_{2}} (z_{l} - z_{q}) + (\nabla y_{l})^{T} \cdot \Lambda^{2- \gamma_{2}} (z_{q}  - z_{q+1}) \nonumber \\
&+ ( \nabla (z_{l} - z_{q} ))^{T} \cdot \Lambda^{2- \gamma_{2}} y_{l}+ ( \nabla (z_{q} - z_{q+1}))^{T} \cdot \Lambda^{2- \gamma_{2}} y_{l} \nonumber \\
&+ ( \nabla (z_{l} - z_{q}))^{T} \cdot \Lambda^{2- \gamma_{2}} z_{l} + ( \nabla (z_{q} - z_{q+1}))^{T} \cdot \Lambda^{2- \gamma_{2}} z_{l}  \nonumber \\
&+ (\nabla z_{q+1})^{T} \cdot \Lambda^{2- \gamma_{2}} (z_{l} - z_{q}) + (\nabla z_{q+1})^{T} \cdot\Lambda^{2- \gamma_{2}} (z_{q} - z_{q+1}), \nonumber 
\end{align}
\begin{align}
&- (\nabla w_{q+1})^{T} \cdot \Lambda^{2- \gamma_{2}} y_{l} - (\nabla w_{q+1})^{T} \cdot \Lambda^{2- \gamma_{2}} z_{q+1} \label{est 187}\\
=& ( \nabla  \Lambda^{2- \gamma_{2}} (y_{l} + z_{q} ))^{T} \cdot w_{q+1} + ( \nabla \Lambda^{2- \gamma_{2}} (z_{q+1} - z_{q}) )^{T} \cdot w_{q+1} - \nabla (w_{q+1} \cdot \Lambda^{2-\gamma_{2}} (y_{l} + z_{q+1})), \nonumber 
\end{align} 
and because $\Lambda^{2- \gamma_{2}} w_{q+1} \cdot \nabla z_{l}$ in \eqref{est 185} will still create a difficulty to handle, we couple it with $- (\nabla z_{q+1})^{T} \cdot \Lambda^{2- \gamma_{2}} w_{q+1}$ from \eqref{est 189} to write 
\begin{align}
& \Lambda^{2- \gamma_{2}} w_{q+1} \cdot\nabla z_{l} - (\nabla z_{q+1})^{T} \cdot \Lambda^{2- \gamma_{2}} w_{q+1}  \label{est 188}\\
\overset{\eqref{est 38}  }{=}& \Lambda^{2- \gamma_{2}} w_{q+1}^{\bot} (\nabla^{\bot} \cdot z_{l}) + ( \nabla ( z_{l} - z_{q}))^{T} \cdot \Lambda^{2- \gamma_{2}} w_{q+1} + ( \nabla (z_{q} - z_{q+1}))^{T} \cdot \Lambda^{2- \gamma_{2}} w_{q+1}.\nonumber 
\end{align} 
Finally, the products of $y_{q+1}$ with $z_{q}- z_{l}$ in \eqref{est 185} are difficult to treat while a product of $w_{q+1}$ with $z_{q} - z_{l}$ grants us a factor of $\lambda_{q+1}^{-1}$ from $\mathcal{B}$ considering its frequency support (see how we treat $R_{T}$ in Subsection \ref{Subsection 4.1.1} and $R_{N}$ in Subsection \ref{Subsection 4.1.2}); therefore, we pair up some terms from \eqref{est 185} and \eqref{est 186} selectively to rewrite by using \eqref{est 164} and \eqref{est 38},  
\begin{subequations}\label{est 411} 
\begin{align}
& \Lambda^{2- \gamma_{2}} y_{q+1} \cdot \nabla (z_{q} - z_{l}) + (\nabla (z_{l} - z_{q}))^{T} \cdot \Lambda^{2-\gamma_{2}} y_{l}  \nonumber\\
=& \Lambda^{2-\gamma_{2}} w_{q+1} \cdot \nabla (z_{q} - z_{l}) + \Lambda^{2-\gamma_{2}} y_{l}^{\bot} \nabla^{\bot} \cdot (z_{q} - z_{l}), \label{est 411a}  \\
&\Lambda^{2-\gamma_{2}} (z_{q} - z_{l}) \cdot \nabla y_{q+1} + (\nabla y_{l})^{T} \cdot \Lambda^{2-\gamma_{2}} (z_{l} - z_{q})  \nonumber \\
=& \Lambda^{2-\gamma_{2}} (z_{q} - z_{l}) \cdot \nabla w_{q+1} + \Lambda^{2-\gamma_{2}} (z_{q} - z_{l})^{\bot} \nabla^{\bot} \cdot y_{l}, \label{est 411b}  
\end{align}
\end{subequations}
and furthermore for convenience by \eqref{est 38}, 
\begin{align}
& ( \nabla (z_{l}- z_{q}))^{T} \cdot \Lambda^{2-\gamma_{2}} w_{q+1} + \Lambda^{2-\gamma_{2}} y_{q+1} \cdot \nabla (z_{q} - z_{l}) + ( \nabla (z_{l} - z_{q}))^{T}\cdot \Lambda^{2-\gamma_{2}} y_{l} \nonumber\\
& \hspace{10mm}  = - \Lambda^{2-\gamma_{2}} w_{q+1}^{\bot} \nabla^{\bot} \cdot (z_{l} - z_{q}) + \Lambda^{2-\gamma_{2}} y_{l}^{\bot} \nabla^{\bot} \cdot (z_{q} - z_{l}), \nonumber \\ 
& \Lambda^{2-\gamma_{2}} (z_{q+1} - z_{q}) \cdot \nabla z_{q+1} + (\nabla z_{q+1})^{T} \cdot \Lambda^{2-\gamma_{2}} (z_{q} - z_{q+1}) = \Lambda^{2-\gamma_{2}} (z_{q+1} - z_{q})^{\bot} \nabla^{\bot} \cdot z_{q+1},  \nonumber \\
& \Lambda^{2-\gamma_{2}} (z_{q} - z_{l}) \cdot \nabla z_{q+1} + (\nabla z_{q+1})^{T} \cdot \Lambda^{2-\gamma_{2}} (z_{l} - z_{q}) =  \Lambda^{2-\gamma_{2}} (z_{q} - z_{l})^{\bot} \nabla^{\bot} \cdot z_{q+1},  \nonumber \\
& \Lambda^{2-\gamma_{2}} z_{l} \cdot \nabla (z_{q+1} - z_{q}) + (\nabla (z_{q} - z_{q+1}))^{T}\cdot \Lambda^{2-\gamma_{2}} z_{l} = \Lambda^{2-\gamma_{2}} z_{l}^{\bot} \nabla^{\bot} \cdot (z_{q+1} - z_{q}),  \nonumber \\
&\Lambda^{2-\gamma_{2}} z_{l} \cdot \nabla (z_{q} - z_{l}) + (\nabla (z_{l} - z_{q}))^{T} \cdot \Lambda^{2-\gamma_{2}} z_{l} = \Lambda^{2-\gamma_{2}} z_{l}^{\bot} \nabla^{\bot} \cdot (z_{q} - z_{l}). \label{est 412}
\end{align} 
Applying \eqref{est 185}-\eqref{est 412} to \eqref{est 189} allows us to deduce 
\begin{equation}\label{est 190} 
\divergence \mathring{R}_{q+1} = \divergence (R_{T} + R_{N} + R_{L} + R_{O} + R_{\text{Com1}} + R_{\text{Com2}})
\end{equation} 
where in addition to $R_{\text{Com1}}$ in \eqref{est 181} we defined  
\begin{subequations}\label{est 191} 
\begin{align}
 \divergence R_{T} \triangleq& \partial_{t} w_{q+1} + (\Lambda^{2- \gamma_{2}} y_{l} + \Lambda^{2-\gamma_{2}} z_{l}) \cdot \nabla w_{q+1},  \label{est 191a} \\
\divergence R_{N} \triangleq& ( \nabla ( \Lambda^{2- \gamma_{2}} (y_{l} + z_{q} ))^{T} \cdot w_{q+1} + (\Lambda^{2- \gamma_{2}} w_{q+1} \cdot \nabla) y_{l} - (\nabla y_{l})^{T} \cdot \Lambda^{2- \gamma_{2}} w_{q+1}, \label{est 191c} \\
\divergence R_{L} \triangleq& \Lambda^{\gamma_{1}} w_{q+1} + \Lambda^{2- \gamma_{2}} w_{q+1}^{\bot} (\nabla^{\bot} \cdot z_{l}), \label{est 191d} \\
 \divergence R_{O} \triangleq& \divergence \mathring{R}_{l} + (\Lambda^{2- \gamma_{2}} w_{q+1} \cdot \nabla) w_{q+1} - (\nabla w_{q+1})^{T} \cdot \Lambda^{2- \gamma_{2}} w_{q+1}, \label{est 191b} \\
\divergence R_{\text{Com2}} \triangleq&  \Lambda^{2-\gamma_{2}} y_{q+1} \cdot \nabla (z_{q+1} - z_{q}) + \Lambda^{2-\gamma_{2}} (z_{q+1} - z_{q}) \cdot \nabla y_{q+1}+ (\nabla y_{l})^{T} \cdot \Lambda^{2-\gamma_{2}} (z_{q} - z_{q+1}) \nonumber \\
& \hspace{4mm} + (\nabla (z_{q} - z_{q+1}))^{T} \cdot \Lambda^{2-\gamma_{2}} y_{l} + (\nabla \Lambda^{2-\gamma_{2}} (z_{q+1} - z_{q}))^{T} \cdot w_{q+1} \nonumber \\
&\hspace{4mm} -\Lambda^{2-\gamma_{2}} w_{q+1}^{\bot} \nabla^{\bot} \cdot (z_{l} - z_{q}) + (\nabla (z_{q} - z_{q+1}))^{T} \cdot \Lambda^{2-\gamma_{2}} w_{q+1}   \nonumber \\
&\hspace{4mm} + \Lambda^{2-\gamma_{2}} y_{l}^{\bot} \nabla^{\bot} \cdot (z_{q} - z_{l}) + \Lambda^{2-\gamma_{2}} (z_{q} - z_{l}) \cdot \nabla w_{q+1} + \Lambda^{2-\gamma_{2}} (z_{q} - z_{l})^{\bot} \nabla^{\bot} \cdot y_{l} \nonumber \\
&\hspace{4mm} + \Lambda^{2-\gamma_{2}} (z_{q+1} - z_{q})^{\bot} \nabla^{\bot} \cdot z_{q+1} + \Lambda^{2-\gamma_{2}} (z_{q} - z_{l})^{\bot}\nabla^{\bot} \cdot z_{q+1} \nonumber \\
&\hspace{4mm} + \Lambda^{2-\gamma_{2}} z_{l}^{\bot} \nabla^{\bot} \cdot (z_{q+1} - z_{q}) + \Lambda^{2-\gamma_{2}} z_{l}^{\bot} \nabla^{\bot} \cdot (z_{q} - z_{l}), \label{est 191e} 
\end{align}
\end{subequations}
representing the Reynolds stress errors respectively of transport, Nash, linear, oscillation, and second commutator types, if we also define 
\begin{equation}\label{est 191f} 
p_{q+1} \triangleq  p_{l} + w_{q+1} \cdot \Lambda^{2- \gamma_{2}} (y_{l} + z_{q+1}). 
\end{equation}  
Next, we will consider two special cases of the following transport equation 
\begin{subequations}\label{est 402} 
\begin{align}
& \partial_{t} f + (u\cdot\nabla) f = g, \\
& f(t_{0}, x) = f_{0}(x), 
\end{align}
\end{subequations}
where $u(t,x)$ is a given smooth vector field, for which if we let $\Phi$ be the inverse of the flux $X$ of $u$ starting at time $t_{0}$ as the identity
\begin{subequations}\label{flux}
\begin{align}
& \partial_{t} X = u(X, t), \\
&X(t_{0}, x) = x, 
\end{align}
\end{subequations}
then one of the consequences is that $f(t,x) = f_{0} (\Phi(t,x))$ in case $g \equiv 0$. We defer more consequences to Lemma \ref{Lemma 6.4}. Now for all $j \in \{0, \hdots, \lceil \tau_{q+1}^{-1} T_{L} \rceil\}$ we define $\Phi_{j}(t,x)$ as a special case of \eqref{est 402} that solves 
\begin{subequations}\label{est 193} 
\begin{align}
&( \partial_{t} + (\Lambda^{2- \gamma_{2}} y_{l} + \Lambda^{2- \gamma_{2}} z_{l}) \cdot \nabla) \Phi_{j} =0, \label{est 193a}\\
&\Phi_{j} (\tau_{q+1}j, x) = x, \label{est 193b}
\end{align}
\end{subequations}  
similarly to \cite[equation (68)]{Y21c} (also \cite[equation (43)]{RS21}). With $\Gamma_{1}$ and $\Gamma_{2}$ from Lemma \ref{Geometric Lemma}, we define $\Gamma_{j}$ to be $\Gamma_{1}$ and $\Gamma_{2}$ when $j$ is odd and even, respectively. With $\mathbb{P}_{q+1, k}$ from \eqref{est 192}  and $a_{k,j}$ to be defined subsequently in \eqref{est 196}, we define, recalling the notations from \eqref{est 401}, 
\begin{equation}\label{est 194} 
w_{q+1}(t,x) \triangleq \sum_{j,k} \chi_{j}(t) \mathbb{P}_{q+1,k}(a_{k,j} (t,x) b_{k} (\lambda_{q+1} \Phi_{j}(t,x))), 
\end{equation} 
where thanks to $\mathbb{P}_{q+1, k}$, we see that $w_{q+1}$ has the frequency support of \eqref{est 163}, which also implies directly from \eqref{est 189} that $\supp \hat{ \mathring{R}}_{q+1} \subset  B(0, 4 \lambda_{q+1})$ as desired. Next, as the second special case of \eqref{est 402}, we define $\mathring{R}_{q,j}$ to be the solution to 
\begin{subequations}\label{est 195} 
\begin{align}
& ( \partial_{t} + ( \Lambda^{2-\gamma_{2}} y_{l} + \Lambda^{2- \gamma_{2}} z_{l}) \cdot \nabla) \mathring{R}_{q,j} = 0, \label{est 195a}\\
& \mathring{R}_{q,j} (\tau_{q+1}j, x) = \mathring{R}_{l} (\tau_{q+1}j, x). \label{est 195b}
\end{align}
\end{subequations} 
Moreover, for $t \geq \tau_{q+1} j$ and $\gamma_{k}$ for $k \in \Gamma_{j} \subset \mathbb{S}^{1}$ from Lemma \ref{Geometric Lemma} we define 
\begin{equation}\label{est 196} 
a_{k,j}(t,x) \triangleq \frac{\sqrt{\bar{C}} L^{\frac{1}{2}} M_{0} (\tau_{q+1} j)^{\frac{1}{2}}}{\sqrt{\epsilon_{\gamma}} \sqrt{\gamma_{2}}}\delta_{q+1}^{\frac{1}{2}} \gamma_{k} \left( \Id - \frac{ \epsilon_{\gamma}\mathring{R}_{q,j}(t,x)}{\bar{C} \lambda_{q+1}^{2- \gamma_{2}} \delta_{q+1} LM_{0} (\tau_{q+1} j)} \right) 
\end{equation} 
where $\gamma_{k} \left( \Id - \frac{\epsilon_{\gamma} \mathring{R}_{q,j}(t,x)}{\bar{C} \lambda_{q+1}^{2- \gamma_{2}} \delta_{q+1} LM_{0} (\tau_{q+1} j)} \right)$ is well-defined because 
\begin{equation}\label{est 384}
\left\lvert \frac{ \epsilon_{\gamma} \mathring{R}_{q,j}(t,x)}{\bar{C} \lambda_{q+1}^{2- \gamma_{2}} \delta_{q+1} LM_{0}(\tau_{q+1} j)} \right\rvert \overset{\eqref{est 197a} \eqref{est 195b}}{\leq} \frac{ \epsilon_{\gamma} \lVert \mathring{R}_{l}(\tau_{q+1} j) \rVert_{C_{x}}}{\bar{C} \lambda_{q+1}^{2- \gamma_{2}} \delta_{q+1} LM_{0}(\tau_{q+1} j)} \overset{\eqref{est 167}}{\leq}\epsilon_{\gamma}
\end{equation} 
so that $\Id - \frac{ \epsilon_{\gamma}\mathring{R}_{q,j}(t,x)}{\bar{C} \lambda_{q+1}^{2- \gamma_{2}} \delta_{q+1} M_{0} (\tau_{q+1} j)}  \in B(\Id, \epsilon_{\gamma})$; here we used the facts that $\varphi_{l}$ is supported in $(\tau_{q+1}, 2\tau_{q+1}] \subset \mathbb{R}_{+}$ and $M_{0}(t)$ from \eqref{est 135}  is non-decreasing. We recall $\chi_{j}(t)$ from \eqref{est 209},  $b_{k}$ and $c_{k}$ from \eqref{est 179} and define 
\begin{subequations}\label{est 200} 
\begin{align}
& \tilde{w}_{q+1, j, k} (t,x) \triangleq \chi_{j}(t) a_{k,j}(t,x) b_{k} (\lambda_{q+1} \Phi_{j}(t,x)), \label{est 200a}\\
& \psi_{q+1, j, k} (t,x) \triangleq \frac{c_{k} (\lambda_{q+1} \Phi_{j} (t,x))}{c_{k} (\lambda_{q+1} x)} \overset{\eqref{est 178} }{=} e^{i \lambda_{q+1} (\Phi_{j} (t,x) - x) \cdot k}, \label{est 200b} 
\end{align}
\end{subequations} 
so that due to \eqref{est 194} and \eqref{est 200a} 
\begin{equation}\label{est 208}
w_{q+1} = \sum_{j, k} \mathbb{P}_{q+1, k} \tilde{w}_{q+1, j, k} \hspace{2mm} \text{ and } \hspace{2mm} b_{k} (\lambda_{q+1} \Phi_{j} (x)) = b_{k} (\lambda_{q+1} x) \psi_{q+1, j, k} (x).
\end{equation} 
In Propositions \ref{Proposition 4.9}-\ref{Proposition 4.10}, we collect some estimates, similarly to \cite[Lemmas 4.3-4.4]{BSV19}.  
\begin{proposition}\label{Proposition 4.9}
Define 
\begin{equation}\label{est 206}
D_{t,q} \triangleq \partial_{t} + (\Lambda^{2-\gamma_{2}} y_{l} + \Lambda^{2- \gamma_{2}} z_{l}) \cdot \nabla. 
\end{equation}
Then $w_{q+1}$ defined in \eqref{est 194} satisfies the following inequalities: for $C_{1}$ from \eqref{est 202},
\begin{subequations}\label{est 204} 
\begin{align}
&\lVert w_{q+1}(t) \rVert_{C_{x}} \leq 2\sup_{k\in \Gamma_{1} \cup \Gamma_{2}} \lVert \gamma_{k} \rVert_{C(B(\Id, \epsilon_{\gamma} ))} \frac{ \sqrt{\bar{C}}}{\sqrt{\epsilon_{\gamma}}} \frac{C_{1} L^{\frac{1}{2}} M_{0}(t)^{\frac{1}{2}}}{\sqrt{\gamma_{2}}} \delta_{q+1}^{\frac{1}{2}},\label{est 204a} \\
&\lVert D_{t,q} w_{q+1} (t) \rVert_{C_{x}}  \lesssim L M_{0}(t) \delta_{q+1}^{\frac{1}{2}}\tau_{q+1}^{-1}. \label{est 204b}
\end{align}
\end{subequations} 
Consequently, \eqref{est 162}, as well as all of \eqref{est 165}, \eqref{est 166}, and \eqref{est 168} at level $q+1$ and hence Hypothesis \ref{Hypothesis 4.1} \ref{Hypothesis 4.1 (a)} and \ref{Hypothesis 4.1 (c)} at level $q+1$ hold.  
\end{proposition}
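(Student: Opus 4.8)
The plan is to estimate $w_{q+1}$ and $D_{t,q}w_{q+1}$ directly from the defining formula \eqref{est 194}, and then deduce \eqref{est 162} and the level-$(q+1)$ instances of \eqref{est 165}, \eqref{est 166}, \eqref{est 168} from \eqref{est 204} together with the frequency localization \eqref{est 163} and the mollification estimate \eqref{est 203}. For \eqref{est 204a}, fix $t$: since $\chi$ is supported in $(-1,1)$, at most two indices $j$ make $\chi_j(t)\neq 0$, and for each such $j$ and $k\in\Gamma_j$ one has $|b_k(\lambda_{q+1}\Phi_j(t,x))|=|k^{\bot}|=1$ because $k\in\mathbb{S}^1$, so $\lVert\mathbb{P}_{q+1,k}(a_{k,j}b_k(\lambda_{q+1}\Phi_j))(t)\rVert_{C_x}\leq C_1\lVert a_{k,j}(t)\rVert_{C_x}$ by \eqref{est 202}. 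The amplitude bound $\lVert a_{k,j}(t)\rVert_{C_x}\leq\frac{\sqrt{\bar C}L^{\frac12}M_0(t)^{\frac12}}{\sqrt{\epsilon_\gamma}\sqrt{\gamma_2}}\delta_{q+1}^{\frac12}\sup_{k\in\Gamma_1\cup\Gamma_2}\lVert\gamma_k\rVert_{C(B(\Id,\epsilon_\gamma))}$ is read off from \eqref{est 196}: the matrix argument of $\gamma_k$ lies in $B(\Id,\epsilon_\gamma)$ by \eqref{est 384} (which rests on the maximum principle for the transport equation \eqref{est 195}, the bound \eqref{est 167}, and the support of $\varphi_l$), while $M_0(\tau_{q+1}j)\leq M_0(t)$ on $\supp\chi_j$ since $M_0$ from \eqref{est 135} is nondecreasing and $\tau_{q+1}$ is small. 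Summing over the (at most two) $j$ and over $k\in\Gamma_j$ — whose contributions do not pile up, the frequency supports of the $\mathbb{P}_{q+1,k}$ being pairwise disjoint by item (4) of Lemma \ref{Geometric Lemma} applied to $k$ and $-k'$ — yields \eqref{est 204a}.

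For \eqref{est 204b} the key observation is that $D_{t,q}$ kills the transported ingredients: $D_{t,q}\Phi_j=0$ by \eqref{est 193a}, so $D_{t,q}[b_k(\lambda_{q+1}\Phi_j)]=0$; and $D_{t,q}\mathring R_{q,j}=0$ by \eqref{est 195}, so $D_{t,q}a_{k,j}=0$ because the prefactor in \eqref{est 196} is constant in $(t,x)$ for fixed $j$. Writing $g_{k,j}\triangleq a_{k,j}b_k(\lambda_{q+1}\Phi_j)$ we get $D_{t,q}g_{k,j}=0$, and since $\mathbb{P}_{q+1,k}$ is a $t$-independent Fourier multiplier,
\begin{equation*}
D_{t,q}w_{q+1}=\sum_{j,k}(\partial_t\chi_j)\,\mathbb{P}_{q+1,k}g_{k,j}+\sum_{j,k}\chi_j\,[(\Lambda^{2-\gamma_2}y_l+\Lambda^{2-\gamma_2}z_l)\cdot\nabla,\,\mathbb{P}_{q+1,k}]g_{k,j}.
\end{equation*}
The first sum is $\lesssim\tau_{q+1}^{-1}$ times the right side of \eqref{est 204a}, using $|\partial_t\chi_j|\lesssim\tau_{q+1}^{-1}$ from \eqref{est 209}. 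For the second sum I will use the commutator estimate $\lVert[v\cdot\nabla,\mathbb{P}_{q+1,k}]f\rVert_{C_x}\lesssim\lVert\nabla v\rVert_{C_x}\lVert f\rVert_{C_x}$, obtained by writing $\mathbb{P}_{q+1,k}$ as convolution with $K_{q+1,k}$, integrating by parts in the convolution variable, and invoking \eqref{est 416} with $(a,b)\in\{(0,0),(1,1)\}$; combined with $\lVert\nabla\Lambda^{2-\gamma_2}y_l\rVert_{C_x}\lesssim\lambda_q^{3-\gamma_2}\delta_q^{\frac12}L^{\frac12}M_0(t)^{\frac12}$ (from \eqref{est 166} and Bernstein), $\lVert\nabla\Lambda^{2-\gamma_2}z_l\rVert_{C_x}\lesssim L^{\frac14}$ (from \eqref{est 136}), and $\lVert g_{k,j}\rVert_{C_x}\lesssim L^{\frac12}M_0(t)^{\frac12}\delta_{q+1}^{\frac12}$, the second sum is $\lesssim\lambda_q^{3-\gamma_2}\delta_q^{\frac12}LM_0(t)\delta_{q+1}^{\frac12}$; since the choice \eqref{est 210} of $\tau_{q+1}$ with $\alpha>1$ and $b$ large gives $\lambda_q^{3-\gamma_2}\delta_q^{\frac12}\lesssim\tau_{q+1}^{-1}$, this is within \eqref{est 204b}.

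The stated consequences then follow. Since $y_{q+1}-y_q=(y_l-y_q)+w_{q+1}$ by \eqref{est 164}, \eqref{est 162} follows from \eqref{est 203} and \eqref{est 204a} once $C_0$ is fixed large enough; by \eqref{est 419} the constant in \eqref{est 204a} grows only like $\sqrt{\bar C}\sim\sqrt{C_0}$, so $C_0$ can be chosen to dominate it. Adding \eqref{est 162} to \eqref{est 165} at level $q$ gives \eqref{est 165} at level $q+1$. For \eqref{est 166} at level $q+1$, apply Bernstein to $y_{q+1}=y_l+w_{q+1}$, using $\supp\hat y_l\subset B(0,2\lambda_q)$ with \eqref{est 165} and $\supp\hat w_{q+1}\subset\{\frac{\lambda_{q+1}}{2}\leq|\xi|\leq 2\lambda_{q+1}\}$ by \eqref{est 163} with \eqref{est 204a}, together with $\lambda_q^{2-\gamma_2}\ll\lambda_{q+1}^{2-\gamma_2}\delta_{q+1}^{\frac12}$ for $b$ large; in particular $\supp\hat y_{q+1}\subset B(0,2\lambda_{q+1})$, the frequency part of Hypothesis \ref{Hypothesis 4.1} \ref{Hypothesis 4.1 (a)}. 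Finally, for \eqref{est 168} at level $q+1$ I split the level-$(q+1)$ material derivative of $y_{q+1}=y_l+w_{q+1}$ into its level-$q$ counterpart plus the correction from the difference of transport velocities; $D_{t,q}w_{q+1}$ is controlled by \eqref{est 204b} (after rewriting $\partial_t w_{q+1}=D_{t,q}w_{q+1}-(\Lambda^{2-\gamma_2}y_l+\Lambda^{2-\gamma_2}z_l)\cdot\nabla w_{q+1}$), $D_{t,q}y_l$ by Hypothesis \ref{Hypothesis 4.1} \ref{Hypothesis 4.1 (c)} at level $q$ plus mollification-commutator estimates, and the velocity-difference term by a crude product bound, everything absorbed by $b$ large. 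The main obstacles I anticipate are the commutator estimate for $[v\cdot\nabla,\mathbb{P}_{q+1,k}]$ and the bookkeeping showing $C_0$ can be chosen consistently with \eqref{est 419} and with the constant in \eqref{est 204a}.
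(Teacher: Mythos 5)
Your proposal is correct and follows essentially the same route as the paper: the amplitude bound from \eqref{est 196} and \eqref{est 384} with at most two active cutoffs and \eqref{est 202} gives \eqref{est 204a}, the splitting of $D_{t,q}w_{q+1}$ into the $\partial_{t}\chi_{j}$ term plus the convection--multiplier commutator (your kernel argument via \eqref{est 416} is exactly the cited commutator bound from \cite{BSV19}) gives \eqref{est 204b} after $\lambda_{q}^{3-\gamma_{2}}\delta_{q}^{\frac{1}{2}}\lesssim\tau_{q+1}^{-1}$, and the consequences \eqref{est 162}, \eqref{est 165}, \eqref{est 166}, \eqref{est 168} are deduced as in the paper. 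The one parenthetical you should drop is the claim that disjoint frequency supports keep the $k$-sum from piling up in $C_{x}$ --- that is an $L^{2}$ orthogonality argument, not an $L^{\infty}$ one; just pay the harmless universal factor given by the cardinality of $\Gamma_{j}$ (the paper is no more careful at this point), which is absorbed in the subsequent choice of $C_{0}$.
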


\begin{proof}[Proof of Proposition \ref{Proposition 4.9}]
First, we can prove \eqref{est 204a} from \eqref{est 194} as 
\begin{equation}\label{est 205} 
\lVert w_{q+1}(t) \rVert_{C_{x}} \overset{\eqref{est 202} \eqref{est 196}}{\leq} \sup_{k \in \Gamma_{1} \cup \Gamma_{2}} \lVert \gamma_{k} \rVert_{C(B(\Id, \epsilon_{\gamma}))} \frac{ \sqrt{\bar{C}}}{\sqrt{\epsilon_{\gamma}}} \sum_{j} \frac{1_{\supp \chi_{j}}(t) C_{1} L^{\frac{1}{2}}M_{0}(\tau_{q+1}j)^{\frac{1}{2}}}{\sqrt{\gamma_{2}}} \delta_{q+1}^{\frac{1}{2}} 
\end{equation} 
which implies \eqref{est 204a} considering the fact that for all $t$ there exist at most two non-trivial cutoffs. Its immediate consequence is \eqref{est 162} because  
\begin{align}
\lVert y_{q+1}(t) - y_{q}(t) \rVert_{C_{x}} \overset{\eqref{est 203} \eqref{est 205}}{\lesssim} L^{\frac{1}{2}} M_{0}(t)^{\frac{1}{2}} \delta_{q+1}^{\frac{1}{2}} + l L M_{0}(t) \lambda_{q}^{3- \gamma_{2}} \delta_{q}  \lesssim L^{\frac{1}{2}} M_{0}(t)^{\frac{1}{2}} \delta_{q+1}^{\frac{1}{2}} \label{138 star}
\end{align} 
where we took $b > 2 (L^{2} + 2)$, $a \in 5 \mathbb{N}$ sufficiently large and used that \eqref{est 154c} guarantees $\alpha > \frac{3- \gamma_{2}}{2}$. Next, by Young's inequality for convolution we can prove \eqref{est 165} at level $q+1$ as follows: 
\begin{equation}
\lVert y_{q+1} \rVert_{C_{t,x}} \overset{\eqref{est 164} }{\leq} \lVert y_{l} \rVert_{C_{t,x}} + \lVert w_{q+1} \rVert_{C_{t,x}}  \overset{\eqref{est 165}\eqref{est 204a} }{\leq} C_{0} \left(1+ \sum_{1 \leq j \leq q+1} \delta_{j}^{\frac{1}{2}} \right) L^{\frac{1}{2}} M_{0}(t)^{\frac{1}{2}}.
\end{equation} 
Similarly, because \eqref{est 154b} guarantees that $2- \gamma_{2} > \beta$, we can prove \eqref{est 166} at level $q+1$ as follows: for $C_{0} \geq 2^{4} \sup_{k \in \Gamma_{1} \cup \Gamma_{2}} \lVert \gamma_{k} \rVert_{C(B(\Id, \epsilon_{\gamma} ))} C_{1} \frac{ \sqrt{\bar{C}}}{\sqrt{\epsilon_{\gamma}}}$ and $a \in 5 \mathbb{N}$ sufficiently large   
\begin{align*}
\lVert y_{q+1} \rVert_{C_{t}C_{x}^{2- \gamma_{2}}} + &\lVert \Lambda^{2- \gamma_{2}} y_{q+1} \rVert_{C_{t,x}} \overset{\eqref{est 164}\eqref{est 163} \eqref{est 166} \eqref{est 204a}}{\leq}C_{0} L^{\frac{1}{2}} M_{0}(t)^{\frac{1}{2}} [ \frac{1}{2} \lambda_{q+1}^{2- \gamma_{2}} \delta_{q+1}^{\frac{1}{2}} + \lambda_{q}^{2- \gamma_{2}}\delta_{q}^{\frac{1}{2}}] \nonumber\\
& \hspace{48mm} \leq C_{0} L^{\frac{1}{2}} M_{0}(t)^{\frac{1}{2}} \lambda_{q+1}^{2- \gamma_{2}} \delta_{q+1}^{\frac{1}{2}}.
\end{align*} 
Next, $y_{q+1} = y_{l} + w_{q+1}$ by \eqref{est 164} so that \eqref{est 163}  and Hypothesis \ref{Hypothesis 4.1} \ref{Hypothesis 4.1 (a)} at level $q$ imply that $\supp \hat{y}_{q+1}  \subset B(0, 2 \lambda_{q+1})$; therefore, we conclude that the Hypothesis \ref{Hypothesis 4.1} \ref{Hypothesis 4.1 (a)} at level $q+1$ holds.  Next, it follows immediately from \eqref{est 206}, \eqref{est 193a}, and \eqref{est 195a} that 
\begin{equation}\label{est 207}
D_{t,q} a_{k,j}(t,x) = 0, \hspace{2mm} D_{t,q} b_{k}(\lambda_{q+1} \Phi_{j}(t,x) ) = 0, \hspace{2mm} D_{t,q} c_{k} (\lambda_{q+1} \Phi_{j}(t,x))  = 0. 
\end{equation} 
By \eqref{est 208}, \eqref{est 200a}, and \eqref{est 207} we obtain 
\begin{equation}\label{est 212}
 D_{t,q} w_{q+1} =\sum_{j,k}  [D_{t,q}, \mathbb{P}_{q+1, k} ] \tilde{w}_{q+1, j, k} + \mathbb{P}_{q+1,k} [\partial_{t} \chi_{j} a_{k,j} b_{k} (\lambda_{q+1} \Phi_{j} )].
\end{equation} 
We estimate from \eqref{est 212} using \cite[Corollary A.8]{BSV19} with \eqref{est 416} and the facts that \eqref{est 154b} and \eqref{est 154c} respectively guarantee that $3- \gamma_{2} - \beta > 0$ and $\alpha > 0$ so that $\lambda_{q}^{3- \gamma_{2}} \delta_{q}^{\frac{1}{2}} \lesssim \tau_{q+1}^{-1}$, for $a \in 5 \mathbb{N}$ sufficiently large 
\begin{align}
& \lVert D_{t,q} w_{q+1} (t) \rVert_{C_{x}} \label{est 213}\\
&\overset{\eqref{est 212} \eqref{est 202}}{\lesssim}  \sum_{j,k}   \lVert \nabla (\Lambda^{2- \gamma_{2}} y_{l} + \Lambda^{2- \gamma_{2}} z_{l})(t) \rVert_{C_{x}} \lVert \tilde{w}_{q+1, j,k}(t) \rVert_{C_{x}} + \lVert \partial_{t} \chi_{j}(t)a_{k,j}(t) b_{k} (\lambda_{q+1} \Phi_{j} (t)) \rVert_{C_{x}} \nonumber \\
& \hspace{25mm} \overset{\eqref{est 178} \eqref{est 196}  \eqref{est 136} \eqref{est 166}}{\lesssim} L M_{0}(t) \delta_{q+1}^{\frac{1}{2}} (\lambda_{q}^{3- \gamma_{2}} \delta_{q}^{\frac{1}{2}} + \tau_{q+1}^{-1}) \overset{\eqref{est 210}}{\lesssim} L M_{0}(t) \delta_{q+1}^{\frac{1}{2}}\tau_{q+1}^{-1}. \nonumber 
\end{align}
Next, we recall the definitions of $y_{l}$ and $z_{l}$ from \eqref{est 180} and write $D_{t,q+1}y_{q+1}$ as
\begin{align*}
D_{t, q+1} &y_{q+1} 
= D_{t,q} w_{q+1} + (\Lambda^{2- \gamma_{2}} y_{l} \ast_{t} \phi_{\lambda_{q+2}^{-\alpha}} \ast_{t} \varphi_{\lambda_{q+2}^{-\alpha}} - \Lambda^{2- \gamma_{2}} y_{l}) \cdot \nabla w_{q+1} \\
&+ (\Lambda^{2- \gamma_{2}} z_{q+1} \ast_{x} \phi_{\lambda_{q+2}^{-\alpha}} \ast_{t} \varphi_{\lambda_{q+2}^{-\alpha}} - \Lambda^{2- \gamma_{2}} z_{l}) \cdot \nabla w_{q+1}\nonumber  \\
&+ \Lambda^{2- \gamma_{2}} w_{q+1} \ast_{x} \phi_{\lambda_{q+2}^{-\alpha}} \ast_{t} \varphi_{\lambda_{q+2}^{-\alpha}} \cdot \nabla w_{q+1} + \Lambda^{2- \gamma_{2}} w_{q+1} \ast_{x} \phi_{\lambda_{q+2}^{-\alpha}} \ast_{t} \varphi_{\lambda_{q+2}^{-\alpha}} \cdot \nabla y_{l}\nonumber  \\
&+ [D_{t,q} y_{q}]\ast_{x} \phi_{\lambda_{q+1}^{-\alpha}} \ast_{t} \varphi_{\lambda_{q+1}^{-\alpha}} + \Lambda^{2-\gamma_{2}} y_{l} \ast_{x} \phi_{\lambda_{q+2}^{-\alpha}} \ast_{t} \varphi_{\lambda_{q+2}^{-\alpha}} \cdot \nabla y_{l}\nonumber \\
& - [\Lambda^{2 - \gamma_{2}} y_{q} \ast_{x} \phi_{\lambda_{q+1}^{-\alpha}} \ast_{t} \varphi_{\lambda_{q+1}^{-\alpha}} \cdot \nabla y_{q} ] \ast_{x} \phi_{\lambda_{q+1}^{-\alpha}} \ast_{t} \varphi_{\lambda_{q+1}^{-\alpha}}\nonumber  \\
&+ \Lambda^{2- \gamma_{2}} z_{q+1} \ast_{x} \phi_{\lambda_{q+2}^{-\alpha}} \ast_{t} \varphi_{\lambda_{q+2}^{-\alpha}} \cdot \nabla y_{l} - [\Lambda^{2- \gamma_{2}} z_{q} \ast_{x} \phi_{\lambda_{q+1}^{-\alpha}} \ast_{t} \varphi_{\lambda_{q+1}^{-\alpha}} \cdot \nabla y_{q} ] \ast_{x} \phi_{\lambda_{q+1}^{-\alpha}} \ast_{t} \varphi_{\lambda_{q+1}^{-\alpha}} \nonumber 
\end{align*}
due to \eqref{est 206} and \eqref{est 164} so that we can prove \eqref{est 168} at level $q+1$ as follows: for $a \in 5 \mathbb{N}$ sufficiently large 
\begin{align*}
&\lVert D_{t, q+1} y_{q+1} \rVert_{C_{t,x}} \\ 
&\lesssim \lVert D_{t,q} w_{q+1} \rVert_{C_{t,x}} + \lVert \Lambda^{2-\gamma_{2}} y_{l} \rVert_{C_{t,x}} \lVert \nabla w_{q+1} \rVert_{C_{t,x}} + \lVert \Lambda^{2- \gamma_{2}} z \rVert_{C_{t,x}} \lVert \nabla w_{q+1} \rVert_{C_{t,x}} \\
&+ \lVert\Lambda^{2- \gamma_{2}} w_{q+1} \rVert_{C_{t,x}} \lVert \nabla w_{q+1} \rVert_{C_{t,x}} + \lVert \Lambda^{2- \gamma_{2}} w_{q+1} \rVert_{C_{t,x}} \lVert \nabla y_{q} \rVert_{C_{t,x}} \\
&+ \lVert D_{t,q} y_{q} \rVert_{C_{t,x}} + \lVert \Lambda^{2- \gamma_{2}} y_{q} \rVert_{C_{t,x}} \lVert \nabla y_{q} \rVert_{C_{t,x}} + \lVert \Lambda^{2- \gamma_{2}} y_{q} \rVert_{C_{t,x}} \lVert \nabla y_{q} \rVert_{C_{t,x}} + \lVert \Lambda^{2- \gamma_{2}} z \rVert_{C_{t,x}} \lVert \nabla y_{q} \rVert_{C_{t,x}}  \\
&\overset{\eqref{est 136} \eqref{est 163}\eqref{est 204} \eqref{est 166} \eqref{est 168} \eqref{est 210} \eqref{est 32}}{\lesssim} LM_{0}(t) \lambda_{q+1}^{3- \gamma_{2}} \delta_{q+1} \\
& \hspace{15mm} \times [ a^{b^{q+1} [-\frac{3}{2} + \frac{\gamma_{2}}{2} + \frac{\beta}{2} + \frac{\alpha}{2} ]} + a^{b^{q} [(b-1)(-2+ \gamma_{2} + \beta)]} + 1 + a^{b^{q} [(b-1)(-1 +\beta)]}] \lesssim LM_{0}(t) \lambda_{q+1}^{3- \gamma_{2}} \delta_{q+1} 
\end{align*}
where we used that $\alpha < 3 - \gamma_{2} - \beta$ by \eqref{est 154c} while $\beta < \min\{\frac{3-\gamma_{2}}{2}, 2- \gamma_{2}, 1 \}$ by \eqref{est 154b}. 
\end{proof}

\begin{proposition}\label{Proposition 4.10}   
Let $a_{k,j}$ and $\psi_{q+1, j,k}$ be defined by \eqref{est 196} and \eqref{est 200b}, respectively. Then they satisfy the following inequalities for all $t \in \supp \chi_{j}$:   
\begin{subequations}\label{est 216} 
\begin{align}
& \lVert D^{N} a_{k,j} (t) \rVert_{C_{x}} \lesssim L^{\frac{1}{2}}M_{0}(t)^{\frac{1}{2}}  \lambda_{q}^{N} \delta_{q+1}^{\frac{1}{2}} \hspace{1mm} \forall \hspace{1mm} N \in \mathbb{N}_{0}, \label{est 216a}\\
&\lVert \psi_{q+1, j, k} \rVert_{C_{t,x}} \leq 1, \hspace{2mm} \lVert D^{N} \psi_{q+1, j, k} (t) \rVert_{C_{x}} \lesssim L^{\frac{1}{2}} M_{0}(t)^{\frac{1}{2}} \lambda_{q+1} \tau_{q+1} \lambda_{q}^{N+ 2 - \gamma_{2}}  \delta_{q}^{\frac{1}{2}} \hspace{1mm} \forall \hspace{1mm} N \in \mathbb{N}. \label{est 216b}
\end{align}
\end{subequations}
\end{proposition}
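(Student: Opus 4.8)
The plan is to express $a_{k,j}$ and $\psi_{q+1,j,k}$ through the flow map $\Phi_{j}$ of the transport operator $D_{t,q}$ of \eqref{est 206} and to reduce everything to Bernstein bounds on the coefficients, in the spirit of \cite[Lemmas 4.3-4.4]{BSV19}. Write $V\triangleq\Lambda^{2-\gamma_{2}}(y_{l}+z_{l})$ for the drift of $D_{t,q}$ occurring in \eqref{est 193a} and \eqref{est 195a}. First I record that $\lVert\nabla^{N}V\rVert_{C_{t,x}}\lesssim L^{\frac12}M_{0}(t)^{\frac12}\lambda_{q}^{N+2-\gamma_{2}}\delta_{q}^{\frac12}$ for every $N\ge1$: the contribution of $y_{l}$ follows from Hypothesis \ref{Hypothesis 4.1} \ref{Hypothesis 4.1 (a)} (the frequency localization $\supp\hat y_{q}\subset B(0,2\lambda_{q})$ together with \eqref{est 166}) and Bernstein's inequality, while the contribution of $z_{l}$ is $\lesssim\lambda_{q}^{N+1-\gamma_{2}}L^{\frac14}$ by \eqref{est 136} and Bernstein and is dominated once $a\in5\mathbb{N}$ is large because \eqref{est 154b} forces $\beta<1$. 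Comparing with $\tau_{q+1}^{-1}=l^{-\frac12}\lambda_{q+1}^{\frac{3-\gamma_{2}}{2}}\delta_{q+1}^{\frac14}$ from \eqref{est 210} and using $\alpha>1$ from \eqref{est 154c}, exactly as for \eqref{est 213}, one gets $\lambda_{q}^{3-\gamma_{2}}\delta_{q}^{\frac12}\lesssim\tau_{q+1}^{-1}$ for $b$ large, hence the smallness $\tau_{q+1}\lVert\nabla V\rVert_{C_{t,x}}\lesssim1$. Invoking the transport--flow estimates of Lemma \ref{Lemma 6.4}, on $\supp\chi_{j}$, where $\lvert t-\tau_{q+1}j\rvert\le\tau_{q+1}$, the solution $\Phi_{j}$ of \eqref{est 193} then satisfies $\lVert\nabla\Phi_{j}\rVert_{C_{x}}\lesssim1$, $\lVert\nabla\Phi_{j}-\Id\rVert_{C_{x}}\lesssim\tau_{q+1}\lVert\nabla V\rVert_{C_{t,x}}$ and $\lVert\nabla^{N}\Phi_{j}\rVert_{C_{x}}\lesssim\tau_{q+1}\lVert\nabla V\rVert_{C_{t}C_{x}^{N-1}}\lesssim\tau_{q+1}L^{\frac12}M_{0}(t)^{\frac12}\lambda_{q}^{N+2-\gamma_{2}}\delta_{q}^{\frac12}$ for $N\ge2$; moreover, since the right-hand side of \eqref{est 195} vanishes, $\mathring{R}_{q,j}(t,x)=\mathring{R}_{l}(\tau_{q+1}j,\Phi_{j}(t,x))$. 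Finally, Hypothesis \ref{Hypothesis 4.1} \ref{Hypothesis 4.1 (b)} ($\supp\hat{\mathring{R}}_{q}\subset B(0,4\lambda_{q})$ and \eqref{est 167}) and Bernstein give $\lVert\nabla^{a}\mathring{R}_{l}(\tau_{q+1}j)\rVert_{C_{x}}\lesssim\lambda_{q}^{a}\bar{C}LM_{0}(\tau_{q+1}j)\lambda_{q+1}^{2-\gamma_{2}}\delta_{q+1}$.

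For \eqref{est 216a} I will differentiate $a_{k,j}$ from \eqref{est 196} by the Fa\`{a} di Bruno (higher-order chain rule) formula twice. The outer derivatives land on the smooth map $\gamma_{k}$, whose derivatives on $B(\Id,\epsilon_{\gamma})$ are bounded and whose argument lies in $B(\Id,\epsilon_{\gamma})$ by \eqref{est 384}; this gives $\lVert D^{N}a_{k,j}\rVert_{C_{x}}\lesssim\frac{\sqrt{\bar{C}}L^{\frac12}M_{0}(\tau_{q+1}j)^{\frac12}}{\sqrt{\epsilon_{\gamma}\gamma_{2}}}\delta_{q+1}^{\frac12}\sum_{p\le N}c^{p}\prod_{i}\lVert D^{n_{i}}\mathring{R}_{q,j}\rVert_{C_{x}}$ with $\sum_{i}n_{i}=N$ and $c\triangleq\frac{\epsilon_{\gamma}}{\bar{C}\lambda_{q+1}^{2-\gamma_{2}}\delta_{q+1}LM_{0}(\tau_{q+1}j)}$. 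Differentiating $\mathring{R}_{q,j}=\mathring{R}_{l}(\tau_{q+1}j,\cdot)\circ\Phi_{j}$ again and using the estimates above together with $\lVert\nabla\Phi_{j}\rVert_{C_{x}}\lesssim1$, one obtains $\lVert D^{n}\mathring{R}_{q,j}\rVert_{C_{x}}\lesssim\lambda_{q}^{n}\bar{C}LM_{0}(\tau_{q+1}j)\lambda_{q+1}^{2-\gamma_{2}}\delta_{q+1}$ for $n\ge0$, because every term in that expansion except the top one $\nabla^{n}\mathring{R}_{l}\cdot(\nabla\Phi_{j})^{\otimes n}$ carries a power of $\tau_{q+1}L^{\frac12}M_{0}(t)^{\frac12}\lambda_{q}^{3-\gamma_{2}}\delta_{q}^{\frac12}\lesssim1$. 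Substituting back, $c^{p}\bigl(\bar{C}LM_{0}(\tau_{q+1}j)\lambda_{q+1}^{2-\gamma_{2}}\delta_{q+1}\bigr)^{p}=\epsilon_{\gamma}^{p}$, so the factors $\lambda_{q+1}^{2-\gamma_{2}},\delta_{q+1},L,M_{0}(\tau_{q+1}j)$ all cancel and $\lVert D^{N}a_{k,j}\rVert_{C_{x}}\lesssim L^{\frac12}M_{0}(\tau_{q+1}j)^{\frac12}\delta_{q+1}^{\frac12}\lambda_{q}^{N}\lesssim L^{\frac12}M_{0}(t)^{\frac12}\delta_{q+1}^{\frac12}\lambda_{q}^{N}$ on $\supp\chi_{j}$, the case $N=0$ being immediate.

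For \eqref{est 216b}, \eqref{est 200b} and \eqref{est 178} give $\psi_{q+1,j,k}=e^{i\lambda_{q+1}(\Phi_{j}-x)\cdot k}$, and since $\Phi_{j}$ is $\mathbb{R}^{2}$-valued (the drift $V$ being real) it has modulus one, so $\lVert\psi_{q+1,j,k}\rVert_{C_{t,x}}\le1$. Applying Fa\`{a} di Bruno to $e^{ig}$ with $g=\lambda_{q+1}(\Phi_{j}-x)\cdot k$, each block is $D^{m}g=\lambda_{q+1}D^{m}(\Phi_{j}-x)\cdot k$, so $\lVert D^{m}g\rVert_{C_{x}}\lesssim\lambda_{q+1}\tau_{q+1}L^{\frac12}M_{0}(t)^{\frac12}\lambda_{q}^{m+2-\gamma_{2}}\delta_{q}^{\frac12}$ for every $m\ge1$ by the flow bounds; the single-block term $m=N$ already yields the claimed estimate, and any product of $r\ge2$ blocks is smaller by a factor $\bigl(\lambda_{q+1}\tau_{q+1}L^{\frac12}M_{0}(t)^{\frac12}\lambda_{q}^{2-\gamma_{2}}\delta_{q}^{\frac12}\bigr)^{r-1}$, which is $\lesssim1$ for $b$, then $a$, large since $\lambda_{q+1}\tau_{q+1}=\lambda_{q+1}^{1-(\alpha+3-\gamma_{2}-\beta)/2}$ and $\alpha>\gamma_{2}+\beta-1$, the latter holding because $\alpha>1$ and $\gamma_{2}+\beta<2$ (which \eqref{est 154b} guarantees). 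The only genuine work is the combinatorial bookkeeping of these two nested Fa\`{a} di Bruno expansions and the verification that every subleading term is absorbed under \eqref{est 154}; no individual estimate is delicate, the inputs being just the Bernstein bounds of Hypothesis \ref{Hypothesis 4.1}, the flow estimates of Lemma \ref{Lemma 6.4}, and the two scalings $\lambda_{q}^{3-\gamma_{2}}\delta_{q}^{\frac12}\lesssim\tau_{q+1}^{-1}$ and $\lambda_{q+1}\tau_{q+1}\lambda_{q}^{2-\gamma_{2}}\delta_{q}^{\frac12}\lesssim1$.
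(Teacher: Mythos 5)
Your proposal is correct and follows essentially the same route as the paper: both rest on the smallness $\tau_{q+1}\lVert \nabla(\Lambda^{2-\gamma_{2}}y_{l}+\Lambda^{2-\gamma_{2}}z_{l})\rVert_{C_{t,x}}\lesssim 1$ (the paper's \eqref{est 201}), Bernstein bounds coming from the frequency localizations in Hypothesis \ref{Hypothesis 4.1}, the flow estimates \eqref{est 199}, and the key scaling $\lambda_{q+1}\tau_{q+1}\lambda_{q}^{2-\gamma_{2}}\delta_{q}^{\frac12}L^{\frac12}M_{0}(t)^{\frac12}\lesssim 1$ of \eqref{est 223}. The only cosmetic difference is that you control $D^{N}\mathring{R}_{q,j}$ through the composition $\mathring{R}_{l}(\tau_{q+1}j,\cdot)\circ\Phi_{j}$ and explicit Fa\`{a} di Bruno bookkeeping, whereas the paper invokes the transport estimates \eqref{est 197b}, \eqref{est 198} of Lemma \ref{Lemma 6.4} and the chain-rule estimate of \cite[equation (130)]{BDIS15}; these are equivalent.
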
  

\begin{proof}[Proof of Proposition \ref{Proposition 4.10}]
First, immediately by definition from \eqref{est 196} we can estimate 
\begin{equation}
\lVert a_{k,j}(t) \rVert_{C_{x}} \lesssim L^{\frac{1}{2}}  M_{0}(t)^{\frac{1}{2}} \delta_{q+1}^{\frac{1}{2}}.
\end{equation} 
Next, for $N \in \mathbb{N}$, by standard chain rule estimates (e.g., \cite[equation (130)]{BDIS15}),
\begin{align}
& \lVert D^{N} a_{k,j}(t) \rVert_{C_{x}} \lesssim \delta_{q+1}^{\frac{1}{2}} L^{\frac{1}{2}}  M_{0}(\tau_{q+1} j)^{\frac{1}{2}}  \label{est 217}\\
&\hspace{15mm} \times \left[ \frac{1}{\lambda_{q+1}^{2-\gamma_{2}} \delta_{q+1} L M_{0}(\tau_{q+1} j)} \lVert D^{N} \mathring{R}_{q,j} \rVert_{C_{t,x}} + \left( \frac{1}{\lambda_{q+1}^{2-\gamma_{2}} \delta_{q+1} L M_{0}(\tau_{q+1} j)} \right)^{N} \lVert D\mathring{R}_{q,j} \rVert_{C_{t,x}}^{N} \right]. \nonumber 
\end{align}
Because \eqref{est 154b}-\eqref{est 154c} guarantee that $3 - \gamma_{2} > \beta$, we can estimate for $b> \frac{8}{7} (L^{2} + \frac{3}{2})$ and $a \in 5 \mathbb{N}$ sufficiently large  
\begin{align}
\tau_{q+1} \lVert D (\Lambda^{2- \gamma_{2}} y_{l} + \Lambda^{2- \gamma_{2}} z_{l}) \rVert_{C_{t,x}} \overset{\eqref{est 136} \eqref{est 166}\eqref{est 210}\eqref{est 32}}{\lesssim} a^{L^{2}} a^{b^{q} [b( \frac{ - \alpha - 3 + \gamma_{2} + \beta}{2}) + 3 - \gamma_{2} - \beta]}  \lesssim 1, \label{est 201} 
\end{align}
and consequently in case $N = 1$ we can estimate by \eqref{est 197b} and \eqref{est 195}
\begin{align}
& \lVert D \mathring{R}_{q,j}(t) \rVert_{C_{x}} \leq \lVert D \mathring{R}_{l} (\tau_{q+1} j) \rVert_{C_{x}}  e^{(t- \tau_{q+1} j) \lVert D(\Lambda^{2- \gamma_{2}} y_{l} + \Lambda^{2- \gamma_{2}} z_{l}) \rVert_{C_{t,x}}} \nonumber\\
& \hspace{35mm} \overset{\eqref{est 201} \eqref{est 167}}{\lesssim} \lambda_{q} L M_{0}(\tau_{q+1} j) \lambda_{q+1}^{2 - \gamma_{2}} \delta_{q+1}, \label{est 214} 
\end{align} 
and similarly in case $N \in \mathbb{N}\setminus \{1\}$ by relying instead on \eqref{est 198}, for $b> 2L^{2}$ 
\begin{align}
&\lVert D^{N} \mathring{R}_{q,j}(t)  \rVert_{C_{x}} \lesssim ( \lVert D^{N} \mathring{R}_{l} (\tau_{q+1} j) \rVert_{C_{x}} + (t -\tau_{q+1} j) \lVert D^{N} (\Lambda^{2- \gamma_{2}} y_{l} + \Lambda^{2- \gamma_{2}} z_{l}) \rVert_{C_{x}} \lVert D \mathring{R}_{l} (\tau_{q+1} j) \rVert_{C_{x}}) \nonumber \\
& \hspace{12mm} \times e^{( t-  \tau_{q+1} j) \lVert D ( \Lambda^{2- \gamma_{2}} y_{l} + \Lambda^{2- \gamma_{2}} z_{l}) \rVert_{C_{t,x}}} \overset{\eqref{est 201} \eqref{est 166} \eqref{est 136} \eqref{est 167}}{\lesssim}  \lambda_{q}^{N} L M_{0}(\tau_{q+1} j) \lambda_{q+1}^{2- \gamma_{2}} \delta_{q+1}. \label{est 215}
\end{align}  
Applying \eqref{est 214}-\eqref{est 215} to \eqref{est 217} gives us \eqref{est 216a}. Next, concerning \eqref{est 216b}, its first inequality is clear from \eqref{est 200b}. Moreover, by \cite[equation (130)]{BDIS15} again we have 
\begin{align}
\lVert D \psi_{q+1, j, k} (t) \rVert_{C_{x}} 
\overset{\eqref{est 200b}}{\lesssim}& \lambda_{q+1} \lVert \nabla \Phi_{j}(t) -\Id \rVert_{C_{x}} \nonumber\\
& \overset{\eqref{est 199a}\eqref{est 136}\eqref{est 201}\eqref{est 166} }{\lesssim} \lambda_{q+1} \tau_{q+1} \lambda_{q}^{3- \gamma_{2}} \delta_{q}^{\frac{1}{2}}L^{\frac{1}{2}} M_{0}(t)^{\frac{1}{2}}. \label{est 222}
\end{align}
In case $N \in \mathbb{N}\setminus \{1\}$, we estimate again via \cite[equation (130)]{BDIS15} from \eqref{est 200b} for 
\begin{align*}
b > 
\begin{cases}
2 + 4L^{2} & \text{ if } \gamma_{2} = 1, \\
2+ \frac{2L^{2}}{1- \frac{\gamma_{2}}{2}} & \text{ if } \gamma_{2} > 1, 
\end{cases}
\end{align*}
and $a \in 5 \mathbb{N}$ sufficiently large 
\begin{align*}
& \lVert D^{N} \psi_{q+1, j, k} (t) \rVert_{C_{x}} \overset{\eqref{est 200b}}{\lesssim}   \lambda_{q+1} \lVert \Phi_{j}(t)\rVert_{C_{x}^{N}} + \lambda_{q+1}^{N} \lVert \nabla \Phi_{j}(t) -\Id \rVert_{C_{x}}^{N} 
 \\
&\hspace{10mm} \overset{\eqref{est 199} \eqref{est 201} \eqref{est 136}\eqref{est 166}}{\lesssim} \lambda_{q+1} \tau_{q+1} \lambda_{q}^{N+ 2 - \gamma_{2}} \delta_{q}^{\frac{1}{2}}L^{\frac{1}{2}} M_{0}(t)^{\frac{1}{2}} + \lambda_{q+1}^{N} ( \tau_{q+1} \lambda_{q}^{3- \gamma_{2}}  \delta_{q}^{\frac{1}{2}} L^{\frac{1}{2}} M_{0}(t)^{\frac{1}{2}})^{N} \\
& \hspace{23mm} \lesssim \lambda_{q+1} \tau_{q+1} \lambda_{q}^{N+ 2 - \gamma_{2}}   \delta_{q}^{\frac{1}{2}}L^{\frac{1}{2}} M_{0}(t)^{\frac{1}{2}}  
\end{align*}
where the last inequality used the facts that $e^{4} \leq a$ and $\alpha > 1$ from \eqref{est 154c} so that 
\begin{equation}\label{est 223}
 \lambda_{q+1} \tau_{q+1} \lambda_{q}^{2 - \gamma_{2}}   \delta_{q}^{\frac{1}{2}}  L^{\frac{1}{2}} M_{0}(t)^{\frac{1}{2}} \overset{\eqref{est 210} \eqref{est 32}}{\lesssim} a^{b^{q} [(b-2) ( \frac{- \alpha - 1 + \gamma_{2} + \beta}{2}) -\alpha + 1 ]} a^{L^{2}}\ll 1. 
\end{equation}  
\end{proof}

Before we proceed, let us observe an immediate useful corollary of \eqref{est 216} and \eqref{est 223}: for all $t \in \supp \chi_{j}$, 
\begin{align}  
& \lVert   \nabla (a_{k,j} \psi_{q+1, j, k})(t) \rVert_{C_{x}}  \lesssim \lVert  \nabla a_{k,j} \rVert_{C_{t,x}} \lVert \psi_{q+1, j,k} \rVert_{C_{t,x}} + \lVert a_{k,j} \rVert_{C_{t,x}} \lVert \nabla \psi_{q+1, j,k} \rVert_{C_{t,x}} \label{est 268} \\
& \hspace{17mm} \overset{\eqref{est 216}}{\lesssim}  \lambda_{q} \delta_{q+1}^{\frac{1}{2}} L^{\frac{1}{2}} M_{0}(t)^{\frac{1}{2}} [ 1 + L^{\frac{1}{2}} M_{0}(t)^{\frac{1}{2}} \lambda_{q+1} \tau_{q+1} \lambda_{q}^{2- \gamma_{2}}  \delta_{q}^{\frac{1}{2}} ] \overset{\eqref{est 223}}{\lesssim} \lambda_{q} \delta_{q+1}^{\frac{1}{2}} L^{\frac{1}{2}} M_{0}(t)^{\frac{1}{2}}. \nonumber 
\end{align}

\subsubsection{Bounds on $R_{T}$, the transport error}\label{Subsection 4.1.1}
Using \eqref{est 200a}, \eqref{est 208}-\eqref{est 206}, and \eqref{est 207}, we can rewrite $R_{T}$ from \eqref{est 191a} as
\begin{align}
R_{T} =& \sum_{j,k} 1_{\supp \chi_{j}}(t) \mathcal{B} ( [(\Lambda^{2- \gamma_{2}} y_{l} + \Lambda^{2- \gamma_{2}} z_{l}) \cdot \nabla, \mathbb{P}_{q+1,k} ] \tilde{w}_{q+1, j, k}  \nonumber \\
& \hspace{35mm}+ \mathbb{P}_{q+1, k} (\partial_{t} \chi_{j} a_{k,j} b_{k} (\lambda_{q+1} \Phi_{j} ))). \label{est 218}
\end{align}
Next, by definitions of $\mathbb{P}_{q+1, k}$ and $\tilde{P}_{\approx \lambda_{q+1}}$ from Subsection \ref{Subsection 3.2}, we see that for $a \in 5 \mathbb{N}$ such that $a \geq \frac{16}{5}$ we can rewrite furthermore as 
\begin{align}
R_{T} =& \sum_{j,k} 1_{\supp \chi_{j}}(t) \mathcal{B} \tilde{P}_{\approx \lambda_{q+1}} ( [(\Lambda^{2- \gamma_{2}} y_{l} + \Lambda^{2- \gamma_{2}} z_{l}) \cdot \nabla, \mathbb{P}_{q+1,k} ] \tilde{w}_{q+1, j, k}  \nonumber \\
& \hspace{35mm}+ \mathbb{P}_{q+1, k} (\partial_{t} \chi_{j} a_{k,j} b_{k} (\lambda_{q+1} \Phi_{j} ))). \label{est 219}
\end{align} 
Thus, we estimate by \cite[Lemma A.6]{BSV19} and using the fact that \eqref{est 154b} and \eqref{est 154c} guarantee that $\beta < \frac{7-3\gamma_{2}}{5}$ and $\alpha < 7 - 3 \gamma_{2} - 5 \beta$, for $a \in 5 \mathbb{N}$ sufficiently large 
\begin{align}
\lVert R_{T} &\rVert_{C_{t,x}} \overset{\eqref{est 202} \eqref{est 216a}}{\lesssim} \lambda_{q+1}^{-1} \sum_{j,k} \nonumber  \\
& \times [ ( \lVert \nabla \Lambda^{2- \gamma_{2}} y_{l} \rVert_{C_{t,x}} + \lVert \nabla \Lambda^{2- \gamma_{2}} z_{l} \rVert_{C_{t,x}}) \lVert 1_{\supp \chi_{j}} \tilde{w}_{q+1, j, k} \rVert_{C_{t,x}} + \tau_{q+1}^{-1} \delta_{q+1}^{\frac{1}{2}} L^{\frac{1}{2}} M_{0}(t)^{\frac{1}{2}} ] \nonumber \\
&\overset{\eqref{est 136} \eqref{est 166} \eqref{est 216a}}{\lesssim} \lambda_{q+1}^{-1} M_{0}(t)^{\frac{1}{2}} [ \lambda_{q}^{3-\gamma_{2}} L M_{0}(t)^{\frac{1}{2}} \delta_{q}^{\frac{1}{2}} \delta_{q+1}^{\frac{1}{2}} + L^{\frac{1}{2}} \tau_{q+1}^{-1} \delta_{q+1}^{\frac{1}{2}} ]  \nonumber\\
&\overset{ \eqref{est 210}\eqref{est 32}}{\lesssim}  L M_{0}(t) \lambda_{q+2}^{2-\gamma_{2}} \delta_{q+2} [ a^{b^{q} [(b-2) [(b+2) (-2 + \gamma_{2} + 2 \beta) - 1 - \beta] - 7 + 3 \gamma_{2} + 5 \beta]} \nonumber\\
& \hspace{33mm} + a^{b^{q+1} [(b-2) (-2+ \gamma_{2} + 2\beta) + \frac{ -7 + 3 \gamma_{2} + 5 \beta + \alpha}{2} ]}] \ll L M_{0}(t) \lambda_{q+2}^{2-\gamma_{2}} \delta_{q+2}. \label{est 301}
\end{align}

\subsubsection{Bounds on $R_{N}$, the Nash error}\label{Subsection 4.1.2}
Let us write from \eqref{est 191c} using \eqref{est 38}   
\begin{equation}\label{est 304}
R_{N} = N_{1} + N_{2} 
\end{equation} 
where 
\begin{equation}\label{est 221} 
N_{1} \triangleq \mathcal{B} ( ( \nabla (\Lambda^{2-\gamma_{2}} (y_{l} + z_{q} ))^{T} \cdot w_{q+1}) \hspace{2mm} \text{ and } \hspace{2mm} N_{2} \triangleq \mathcal{B} ( \Lambda^{2-\gamma_{2}} w_{q+1}^{\bot} (\nabla^{\bot} \cdot y_{l})).
\end{equation} 
Considering that $\supp  \left((\nabla (\Lambda^{2- \gamma_{2}} (y_{l} + z_{q} )))^{T}\right)\hat{} \subset B(0, 2 \lambda_{q})$ and definition of $\mathbb{P}_{q+1, k}$ within $w_{q+1}$, we can rewrite similarly to \eqref{est 219}
\begin{equation}\label{est 220}
N_{1} =  \sum_{j,k}  \mathcal{B} \tilde{P}_{\approx \lambda_{q+1}} [ ( \nabla (\Lambda^{2-\gamma_{2}} y_{l} + z_{q}))^{T} \cdot \mathbb{P}_{q+1, k} (\chi_{j} a_{k,j}  b_{k} (\lambda_{q+1} \Phi_{j}))]. 
\end{equation} 
This allows us to immediately estimate for $a \in 5 \mathbb{N}$ sufficiently large 
\begin{align}
 &\lVert N_{1} \rVert_{C_{t,x}} 
\lesssim \lambda_{q+1}^{-1} \sum_{j,k}  ( \lVert \nabla \Lambda^{2- \gamma_{2}} y_{l} \rVert_{C_{t,x}} + \lVert \nabla \Lambda^{2-\gamma_{2}} z_{q} \rVert_{C_{t,x}}) \lVert 1_{\supp \chi_{j}} a_{k,j} \rVert_{C_{t,x}} \label{est 302}\\
& \hspace{2mm} \overset{\eqref{est 166} \eqref{est 136} \eqref{est 216a}}{\lesssim} L M_{0}(t) \lambda_{q+2}^{2 - \gamma_{2}} \delta_{q+2} a^{b^{q} [(b-2)[(b+2) (-2+ \gamma_{2} + 2 \beta) - 1 - \beta] - 7 + 3 \gamma_{2} + 5 \beta]} \ll L M_{0}(t) \lambda_{q+2}^{2-\gamma_{2}} \delta_{q+2} \nonumber 
\end{align}
where we used that $\beta < \frac{7-3\gamma_{2}}{5}$ due to \eqref{est 154b}. Next, to handle $N_{2}$ we first rewrite using \eqref{est 208}, \eqref{est 178}, and \eqref{est 200} 
\begin{align}
w_{q+1}^{\bot}(t,x) =& - \lambda_{q+1}^{-1} \sum_{j,k} \nabla \mathbb{P}_{q+1, k} ( \chi_{j}(t) a_{k,j} (t,x) \psi_{q+1, j, k} (t,x) c_{k} (\lambda_{q+1} x) ) \nonumber \\
&+ \lambda_{q+1}^{-1} \sum_{j,k}  \mathbb{P}_{q+1, k} ( \chi_{j}(t) \nabla (a_{k,j} (t,x) \psi_{q+1, j, k} (t,x)) c_{k} (\lambda_{q+1} x)).  \label{est 367}
\end{align}
Applying \eqref{est 367} to $N_{2}$ in \eqref{est 221} and relying on \eqref{est 200b}, as well as the fact that $\mathcal{B} f = \mathcal{B} \mathbb{P} f$ for any $f$ that is not divergence-free and $\mathbb{P}$ eliminates any gradient give us 
\begin{align}
N_{2}(t,x) =&   \lambda_{q+1}^{-1} \mathcal{B} ( \nabla (\nabla^{\bot} \cdot y_{l})(t,x)   \sum_{j,k}   \Lambda^{2- \gamma_{2}} \mathbb{P}_{q+1, k} (\chi_{j}(t) a_{k,j} (t,x) c_{k} (\lambda_{q+1} \Phi_{j} (t,x))))  \label{est 368}\\
& + \lambda_{q+1}^{-1}\mathcal{B} (( \nabla^{\bot} \cdot y_{l})(t,x)   \sum_{j,k}   \Lambda^{2- \gamma_{2}} \mathbb{P}_{q+1, k} (\chi_{j}(t) \nabla (a_{k,j} (t,x) \psi_{q+1, j, k} (t,x)) c_{k} (\lambda_{q+1} x )) ). \nonumber
\end{align}
Thus, we can compute using \cite[equation (A.11) on p. 1865]{BSV19} and the fact that $\beta < \frac{6- 2 \gamma_{2}}{5}$ from \eqref{est 154b}, for $a \in 5 \mathbb{N}$ sufficiently large 
\begin{align}
&\lVert N_{2} \rVert_{C_{t,x}} \overset{\eqref{est 368}}{\lesssim}  \lambda_{q+1}^{-2} \sum_{j,k}  \lVert \nabla(\nabla^{\bot} \cdot y_{l} ) \rVert_{C_{t,x}} \lVert \Lambda^{2- \gamma_{2}} \mathbb{P}_{q+1, k} ( \chi_{j} a_{k,j} (x) c_{k} (\lambda_{q+1} \Phi_{j} (x))) \rVert_{C_{t,x}}  \label{est 303}  \\
& \hspace{14mm} + \lVert \nabla^{\bot} \cdot y_{l} \rVert_{C_{t,x}} \lVert \Lambda^{2- \gamma_{2}} \mathbb{P}_{q+1, k} (\chi_{j}  \nabla (a_{k,j} (x) \psi_{q+1, j, k} (x)) c_{k} (\lambda_{q+1} x) )  \rVert_{C_{t,x}} \nonumber \\
&\overset{\eqref{est 166} \eqref{est 216} \eqref{est 268}}{\lesssim}  L M_{0}(t) \lambda_{q+2}^{2-\gamma_{2}} \delta_{q+2} a^{b^{q} [(b-2) [(b+2) (-2+ \gamma_{2} + 2 \beta) - \gamma_{2} - \beta] - 6 + 2 \gamma_{2} + 5 \beta]}  \ll L M_{0}(t) \lambda_{q+2}^{2- \gamma_{2}} \delta_{q+2}. \nonumber
\end{align}
Applying \eqref{est 302} and \eqref{est 303} to \eqref{est 304} allows us to conclude 
\begin{equation}\label{est 305} 
\lVert R_{N} \rVert_{C_{t,x}} \ll L M_{0}(t) \lambda_{q+2}^{2-\gamma_{2}} \delta_{q+2}.
\end{equation}  
\subsubsection{Bounds on $R_{L}$, the linear error}
From \eqref{est 191d} let us write
\begin{equation}\label{est 224} 
R_{L} = L_{1} + L_{2} \hspace{1mm} \text{ where } \hspace{1mm} L_{1} \triangleq \mathcal{B} \Lambda^{\gamma_{1}} w_{q+1}\hspace{1mm}  \text{ and } \hspace{1mm} L_{2} \triangleq \mathcal{B} [\Lambda^{2- \gamma_{2}} w_{q+1}^{\bot} (\nabla^{\bot} \cdot z_{l})]. 
\end{equation}  
We estimate from $L_{1}$ for $a \in 5 \mathbb{N}$ sufficiently large by using $\gamma_{1} + 2 \gamma_{2} < 5 - 3 \beta$ from \eqref{est 154d} 
\begin{align}
\lVert L_{1} \rVert_{C_{t,x}} \overset{\eqref{est 163} }{\lesssim}& \lambda_{q+1}^{\gamma_{1} -1} \lVert w_{q+1} \rVert_{C_{t,x}}  \label{est 226} \\
\overset{\eqref{est 204a} \eqref{est 35}}{\lesssim}&  L^{\frac{1}{2}}  M_{0}(t)^{\frac{1}{2}} \lambda_{q+2}^{2-\gamma_{2}} \delta_{q+2} a^{b^{q+1} [(b-2) (-2+ \gamma_{2} + 2 \beta) - 5 + 2 \gamma_{2} + 3 \beta + \gamma_{1} ]} \ll L M_{0}(t) \lambda_{q+2}^{2- \gamma_{2}} \delta_{q+2}. \nonumber
\end{align} 
For $L_{2}$, because $\supp \hat{w}_{q+1}^{\bot} \subset \{ \xi: \frac{\lambda_{q+1}}{2} \leq \lvert \xi \rvert \leq 2 \lambda_{q+1} \}$ due to \eqref{est 163} and $\supp \hat{z}_{l} \subset B(0,  \frac{\lambda_{q}}{4})$, we can rely on \cite[equation (A.11)]{BSV19} to compute for $a \in 5 \mathbb{N}$ sufficiently large  
\begin{align}
\lVert L_{2} \rVert_{C_{t,x}} \overset{\eqref{est 224}}{\lesssim}& \lambda_{q+1}^{-1} \lVert \Lambda^{2- \gamma_{2}} w_{q+1} \rVert_{C_{t,x}} \lVert z \rVert_{C_{t}C_{x}^{1}}  \overset{\eqref{est 163} \eqref{est 204a} \eqref{est 166}}{\lesssim} \lambda_{q+1}^{1- \gamma_{2} - \beta} L^{\frac{1}{2}}  M_{0}(t)^{\frac{1}{2}} L^{\frac{1}{4}}\nonumber \\
\lesssim& M_{0}(t) \lambda_{q+2}^{2- \gamma_{2}} \delta_{q+2} a^{b^{q+1} [(b-2)(-2+ \gamma_{2} + 2 \beta) - 3 + \gamma_{2} + 3 \beta]} \ll L M_{0}(t) \lambda_{q+2}^{2- \gamma_{2}} \delta_{q+2} \label{est 227} 
\end{align}
where we used that \eqref{est 154b} guarantees that $\beta < \frac{3- \gamma_{2}}{3}$. Applying \eqref{est 226} and \eqref{est 227} in \eqref{est 224}, we conclude that 
\begin{equation}\label{est 306} 
\lVert R_{L} \rVert_{C_{t,x}} \ll LM_{0}(t) \lambda_{q+2}^{2- \gamma_{2}} \delta_{q+2}. 
\end{equation} 

\subsubsection{Bounds on $R_{O}$, the oscillation error}
Some of the computations in this subsection are generalizations of those in \cite[Section 5.4]{BSV19}; when the generalizations are straight-forward, we only sketch them and refer to \cite[Section 5.4]{BSV19} for details. We rewrite 
\begin{align}
\divergence R_{O} \overset{\eqref{est 191b} \eqref{est 211}}{=}& \divergence \left( \sum_{j} \chi_{j}^{2} (\mathring{R}_{l} - \mathring{R}_{q,j}) + \chi_{j}^{2} \mathring{R}_{q,j } \right) \nonumber\\
&+ (\Lambda^{2- \gamma_{2}} w_{q+1} \cdot \nabla) w_{q+1} - (\nabla w_{q+1})^{T} \cdot \Lambda^{2- \gamma_{2}} w_{q+1}. \label{est 228} 
\end{align} 
By \eqref{est 208} we can write 
\begin{subequations}\label{est 417}
\begin{align}
& (\Lambda^{2- \gamma_{2}} w_{q+1} \cdot \nabla) w_{q+1} = (\Lambda^{2- \gamma_{2}} \sum_{j,k}  \mathbb{P}_{q+1, k} \tilde{w}_{q+1, j, k}) \cdot \nabla \sum_{j',k'} \mathbb{P}_{q+1, k'} \tilde{w}_{q+1, j', k'}, \\
& (\nabla w_{q+1})^{T} \cdot\Lambda^{2- \gamma_{2}} w_{q+1} = (\nabla \sum_{j,k}  \mathbb{P}_{q+1, k} \tilde{w}_{q+1, j, k})^{T} \cdot \Lambda^{2- \gamma_{2}} \sum_{j',k'} \mathbb{P}_{q+1, k'} \tilde{w}_{q+1, j', k'}. 
\end{align}
\end{subequations} 
We call the cases $k + k' \neq 0$ and $k + k' = 0$  respectively the high and low frequency parts. We have $\frac{1}{2} \leq \lvert k + k' \rvert \leq 2$ due to Lemma \ref{Geometric Lemma} in the high oscillation part that leads to 
\begin{align*}
& \Lambda^{ 2- \gamma_{2}} \mathbb{P}_{q+1, k} \tilde{w}_{q+1, j, k} \cdot \nabla \mathbb{P}_{q+1, k'} \tilde{w}_{q+1, j', k'} = \tilde{P}_{\approx \lambda_{q+1}} [  \Lambda^{ 2- \gamma_{2}} \mathbb{P}_{q+1, k} \tilde{w}_{q+1, j, k} \cdot \nabla \mathbb{P}_{q+1, k'} \tilde{w}_{q+1, j', k'} ], \\
& ( \nabla \mathbb{P}_{q+1, k} \tilde{w}_{q+1, j, k})^{T} \cdot \Lambda^{2- \gamma_{2}} \mathbb{P}_{q+1, k'} \tilde{w}_{q+1, j', k'} = \tilde{P}_{\approx \lambda_{q+1}}[ ( \nabla \mathbb{P}_{q+1, k} \tilde{w}_{q+1, j, k})^{T} \cdot \Lambda^{2- \gamma_{2}} \mathbb{P}_{q+1, k'} \tilde{w}_{q+1, j', k'}]
\end{align*}
and thus we define formally 
\begin{align}
R_{O, \text{high}} \triangleq& \mathcal{B} \tilde{P}_{\approx \lambda_{q+1}} [ \sum_{j, j', k, k': k + k' \neq 0} ( \Lambda^{2- \gamma_{2}} \mathbb{P}_{q+1, k} \tilde{w}_{q+1, j, k}) \cdot \nabla \mathbb{P}_{q+1, k'} \tilde{w}_{q+1, j', k'} \nonumber\\
& \hspace{35mm} - (\nabla \mathbb{P}_{q+1, k} \tilde{w}_{q+1, j, k})^{T} \cdot \Lambda^{2- \gamma_{2}} \mathbb{P}_{q+1, k'} \tilde{w}_{q+1, j', k'} ] \label{est 230}
\end{align}
where $\sum_{j,j',k,k'} \triangleq \sum_{j,j' = 0}^{\lceil \tau_{q+1}^{-1} T_{L} \rceil} \sum_{k\in \Gamma_{j}, k' \in \Gamma_{j'}}$ (recall \eqref{est 401}).  Concerning the low frequency part, Lemma \ref{Geometric Lemma} shows that $k + k' = 0$ implies that $\Gamma_{j} = \Gamma_{j'}$; thus, we symmetrize to define the $(j,k)$-term of low-frequency part of the nonlinear terms as 
\begin{align}
\mathcal{J}_{j,k} &\triangleq \frac{1}{2} [  \Lambda^{2-\gamma_{2}} \mathbb{P}_{q+1,k} \tilde{w}_{q+1, j,k} \cdot \nabla \mathbb{P}_{q+1, -k} \tilde{w}_{q+1, j, -k} \label{est 232} \\
&\hspace{30mm} + \Lambda^{2- \gamma_{2}} \mathbb{P}_{q+1, -k} \tilde{w}_{q+1, j, -k} \cdot \nabla \mathbb{P}_{q+1, k} \tilde{w}_{q+1, j,k} \nonumber\\
& - (\nabla \mathbb{P}_{q+1, k} \tilde{w}_{q+1, j,k})^{T} \cdot \Lambda^{2- \gamma_{2}} \mathbb{P}_{q+1, -k} \tilde{w}_{q+1, j, -k} - (\nabla \mathbb{P}_{q+1, -k} \tilde{w}_{q+1, j, -k})^{T} \cdot \Lambda^{2- \gamma_{2}} \mathbb{P}_{q+1, k} \tilde{w}_{q+1, j, k} ]. \nonumber 
\end{align}
As we will see in \eqref{est 243}, we can decompose $\mathcal{J}_{j,k}$ as 
\begin{equation}\label{est 229}  
\mathcal{J}_{j,k} = \nabla \mathcal{P}_{j,k} + \divergence (\mathcal{L}_{j,k})  = \nabla \left( \mathcal{P}_{j,k} + \frac{ \mathcal{L}_{j,k}^{11} + \mathcal{L}_{j,k}^{22}}{2} \right) + \divergence (\mathring{\mathcal{L}}_{j,k}), 
\end{equation} 
where $\mathring{\mathcal{L}}_{j,k}$ is a trace-free part of $\mathcal{L}_{j,k}$, and both of $\mathcal{L}_{j,k}$ and $\mathcal{P}_{j,k} $ will be defined in \eqref{est 244}. Then \eqref{est 229} allows us to rewrite \eqref{est 228}  as 
\begin{equation}\label{est 291}
R_{O} = R_{O, \text{approx}} + R_{O, \text{low}} + R_{O, \text{high}}
\end{equation}  
where in addition to $R_{O, \text{high}}$ in \eqref{est 230} we defined 
\begin{equation}\label{est 259}
R_{O, \text{approx}} \triangleq \sum_{j} \chi_{j}^{2} (\mathring{R}_{l} -\mathring{R}_{q,j}) \hspace{2mm} \text{ and }  \hspace{2mm} R_{O, \text{low}} \triangleq \sum_{j} \chi_{j}^{2} \mathring{R}_{q,j} + \sum_{j,k}  \mathring{\mathcal{L}}_{j,k}. 
\end{equation} 
We have the identity of $\mathbb{P}_{q+1, k} b_{k} (\lambda_{q+1} x) = b_{k} (\lambda_{q+1} x)$ due to \eqref{est 192} and \eqref{est 179}  which leads to 
\begin{equation}\label{est 273} 
\mathbb{P}_{q+1, k} \tilde{w}_{q+1, j, k}(x) \overset{\eqref{est 200a} \eqref{est 208}}{=} \tilde{w}_{q+1, j, k} (x) +\chi_{j} [\mathbb{P}_{q+1, k}, a_{k,j} (x) \psi_{q+1, j, k} (x) ] b_{k} (\lambda_{q+1} x) 
\end{equation} 
and therefore 
\begin{equation}
w_{q+1}(t,x) \overset{\eqref{est 208}}{=} \sum_{j,k}  \tilde{w}_{q+1, j,k}(t,x) + \chi_{j} (t) [\mathbb{P}_{q+1, k}, a_{k,j} (t,x) \psi_{q+1, j, k}(t,x) ] b_{k} (\lambda_{q+1} x).
\end{equation} 
Next, by \eqref{est 200a} and \eqref{est 208} we can deduce 
\begin{align}
&\Lambda^{2- \gamma_{2}} \mathbb{P}_{q+1, k} \tilde{w}_{q+1, j, k}(x) \nonumber\\
=& \lambda_{q+1}^{2- \gamma_{2}} \tilde{w}_{q+1, j, k}(x) + \chi_{j} [\mathbb{P}_{q+1, k} \Lambda^{2- \gamma_{2}}, a_{k,j}(x) \psi_{q+1, j, k}(x) ] b_{k} (\lambda_{q+1} x)\label{est 274}
\end{align}
which leads to via \eqref{est 208} 
\begin{equation}
\Lambda^{2- \gamma_{2}} w_{q+1}(x) = \sum_{j,k} \lambda_{q+1}^{2- \gamma_{2}}    \tilde{w}_{q+1, j, k}(x) +  \chi_{j} [\mathbb{P}_{q+1, k} \Lambda^{2- \gamma_{2}}, a_{k,j}(x) \psi_{q+1, j, k}(x) ] b_{k} (\lambda_{q+1} x). 
\end{equation}  
We denote the potential vorticity associated to $\mathbb{P}_{q+1, k} \tilde{w}_{q+1, j, k}$ as 
\begin{equation}\label{est 233} 
\vartheta_{j,k} \triangleq \nabla^{\bot} \cdot \mathbb{P}_{q+1, k} \chi_{j} a_{k,j} b_{k} (\lambda_{q+1} \Phi_{j}) 
\end{equation} 
because $\tilde{w}_{q+1, j,k}(t,x) = \chi_{j}(t) a_{k,j}(t,x) b_{k} (\lambda_{q+1} \Phi_{j}(t,x))$ due to \eqref{est 200a}. For any $f$ that is mean-zero and divergence-free, due to \eqref{est 38} we have an identity of 
\begin{equation}\label{est 231}
\Lambda^{2- \gamma_{2}} f \cdot \nabla g - (\nabla g)^{T}\cdot \Lambda^{2- \gamma_{2}} f = \Lambda^{2- \gamma_{2}} f^{\bot} (\nabla^{\bot} \cdot g) = (\Lambda^{1- \gamma_{2}} \mathcal{R} \nabla^{\bot} \cdot f) (\nabla^{\bot} \cdot g). 
\end{equation}  
The identity \eqref{est 231} allows us to define 
\begin{equation}\label{est 235} 
\mathcal{J}(\vartheta_{j,k}, \vartheta_{j, -k}) \triangleq  \frac{1}{2} [ ( \Lambda^{1- \gamma_{2}} \mathcal{R} \vartheta_{j,k} ) \vartheta_{j, -k} + \vartheta_{j,k}(\Lambda^{1- \gamma_{2}} \mathcal{R} \vartheta_{j, -k} ) ] 
\end{equation} 
and deduce from \eqref{est 232} 
\begin{equation}\label{est 234}
\mathcal{J}_{j,k} \overset{\eqref{est 233}  \eqref{est 231}}{=} \mathcal{J} (\vartheta_{j,k}, \vartheta_{j, -k}).
\end{equation} 
Additionally, we define 
\begin{equation}\label{est 236} 
s^{m} (\lambda, \eta) \triangleq \int_{0}^{1} \frac{ i ((1-r) \eta - r \lambda)_{m}}{ \lvert (1-r) \eta - r \lambda \rvert^{2- \gamma_{2}}} dr 
\end{equation} 
so that the Fourier symbol of $\Lambda^{\gamma_{2} -1} \mathcal{R}_{m}$ is 
\begin{align}\label{est 237} 
s^{m} (-\eta, \eta) = \int_{0}^{1} \frac{ i \eta_{m}}{\lvert \eta \rvert^{2- \gamma_{2}}} dr = \lvert \eta \rvert^{\gamma_{2} -1} \frac{ i \eta_{m}}{\lvert \eta \rvert}. 
\end{align}
Denoting the $l$-th component of $\mathcal{J}(f,g)$ by $\mathcal{J}_{l} (f,g)$, we see that 
\begin{equation}\label{est 238}
2 ( \mathcal{J}_{l} (f,g))\hspace{1mm}\hat{}\hspace{1mm}  (\xi) \overset{\eqref{est 235}}{=} \int_{\mathbb{R}^{2}} \left( \frac{ i (\xi - \eta)_{l}}{\lvert \xi - \eta \rvert^{\gamma_{2}}} + \frac{ i \eta_{l}}{\lvert \eta \rvert^{\gamma_{2}} } \right) \hat{f} (\xi - \eta) \hat{g} (\eta) d\eta 
\end{equation} 
 where we can compute 
 \begin{equation}\label{est 239} 
 \frac{ i (\xi - \eta)_{l}}{ \lvert \xi - \eta \rvert^{\gamma_{2}}} + \frac{ i \eta_{l}}{\lvert \eta \rvert^{\gamma_{2}}} \overset{\eqref{est 236}}{=} \frac{ i \xi_{l}}{\lvert \xi - \eta \rvert^{\gamma_{2}}} + \sum_{m=1}^{2} i \xi_{m} \frac{ i \eta_{l}}{\lvert \eta \rvert^{\gamma_{2}}} \frac{1}{\lvert \xi - \eta \rvert^{\gamma_{2}}} \gamma_{2} s^{m} (\xi - \eta, \eta). 
\end{equation} 
Applying \eqref{est 239} to \eqref{est 238} gives 
\begin{align}
& (\mathcal{J}_{l}(f,g))\hspace{1mm}\hat{}\hspace{1mm}  (\xi) \label{est 240} \\
=& \frac{ i \xi_{l}}{2} \int_{\mathbb{R}^{2}} \frac{ \hat{f} (\xi - \eta)}{ \lvert \xi - \eta \rvert^{\gamma_{2}}} \hat{g} (\eta) d \eta + \sum_{m=1}^{2} \frac{ i \xi_{m} \gamma_{2}}{2} \int_{\mathbb{R}^{2}} s^{m} (\xi - \eta, \eta) \frac{ \hat{f} (\xi - \eta)}{ \lvert \xi - \eta \rvert^{\gamma_{2}}} \frac{ i \eta_{l}}{\lvert \eta \rvert^{\gamma_{2}}} \hat{g} (\eta) d \eta. \nonumber  
\end{align}
We define $\mathcal{T}^{m}$ by 
\begin{equation}\label{est 241}
( \mathcal{T}^{m} (f,g))\hspace{1mm}\hat{}\hspace{1mm}  (\xi) \triangleq \int_{\mathbb{R}^{2}} s^{m} (\xi - \eta, \eta) \hat{f} (\xi - \eta) \hat{g} (\eta) d\eta 
\end{equation} 
so that \eqref{est 240} gives us 
\begin{equation}\label{est 242} 
\mathcal{J}_{l} (f,g) = \frac{1}{2} \partial_{l} (( \Lambda^{-\gamma_{2}} f) g) + \frac{\gamma_{2}}{2} \sum_{m=1}^{2} \partial_{m} ( \mathcal{T}^{m} (\Lambda^{-\gamma_{2}} f, \Lambda^{1- \gamma_{2}} \mathcal{R}_{l} g)).
\end{equation} 
Taking Fourier inverse on \eqref{est 241} and writing $K_{s^{m}}$ as the Fourier inverse in $\mathbb{R}^{4}$ of $s^{m}$ give us 
\begin{equation}
\mathcal{T}^{m} (f,g) (x) = \int\int_{\mathbb{R}^{2} \times \mathbb{R}^{2}} K_{s^{m}} (x-y, x-z) f(y) g(z) dy dz. 
\end{equation} 
(see \cite[equations (5.27)-(5.28)]{BSV19} for details). Therefore, 
\begin{equation}\label{est 243}
\mathcal{J}_{j,k}  \overset{\eqref{est 234}\eqref{est 242}}{=} \frac{1}{2} \nabla (( \Lambda^{-\gamma_{2}} \vartheta_{j,k} ) \vartheta_{j, -k}) + \frac{\gamma_{2}}{2} \divergence ( \mathcal{T} ( \Lambda^{-\gamma_{2}} \vartheta_{j,k}, \Lambda^{1- \gamma_{2}} \mathcal{R} \vartheta_{j,-k})). 
\end{equation} 
Thus, we have proven our claim in \eqref{est 229}  with 
\begin{equation}\label{est 244}
\mathcal{L}_{j,k}^{ml} \triangleq \frac{\gamma_{2}}{2} \mathcal{T}^{m} ( \Lambda^{ -\gamma_{2}} \vartheta_{j,k}, \Lambda^{1- \gamma_{2}} \mathcal{R}^{l} \vartheta_{j, -k} ) \hspace{2mm} \text{ and } \hspace{2mm} \mathcal{P}_{j,k} \triangleq \frac{1}{2} ( \Lambda^{-\gamma_{2}} \vartheta_{j,k}) \vartheta_{j, -k}.
\end{equation} 
Next, we see that 
\begin{equation}\label{est 245}
\vartheta_{j,k} \overset{\eqref{est 233} \eqref{est 192} }{=} \nabla^{\bot} \cdot ( \mathbb{P} P_{\approx \lambda_{q+1} k} ) \tilde{w}_{q+1, j, k} = P_{\approx \lambda_{q+1} k} (\nabla^{\bot} \cdot \tilde{w}_{q+1, j, k}). 
\end{equation} 
It follows due to \eqref{est 245}, \eqref{est 200a}, \eqref{est 208}, and \eqref{est 178} that 
\begin{subequations}\label{est 246} 
\begin{align}
& \Lambda^{- \gamma_{2}} \vartheta_{j,k}(x) =  \chi_{j} ik^{\bot} \cdot \Lambda^{1- \gamma_{2}} \mathcal{R}^{\bot} P_{\approx \lambda_{q+1} k} (a_{k,j} (x) \psi_{q+1, j, k} (x) c_{k} (\lambda_{q+1} x)), \label{est 246a} \\
& \Lambda^{1- \gamma_{2}} \mathcal{R}_{l} \vartheta_{j, -k}(x) = - i \chi_{j} k^{\bot}  \cdot \nabla^{\bot}  \Lambda^{1- \gamma_{2}} \mathcal{R}_{l} P_{\approx -k \lambda_{q+1}} ( a_{-k, j} (x) \psi_{q+1, j, -k} (x) c_{-k} (\lambda_{q+1} x) ). \label{est 246b} 
\end{align}
\end{subequations} 
It follows by using the fact that $a_{k,j} = a_{-k,j}$ due to \eqref{est 196} and Lemma \ref{Geometric Lemma} (2) that 
\begin{subequations}\label{est 247}
\begin{align}
& ( \Lambda^{- \gamma_{2}} \vartheta_{j,k} )\hspace{1mm}\hat{}\hspace{1mm} (\xi) = - \chi_{j} \frac{ k \cdot \xi}{\lvert \xi \rvert^{\gamma_{2}}} \hat{K}_{\approx 1} \left( \frac{ \xi}{\lambda_{q+1}} - k \right) (a_{k,j} \psi_{q+1, j, k})\hspace{1mm}\hat{}\hspace{1mm}  (\xi - k \lambda_{q+1}), \\
&( \Lambda^{1- \gamma_{2}} \mathcal{R}_{l} \vartheta_{j, -k})\hspace{1mm}\hat{}\hspace{1mm}  (\xi)= \chi_{j} i \xi_{l} \frac{ k \cdot \xi}{\lvert \xi \rvert^{\gamma_{2}}} \hat{K}_{\approx 1} \left( \frac{\xi}{\lambda_{q+1}} + k \right) (a_{k,j} \psi_{q+1, j, -k})\hspace{1mm}\hat{}\hspace{1mm} (\xi + k \lambda_{q+1}). 
\end{align}
\end{subequations}   
Consequently, if we define 
\begin{align}
M_{k}^{ml} (\xi, \eta) \triangleq& - i s^{m} (\xi + k \lambda_{q+1}, \eta - k \lambda_{q+1}) \frac{ k\cdot ( \xi + k \lambda_{q+1} )}{\lvert \xi + k \lambda_{q+1} \rvert^{\gamma_{2}}} \nonumber\\
& \times \hat{K}_{\approx 1} \left( \frac{\xi}{\lambda_{q+1}} \right) (\eta_{l} - k_{l} \lambda_{q+1}) \frac{ k \cdot (\eta - k \lambda_{q+1})}{\lvert \eta - k \lambda_{q+1} \rvert^{\gamma_{2}}} \hat{K}_{\approx 1} \left( \frac{\eta}{\lambda_{q+1}} \right)  \label{est 248} 
\end{align}
and 
\begin{align}
M_{k,r}^{ml} (\xi, \eta) \triangleq& \frac{ ((1-r) \eta - r \xi - k \lambda_{q+1})_{m}}{\lvert (1-r) \eta - r \xi - k \lambda_{q+1} \rvert^{2- \gamma_{2}}} \nonumber\\
& \times \frac{ k \cdot (\xi + k \lambda_{q+1} )}{\lvert \xi + k \lambda_{q+1} \rvert^{\gamma_{2}}} \hat{K}_{\approx 1} \left( \frac{\xi}{\lambda_{q+1}} \right) (\eta_{l} - k_{l} \lambda_{q+1}) \frac{ k \cdot (\eta - k \lambda_{q+1} )}{\lvert \eta - k \lambda_{q+1} \rvert^{\gamma_{2}}} \hat{K}_{\approx 1} \left( \frac{\eta}{\lambda_{q+1}} \right)  \label{est 249} 
\end{align}
so that 
\begin{equation}\label{est 250} 
M_{k}^{ml} (\xi, \eta) = \int_{0}^{1} M_{k,r}^{ml} (\xi, \eta) dr, 
\end{equation} 
then we obtain from \eqref{est 244}, \eqref{est 241}, and \eqref{est 247}, 
\begin{align}
\mathcal{L}_{j,k}^{ml} (x) =& \frac{\gamma_{2}}{2} \frac{\chi_{j}^{2}(t)}{(2\pi)^{2}} \int \int_{\mathbb{R}^{2} \times \mathbb{R}^{2}} M_{k}^{ml} (\xi, \eta) \nonumber\\
& \times (a_{k,j} \psi_{q+1, j, k})\hspace{1mm}\hat{}\hspace{1mm} (\xi) (a_{k,j} \psi_{q+1, j, -k})\hspace{1mm}\hat{}\hspace{1mm} (\eta) e^{ix\cdot (\xi + \eta)} d\xi d\eta. \label{est 251} 
\end{align}
We observe that if we define for $\xi, \tilde{\xi} \in \mathbb{R}^{2}$
\begin{equation}\label{est 252}
(M_{k,r}^{\ast})^{ml} (\xi, \tilde{\xi}) \triangleq \frac{ (( 1-r) \tilde{\xi} - r \xi - k)_{m}}{\lvert (1-r) \tilde{\xi} - r \xi - k \rvert^{2 -\gamma_{2}}} (\tilde{\xi}_{l} - k_{l}) \frac{ k \cdot (\xi + k) k \cdot (\tilde{\xi} - k)}{\lvert \xi + k \rvert^{\gamma_{2}} \lvert \tilde{\xi} - k \rvert^{\gamma_{2}}} \hat{K}_{\approx 1} (\xi) \hat{K}_{\approx 1} (\tilde{\xi}),
\end{equation} 
then we can write due to \eqref{est 249} and \eqref{est 252}, 
\begin{equation}\label{est 255}
 M_{k,r}^{ml} (\xi, \eta)=\lambda_{q+1}^{2- \gamma_{2}} (M_{k,r}^{\ast})^{ml} \left(\frac{ \xi}{\lambda_{q+1}}, \frac{\eta}{\lambda_{q+1}} \right).  
\end{equation} 
We note that $M_{k,r}^{\ast}$ is independent of $\lambda_{q+1}$ and due to $\supp \hat{K}_{\approx 1}\subset B(0, \frac{1}{8})$ from Section \ref{Subsection 3.2}, we have $\supp (M_{k,r}^{\ast}) \subset B(0, \frac{1}{8}) \times B(0, \frac{1}{8})$. This implies that $M_{k,r}^{\ast}$ is infinitely many times differentiable with bounds that are uniform in $r \in (0,1)$. Next, we observe that 
\begin{subequations}\label{est 253} 
\begin{align}
& M_{k,r}^{ml} (0,0) \overset{\eqref{est 249}}{=} - k_{m} k_{l} \lambda_{q+1}^{2-\gamma_{2}}  \hspace{5mm} \forall \hspace{1mm} k \in \mathbb{S}^{1}, \label{est 253a}  \\
&  \frac{1}{(2\pi)^{2}} \int\int_{\mathbb{R}^{2} \times \mathbb{R}^{2}} (a_{k,j} \psi_{q+1, j, k}) \hspace{1mm} \hat{}\hspace{1mm} (\xi) (a_{k,j} \psi_{q+1, j, -k}) \hspace{1mm} \hat{}\hspace{1mm} (\eta) e^{ix\cdot (\xi + \eta)} d\xi d \eta \overset{\eqref{est 200b}}{=} a_{k,j}^{2} (t,x).  \label{est 253b} 
\end{align}
\end{subequations} 
Thus, if we define 
\begin{align}
\tilde{\mathcal{L}}_{j,k}^{ml} (t,x) \triangleq& \frac{\gamma_{2}}{2}\frac{\chi_{j}^{2}(t)}{(2\pi)^{2}} \int\int_{\mathbb{R}^{2} \times \mathbb{R}^{2}} \int_{0}^{1} [ M_{k,r}^{ml} (\xi,\eta) - M_{k,r}^{ml} (0,0) ] dr \nonumber  \\
& \hspace{20mm} \times  (a_{k,j} \psi_{q+1, j, k})(\xi)\hspace{1mm}\hat{}\hspace{1mm} (a_{k,j} \psi_{q+1, j, -k})\hspace{1mm}\hat{}\hspace{1mm} (\eta) e^{ix\cdot (\xi + \eta)} d\xi d \eta, \label{est 254}
\end{align}
then it follows that 
\begin{align}
\mathcal{L}_{j,k}^{ml} (t, x) 
\overset{\eqref{est 251}\eqref{est 250}}{=}& \frac{\gamma_{2}}{2} \frac{ \chi_{j}^{2}(t)}{(2\pi)^{2}} \int \int_{\mathbb{R}^{2} \times \mathbb{R}^{2}} [ \int_{0}^{1} M_{k,r}^{ml} (\xi,\eta) dr] \label{est 260}\\
& \times (a_{k,j} \psi_{q+1, j, k} )\hspace{1mm}\hat{}\hspace{1mm}  (\xi) (a_{k,j} \psi_{q+1, j, -k})\hspace{1mm}\hat{}\hspace{1mm}  (\eta) e^{ix\cdot (\xi + \eta)} d\xi d\eta \nonumber    \\
\overset{\eqref{est 253} \eqref{est 254} }{=}& \frac{\gamma_{2}}{2} \chi_{j}^{2}(t) \lambda_{q+1}^{2- \gamma_{2}} (k^{\bot} \otimes k^{\bot} -\Id)^{ml} a_{k,j}^{2}(t,x) + \tilde{\mathcal{L}}_{j,k}^{ml} (t,x), \nonumber
\end{align} 
and we note that $\Tr (k^{\bot} \otimes k^{\bot}) = 1$ because $k \in \mathbb{S}^{1}$. Next, we define 
\begin{subequations}\label{est 256} 
\begin{align}
( \tilde{\mathcal{L}}_{j,k}^{(1)})^{ml} (t,x) \triangleq& -i\frac{\gamma_{2}}{2}  \frac{ \chi_{j}^{2}(t)}{(2\pi)^{2}} \int_{0}^{1}\int_{0}^{1}\int \int_{\mathbb{R}^{2} \times \mathbb{R}^{2}} \lambda_{q+1}^{1- \gamma_{2}}  \nabla_{\xi} (M_{k,r}^{\ast})^{ml} \left( \frac{ \bar{r}\xi}{\lambda_{q+1}}, \frac{\bar{r} \eta}{\lambda_{q+1}} \right) \nonumber \\
& \cdot ( \nabla (a_{k,j} \psi_{q+1, j, k} ))\hspace{1mm}\hat{}\hspace{1mm} (\xi) (a_{k,j} \psi_{q+1, j, -k})\hspace{1mm}\hat{}\hspace{1mm}  (\eta) e^{ix\cdot (\xi + \eta)} d\xi d\eta d r d \bar{r}, \\ 
 ( \tilde{\mathcal{L}}_{j,k}^{(2)})^{ml} (t,x) \triangleq& -i \frac{\gamma_{2}}{2} \frac{\chi_{j}^{2} (t)}{(2\pi)^{2}} \int_{0}^{1}\int_{0}^{1} \int\int_{\mathbb{R}^{2} \times \mathbb{R}^{2}} \lambda_{q+1}^{1- \gamma_{2}}  \nabla_{\eta} (M_{k,r}^{\ast})^{ml} \left( \frac{\bar{r} \xi}{\lambda_{q+1}}, \frac{\bar{r} \eta}{\lambda_{q+1}} \right) \nonumber \\
& \cdot ( a_{k,j} \psi_{q+1, j, k} )\hspace{1mm}\hat{}\hspace{1mm} (\xi) (\nabla a_{k,j} \psi_{q+1, j, -k})\hspace{1mm}\hat{}\hspace{1mm}  (\eta) e^{ix \cdot (\xi + \eta)} d\xi d \eta d r d \bar{r}, 
\end{align}
\end{subequations}  
so that we can compute from \eqref{est 254} using \eqref{est 255} 
\begin{equation}\label{est 261}
\tilde{\mathcal{L}}_{j,k}^{ml} (t,x)  = ( \tilde{\mathcal{L}}_{j,k}^{(1)})^{ml} (t,x)  + ( \tilde{\mathcal{L}}_{j,k}^{(2)})^{ml} (t,x). 
\end{equation}
We also define for $z, \tilde{z} \in \mathbb{R}^{2}$
\begin{subequations}\label{est 257} 
\begin{align}
& ( \mathcal{K}_{k, r, \bar{r}}^{(1)})^{ml} (z, \tilde{z}) \triangleq -i \left( \frac{ \lambda_{q+1}}{\bar{r}} \right)^{4}  ( \nabla_{\xi} (M_{k,r}^{\ast} )^{ml})\hspace{1mm}\check{}\hspace{1mm}  \left( \frac{\lambda_{q+1} z}{\bar{r}}, \frac{\lambda_{q+1} \tilde{z}}{\bar{r}}\right), \\
& ( \mathcal{K}_{k, r, \bar{r}}^{(2)})^{ml} (z, \tilde{z}) \triangleq -i\left( \frac{ \lambda_{q+1}}{\bar{r}} \right)^{4}  ( \nabla_{\eta} (M_{k,r}^{\ast} )^{ml})\hspace{1mm}\check{}\hspace{1mm}  \left( \frac{\lambda_{q+1} z}{\bar{r}}, \frac{\lambda_{q+1} \tilde{z}}{\bar{r}}\right), 
\end{align}
\end{subequations} 
and 
\begin{subequations}\label{est 258} 
\begin{align}
& \tilde{\mathcal{T}}_{k}^{(1), ml} (\nabla (a_{k,j} \psi_{q+1, j, k}), a_{k,j} \psi_{q+1, j, -k}) (t,x) \triangleq \frac{\gamma_{2}}{2} \lambda_{q+1}^{1-\gamma_{2}} \int_{0}^{1}\int_{0}^{1} \int\int_{\mathbb{R}^{2} \times \mathbb{R}^{2}} ( \tilde{\mathcal{K}}_{k,r,\bar{r}}^{(1)})^{ml} ( x- z, x - \tilde{z}) \nonumber\\
& \hspace{45mm} \cdot \nabla (a_{k,j} \psi_{q+1, j, k})(z) (a_{k,j} \psi_{q+1, j, -k}) (\tilde{z}) dz d \tilde{z} dr d \bar{r},\\
& \tilde{\mathcal{T}}_{k}^{(2), ml} ( a_{k,j} \psi_{q+1, j, k}, \nabla (a_{k,j} \psi_{q+1, j, -k})) (t,x) \triangleq \frac{\gamma_{2}}{2} \lambda_{q+1}^{1-\gamma_{2}} \int_{0}^{1}\int_{0}^{1} \int\int_{\mathbb{R}^{2} \times \mathbb{R}^{2}} ( \tilde{\mathcal{K}}_{k,r,\bar{r}}^{(2)})^{ml} ( x- z, x - \tilde{z}) \nonumber\\
& \hspace{45mm}\cdot (a_{k,j} \psi_{q+1, j, k})(z) \nabla (a_{k,j} \psi_{q+1, j, -k}) (\tilde{z}) dz d \tilde{z} dr d \bar{r},
\end{align}
\end{subequations} 
so that we can compute from \eqref{est 256} 
\begin{subequations}\label{est 418}
\begin{align}
& (\tilde{\mathcal{L}}_{j,k}^{(1)})^{ml} (t,x)  = \chi_{j}^{2}(t) \tilde{\mathcal{T}}_{k}^{(1), ml} (\nabla (a_{k,j} \psi_{q+1, j, k}), a_{k,j} \psi_{q+1, j, -k})(t,x), \\
& ( \tilde{\mathcal{L}}_{j,k}^{(2)})^{ml} (t,x)  =  \chi_{j}^{2}(t) \tilde{\mathcal{T}}_{k}^{(2), ml} (a_{k,j} \psi_{q+1, j, k}, \nabla (a_{k,j} \psi_{q+1, j, -k}))(t,x), 
\end{align}
\end{subequations}
where as usual, the $\mathbb{T}^{2}$-periodic functions of $z$ and $\tilde{z}$ are identified with their periodic extensions to $\mathbb{R}^{2}$. It follows that for both $i \in \{1,2\}$, all $(z, \tilde{z}) \in \mathbb{R}^{4}$, and all $0 \leq \lvert a \rvert, \lvert b \rvert \leq 1$, uniformly in $r \in (0,1)$, $\mathcal{K}_{k,r, \bar{r}}^{(i)}$ in \eqref{est 257} satisfies 
\begin{equation}\label{est 267}
\left\lVert (z, \tilde{z})^{a} \nabla_{(z, \tilde{z})}^{b} ( \mathcal{K}_{k, r, \bar{r}}^{(i)} )^{ml} \right\rVert_{L_{z, \tilde{z}}^{1}(\mathbb{R}^{2} \times \mathbb{R}^{2} )} \leq C_{a,b} \left( \frac{\lambda_{q+1}}{\bar{r}}\right)^{\lvert b \rvert - \lvert a \rvert}.  
\end{equation} 
Using \eqref{est 259}, \eqref{est 260}, and \eqref{est 261}, we are now prepared to write 
\begin{equation}\label{est 262}
R_{O,\text{low}} = \mathring{O}_{1} + \mathring{O}_{2} 
\end{equation} 
where $\mathring{O}_{1}$ and $\mathring{O}_{2}$ respectively are trace-free parts of 
\begin{subequations}\label{est 263}
\begin{align}
& O_{1}(t,x) \triangleq \sum_{j} \chi_{j}^{2}(t) \mathring{R}_{q,j}(t,x) + \frac{ \gamma_{2} \lambda_{q+1}^{2-\gamma_{2}}}{2}  \sum_{j,k}  (k^{\bot} \otimes k^{\bot} - \Id) \chi_{j}^{2}(t) a_{k,j}^{2}(t,x), \label{est 263a}\\
& O_{2} \triangleq O_{21} + O_{22} \hspace{2mm} \text{ where } \hspace{2mm} O_{21} \triangleq  \sum_{j,k}  \tilde{\mathcal{L}}_{j,k}^{(1)}  \hspace{1mm} \text{ and } O_{22} \triangleq  \sum_{j,k}  \tilde{\mathcal{L}}_{j,k}^{(2)}. \label{est 263b}
\end{align}
\end{subequations} 
We make the key observation that due to our choice of $a_{k,j}$ in \eqref{est 196}, we have 
\begin{align}
O_{1} & \overset{\eqref{est 263a}\eqref{est 196}}{=} \sum_{j} \chi_{j}^{2} \lambda_{q+1}^{2- \gamma_{2}} [ \frac{ \mathring{R}_{q,j}}{\lambda_{q+1}^{2- \gamma_{2}}}  \nonumber\\
& + \frac{\gamma_{2}}{2} \sum_{k}  k^{\bot} \otimes k^{\bot}  \frac{\bar{C} LM_{0} (\tau_{q+1} j)}{\epsilon_{\gamma} \gamma_{2}} \delta_{q+1} \gamma_{k}^{2} \left( \Id - \frac{\epsilon_{\gamma}  \mathring{R}_{q,j}}{\bar{C} \lambda_{q+1}^{2-\gamma_{2}} \delta_{q+1} LM_{0} (\tau_{q+1} j)} \right) - \frac{\gamma_{2}}{2} \sum_{k} \Id a_{k,j}^{2} ] \nonumber  \\
\overset{\eqref{est 265}}{=}& \sum_{j} \chi_{j}^{2} \lambda_{q+1}^{2- \gamma_{2}} [ \frac{ \mathring{R}_{q,j}}{\lambda_{q+1}^{2- \gamma_{2}}} + \frac{\bar{C} LM_{0}(\tau_{q+1} j)}{\epsilon_{\gamma}} \delta_{q+1} \left( \Id - \frac{\epsilon_{\gamma} \mathring{R}_{q,j} }{\bar{C} \lambda_{q+1}^{2-\gamma_{2}} \delta_{q+1} LM_{0}(\tau_{q+1} j)} \right) - \frac{\gamma_{2}}{2} \sum_{k} \Id a_{k,j}^{2} ] \nonumber\\
& \hspace{6mm} = \sum_{j} \chi_{j}^{2} \lambda_{q+1}^{2- \gamma_{2}} \Id [ \frac{ \bar{C} LM_{0}(\tau_{q+1} j)}{\epsilon_{\gamma}} \delta_{q+1} - \frac{\gamma_{2}}{2} \sum_{k} a_{k,j}^{2} ] \label{est 382}
\end{align}
so that $\mathring{O}_{1}$ vanishes because a trace-free part of any multiple of an identity matrix is a zero matrix. Therefore, \eqref{est 262} is simplified to 
\begin{equation}\label{est 269}
R_{O,\text{low}}  = \mathring{O}_{21} + \mathring{O}_{22}
\end{equation} 
due to \eqref{est 263}.  We now come back to estimate $R_{O, \text{approx}}$ in \eqref{est 259}; we recall $D_{t,q}$ from \eqref{est 206} and realize that $D_{t,q} (\mathring{R}_{l} - \mathring{R}_{q,j} )= D_{t,q} \mathring{R}_{l}$ and $(\mathring{R}_{l} - \mathring{R}_{q,j}) (\tau_{q+1}j, x) = 0$ due to \eqref{est 195} and thus we may apply \eqref{est 197a} with ``$f_{0}$'' = 0 and ``$g$'' = $D_{t,q} \mathring{R}_{l}$ to estimate for all $t \in \supp \chi_{j}$, $b> L^{2} + 2$, and $a \in 5 \mathbb{N}$ sufficiently large, using the fact that $\beta < 2- \gamma_{2}$ due to \eqref{est 154b}, 
\begin{align}
\lVert (\mathring{R}_{l} - \mathring{R}_{q,j})(t) \rVert_{C_{x}} \overset{\eqref{est 206}}{\leq}&  \int_{ \tau_{q+1} j}^{t} \lVert \partial_{s} \mathring{R}_{l}(s) \rVert_{C_{x}} + \lVert (\Lambda^{2- \gamma_{2}} y_{l} + \Lambda^{2- \gamma_{2}} z_{l}) (s) \rVert_{C_{x}} \lVert \nabla \mathring{R}_{l} (s) \rVert_{C_{x}} ds \nonumber  \\
\overset{\eqref{est 166}\eqref{est 167} \eqref{est 136}}{\lesssim}& \tau_{q+1} [ l^{-1} L M_{0}(t) \lambda_{q+1}^{2- \gamma_{2}} \delta_{q+1} + L^{\frac{3}{2}} M_{0}(t)^{\frac{3}{2}} \lambda_{q}^{3- \gamma_{2}} \delta_{q}^{\frac{1}{2}} \lambda_{q+1}^{2-\gamma_{2}} \delta_{q+1} ] \nonumber\\
\lesssim& \tau_{q+1} l^{-1} LM_{0}(t) \lambda_{q+1}^{2- \gamma_{2}} \delta_{q+1}. \label{est 266}
\end{align}
This leads to 
\begin{equation}\label{est 292}
\lVert R_{O, \text{approx}} \rVert_{C_{t,x}} \overset{\eqref{est 259}}{\leq} \sum_{j} \lVert 1_{\supp \chi_{j}}  (\mathring{R}_{l} - \mathring{R}_{q,j} ) \rVert_{C_{t,x}} \overset{\eqref{est 266}}{\lesssim} \tau_{q+1} l^{-1} L M_{0}(t) \lambda_{q+1}^{2- \gamma_{2}} \delta_{q+1}. 
\end{equation} 
Next, we compute from \eqref{est 263b} 
\begin{align}
&\lVert O_{21} \rVert_{C_{t,x}} \label{est 270} \\
\overset{\eqref{est 258} \eqref{est 418}}{\lesssim}&  \sum_{j,k}  \chi_{j}^{2}(t) \lambda_{q+1}^{1-\gamma_{2}} \lVert \nabla (a_{k,j} \psi_{q+1, j, k}) \rVert_{C_{t,x}} \lVert a_{k,j} \psi_{q+1, j, -k} \rVert_{C_{t,x}} \sup_{r, \bar{r} \in [0,1]} \lVert ( \mathcal{K}_{k, r, \bar{r}}^{(1)})^{ml} \rVert_{L^{1} (\mathbb{R}^{2} \times \mathbb{R}^{2} )}\nonumber \\
\overset{\eqref{est 267} \eqref{est 268}}{\lesssim}&  \sum_{j,k}  \chi_{j}^{2}(t) \lambda_{q+1}^{1- \gamma_{2}} \lambda_{q} \delta_{q+1}^{\frac{1}{2}} L^{\frac{1}{2}} M_{0}(t)^{\frac{1}{2}} \lVert a_{k,j} \rVert_{C_{t,x}} \lVert \psi_{q+1, j, -k} \rVert_{C_{t,x}}  
\overset{\eqref{est 216a}}{\lesssim}  \lambda_{q+1}^{1-\gamma_{2}} \lambda_{q} \delta_{q+1} L M_{0}(t).\nonumber
\end{align}
Looking at \eqref{est 256}, we realize that an identical bound in \eqref{est 270} applies for $O_{22}$ and thus 
\begin{equation}\label{est 293}
\lVert R_{O, \text{low}} \rVert_{C_{t,x}} \overset{\eqref{est 269}}{\leq} \lVert O_{21} \rVert_{C_{t,x}} + \lVert O_{22} \rVert_{C_{t,x}} \overset{\eqref{est 270}}{\lesssim} \lambda_{q+1}^{1-\gamma_{2}} \lambda_{q} \delta_{q+1} M_{0}(t). 
\end{equation} 
Next, we define 
\begin{subequations}\label{est 272} 
\begin{align}
& O_{3} \triangleq \mathcal{B} \tilde{P}_{\approx \lambda_{q+1}} \left( \sum_{j, j', k, k': k + k' \neq 0} (\Lambda^{2- \gamma_{2}} \mathbb{P}_{q+1, k} \tilde{w}_{q+1, j, k}) \cdot \nabla \mathbb{P}_{q+1, k'} \tilde{w}_{q+1, j', k'} \right), \label{est 272a} \\
& O_{4} \triangleq \mathcal{B} \tilde{P}_{\approx \lambda_{q+1}} \left( \sum_{j, j', k, k': k + k' \neq 0}  (\nabla \mathbb{P}_{q+1, k} \tilde{w}_{q+1, j, k})^{T} \cdot \Lambda^{2-\gamma_{2}} \mathbb{P}_{q+1, k'} \tilde{w}_{q+1, j', k'} \right), \label{est 272b} 
\end{align}
\end{subequations} 
so that \eqref{est 230} gives us  
\begin{equation}\label{est 271} 
R_{O, \text{high}} = O_{3} - O_{4}. 
\end{equation} 
To work on $O_{3}$ from \eqref{est 272a}, we use the identity of $(B\cdot\nabla) A = \divergence (A \otimes B) - A (\nabla\cdot B)$, the fact that $\Lambda^{2-\gamma_{2}} \mathbb{P}_{q+1, k} \tilde{w}_{q+1,j, k}$ is divergence-free, \eqref{est 273}, and \eqref{est 274} to split $O_{3}$ to 
\begin{equation}\label{est 286} 
O_{3} = \sum_{k=1}^{3} O_{3k} 
\end{equation} 
where 
\begin{subequations}\label{est 275} 
\begin{align}
O_{31}(x) \triangleq& \mathcal{B} \tilde{P}_{\approx \lambda_{q+1}} \lambda_{q+1}^{2 - \gamma_{2}} \sum_{j, j',k,k': k + k'\neq 0} \divergence ( \tilde{w}_{q+1, j, k} \otimes \tilde{w}_{q+1, j', k'})(x), \label{est 275a} \\
O_{32}(x) \triangleq& \mathcal{B} \tilde{P}_{\approx \lambda_{q+1}} \lambda_{q+1}^{2 - \gamma_{2}} \sum_{j, j',k, k': k + k'\neq 0}  \nonumber\\
& \hspace{5mm} \divergence ( \tilde{w}_{q+1, j, k}(x) \otimes \chi_{j'} [\mathbb{P}_{q+1, k'}, a_{k', j'}(x) \psi_{q+1, j', k'}(x)] b_{k'} (\lambda_{q+1} x) ), \label{est 275b}\\
O_{33}(x) \triangleq& \mathcal{B} \tilde{P}_{\approx \lambda_{q+1}}  \sum_{j, j', k, k': k + k'\neq 0} \nonumber\\
& \hspace{5mm}  \divergence (\chi_{j} [\mathbb{P}_{q+1} \Lambda^{2-\gamma_{2}}, a_{k, j}(x) \psi_{q+1, j, k}(x)] b_{k} (\lambda_{q+1} x) \otimes \mathbb{P}_{q+1, k'} \tilde{w}_{q+1, j', k'}(x)). \label{est 275c}
\end{align}
\end{subequations} 
We now rewrite $O_{31}$ as follows; it is inspired by \cite[equations (104)-(105)]{Y21c} and different from \cite[p. 1853]{BSV19} due to a technical reason. We first rely on \eqref{est 275a}, \eqref{est 200a}, \eqref{est 208}, and symmetry to write 
 \begin{equation}\label{est 280}
O_{31} = O_{311} + O_{312}
\end{equation} 
where 
\begin{subequations}\label{est 278}
\begin{align}
O_{311}(x) &\triangleq \frac{1}{2} \mathcal{B} \tilde{P}_{\approx \lambda_{q+1}} \lambda_{q+1}^{2- \gamma_{2}} \sum_{j, j', k, k': k + k' \neq 0} \chi_{j}\chi_{j'} a_{k,j} (x) a_{k',j'} (x) \psi_{q+1, j', k'} (x) \psi_{q+1, j, k} (x)\nonumber\\
& \times \divergence (b_{k'} (\lambda_{q+1} x) \otimes b_{k} (\lambda_{q+1} x) + b_{k} (\lambda_{q+1} x) \otimes b_{k'} (\lambda_{q+1} x)), \label{est 278a}\\
O_{312}(x) &\triangleq  \mathcal{B} \tilde{P}_{\approx \lambda_{q+1}} \lambda_{q+1}^{2-\gamma_{2}} \sum_{j,j',k,k': k + k' \neq 0} \chi_{j} \chi_{j'}  \nonumber\\
&\times b_{k'}(\lambda_{q+1} x) \otimes b_{k} (\lambda_{q+1} x) \nabla (a_{k,j} (x) a_{k', j'} (x) \psi_{q+1, j', k'} (x) \psi_{q+1, j, k} (x)). \label{est 278b}
\end{align}
\end{subequations}
Within $O_{311}$, we can rely on the identity 
\begin{equation}\label{est 276} 
(A\cdot\nabla) B + (B\cdot\nabla) A = \nabla (A\cdot B) - A \times \nabla \times B - B \times \nabla \times A, 
\end{equation} 
that was also used on \cite[p. 113]{BV19a}, \eqref{est 179}, and that $k \in \mathbb{S}^{1}$ to rewrite 
\begin{align}
& \divergence (b_{k'} (\lambda_{q+1} x) \otimes b_{k} (\lambda_{q+1} x) + b_{k} (\lambda_{q+1} x) \otimes b_{k'} (\lambda_{q+1} x)) \nonumber\\
=& \nabla \left(b_{k} (\lambda_{q+1} x) \cdot b_{k'} (\lambda_{q+1} x) + e^{i(k+k') \cdot \lambda_{q+1} x} \right).  \label{est 277}
\end{align}
Applying \eqref{est 277} to \eqref{est 278a} and using the fact that $\mathcal{B} f = \mathcal{B} \mathbb{P} f$ for any $f$ that is not divergence-free and $\mathbb{P}$ eliminates any gradient give us 
\begin{align}
O_{311}(x) \overset{\eqref{est 278a}}{=}& \frac{1}{2} \mathcal{B} \tilde{P}_{\approx \lambda_{q+1}} \lambda_{q+1}^{2- \gamma_{2}} \sum_{j, j', k, k': k + k' \neq 0} \chi_{j} \chi_{j'} \nabla (a_{k,j} (x) a_{k', j'} (x) \psi_{q+1, j', k'}(x) \psi_{q+1, j, k}(x))  \nonumber \\
& \hspace{35mm} \times (b_{k}(\lambda_{q+1} x) \cdot b_{k'} (\lambda_{q+1} x) - e^{i(k+k') \cdot \lambda_{q+1} x}). \label{est 279}
\end{align}
From \eqref{est 279} and \eqref{est 278} we are able to immediately derive 
\begin{align}
& \lVert O_{31} \rVert_{C_{t,x}} \overset{\eqref{est 280}}{\leq} \sum_{k=1}^{2} \lVert O_{31k} \rVert_{C_{t,x}}  \label{est 287} \\
&\overset{\eqref{est 279}\eqref{est 278}}{\lesssim} \lambda_{q+1}^{-1} \lambda_{q+1}^{2-\gamma_{2}} \sum_{j, j', k, k': k + k' \neq 0} \lVert  1_{\supp \chi_{j}} \nabla (a_{k,j} \psi_{q+1, j, k}) \rVert_{C_{t,x}} \lVert 1_{\supp \chi_{j'}} a_{k', j'} \psi_{q+1, j', k'} \rVert_{C_{t,x}} \nonumber\\
& \hspace{8mm} + \lVert 1_{\supp \chi_{j'}} \nabla (a_{k', j'} \psi_{q+1, j', k'}) \rVert_{C_{t,x}} \lVert 1_{\supp \chi_{j}} a_{k,j} \psi_{q+1, j, k} \rVert_{C_{t,x}} 
\overset{\eqref{est 268} \eqref{est 216}}{\lesssim}  \lambda_{q+1}^{1-\gamma_{2}} \lambda_{q} \delta_{q+1} LM_{0}(t). \nonumber 
\end{align}
We can estimate $O_{32}$ and $O_{33}$ from \eqref{est 275b}-\eqref{est 275c} more immediately as follows by relying on \cite[equation (A.17)]{BSV19}:
\begin{align}
& \lVert O_{32} \rVert_{C_{t,x}} + \lVert O_{33} \rVert_{C_{t,x}} \overset{\eqref{est 200a} }{\lesssim} \sum_{j, j', k, k': k + k' \neq 0} \lambda_{q+1}^{2-\gamma_{2}} \lVert \chi_{j} a_{k,j}  b_{k} (\lambda_{q+1} \Phi_{j} ) \rVert_{C_{t,x}} \nonumber \\
& \hspace{50mm} \times \lambda_{q+1}^{-1} \lVert 1_{\supp \chi_{j'}} \nabla (a_{k', j'} \psi_{q+1, j', k'}) \rVert_{C_{t,x}} \lVert b_{k'} (\lambda_{q+1} x) \rVert_{C_{t,x}} \nonumber \\
&\hspace{25mm} + \lambda_{q+1}^{1-\gamma_{2}} \lVert 1_{\supp \chi_{j}} \nabla (a_{k,j} \psi_{q+1, j, k}) \rVert_{C_{t,x}} \lVert b_{k} (\lambda_{q+1} x) \rVert_{C_{t,x}} \lVert \chi_{j'}  a_{k', j'}  b_{k'} (\lambda_{q+1} \Phi_{j'}) \rVert_{C_{t,x}} \nonumber \\
& \hspace{60mm} \overset{\eqref{est 216a} \eqref{est 268}}{\lesssim} \lambda_{q+1}^{1- \gamma_{2}} \lambda_{q} \delta_{q+1} L M_{0}(t). \label{est 288} 
\end{align}
Thus, we conclude 
\begin{equation}\label{est 289}
\lVert O_{3} \rVert_{C_{t,x}} \overset{\eqref{est 286}}{\leq} \sum_{k=1}^{3} \lVert O_{3k} \rVert_{C_{t,x}} \overset{\eqref{est 287} \eqref{est 288}}{\lesssim} \lambda_{q+1}^{1-\gamma_{2}} \lambda_{q} \delta_{q+1} L M_{0}(t).
\end{equation} 
Next, we define 
\begin{subequations}\label{est 282} 
\begin{align}
 O_{41}(x) \triangleq& \sum_{j, j', k, k': k + k' \neq 0}  \mathcal{B} \tilde{P}_{\approx \lambda_{q+1}} \nonumber \\
& (\chi_{j} \nabla ( [ \mathbb{P}_{q+1, k}, a_{k,j} (x) \psi_{q+1, j, k} (x) ] b_{k} (\lambda_{q+1} x) )^{T} \cdot \lambda_{q+1}^{2-\gamma_{2}} \tilde{w}_{q+1, j', k'}(x)), \label{est 282a} \\
 O_{42}(x) \triangleq& \sum_{j, j', k, k': k + k' \neq 0}  \mathcal{B} \tilde{P}_{\approx \lambda_{q+1}} \nonumber \\
&((\nabla \mathbb{P}_{q+1, k} \tilde{w}_{q+1, j, k} (x))^{T} \cdot \chi_{j'} [\mathbb{P}_{q+1, k'} \Lambda^{2- \gamma_{2}}, a_{k', j'}(x) \psi_{q+1, j', k'} (x)] b_{k'} (\lambda_{q+1} x) ), \label{est 282b}  
\end{align}
\end{subequations} 
so that $O_{4}$ from \eqref{est 272b} satisfies  
\begin{equation}\label{est 283} 
O_{4} = O_{41} + O_{42}
\end{equation} 
due to \eqref{est 273}, \eqref{est 274}, and 
\begin{equation}
\mathcal{B} (( \nabla \tilde{w}_{q+1, j, k})^{T} \cdot \tilde{w}_{q+1, j', k'} + (\nabla \tilde{w}_{q+1, j', k'})^{T} \cdot \tilde{w}_{q+1, j, k} ) = \mathcal{B} \nabla (\tilde{w}_{q+1, j, k} \cdot \tilde{w}_{q+1, j', k'}) = 0
\end{equation} 
which follows from the fact that $\mathcal{B} f = \mathcal{B} \mathbb{P} f$ for any $f$ that is not divergence-free and $\mathbb{P}$ eliminates any gradient. Let us further write 
\begin{align}
& \nabla ( [ \mathbb{P}_{q+1, k}, a_{k,j} (x) \psi_{q+1, j, k} (x) ] b_{k} (\lambda_{q+1} x) ) \nonumber \\
=& [ \mathbb{P}_{q+1, k}, \nabla (a_{k,j} \psi_{q+1, j, k}) ] b_{k} (\lambda_{q+1} x) + [\mathbb{P}_{q+1, k}, a_{k,j} \psi_{q+1, j, k} ] \nabla b_{k} (\lambda_{q+1} x). \label{est 281}
\end{align}
Applying \eqref{est 281} in \eqref{est 282a} and then\cite[equation (A.17)]{BSV19} to $O_{41}$, as well as using the fact that $\alpha > 1$ due to \eqref{est 154c} give us for 
\begin{align*}
b > 
\begin{cases}
2+ 4L^{2} & \text{ if } \gamma_{2} = 1, \\
2+ L^{2} (\frac{2}{1- \frac{\gamma_{2}}{2}}) & \text{ if } \gamma_{2} > 1, 
\end{cases}
\end{align*}
and $a \in 5 \mathbb{N}$ sufficiently large, 
\begin{align}
& \lVert O_{41} \rVert_{C_{t,x}} \overset{\eqref{est 282a}\eqref{est 281}\eqref{est 200a}}{\lesssim} \lambda_{q+1}^{-1} \sum_{j, j', k, k': k + k' \neq 0} [ \lambda_{q+1}^{-1} \lVert 1_{\supp \chi_{j}} \nabla^{2} (a_{k,j} \psi_{q+1, j, k}) \rVert_{C_{t,x}} \lVert b_{k} (\lambda_{q+1} x) \rVert_{C_{t,x}}\nonumber  \\
& \hspace{35mm} + \lambda_{q+1}^{-1} \lVert 1_{\supp \chi_{j}} \nabla (a_{k,j} \psi_{q+1, j, k} ) \rVert_{C_{t,x}} \lVert \nabla b_{k} (\lambda_{q+1} x) \rVert_{C_{t,x}} ] \lambda_{q+1}^{2-\gamma_{2}} \lVert a_{k', j'} \rVert_{C_{t,x}} \nonumber \\
&\overset{\eqref{est 268} \eqref{est 216}}{\lesssim} \lambda_{q+1}^{1- \gamma_{2}} \delta_{q+1}^{\frac{1}{2}} L^{\frac{1}{2}} M_{0}(t)^{\frac{1}{2}} [ \lambda_{q+1}^{-1} ( \lambda_{q}^{2} \delta_{q+1}^{\frac{1}{2}} L^{\frac{1}{2}} M_{0}(t)^{\frac{1}{2}} + \delta_{q+1}^{\frac{1}{2}} L M_{0}(t) \lambda_{q+1} \tau_{q+1} \lambda_{q}^{4- \gamma_{2}}  \delta_{q}^{\frac{1}{2}}) \nonumber \\
& \hspace{55mm} + \lambda_{q} \delta_{q+1}^{\frac{1}{2}} L^{\frac{1}{2}}M_{0}(t)^{\frac{1}{2}} ] \lesssim \lambda_{q+1}^{1- \gamma_{2}} \lambda_{q} \delta_{q+1} L M_{0}(t).\label{est 284} 
\end{align}
Similarly, we can estimate from \eqref{est 282b} by \cite[equation (A.17)]{BSV19} 
\begin{align}
\lVert  &O_{42} \rVert_{C_{t,x}} 
\lesssim \lambda_{q+1}^{-1} \sum_{j,j',k,k': k + k' \neq 0} \lVert 1_{\supp \chi_{j}} \nabla \mathbb{P}_{q+1, k} \tilde{w}_{q+1, j, k} \rVert_{C_{t,x}} \label{est 285} \\
& \hspace{20mm} \times  \lVert  1_{\supp \chi_{j'}} [ \mathbb{P}_{q+1, k'} \Lambda^{2-\gamma_{2}}, a_{k', j'} \psi_{q+1, j', k'} ] b_{k'} (\lambda_{q+1} x) \rVert_{C_{t,x}} \nonumber \\
&\lesssim \lambda_{q+1}^{-1} \sum_{j,j',k,k': k + k' \neq 0} \lambda_{q+1} \lVert 1_{\supp \chi_{j}} \tilde{w}_{q+1, j, k} \rVert_{C_{t,x}}   \nonumber \\
& \times \lambda_{q+1}^{1-\gamma_{2}} \lVert 1_{\supp \chi_{j'}} \nabla (a_{k', j'} \psi_{q+1, j', k'}) \rVert_{C_{t,x}} \lVert b_{k'} (\lambda_{q+1} x) \rVert_{C_{t,x}} \overset{\eqref{est 200a} \eqref{est 268} \eqref{est 216a}}{\lesssim} \lambda_{q+1}^{1-\gamma_{2}} \lambda_{q} \delta_{q+1} LM_{0}(t). \nonumber 
\end{align}
In sum, we conclude 
\begin{equation}\label{est 290}
\lVert O_{4} \rVert_{C_{t,x}} \overset{\eqref{est 283}}{\leq} \sum_{k=1}^{2} \lVert O_{4k} \rVert_{C_{t,x}} \overset{\eqref{est 284} \eqref{est 285}}{\lesssim} \lambda_{q+1}^{1-\gamma_{2}} \lambda_{q} \delta_{q+1} LM_{0}(t).
\end{equation} 
Therefore, 
\begin{equation}\label{est 294}
\lVert R_{O,\text{high}} \rVert_{C_{t,x}} \overset{\eqref{est 271}}{\leq} \lVert O_{3} \rVert_{C_{t,x}} + \lVert O_{4} \rVert_{C_{t,x}} \overset{\eqref{est 289} \eqref{est 290}}{\lesssim} \lambda_{q+1}^{1-\gamma_{2}} \lambda_{q} \delta_{q+1} LM_{0}(t).
\end{equation} 
At last, we deduce by relying on the fact that $\alpha < 7 - 3 \gamma_{2} - 5 \beta$ and $\beta < \frac{5- 2 \gamma_{2}}{4}$ due to \eqref{est 154b} and \eqref{est 154c}, for $a \in 5 \mathbb{N}$ sufficiently large 
\begin{align}
\lVert R_{O} &\rVert_{C_{t,x}} \overset{\eqref{est 291}}{\leq} \lVert R_{O, \text{approx}} \rVert_{C_{t,x}} + \lVert R_{O, \text{low}} \rVert_{C_{t,x}} + \lVert R_{O, \text{high}} \rVert_{C_{t,x}}  \label{est 307}\\
& \hspace{5mm} \overset{\eqref{est 292} \eqref{est 293} \eqref{est 294}}{\lesssim} \tau_{q+1} l^{-1} LM_{0}(t) \lambda_{q+1}^{2- \gamma_{2}} \delta_{q+1} + \lambda_{q+1}^{1- \gamma_{2}} \lambda_{q} \delta_{q+1} LM_{0}(t) \nonumber \\
& \hspace{5mm} \overset{\eqref{est 210}\eqref{est 32}}{\approx} LM_{0}(t) \lambda_{q+2}^{2- \gamma_{2}} \delta_{q+2} [ a^{b^{q+1} [(b-2) (-2 + \gamma_{2} + 2 \beta) + \frac{ -7 + 3 \gamma_{2} + 5 \beta + \alpha}{2} ]} \nonumber\\
& \hspace{25mm} + a^{b^{q} [(b-2) [(b+2) (-2+ \gamma_{2} + 2 \beta) + 1 - \gamma_{2} - 2 \beta] + 4 \beta - 5 + 2 \gamma_{2} ]} ] \ll LM_{0}(t) \lambda_{q+2}^{2- \gamma_{2}} \delta_{q+2}. \nonumber 
\end{align}

\subsubsection{Bounds on $R_{Com1}$, the first commutator error}
From \eqref{est 181}, we compute 
\begin{equation}
\lVert R_{\text{Com1}} \rVert_{C_{t,x}} \leq \sum_{k=1}^{4} \RomanIII_{k}
\end{equation} 
where 
\begin{subequations}\label{est 182} 
\begin{align}
\RomanIII_{1} \triangleq& \lVert \mathcal{B} [( \lambda^{2-\gamma_{2}} y_{l})^{\bot} \nabla^{\bot} \cdot y_{l} - [ (\Lambda^{2- \gamma_{2}} y_{q})^{\bot} \nabla^{\bot} \cdot y_{q} ] \ast_{x} \phi_{l} \ast_{t} \varphi_{l} ] \rVert_{C_{t,x}}, \\
\RomanIII_{2} \triangleq& \lVert \mathcal{B} [( \lambda^{2-\gamma_{2}} y_{l})^{\bot} \nabla^{\bot} \cdot z_{l} - [ (\Lambda^{2- \gamma_{2}} y_{q})^{\bot} \nabla^{\bot} \cdot z_{q} ] \ast_{x} \phi_{l} \ast_{t} \varphi_{l} ] \rVert_{C_{t,x}}, \\
\RomanIII_{3} \triangleq& \lVert \mathcal{B} [( \lambda^{2-\gamma_{2}} z_{l})^{\bot} \nabla^{\bot} \cdot y_{l} - [ (\Lambda^{2- \gamma_{2}} z_{q})^{\bot} \nabla^{\bot} \cdot y_{q} ] \ast_{x} \phi_{l} \ast_{t} \varphi_{l} ] \rVert_{C_{t,x}}, \\
\RomanIII_{4} \triangleq& \lVert \mathcal{B} [( \lambda^{2-\gamma_{2}} z_{l})^{\bot} \nabla^{\bot} \cdot z_{l} - [ (\Lambda^{2- \gamma_{2}} z_{q})^{\bot} \nabla^{\bot} \cdot z_{q} ] \ast_{x} \phi_{l} \ast_{t} \varphi_{l} ] \rVert_{C_{t,x}}. 
\end{align}
\end{subequations} 
Using $\lVert \mathcal{B} \rVert_{C_{x} \mapsto C_{x}} \lesssim 1$ and the standard commutator estimate (e.g, \cite[equation (5) on p. 88]{CDS12b}) we bound them separately as follows: as $5- \gamma_{2} - 2\beta \leq 9 - 3\gamma_{2} - 4 \beta$ due to \eqref{est 154b}, 
\begin{subequations}\label{est 183} 
\begin{align}
\RomanIII_{1} \lesssim&  l^{2} (\lambda_{q}^{3- \gamma_{2}} \lVert y_{q} \rVert_{C_{x}C_{t}^{1}}^{2} + \lambda_{q}^{1+ \gamma_{2}}  \lVert \Lambda^{2- \gamma_{2}} y_{q} \rVert_{C_{t,x}}^{2}) \nonumber\\
\overset{\eqref{est 166}\eqref{est 169}}{\lesssim}& l^{2} ( \lambda_{q}^{9 - 3 \gamma_{2} - 4 \beta} L^{2} M_{0}(t)^{2}  + \lambda_{q}^{5 - \gamma_{2} - 2 \beta} LM_{0}(t)) \overset{\eqref{est 154b}}{\lesssim} l^{2} \lambda_{q}^{9 - 3 \gamma_{2} - 4 \beta} L^{2}M_{0}(t)^{2}, \label{est 183a} \\
\RomanIII_{2} \lesssim& l^{2(\frac{1}{2} - 2 \delta)} \lVert \Lambda^{2- \gamma_{2}} y_{q} \ast_{x} \phi_{l} \rVert_{C_{x} C_{t}^{\frac{1}{2} - 2 \delta}} \lVert \nabla^{\bot} \cdot z_{q}\rVert_{C_{x}C_{t}^{\frac{1}{2} - 2 \delta}} + l^{2} \lVert \Lambda^{2- \gamma_{2}} y_{q} \rVert_{C_{t}C_{x}^{1}} \lVert \nabla^{\bot} \cdot z_{q}\rVert_{C_{t}C_{x}^{1}}  \nonumber \\
\overset{\eqref{est 136} \eqref{est 166} \eqref{est 169}}{\lesssim}& l^{1- 4\delta} M_{0}(t)^{\frac{3}{4} - \delta} \delta_{q}^{\frac{3}{4} - \delta} \lambda_{q}^{2- \gamma_{2} + (3-\gamma_{2})(\frac{1}{2} - 2\delta)} L^{\frac{5}{4} - \delta} +  l^{2} \lambda_{q}^{3- \gamma_{2} - \beta} M_{0}(t)^{\frac{1}{2}} L^{\frac{3}{4}}, \label{est 183b}  \\
\RomanIII_{3} \overset{\eqref{est 136}}{\lesssim}& l^{1- 4 \delta} L^{\frac{1}{2}} \lambda_{q}^{\gamma_{2} -1} \lVert \Lambda^{2- \gamma_{2}} y_{q} \rVert_{C_{t,x}}^{\frac{1}{2} + 2 \delta} \lVert \Lambda^{2-\gamma_{2}} y_{q} \rVert_{C_{t}^{1}C_{x}}^{\frac{1}{2} - 2 \delta} + l^{2} L^{\frac{1}{4}} \lambda_{q}^{\gamma_{2}} \lVert \Lambda^{2-\gamma_{2}} y_{q} \rVert_{C_{t,x}}   \nonumber\\
\overset{\eqref{est 166} \eqref{est 169}}{\lesssim}& l^{1- 4 \delta} L^{\frac{5}{4} - \delta} \lambda_{q}^{1 + (3- \gamma_{2}) (\frac{1}{2} - 2 \delta)} M_{0}(t)^{\frac{3}{4} - \delta} \delta_{q}^{\frac{3}{4} - \delta} + l^{2} L^{\frac{3}{4}} \lambda_{q}^{2 - \beta} M_{0}(t)^{\frac{1}{2}}, \label{est 183c}   \\
\RomanIII_{4} \lesssim&  l^{2(\frac{1}{2} - 2 \delta)} \lVert \Lambda^{2- \gamma_{2}} z_{q} \ast_{x} \phi_{l} \rVert_{C_{t}^{\frac{1}{2} - 2 \delta}C_{x}} \lVert \nabla^{\bot} \cdot z_{q} \rVert_{C_{t}^{\frac{1}{2} - 2 \delta}C_{x}} + l^{2} \lVert \Lambda^{2- \gamma_{2}} z_{q}\rVert_{C_{t}C_{x}^{1}} \lVert \nabla^{\bot} z_{q} \rVert_{C_{t}C_{x}^{1}} \nonumber\\ 
\overset{\eqref{est 136}}{\lesssim}&l^{1-4\delta} L + l^{2} L^{\frac{1}{2}}.\label{est 183d} 
\end{align}
\end{subequations}  
We first observe that the upper bound of $\RomanIII_{3}$ in \eqref{est 183c} is already worse than that of $\RomanIII_{2}$ in \eqref{est 183b} because $\gamma_{2} \geq 1$. Moreover, because 
\begin{subequations}
\begin{align}
& \lambda_{q+1}^{-2\alpha} \lambda_{q}^{2-\beta} \lesssim \lambda_{q+1}^{-2\alpha} \lambda_{q}^{9 - 3 \gamma_{2} - 4 \beta} \text{ due to } \beta \leq \frac{7-3\gamma_{2}}{3} \text{ guaranteed by \eqref{est 154b}},\\
& \lambda_{q+1}^{-\alpha} \lesssim \lambda_{q+1}^{-2\alpha} \lambda_{q}^{9 - 3 \gamma_{2} - 4\beta} \text{ due to } \alpha \leq \frac{9 - 3 \gamma_{2} - 4 \beta}{2} \text{ guaranteed by \eqref{est 154c}},
\end{align}
\end{subequations} 
we can take $\delta \in (0, \frac{1}{4})$ arbitrarily small and see that for $a \in 5 \mathbb{N}$ sufficiently large 
\begin{align}
&\lVert R_{\text{Com1}} \rVert_{C_{t,x}} \lesssim l^{2} \lambda_{q}^{9 - 3 \gamma_{2} - 4 \beta} L^{2} M_{0}(t)^{2} +  l^{1- 4 \delta}  \lambda_{q}^{1 + (3- \gamma_{2}) (\frac{1}{2} - 2 \delta)} L^{\frac{5}{4} - \delta} M_{0}(t)^{\frac{3}{4} - \delta} \delta_{q}^{\frac{3}{4} - \delta}  \label{est 308} \\
& \hspace{10mm} \overset{\eqref{est 32}}{\lesssim} L M_{0}(t ) \lambda_{q+2}^{2- \gamma_{2}} \delta_{q+2}  [a^{b^{q} [b^{2} (-2+ \gamma_{2} + 2 \beta) - b 2 \alpha + 9 - 3 \gamma_{2} - 4 \beta]} L M_{0}(t) \nonumber\\
& \hspace{30mm} + a^{b^{q} [b^{2} (-2+ \gamma_{2} + 2 \beta) - b \alpha + \frac{5}{2} - \frac{\gamma_{2}}{2} - \frac{3\beta}{2} ]} \lambda_{q+1}^{\alpha 4 \delta} \lambda_{q}^{(-6 + 2 \gamma_{2} + 2 \beta) \delta}]  \ll LM_{0}(t) \lambda_{q+2}^{2- \gamma_{2}} \delta_{q+2} \nonumber
\end{align} 
where the last inequality used that \eqref{est 154c} guarantees that $\frac{1+ \gamma_{2} + 4 \beta}{4} < \alpha$ and $\frac{ - 11 + 7 \gamma_{2} + 13 \beta}{4} < \alpha$, former of which relies on $\beta < \frac{3-\gamma_{2}}{4}$ from \eqref{est 154b} when $\gamma_{2} > 1$. 

\subsubsection{Bounds on $R_{Com2}$, the second commutator error}\label{Section 4.1.6}
We split $R_{\text{Com2}}$ from \eqref{est 191e} to 
\begin{equation}
R_{\text{Com2}} = \sum_{k=1}^{14} R_{\text{Com,2k}}
\end{equation} 
where 
\begin{subequations}\label{est 296} 
\begin{align} 
R_{\text{Com2,1}} \triangleq& \mathcal{B} ( \Lambda^{2-\gamma_{2}} y_{q+1} \cdot \nabla (z_{q+1} - z_{q}) + \Lambda^{2-\gamma_{2}} (z_{q+1} - z_{q}) \cdot \nabla y_{q+1} ), \label{est 296a} \\
R_{\text{Com2,2}} \triangleq& \mathcal{B} ( (\nabla y_{l})^{T} \cdot \Lambda^{2-\gamma_{2}} (z_{q} - z_{q+1}) ), \label{est 296b} \\
R_{\text{Com2,3}} \triangleq& \mathcal{B} ( (\nabla (z_{q} - z_{q+1}))^{T} \cdot \Lambda^{2-\gamma_{2}} y_{l} ), \label{est 296c} \\
R_{\text{Com2,4}} \triangleq& \mathcal{B} ( (\nabla \Lambda^{2-\gamma_{2}} (z_{q+1} - z_{q}))^{T} \cdot w_{q+1} ), \label{est 296d} \\
R_{\text{Com2,5}} \triangleq& -\mathcal{B} ( \Lambda^{2-\gamma_{2}} w_{q+1}^{\bot} \nabla^{\bot} \cdot (z_{l} - z_{q}) ), \label{est 296e} \\
R_{\text{Com2,6}} \triangleq& \mathcal{B} ( (\nabla (z_{q} - z_{q+1}))^{T} \cdot \Lambda^{2-\gamma_{2}} w_{q+1}), \label{est 296f} \\
R_{\text{Com2,7}} \triangleq& \mathcal{B} ( \Lambda^{2-\gamma_{2}} y_{l}^{\bot} \nabla^{\bot} \cdot (z_{q} - z_{l})), \label{est 296g} \\
R_{\text{Com2,8}} \triangleq& \mathcal{B} (  \Lambda^{2-\gamma_{2}} (z_{q} - z_{l}) \cdot \nabla w_{q+1}), \label{est 296ih} \\
R_{\text{Com2,9}} \triangleq& \mathcal{B} ( \Lambda^{2-\gamma_{2}} (z_{q} - z_{l})^{\bot} \nabla^{\bot} \cdot y_{l}), \label{est 296i} \\
R_{\text{Com2,10}} \triangleq& \mathcal{B} ( \Lambda^{2-\gamma_{2}} (z_{q+1} - z_{q})^{\bot} \nabla^{\bot} \cdot z_{q+1}), \label{est 296j} \\
R_{\text{Com2,11}} \triangleq& \mathcal{B} ( \Lambda^{2-\gamma_{2}} (z_{q} - z_{l})^{\bot}\nabla^{\bot} \cdot z_{q+1} ), \label{est 296k} \\
R_{\text{Com2,12}} \triangleq& \mathcal{B} ( \Lambda^{2-\gamma_{2}} z_{l}^{\bot} \nabla^{\bot} \cdot (z_{q+1} - z_{q}) ), \label{est 296l} \\
R_{\text{Com2,13}} \triangleq& \mathcal{B} ( \Lambda^{2-\gamma_{2}} z_{l}^{\bot} \nabla^{\bot} \cdot (z_{q} - z_{l})). \label{est 296m}
\end{align}
\end{subequations} 
The key observation is that for $R_{\text{Com2,k}}$, $k \in \{5, 8\}$,  identically to how we handled $R_{T}$ (recall \eqref{est 219}), $\supp (z_{l} - z_{q})\hspace{1mm}\hat{}\hspace{1mm}  \subset B(0, \frac{\lambda_{q}}{4})$ while $\supp (\tilde{w}_{q+1, j, k})\hspace{1mm}\hat{}\hspace{1mm}  \subset \{ \frac{7}{8} \lambda_{q+1} \leq \lvert \xi \rvert \leq \frac{9}{8} \lambda_{q+1} \}$ and hence the nonlinear terms therein are equivalent to those with $\tilde{P}_{\approx \lambda_{q+1}}$ applied which gives us a factor of $\lambda_{q+1}^{-1}$ due to $\mathcal{B}$. For terms with $z_{q+1} - z_{q}$, we can use the fact that $\supp (z_{q+1} - z_{q}) \hspace{1mm} \hat{} \subset \{\xi: \frac{\lambda_{q}}{4} \leq \lvert \xi \rvert \leq \frac{\lambda_{q+1}}{4}\} $. With these in mind, we can estimate for any $\delta \in (0,\frac{1}{4})$ 
\begin{subequations}\label{est 429}
\begin{align}
&\lVert R_{\text{Com2,1}} \rVert_{C_{t,x}} \lesssim   \lVert \Lambda^{2-\gamma_{2}} y_{q+1} \rVert_{C_{t,x}} \lVert z_{q+1} - z_{q} \rVert_{C_{t,x}} + \lVert \Lambda^{2-\gamma_{2}} (z_{q+1} - z_{q}) \rVert_{C_{t,x}} \lVert y_{q+1} \rVert_{C_{t,x}} \nonumber \\
& \hspace{20mm} \overset{\eqref{est 166} \eqref{est 136}}{\lesssim} M_{0}(t) \lambda_{q+2}^{2-\gamma_{2}} \delta_{q+2} \lambda_{q+2}^{-2 + \gamma_{2} + 2 \beta} \lambda_{q}^{-1-\gamma_{2}}, \\
& \lVert R_{\text{Com2,2}}\rVert_{C_{t,x}} + \lVert R_{\text{Com2,9}} \rVert_{C_{t,x}} \lesssim \lVert \nabla y_{l} \rVert_{C_{t,x}} [  \lVert \lambda^{2- \gamma_{2}} (z_{q} - z_{q+1}) \rVert_{C_{t,x}} + \lVert \Lambda^{2-\gamma_{2}} (z_{q} - z_{l}) \rVert_{C_{t,x}} ]  \nonumber \\
& \hspace{15mm} \overset{\eqref{est 136} \eqref{est 166}\eqref{est 154}}{\lesssim} M_{0}(t) \lambda_{q+2}^{2-\gamma_{2}} \delta_{q+2} a^{b^{q} [(b-2) [(b+2) (-2+ \gamma_{2} + 2 \beta) - \frac{\alpha}{2} ] - 7 + 4 \gamma_{2} + 7 \beta - \alpha + b \alpha 2 \delta]},  \\
& \lVert R_{\text{Com2,3}} \rVert_{C_{t,x}} + \lVert R_{\text{Com2,7}} \rVert_{C_{t,x}}  \overset{\eqref{est 166} \eqref{est 136}}{\lesssim}   M_{0}(t) (\lambda_{q+1}^{-\alpha (\frac{1}{2} - 2 \delta)} + \lambda_{q}^{-2}) \lambda_{q}^{2- \gamma_{2}} \delta_{q}^{\frac{1}{2}} \nonumber \\
& \hspace{10mm} \approx M_{0}(t) \lambda_{q+2}^{2- \gamma_{2}} \delta_{q+2} [a^{b^{q} [ b^{2} (-2 + \gamma_{2} + 2 \beta) - \gamma_{2} - \beta]} + a^{b^{q} [b^{2} (-2 + \gamma_{2} + 2 \beta) + 2 - \gamma_{2} - \beta - b \alpha (\frac{1}{2} - 2 \delta) ]}],  \\
&\lVert R_{\text{Com2,4}} \rVert_{C_{t,x}} \overset{\eqref{est 204a} \eqref{est 136}}{\lesssim} \lambda_{q}^{ - \gamma_{2}}  M_{0}(t) \delta_{q+1}^{\frac{1}{2}}  \nonumber\\
& \hspace{25mm} \lesssim  M_{0}(t) \lambda_{q+2}^{2- \gamma_{2}} \delta_{q+2} a^{b^{q} [(b-2) [(b+2) (-2 + \gamma_{2} + 2 \beta) - \beta] - 8 + 3 \gamma_{2} + 6 \beta]},  \\
&\lVert R_{\text{Com2,5}} \rVert_{C_{t,x}} \lesssim \lambda_{q+1}^{-1} \lVert \Lambda^{2- \gamma_{2}} w_{q+1} \rVert_{C_{t,x}} \lVert \nabla (z_{l} - z_{q} ) \rVert_{C_{t,x}} \nonumber \\
& \hspace{20mm} \overset{\eqref{est 163} \eqref{est 204a} \eqref{est 136}}{\lesssim} M_{0}(t) \lambda_{q+2}^{2-\gamma_{2}} \delta_{q+2} a^{b^{q+1} [(b-2) (-2+ \gamma_{2} + 2 \beta) - 3 + \gamma_{2} + 3 \beta - \alpha (\frac{1}{2} - 2 \delta) ]},  \\
&\lVert R_{\text{Com2,6}} \rVert_{C_{t,x}} \overset{\eqref{est 163}\eqref{est 204a} }{\lesssim} \lambda_{q}^{-2} \lambda_{q+1}^{1- \gamma_{2}} \delta_{q+1}^{\frac{1}{2}} M_{0}(t)  \nonumber\\
& \hspace{25mm} \lesssim M_{0}(t) \lambda_{q+2}^{2- \gamma_{2}}\delta_{q+2}  a^{b^{q} [b^{2} (-2+ \gamma_{2} + 2 \beta) - 2 + b(1- \gamma_{2} - \beta)]},    \\
& \lVert R_{\text{Com2,8}} \rVert_{C_{t,x}} \lesssim \lVert \Lambda^{2-\gamma_{2}} (z_{q} - z_{l}) \rVert_{C_{t,x}}  \lVert w_{q+1} \rVert_{C_{t,x}}  \nonumber \\
& \hspace{20mm} \overset{\eqref{est 204a} \eqref{est 32} \eqref{est 136}}{\lesssim}   M_{0}(t) \lambda_{q+2}^{2-\gamma_{2}} \delta_{q+2} a^{b^{q+1} [(b-2) (-2+ \gamma_{2} + 2 \beta) - 4 + 2 \gamma_{2} + 3 \beta - \frac{\alpha}{2} + 2 \alpha \delta]},   \\
&\sum_{k=10}^{13} \lVert R_{\text{Com2,k}} \rVert_{C_{t,x}} \lesssim L^{\frac{1}{4}} (\lVert \nabla (z_{q+1} - z_{q}) \rVert_{C_{t,x}} + \lVert \nabla (z_{q} - z_{l}) \rVert_{C_{t,x}}) \nonumber\\
& \hspace{20mm} \overset{\eqref{est 136}}{\lesssim} M_{0}(t) \lambda_{q+2}^{2- \gamma_{2}} \delta_{q+2} [a^{b^{q} [(b-2) (b+2) (-2+ \gamma_{2} + 2 \beta) - 10 + 4 \gamma_{2} + 8 \beta]} \nonumber\\
& \hspace{46mm} + a^{b^{q+1} [(b-2)(-2+\gamma_{2} + 2 \beta) - 4 + 2 \gamma_{2} + 4 \beta - \frac{\alpha}{2} + \alpha 2 \delta]}]. 
\end{align}
\end{subequations}
Because 
\begin{align*}
\max\{ -7 + 4 \gamma_{2} + 7 \beta,  -6 + 3 \gamma_{2} + 7 \beta, 2 \gamma_{2} - 6 + 6 \beta,  -8 + 4 \gamma_{2}+ 6 \beta,  -8 + 4 \gamma_{2} + 8 \beta  \} < \alpha 
\end{align*}
all due to \eqref{est 154}, we can take $\beta > 1 - \frac{\gamma_{2}}{2}$ sufficiently close to $1 - \frac{\gamma_{2}}{2}$, $\delta \in (0,\frac{1}{4})$ sufficiently small, and $a\in 5 \mathbb{N}$ sufficiently large to conclude from \eqref{est 429} that for $a \in 5 \mathbb{N}$ sufficiently large 
\begin{equation}\label{est 300} 
\lVert R_{\text{Com2}} \rVert_{C_{t,x}} \ll LM_{0}(t)\lambda_{q+2}^{2-\gamma_{2}} \delta_{q+2}.
\end{equation}  

\subsubsection{Concluding the Proof of Proposition \ref{Proposition 4.8}}
Applying \eqref{est 301}, \eqref{est 305}, \eqref{est 306}, \eqref{est 307}, \eqref{est 308}, and \eqref{est 300} to \eqref{est 190} allows us to conclude 
\begin{equation*}
\lVert \mathring{R}_{q+1} \rVert_{C_{t,x}} \leq \lVert R_{T} + R_{N} + R_{L} + R_{O} + R_{\text{Com1}} + R_{\text{Com2}} \rVert_{C_{t,x}} \ll L M_{0}(t) \lambda_{q+2}^{2-\gamma_{2}} \delta_{q+2}
\end{equation*} 
and hence the Hypothesis \ref{Hypothesis 4.1} \ref{Hypothesis 4.1 (b)} at level $q+1$.

Finally, following previous works, we can readily verify that $y_{q+1}, \mathring{R}_{q+1}$ are $(\mathcal{F}_{t})_{t\geq 0}$-adapted and $y_{q+1}(0,x), \mathring{R}_{q+1}(0,x)$ are deterministic under the assumption that $y_{q}, \mathring{R}_{q}$ are $(\mathcal{F}_{t})_{t\geq 0}$-adapted and $y_{q}(0,x), \mathring{R}_{q}(0,x)$ are deterministic. First, $z(t)$ from \eqref{est 152} is $(\mathcal{F}_{t})_{t\geq 0}$-adapted and consequently, so is $z_{q} = z \ast_{x} \tilde{\phi}_{q}$ and $z_{q+1} = z \ast_{x} \tilde{\phi}_{q+1}$ defined in \eqref{est 150}. Due to the compact support of $\varphi_{l}$ in $(\tau_{q+1}, 2 \tau_{q+1}] \subset \mathbb{R}_{+}$ and because we extended $y_{q}$ and $z_{q}$ for all $q \in \mathbb{N}_{0}$ to $t < 0$ by their respective values at $t = 0$, it follows that 
\begin{equation}\label{est 399} 
y_{l}(t) \overset{\eqref{est 180}}{=} \int_{\tau_{q+1}}^{2 \tau_{q+1}} (y_{q} \ast_{x} \phi_{l})(t-s, x) \varphi_{l}(s) ds,  z_{l}(t) \overset{\eqref{est 180}}{=} \int_{\tau_{q+1}}^{2\tau_{q+1}} (z_{q} \ast_{x} \phi_{l})(t-s,x) \varphi_{l}(s) ds,
\end{equation} 
and $\mathring{R}_{l}$ defined in \eqref{est 180} are all $(\mathcal{F}_{t})_{t\geq 0}$-adapted. Considering $z_{l}$ and $y_{l}$ in \eqref{est 399}, we see that 
$\Phi_{j}$ from \eqref{est 193} and $\mathring{R}_{q,j}$ from \eqref{est 195} are both $(\mathcal{F}_{t})_{t\geq 0}$-adapted. This implies that because $b_{k}$ from \eqref{est 178} is deterministic, we see that $b_{k} (\lambda_{q+1} \Phi_{j}(t,x))$ is $(\mathcal{F}_{t})_{t\geq 0}$-adapted; similarly, because $M_{0}$ from \eqref{est 135}, $\chi_{j}$ from \eqref{est 209}, and $\gamma_{k}$ from Lemma \ref{Geometric Lemma} are all deterministic, $a_{k,j}$ from \eqref{est 196} is $(\mathcal{F}_{t})_{t\geq 0}$-adapted. Therefore,
\begin{align}
w_{q+1}(t,x) &= \sum_{j,k} \chi_{j} (t) \nonumber\\
& \times \mathbb{P}_{q+1, k} \left( \frac{ M_{0}(\tau_{q+1} j)^{\frac{1}{2}}}{\sqrt{\gamma_{2}}} \delta_{q+1}^{\frac{1}{2}} \gamma_{k} \left( \Id - \frac{ \mathring{R}_{q,j} (t,x)}{\lambda_{q+1}^{2-\gamma_{2}} \delta_{q+1} M_{0}(\tau_{q+1} j)} \right) b_{k} (\lambda_{q+1} \Phi_{j}(t,x)) \right)  \label{est 400}
\end{align} 
from \eqref{est 194} and \eqref{est 196}, as well as $\partial_{t} w_{q+1}$ are $(\mathcal{F}_{t})_{t\geq 0}$-adapted so that $y_{q+1}$ in \eqref{est 164} is $(\mathcal{F}_{t})_{t\geq 0}$-adapted. Moreover, all of $R_{T}, R_{O}, R_{N}, R_{L},$ and $R_{\text{Com2}}$ in \eqref{est 191}, as well as $R_{\text{Com1}}$ in \eqref{est 181} are $(\mathcal{F}_{t})_{t\geq 0}$-adapted, implying that $\mathring{R}_{q+1}$ in \eqref{est 190} is $(\mathcal{F}_{t})_{t\geq 0}$-adapted. 

Similarly, due to the compact support of $\varphi_{l}$ in $(\tau_{q+1}, 2 \tau_{q+1} ]$, if $v_{q}(0,x)$ and $\mathring{R}_{q}(0,x)$ are deterministic, then so are $v_{l}(0,x)$, $\mathring{R}_{l}(0,x)$, and $\partial_{t} \mathring{R}_{l} (0,x)$. Similarly, because $z(0,x) \equiv 0$ by \eqref{est 152}, $z_{q}(0,x) \equiv 0$ and hence so is $z_{l}(0,x) \equiv 0$. Moreover, within \eqref{est 400}, because $\supp \chi_{j} \subset (\tau_{q+1} (j-1), \tau_{q+1} (j+1))$, $\chi_{j}(0) = 0$ for all $j \in \{1, \hdots, \lceil \tau_{q+1}^{-1} T_{L} \rceil \}$ leaving only one term when $j = 0$. As $\Phi_{0}(0,x) = x$ due to \eqref{est 193} while $\mathring{R}_{q,0} (0,x) = \mathring{R}_{l}(0,x)$ where $\mathring{R}_{l}(0,x)$ was already verified to be deterministic, we conclude that $w_{q+1}(0,x)$ is deterministic. It follows similarly that $\partial_{t} w_{q+1}(0,x)$ is deterministic. It follows that $y_{q+1}(0,x)$ from \eqref{est 164} is deterministic, and all of $R_{T}(0,x)$, $R_{O}(0,x)$, $R_{N}(0,x)$, $R_{L}(0,x)$, $R_{\text{Com2}}(0,x)$ in \eqref{est 191}, and $R_{\text{Com1}}(0,x)$ in \eqref{est 181} are deterministic, allowing us to conclude that $\mathring{R}_{q+1}(0,x)$ from \eqref{est 190} is deterministic. 

\section{Appendix}\label{Appendix}
\subsection{Proof of Proposition \ref{Proposition 4.1}}
The existence of a martingale solution is relatively standard (e.g., \cite{FR08}); nonetheless, we sketch the proof because the QG momentum equations forced by random noise have not been studied before. In short, we rely on \cite[Theorem 4.6 on p. 1739]{GRZ09} similarly to  \cite[Section 4.2]{Z12a}. We choose in accordance with notations from \cite[p. 1729]{GRZ09} 
\begin{equation}\label{est 55} 
\mathbb{H} = \mathbb{Y} \triangleq \dot{H}_{\sigma}^{\frac{1}{2}} \hspace{1mm} \text{ and } \hspace{1mm} \mathbb{X} \triangleq (H_{\sigma}^{4})^{\ast}.
\end{equation} 
It follows that $\mathbb{Y} \subset \mathbb{H} \subset \mathbb{X}$ continuously and densely and $\mathbb{X}^{\ast} \hookrightarrow \mathbb{Y}$ compactly. Following \cite[p. 152]{Z12a} and \cite[equation (3.1) on p. 1730]{GRZ09} we define an operator $\mathcal{A}: C_{0,\sigma}^{\infty} \mapsto \mathbb{X}$ by 
\begin{equation}\label{est 56} 
\mathcal{A}v \triangleq - (u\cdot\nabla) v + (\nabla v)^{T} \cdot u - \Lambda^{\gamma_{1}} v, \hspace{3mm} u = \Lambda v. 
\end{equation} 
The following proposition is analogous to \cite[Lemma 4.2.3]{Z12a} and \cite[Lemma 6.1]{GRZ09} and relies on the Calder$\acute{\mathrm{o}}$n commutator estimate from Proposition \ref{Proposition 6.3}.

\begin{proposition}\label{Proposition 6.1}
\rm{ (cf. \cite[Lemma 4.2.3]{Z12a} and \cite[Lemma 6.1]{GRZ09})} For any $v, \tilde{v} \in C_{0,\sigma}^{\infty}$  
\begin{subequations}\label{est 57}
\begin{align}
& \lVert \Lambda^{\gamma_{1}} v - \Lambda^{\gamma_{1}} \tilde{v} \rVert_{\mathbb{X}} \lesssim \lVert v - \tilde{v} \rVert_{\dot{H}_{x}^{\frac{1}{2}}}, \label{est 57a}\\
& \lVert - (u \cdot \nabla) v + (\tilde{u} \cdot \nabla)\tilde{v} \rVert_{\mathbb{X}} + \lVert (\nabla v)^{T} \cdot u - (\nabla \tilde{v})^{T} \cdot \tilde{u} \rVert_{\mathbb{X}} \lesssim \lVert v - \tilde{v} \rVert_{\dot{H}_{x}^{\frac{1}{2}}} (\lVert v \rVert_{\dot{H}_{x}^{\frac{1}{2}}} + \lVert \tilde{v} \rVert_{\dot{H}_{x}^{\frac{1}{2}}}).  \label{est 58}
\end{align}
\end{subequations}
Therefore, the operator $\mathcal{A}: C_{0,\sigma}^{\infty} \mapsto \mathbb{X}$ extends to an operator $\mathcal{A}: \dot{H}_{\sigma}^{\frac{1}{2}} \mapsto \mathbb{X}$ by continuity. 
\end{proposition}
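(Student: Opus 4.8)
\textbf{Proof proposal for Proposition \ref{Proposition 6.1}.}

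The plan is to prove the three estimates \eqref{est 57a}, \eqref{est 58} directly, working with the $\mathbb{X} = (H_\sigma^4)^\ast$ norm as a supremum of dual pairings against test functions $\psi \in H_\sigma^4$ with $\lVert \psi \rVert_{H_x^4} \leq 1$, and then conclude the extension statement by density of $C_{0,\sigma}^\infty$ in $\dot{H}_\sigma^{1/2}$ together with the fact that a uniformly continuous (here, Lipschitz on bounded sets) map into a complete space extends uniquely.

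First I would dispose of the linear dissipative term. For \eqref{est 57a}, given $\psi \in H_\sigma^4$ we write $\langle \Lambda^{\gamma_1}(v - \tilde v), \psi \rangle = \langle \Lambda^{1/2}(v-\tilde v), \Lambda^{\gamma_1 - 1/2}\psi\rangle$ and use $\lvert \langle \Lambda^{1/2}(v-\tilde v), \Lambda^{\gamma_1 - 1/2}\psi\rangle \rvert \leq \lVert v - \tilde v\rVert_{\dot H_x^{1/2}} \lVert \Lambda^{\gamma_1 - 1/2}\psi\rVert_{L_x^2} \lesssim \lVert v - \tilde v\rVert_{\dot H_x^{1/2}} \lVert \psi\rVert_{H_x^4}$, since $\gamma_1 < \tfrac32 < 4 + \tfrac12$; taking the supremum over $\psi$ gives the claim. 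For the quadratic terms in \eqref{est 58}, I would first reduce to a single nonlinear expression by the usual bilinear telescoping $u\cdot\nabla v - \tilde u\cdot\nabla\tilde v = (u - \tilde u)\cdot\nabla v + \tilde u\cdot\nabla(v - \tilde v)$ (and likewise for $(\nabla v)^T\cdot u$), so that matters reduce to bounding $\lVert (u_1\cdot\nabla) v_2 \rVert_{\mathbb{X}}$ and $\lVert (\nabla v_2)^T \cdot u_1 \rVert_{\mathbb X}$ by $\lVert v_1 \rVert_{\dot H_x^{1/2}}\lVert v_2\rVert_{\dot H_x^{1/2}}$ where $u_1 = \Lambda v_1$. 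The transport-type term is handled by integrating by parts against $\psi$: $\langle (u_1 \cdot\nabla) v_2, \psi\rangle = -\langle v_2, (u_1\cdot\nabla)\psi\rangle$ using $\nabla\cdot u_1 = 0$, then $= -\langle \Lambda^{1/2} v_2, \Lambda^{-1/2}((\Lambda v_1 \cdot\nabla)\psi)\rangle$, and one estimates $\lVert \Lambda^{-1/2}((\Lambda v_1\cdot\nabla)\psi)\rVert_{L_x^2} \lesssim \lVert \Lambda v_1 \cdot \nabla\psi\rVert_{\dot H_x^{-1/2}} \lesssim \lVert v_1\rVert_{\dot H_x^{1/2}}\lVert \nabla\psi\rVert_{W_x^{1,\infty}} \lesssim \lVert v_1\rVert_{\dot H_x^{1/2}}\lVert\psi\rVert_{H_x^4}$ by a product estimate in negative Sobolev spaces (Hölder/duality, using that $H^4(\mathbb T^2)$ embeds into $W^{1,\infty}$ with all needed derivatives controlled). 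The term $(\nabla v_2)^T\cdot u_1$ is \emph{not} in divergence form, and this is where Calderón's commutator enters: using the identity \eqref{est 59} of Proposition \ref{Proposition 6.3}, namely $-\int (\nabla v_2)^T\cdot\Lambda v_1 \cdot\psi\,dx = \tfrac12\sum_{i,j}\int \partial_i (v_2)_j [\Lambda,\psi_i](v_1)_j\,dx$, I would pair off one derivative onto $\psi$ and invoke the commutator bound $\lVert [\Lambda,\psi_i] f\rVert_{L_x^2} \lesssim \lVert \nabla\psi_i\rVert_{L_x^\infty}\lVert f\rVert_{\dot H_x^{-1}}$-type estimate from Proposition \ref{Proposition 6.3} (the standard Calderón estimate \eqref{est 50}), distributing the half-derivatives symmetrically onto the two factors $v_1, v_2$ to land exactly on $\lVert v_1\rVert_{\dot H_x^{1/2}}\lVert v_2\rVert_{\dot H_x^{1/2}}\lVert\psi\rVert_{H_x^4}$.

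I expect the commutator term $(\nabla v)^T\cdot u$ to be the main obstacle, precisely because it resists integration by parts and forces the use of the Calderón commutator estimate with the \emph{sharp} distribution of derivatives: one must put a full derivative's worth of smoothness onto $\psi$ (which is cheap, since $\psi \in H^4$) while splitting the remaining regularity as $\tfrac12 + \tfrac12$ between $v$ and $u = \Lambda v$ so that each factor costs only one unit of $\dot H^{1/2}$; any lopsided bookkeeping would demand more than $\dot H^{1/2}$ of $v$ and break the estimate. Once \eqref{est 57} is established, the operator $\mathcal A$ on $C_{0,\sigma}^\infty$ is Lipschitz continuous on each bounded subset of $\dot H_\sigma^{1/2}$ with values in the Banach space $\mathbb X$, hence uniformly continuous on bounded sets; since $C_{0,\sigma}^\infty$ is dense in $\dot H_\sigma^{1/2}$ and $\mathbb X$ is complete, $\mathcal A$ extends uniquely to a continuous map $\mathcal A : \dot H_\sigma^{1/2}\to\mathbb X$, and the extended map still satisfies \eqref{est 57}–\eqref{est 58} by continuity. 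I would close by remarking that the generalized-QG analogue (with $u = \Lambda^{2-\gamma_2}v$) is handled identically using part (2) of Proposition \ref{Proposition 6.3}, which is why the commutator estimate is stated in that generality, though only $\gamma_2 = 1$ is needed here.
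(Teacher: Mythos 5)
Your proposal is correct and follows essentially the same route as the paper's proof: duality in $\mathbb{X}=(H_{\sigma}^{4})^{\ast}$ for the dissipative term, bilinear telescoping plus the $\dot{H}_{x}^{-\frac{1}{2}}$--$\dot{H}_{x}^{\frac{1}{2}}$ pairing with the product estimate \eqref{est 79} (and the embedding of $H^{4}(\mathbb{T}^{2})$ into $W^{\frac{3}{2},\infty}(\mathbb{T}^{2})$) for the transport term, the identity \eqref{est 59} combined with the Calder\'{o}n commutator estimate \eqref{est 50} at $s=\tfrac{1}{2}$ for the non-divergence term, and extension by density and completeness of $\mathbb{X}$. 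The only caveat is that the displayed bound $\lVert [\Lambda,\psi_{i}] f\rVert_{L_{x}^{2}} \lesssim \lVert \nabla\psi_{i}\rVert_{L_{x}^{\infty}}\lVert f\rVert_{\dot{H}_{x}^{-1}}$ is not what \eqref{est 50} provides (the commutator $[\Lambda,\psi_{i}]$ is an order-zero operator and does not gain a derivative), and the identity \eqref{est 59} is stated after Definition \ref{Definition 4.1} rather than in Proposition \ref{Proposition 6.3}; the estimate actually needed, and the one your symmetric half-derivative bookkeeping in effect uses, is $\lVert [\Lambda, w_{i}] f \rVert_{H_{x}^{\frac{1}{2}}} \lesssim \lVert w_{i}\rVert_{W_{x}^{2,\infty}} \lVert f \rVert_{H_{x}^{\frac{1}{2}}}$ paired against $\partial_{i} v_{j} \in \dot{H}_{x}^{-\frac{1}{2}}$, exactly as in the paper.
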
  

\begin{proof}
First, because $v, \tilde{v} \in C_{0,\sigma}^{\infty}$ are both mean-zero, we can compute 
\begin{equation}
\lVert \Lambda^{\gamma_{1}} v - \Lambda^{\gamma_{2}} \tilde{v} \rVert_{\mathbb{X}} 
\overset{\eqref{est 55}}{=} \lVert \Lambda^{\gamma_{1}}v - \Lambda^{\gamma_{1}} \tilde{v} \rVert_{H_{x}^{-4}}  \lesssim \lVert  v- \tilde{v} \rVert_{\dot{H}_{x}^{\frac{1}{2}}}. 
\end{equation} 
Next, we can compute using \eqref{est 79} and the embedding of $H^{4} (\mathbb{T}^{2}) \hookrightarrow W^{\frac{3}{2}, \infty} (\mathbb{T}^{2})$ 
\begin{align}
& \lVert - (u \cdot \nabla) v + (\tilde{u} \cdot \nabla) \tilde{v} \rVert_{\mathbb{X}} \label{est 407} \\ 
\lesssim& \sup_{ w \in H_{\sigma}^{4}: \lVert w \rVert_{H_{x}^{4}} \leq 1} [ \lVert v - \tilde{v} \rVert_{\dot{H}_{x}^{\frac{1}{2}}} \lVert \nabla w \cdot v \rVert_{\dot{H}_{x}^{\frac{1}{2}}} + \lVert \tilde{v} \rVert_{\dot{H}_{x}^{\frac{1}{2}}} \lVert \nabla w \cdot (v - \tilde{v}) \rVert_{\dot{H}_{x}^{\frac{1}{2}}}] \lesssim \lVert v - \tilde{v} \rVert_{\dot{H}_{x}^{\frac{1}{2}}} ( \lVert v \rVert_{\dot{H}_{x}^{\frac{1}{2}}} + \lVert \tilde{v} \rVert_{\dot{H}_{x}^{\frac{1}{2}}}). \nonumber 
\end{align}
Finally, we compute via \eqref{est 59} and \eqref{est 50} which are crucial as this nonlinear term is not of the divergence form:
\begin{align*} 
&\lVert (\nabla v)^{T} \cdot u - (\nabla \tilde{v})^{T} \cdot \tilde{u} \rVert_{\mathbb{X}}  \nonumber \\
\overset{\eqref{est 59}}{=}& \sup_{w \in H_{\sigma}^{4}: \lVert w \rVert_{H_{x}^{4}} \leq 1} \frac{1}{2} \left\lvert \sum_{i,j=1}^{2} \int_{\mathbb{T}^{2}} (\partial_{i} v_{j} - \partial_{i} \tilde{v}_{j})[ \Lambda, w_{i}] v_{j} + \partial_{i} \tilde{v}_{j} [\Lambda, w_{i}] (v_{j} - \tilde{v}_{j}) dx \right\rvert \nonumber \\
\overset{\eqref{est 50} }{\lesssim}& \sup_{w \in H_{\sigma}^{4}: \lVert w \rVert_{H_{x}^{4}} \leq 1} \sum_{i,j=1}^{2} \lVert v_{j} - \tilde{v}_{j} \rVert_{\dot{H}_{x}^{\frac{1}{2}}} \lVert w_{i} \rVert_{W_{x}^{2,\infty}} \lVert v_{j} \rVert_{\dot{H}_{x}^{\frac{1}{2}}} + \lVert \tilde{v}_{j} \rVert_{\dot{H}_{x}^{\frac{1}{2}}} \lVert w_{i} \rVert_{W_{x}^{2,\infty}} \lVert v_{j} - \tilde{v}_{j} \rVert_{\dot{H}_{x}^{\frac{1}{2}}} \nonumber \\
\lesssim& \lVert v - \tilde{v} \rVert_{\dot{H}_{x}^{\frac{1}{2}}} ( \lVert v \rVert_{\dot{H}_{x}^{\frac{1}{2}}} + \lVert \tilde{v} \rVert_{\dot{H}_{x}^{\frac{1}{2}}}). 
\end{align*} 
\end{proof}
Following \cite[p. 1730 and 1733]{GRZ09} and \cite[p. 154]{Z12a} we define the function $\mathcal{N}_{1}$ and $\mathcal{N}_{p}$ for $p \geq 2$ on $\mathbb{Y}$  by 
\begin{equation}\label{est 62}
\mathcal{N}_{1}(v) \triangleq 
\begin{cases}
\lVert v \rVert_{\dot{H}_{x}^{\frac{1}{2} + \frac{\gamma_{1}}{2}}}^{2} & \text{ if } v \in \dot{H}^{\frac{1}{2} + \frac{\gamma_{1}}{2}}(\mathbb{T}^{2}), \\
+ \infty & \text{ if } v \notin \dot{H}^{\frac{1}{2} + \frac{\gamma_{1}}{2}}(\mathbb{T}^{2}), 
\end{cases} \hspace{5mm} \text{ and } \hspace{5mm}  \mathcal{N}_{p}(v) \triangleq  \lVert v \rVert_{\dot{H}_{x}^{\frac{1}{2}}}^{2(p-1)} \mathcal{N}_{1}(v). 
\end{equation} 
Under these notations, we can show that the criterion of ``(C1) (Demi-Continuity)'' and ``(C3) (Growth Condition)'' on \cite[p. 1733]{GRZ09} hold due to \eqref{est 60} and Proposition \ref{Proposition 6.1} while ``(C2) (Coercivity Condition)'' on \cite[p. 1733]{GRZ09} holds by definition of $\mathcal{N}_{1}$. Therefore, the hypothesis of \cite[Theorem 4.6]{GRZ09} holds, and it allows us to deduce Proposition \ref{Proposition 4.1} (1). 

In order to prove the Proposition \ref{Proposition 4.1} (2), we need the following result first. 
\begin{proposition}\label{Proposition 6.2} 
\rm{(\cite[Lemma A.1]{HZZ19})} Let $\{(s_{l},\xi_{l})\}_{l\in\mathbb{N}} \subset [0,\infty) \times \dot{H}_{\sigma}^{\frac{1}{2}}$ be a family such that $\lim_{l\to\infty} \lVert (s_{l},\xi_{l}) - (s,\xi^{\text{in}}) \rVert_{\mathbb{R} \times \dot{H}_{x}^{\frac{1}{2}}} = 0$ and $\{P_{l} \}_{l\in\mathbb{N}}$ be a family of probability measures on $\Omega_{0}$ satisfying $P_{l} (\{ \xi(t) = \xi_{l} \hspace{1mm} \forall \hspace{1mm} t \in [0, s_{l} ] \}) = 1$ for all $l \in \mathbb{N}$, any $T > 0$, and some $\iota, \kappa > 0$, 
\begin{equation}\label{est 63}  
\sup_{n\in\mathbb{N}} \mathbb{E}^{P_{l}} \left[ \lVert \xi \rVert_{C([0,T]; \dot{H}_{x}^{\frac{1}{2}})} + \sup_{r,t \in [0,T]: r \neq t} \frac{ \lVert \xi(t) - \xi(r) \rVert_{H_{x}^{-4}}}{\lvert t-r \rvert^{\kappa}} + \lVert \xi \rVert_{L^{2}([s_{l}, T]; \dot{H}_{x}^{\frac{1}{2} + \iota })}^{2} \right] < \infty. 
\end{equation} 
Then $\{P_{l} \}_{l\in\mathbb{N}}$ is tight in 
\begin{equation}\label{est 77}
\mathbb{S} \triangleq C_{\text{loc}} ([0,\infty); (H_{\sigma}^{4})^{\ast}) \cap L_{\text{loc}}^{2} ([0,\infty); \dot{H}_{\sigma}^{\frac{1}{2}}).
\end{equation}   
\end{proposition}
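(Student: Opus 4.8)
The final statement to be proved is Proposition \ref{Proposition 6.2}, the tightness criterion cited from \cite[Lemma A.1]{HZZ19}. The plan is to follow the standard Aubin--Lions--type compactness argument adapted to the path-space setting, exactly as in \cite{HZZ19}, and since the statement is quoted verbatim from an existing reference, the proof amounts to reorganizing that argument with the uniform bound \eqref{est 63} as the only input.

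First I would recall the two basic compactness facts underlying tightness in $\mathbb{S}$ as defined in \eqref{est 77}. For the $C_{\text{loc}}([0,\infty);(H_\sigma^4)^\ast)$ component, one uses the Arzel\`a--Ascoli theorem: the set $\{\omega : \lVert \omega\rVert_{C([0,T];\dot H^{1/2}_x)} \le R,\ \sup_{r\ne t}\lVert \omega(t)-\omega(r)\rVert_{H^{-4}_x}/|t-r|^\kappa \le R\}$ is, by the compact embedding $\dot H^{1/2}_\sigma \hookrightarrow (H^4_\sigma)^\ast$ together with equi-H\"older continuity in the weaker norm, relatively compact in $C([0,T];(H^4_\sigma)^\ast)$; intersecting over a sequence $T\nearrow\infty$ and using a diagonal argument handles $C_{\text{loc}}$. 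For the $L^2_{\text{loc}}([0,\infty);\dot H^{1/2}_\sigma)$ component, one invokes an Aubin--Lions--Simon lemma: the set of $\omega$ with $\lVert\omega\rVert_{L^2([s_l,T];\dot H^{1/2+\iota}_x)}\le R$ and $\lVert\omega\rVert_{C^\kappa([0,T];H^{-4}_x)}\le R$ is relatively compact in $L^2([0,T];\dot H^{1/2}_x)$, because $\dot H^{1/2+\iota}_\sigma\hookrightarrow\dot H^{1/2}_\sigma$ compactly and $\dot H^{1/2}_\sigma\hookrightarrow H^{-4}_\sigma$ continuously. Again a diagonalization over $T\nearrow\infty$ upgrades this to $L^2_{\text{loc}}$.

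With these compactness criteria in hand, the proof proceeds: given $\varepsilon>0$, by \eqref{est 63} and Chebyshev's inequality there exists $R=R(\varepsilon,T)$ such that $\sup_{l}P_l(\{\lVert\xi\rVert_{C([0,T];\dot H^{1/2}_x)}+\lVert\xi\rVert_{C^\kappa([0,T];H^{-4}_x)}+\lVert\xi\rVert_{L^2([s_l,T];\dot H^{1/2+\iota}_x)}^2 > R\}) < \varepsilon 2^{-T}$ for each integer $T$; one must be mildly careful here that the initial segment $[0,s_l]$ contributes only $\xi_l$, whose norms are uniformly bounded since $s_l\to s$ and $\xi_l\to\xi^{\text{in}}$ in $\mathbb{R}\times\dot H^{1/2}_x$, so the $L^2$ bound on $[s_l,T]$ together with this extends to a bound on $[0,T]$ up to an $l$-uniform constant. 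Taking the intersection over $T\in\mathbb{N}$ of the corresponding sublevel sets and calling it $K_\varepsilon$, the two compactness lemmas show $K_\varepsilon$ is relatively compact in $\mathbb{S}$, while a union bound gives $\sup_l P_l(K_\varepsilon^c)<\varepsilon$; hence $\{P_l\}$ is tight.

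I do not anticipate a genuine obstacle, since this is a cited lemma whose proof is standard; the one point deserving care is the bookkeeping at the left endpoint, namely that the hypothesis controls $\lVert\xi\rVert_{L^2([s_l,T];\dot H^{1/2+\iota}_x)}$ only from time $s_l$ onward, so one must explicitly use $P_l(\{\xi(t)=\xi_l\ \forall t\in[0,s_l]\})=1$ together with the convergence $(s_l,\xi_l)\to(s,\xi^{\text{in}})$ to conclude a genuinely $l$-uniform bound on all of $[0,T]$ and thereby genuinely $l$-uniform relative compactness. The other routine verification is checking that the Aubin--Lions--Simon statement applies with the fractional spaces $\dot H^{1/2+\iota}_\sigma$, $\dot H^{1/2}_\sigma$, $H^{-4}_\sigma$ in place of the classical triple, which is immediate from the compactness of $\dot H^{1/2+\iota}_\sigma\hookrightarrow\dot H^{1/2}_\sigma$ on the torus. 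I would simply refer the reader to \cite[Lemma A.1]{HZZ19} for the remaining details.
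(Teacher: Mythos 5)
Your overall strategy (Chebyshev plus a compact set built from the three norms in \eqref{est 63}) is the right one, but there is a genuine gap exactly at the point you flag as ``mild bookkeeping.'' You claim that since $\xi_{l}\to\xi^{\text{in}}$ in $\dot{H}_{x}^{\frac{1}{2}}$, the bound on $\lVert \xi \rVert_{L^{2}([s_{l},T];\dot{H}_{x}^{\frac{1}{2}+\iota})}$ ``extends to a bound on $[0,T]$ up to an $l$-uniform constant.'' It does not: on $[0,s_{l}]$ the path is frozen at $\xi_{l}$, and the hypothesis controls $\xi_{l}$ only in $\dot{H}_{x}^{\frac{1}{2}}$, not in $\dot{H}_{x}^{\frac{1}{2}+\iota}$. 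If $\xi_{l}\in\dot{H}^{\frac{1}{2}}\setminus\dot{H}^{\frac{1}{2}+\iota}$ and $s_{l}>0$, then $\lVert\xi\rVert_{L^{2}([0,T];\dot{H}_{x}^{\frac{1}{2}+\iota})}=\infty$ on a set of full $P_{l}$-measure, so no $l$-uniform constant exists. Consequently the single, $l$-independent sublevel set $K_{\varepsilon}$ you propose (to which you want to apply Aubin--Lions on all of $[0,T]$) cannot simultaneously be relatively compact via that lemma and carry $P_{l}$-mass at least $1-\varepsilon$ uniformly in $l$; the sets you can actually bound in probability are the $l$-dependent sets involving $L^{2}([s_{l},T];\dot{H}_{x}^{\frac{1}{2}+\iota})$, and tightness requires one fixed compact set.

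The paper's proof repairs precisely this point, and the repair is not a one-line appeal to Aubin--Lions. It takes $K$ to be the union over $q$ of the sets of paths lying in $\Omega_{q}$ from \eqref{est 67} (constant $\equiv\xi_{q}$ on $[0,s_{q}]$) satisfying the bounds of \eqref{est 65}, with the $\dot{H}^{\frac{1}{2}+\iota}$ integral taken only over $[s_{q},k]$, and then proves compactness of $\bar{K}$ by hand: a sequence $\{\xi_{w}\}\subset K$ is first made convergent in $C([0,k];(H_{\sigma}^{4})^{\ast})$ via the compact embedding \eqref{est 68}, and the $L^{2}(0,k;\dot{H}_{\sigma}^{\frac{1}{2}})$ Cauchy property is obtained by the splitting in \eqref{est 69}: on $[0,s_{w_{l}}\wedge s_{w_{q}}]$ the paths are the constants $\xi_{w_{l}},\xi_{w_{q}}$ and one uses directly the assumed convergence $\xi_{l}\to\xi^{\text{in}}$ in $\dot{H}^{\frac{1}{2}}$ (no compactness, and no $\dot{H}^{\frac{1}{2}+\iota}$ control needed there); on the middle interval one uses that its length vanishes together with the uniform $C_{t}\dot{H}^{\frac{1}{2}}$ bound; and only on $[s_{w_{l}}\vee s_{w_{q}},k]$ does one interpolate between the $H^{-4}$ convergence \eqref{est 66} and the uniform $L^{2}\dot{H}^{\frac{1}{2}+\iota}$ bound. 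Your argument needs to be restructured along these lines (or some equivalent device handling the frozen initial segment); as written, the key uniform bound it rests on is false.
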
 

\begin{proof}[Proof of Proposition \ref{Proposition 6.2}]
This is an extension of \cite[Lemma A.1]{HZZ19} on the 3-d NS equations that has already been generalized to 2-d NS equations in \cite[Lemma 6.4]{Y20c} and $n$-d Boussinesq system in \cite[Proposition 6.6]{Y21a}; thus, we only sketch its proof. We recall that a set $K \subset \mathbb{S}$ is compact if  $\{ f \rvert_{[0,T]}: f \in K \} \subset C([0, T]; (H_{\sigma}^{4})^{\ast})) \cap L^{2}(0, T; \dot{H}_{\sigma}^{\frac{1}{2}})$ is compact for every $T > 0$. Now we fix $\epsilon > 0$ and $k \in \mathbb{N}$ such that $k \geq k_{0} \triangleq \sup_{l\in\mathbb{N}} s_{l}$. Due to \eqref{est 63} we may choose $R_{K} > 0$ sufficiently large such that 
\begin{align}
P_{l} ( \{ \xi \in \Omega_{0}: &\sup_{t \in [0, k]} \lVert \xi(t) \rVert_{\dot{H}_{x}^{\frac{1}{2}}}  \nonumber  \\
&+ \sup_{r, t \in [0,k]: r \neq t} \frac{ \lVert \xi(t) - \xi(r) \rVert_{H_{x}^{-4}}}{\lvert t-r \rvert^{\kappa}} + \int_{s_{l}}^{k} \lVert \xi(r) \rVert_{\dot{H}_{x}^{\frac{1}{2} + \iota}}^{2} dr > R_{k} \}) \leq \frac{\epsilon}{2^{k}} \label{est 64} 
\end{align} 
and define 
\begin{equation}\label{est 67}
\Omega_{l} \triangleq \{ \xi \in \Omega_{0}: \xi(t) = \xi_{l} \hspace{1mm} \forall \hspace{1mm} t \in [0, s_{l}] \}) 
\end{equation} 
and 
\begin{align}
K \triangleq& \cup_{q \in \mathbb{N}} \cap_{k \in\mathbb{N}: k \geq k_{0}} \{ \xi \in \Omega_{q}: \sup_{t \in [0,k]} \lVert \xi(t) \rVert_{\dot{H}_{x}^{\frac{1}{2}}} \nonumber \\
& \hspace{10mm}+ \sup_{r, t \in [0, k]: r \neq t} \frac{ \lVert \xi(t) - \xi(r) \rVert_{H_{x}^{-4}}}{\lvert t-r \rvert^{\kappa}}  + \int_{s_{q}}^{k} \lVert \xi(r) \rVert_{\dot{H}_{x}^{\frac{1}{2} + \iota}}^{2} dr \leq R_{k} \}. \label{est 65} 
\end{align}
Then we can compute $\sup_{l \in \mathbb{N}} P_{l} (\Omega_{0} \setminus \bar{K}) \leq \epsilon$ using \eqref{est 64}. By definition of tightness, if we now show that $\bar{K}$ is compact in $\mathbb{S}$, then it implies that $\{P_{l}\}_{l\in\mathbb{N}}$ is tight in $\mathbb{S}$ as desired. We take $\{\xi_{w}\}_{w \in \mathbb{N}} \subset K$. Suppose that for all $N \in \mathbb{N}, \xi_{w} \in \Omega_{N}$ for only finitely many $w \in \mathbb{N}$. Then passing to a subsequence and relabeling if necessary, we can assume that $\xi_{w} \in \Omega_{w}$. Then, for all $k \geq k_{0}$, 
\begin{equation}
\sup_{t \in [0,k]} \lVert \xi_{w} (t) \rVert_{\dot{H}_{x}^{\frac{1}{2}}} + \sup_{r,t \in [0,k]: r \neq t} \frac{ \lVert \xi_{w} (t) - \xi_{w} (r) \rVert_{H_{x}^{-4}}}{\lvert t-r \rvert^{\kappa}} \leq R_{k} 
\end{equation} 
because $\xi_{w} \in K$. Now 
\begin{equation}\label{est 68}
L^{\infty} (0, k; \dot{H}_{\sigma}^{\frac{1}{2}}) \cap C^{\kappa} ([0,k]; (H_{\sigma}^{4})^{\ast}) \hookrightarrow C([0,k]; (H_{\sigma}^{4})^{\ast})
\end{equation} 
is compact (e.g., \cite{S87}). Therefore, we can extract a subsequence $\{\xi_{w_{l}}\}_{l}$ such that 
\begin{equation}\label{est 66}
\lim_{l,q\to\infty} \sup_{t\in [0,k]} \lVert \xi_{w_{l}}(t) - \xi_{w_{q}}(t) \rVert_{H_{x}^{-4}} = 0. 
\end{equation}
It follows from \eqref{est 67}, \eqref{est 65}, and \eqref{est 66} that for all $\delta > 0$, there exists $L \in \mathbb{N}$ such that $w_{l}, w_{q} \geq L$ implies 
\begin{equation}\label{est 69}
\int_{0}^{k} \lVert \xi_{w_{l}}(t) - \xi_{w_{q}}(t) \rVert_{\dot{H}_{x}^{\frac{1}{2}}}^{2} dt = \int_{0}^{s_{w_{l}} \wedge s_{w_{q}}} + \int_{s_{w_{l}} \wedge s_{w_{q}}}^{s_{w_{l}} \vee s_{w_{q}}} + \int_{s_{w_{l}} \vee s_{w_{q}}}^{k} \lVert \xi_{w_{l}} (t) - \xi_{w_{q}}(t) \rVert_{\dot{H}_{x}^{\frac{1}{2}}}^{2} dt  < \delta. 
\end{equation} 
The case in which there exists $N \in \mathbb{N}$ such that $\xi_{w} \in \Omega_{N}$ for infinitely many $w$ is similar and easier; thus, we conclude, along with \eqref{est 66}, that $\{\xi_{w_{l}}\}_{l}$ is convergent in $C([0,k]; (H_{\sigma}^{4})^{\ast}) \cap L^{2}(0, k; \dot{H}_{\sigma}^{\frac{1}{2}})$. By the arbitrariness of $\{\xi_{w}\} \subset K$ we conclude that $K$ is compact. 
\end{proof} 

With Propositions \ref{Proposition 6.1}-\ref{Proposition 6.2}, we are ready to deduce Proposition \ref{Proposition 4.1} (2). Due to similarity with previous works (e.g., \cite[Proof of Theorem 3.1]{HZZ19}, \cite[Proof of Proposition 4.1]{Y20a}) we only sketch its proof. We fix $\{P_{l} \}_{l\in\mathbb{N}} \subset \mathcal{C} ( s_{l}, \xi_{l}, \{C_{t,q} \}_{q\in \mathbb{N}, t \geq s_{l}})$ where $\{(s_{l}, \xi_{l})\}_{l\in\mathbb{N}} \subset [0,\infty) \times \dot{H}_{\sigma}^{\frac{1}{2}}$ satisfies $\lim_{l\to\infty} \lVert (s_{l},\xi_{l}) - (s, \xi^{\text{in}}) \rVert_{\mathbb{R} \times \dot{H}_{x}^{\frac{1}{2}}} = 0$. To verify the hypothesis of Proposition \ref{Proposition 6.2}, we first note that $P_{l} ( \{ \xi(t) = \xi_{l} \hspace{1mm} \forall \hspace{1mm} t \in [0, s_{l} ]\}) = 1$ for all $l \in \mathbb{N}$ due to (M1) of Definition \ref{Definition 4.1}. Second via Proposition \ref{Proposition 6.1} we define $F: \dot{H}_{\sigma}^{\frac{1}{2}} \mapsto (H_{\sigma}^{4})^{\ast}$ by 
\begin{equation}\label{est 71}
F(\xi) \triangleq - \mathbb{P} [(\Lambda \xi \cdot \nabla) \xi - (\nabla \xi)^{T} \cdot \Lambda \xi] - \Lambda^{\gamma_{1}} \xi. 
\end{equation} 
By definition of $\mathcal{C} (s_{l}, \xi_{l}, \{C_{t,q} \}_{q\in\mathbb{N}, t \geq s_{l}})$ and (M2) of Definition \ref{Definition 4.1} we know that for all $l \in \mathbb{N}$ and $t \in [s_{l}, \infty)$, the mapping $t \mapsto M_{t, s_{l}}^{\xi, k}$ where 
\begin{align}
M_{t, s_{l}}^{\xi, k} \triangleq& \langle \xi(t) - \xi_{l}, \psi^{k} \rangle \nonumber \\
&- \int_{s_{l}}^{t} \sum_{i,j=1}^{2} \langle \Lambda \xi_{i}, \partial_{i} \psi_{j}^{k} \xi_{j} \rangle_{\dot{H}_{x}^{-\frac{1}{2}} - \dot{H}_{x}^{\frac{1}{2}}} - \frac{1}{2} \langle \partial_{i} \xi_{j}, [\Lambda, \psi_{i}^{k} ]\xi_{j} \rangle_{\dot{H}_{x}^{-\frac{1}{2}} - \dot{H}_{x}^{\frac{1}{2}}} - \langle \xi, \Lambda^{\gamma_{1}} \psi^{k} \rangle dr \label{est 82} 
\end{align} 
for $\psi^{k} \in C^{\infty} (\mathbb{T}^{2}) \cap \dot{H}_{\sigma}^{\frac{1}{2}}$ and $\xi \in \Omega_{0}$ is a continuous, square-integrable $(\mathcal{B}_{t})_{t\geq s_{l}}$-martingale under $P_{l}$ and $\langle \langle M_{t, s_{l}}^{\xi, k} \rangle \rangle = \int_{s_{l}}^{t} \lVert G(\xi(r))^{\ast} \psi^{k} \rVert_{U}^{2}dr$. We can thus write for all $\kappa \in (0, \frac{1}{2})$ due to (M2) of Definition \ref{Definition 4.1} and \eqref{est 71}
\begin{align}
&\sup_{l\in\mathbb{N}} \mathbb{E}^{P_{l}} [ \sup_{r,t \in [0,T]: r \neq t} \frac{ \lVert \xi(t) - \xi(r) \rVert_{H_{x}^{-4}}}{\lvert t-r \rvert^{\kappa}}] \nonumber\\
=& \sup_{l \in \mathbb{N}} \mathbb{E}^{P_{l}} [ \sup_{r,t \in [0,T]: r \neq t} \frac{ \lVert \int_{r}^{t} F(\xi(\lambda)) d\lambda+ M_{t,s_{l}}^{\xi} - M_{r,s_{l}}^{\xi} \rVert_{H_{x}^{-4}}}{\lvert t-r \rvert^{\kappa}}] \label{est 74}
\end{align}
where we estimate for any $p \in (1, \infty)$ by using  $\lVert \int_{r}^{t} f(l) dl  \rVert_{H_{x}^{-4}}\leq \int_{r}^{t} \lVert f \rVert_{H_{x}^{-4}} dl$ for any $f \in L^{1} ([r,t]; H^{-4} (\mathbb{T}^{2}))$, \eqref{est 57} with $\tilde{v} \equiv 0$, and (M3) of Definition \ref{Definition 4.1},  
\begin{equation}\label{est 72}
\mathbb{E}^{P_{l}} \left[ \sup_{r \in [0,T], t \in (s_{l}, T]: r \neq t} \frac{ \lVert \int_{r}^{t} F(\xi(\lambda)) d\lambda \rVert_{H_{x}^{-4}}^{p}}{\lvert t-r \rvert^{p-1}}\right] \leq T C_{T,p} (1+ \lVert \xi_{l} \rVert_{\dot{H}_{x}^{\frac{1}{2}}}^{2p}). 
\end{equation}
On the other hand, we can compute for any $p \in (1,\infty)$
\begin{equation}\label{est 73}
\mathbb{E}^{P_{l}}[ \lVert M_{t,s_{l}}^{\xi} - M_{r, s_{l}}^{\xi} \rVert_{L_{x}^{2}}^{2p} ]  \lesssim \lVert M_{t,r}^{\xi} \rVert_{L_{x}^{2} L_{\omega}^{2p}}^{2p} \lesssim_{p} \lvert t-r \rvert^{p} C_{t,p} (1+ \lVert \xi_{l} \rVert_{\dot{H}_{x}^{\frac{1}{2}}}^{2p})
\end{equation} 
by Minkowski's inequality, Burkholder-Davis-Gundy inequality, (M2) and (M3) of Definition \ref{Definition 4.1}. Applying Kolmogorov's test (e.g., \cite[Theorem 3.3 on p. 67]{DZ14}) on \eqref{est 73} we obtain for all $\kappa \in (0, \frac{1}{2})$ 
\begin{equation}
 \sup_{l \in \mathbb{N}} \mathbb{E}^{P_{l}} \left[ \sup_{r, t \in [0,T]: r \neq t} \frac{ \lVert \xi(t) - \xi(r) \rVert_{H_{x}^{-4}}}{\lvert t-r \rvert^{\kappa}} \right] 
\overset{\eqref{est 74}\eqref{est 72}}{\leq} C(T, \alpha, \lVert \xi_{l} \rVert_{\dot{H}_{x}^{\frac{1}{2}}}).
\end{equation}  
Together with (M3) of Definition \ref{Definition 4.1}, we now see that \eqref{est 63} is satisfied so that by Proposition \ref{Proposition 6.2}, we can deduce that $\{P_{l}\}_{l\in\mathbb{N}}$ is tight in $\mathbb{S}$. Then we deduce by Prokhorov's theorem (e.g., \cite[Theorem 2.3]{DZ14}) that $P_{l}$ converges weakly to some $P \in \mathcal{P}(\Omega_{0})$ and by Skorokhod's theorem (e.g., \cite[Theorem 2.4]{DZ14}) that there exist $(\tilde{\Omega}, \tilde{\mathcal{F}}, \tilde{P})$ and $\mathbb{S}$-valued random variables $\{\tilde{\xi}_{l}\}_{l\in\mathbb{N}}$ and $\tilde{\xi}$ such that 
\begin{equation}\label{est 76}
\mathcal{L} (\tilde{\xi}_{l}) = P_{l} \hspace{1mm} \forall \hspace{1mm} l \in\mathbb{N}, \hspace{1mm} \tilde{\xi}_{l} \to \tilde{\xi} \text{ in } \mathbb{S} \hspace{1mm} \tilde{P}\text{-a.s.}, \hspace{1mm} \text{ and } \hspace{1mm} \mathcal{L} (\tilde{\xi}) = P. 
\end{equation} 
The rest of this proof consists of showing that $P \in \mathcal{C} (s, \xi^{\text{in}}, \{C_{t,q} \}_{q\in\mathbb{N}, t \geq s})$. First, it follows immediately from \eqref{est 76} that $P(\{ \xi(t) = \xi^{\text{in}} \hspace{1mm} \forall \hspace{1mm} t \in [0,s] \}) = 1$. Next, for all $\psi^{k} \in C^{\infty} (\mathbb{T}^{2}) \cap \dot{H}_{\sigma}^{\frac{1}{2}}$ and $t\geq s$, due to \eqref{est 76} and \eqref{est 77}
\begin{equation}\label{est 80} 
\langle \tilde{\xi}_{l}(t), \psi^{k} \rangle \to \langle \tilde{\xi}(t), \psi^{k} \rangle 
\end{equation} 
as $l\to\infty$ $\tilde{P}$-a.s. Next, to prove that  
\begin{align}
& \mathbb{E}^{\tilde{P}}[ \int_{s_{l}}^{t} \sum_{i,j=1}^{2} \langle \Lambda \tilde{\xi}_{l,i}, \partial_{i} \psi_{j}^{k} \tilde{\xi}_{l,j} \rangle_{\dot{H}_{x}^{-\frac{1}{2}} - \dot{H}_{x}^{\frac{1}{2}}} - \frac{1}{2} \langle \partial_{i} \tilde{\xi}_{l,j}, [\Lambda, \psi_{i}^{k} ] \tilde{\xi}_{l,j} \rangle_{\dot{H}_{x}^{-\frac{1}{2}}-\dot{H}_{x}^{\frac{1}{2}}} - \langle \tilde{\xi}_{l},\Lambda^{\gamma_{1}} \psi^{k} \rangle dr\nonumber \\
& - \int_{s}^{t} \sum_{i,j=1}^{2} \langle \Lambda \tilde{\xi}_{i}, \partial_{i} \psi_{j}^{k} \tilde{\xi}_{j} \rangle_{\dot{H}_{x}^{-\frac{1}{2}} - \dot{H}_{x}^{\frac{1}{2}}} - \frac{1}{2} \langle \partial_{i} \tilde{\xi}_{j}, [\Lambda, \psi_{i}^{k} ]\tilde{\xi}_{j}\rangle_{\dot{H}_{x}^{-\frac{1}{2}}-\dot{H}_{x}^{\frac{1}{2}}} - \langle \tilde{\xi}, \Lambda^{\gamma_{1}} \psi^{k} \rangle dr] \to 0  \label{est 78} 
\end{align}
as $l\to\infty$, we first split the left hand side of \eqref{est 78} as a sum of $\RomanI_{1}$ and $\RomanI_{2}$ defined by 
\begin{align}
\RomanI_{1} \triangleq& \mathbb{E}^{\tilde{P}}[ \int_{s_{l}}^{s} \sum_{i,j=1}^{2} \langle \Lambda \tilde{\xi}_{l,i}, \partial_{i}\psi_{j}^{k} \tilde{\xi}_{l,j} \rangle_{\dot{H}_{x}^{-\frac{1}{2}}-\dot{H}_{x}^{\frac{1}{2}}} - \frac{1}{2} \langle \partial_{i} \tilde{\xi}_{l,j}, [\Lambda, \psi_{i}^{k}]\tilde{\xi}_{l,j} \rangle_{\dot{H}_{x}^{-\frac{1}{2}} - \dot{H}_{x}^{\frac{1}{2}}} - \langle \tilde{\xi}_{l},\Lambda^{\gamma_{1}}\psi^{k} \rangle dr], \nonumber \\
\RomanI_{2} \triangleq& \mathbb{E}^{\tilde{P}} [\int_{s}^{t} \sum_{i,j=1}^{2} \langle \Lambda \tilde{\xi}_{l,i}, \partial_{i} \psi_{j}^{k} \tilde{\xi}_{l,j} \rangle_{\dot{H}_{x}^{-\frac{1}{2}} - \dot{H}_{x}^{\frac{1}{2}}} - \frac{1}{2} \langle \partial_{i} \tilde{\xi}_{l,j}, [\Lambda, \psi_{i}^{k}]\tilde{\xi}_{l,j}\rangle_{\dot{H}_{x}^{-\frac{1}{2}} - \dot{H}_{x}^{\frac{1}{2}}} - \langle \tilde{\xi}_{l},\Lambda^{\gamma_{1}}\psi^{k} \rangle dr \nonumber\\
& - \sum_{i,j=1}^{2} \langle \Lambda \tilde{\xi}_{i}, \partial_{i} \psi_{j}^{k} \tilde{\xi}_{j} \rangle_{\dot{H}_{x}^{-\frac{1}{2}} - \dot{H}_{x}^{\frac{1}{2}}} + \frac{1}{2} \langle \partial_{i} \tilde{\xi}_{j}, [\Lambda, \psi_{i}^{k} ] \tilde{\xi}_{j} \rangle_{\dot{H}_{x}^{-\frac{1}{2}}-\dot{H}_{x}^{\frac{1}{2}}} + \langle \tilde{\xi}, \Lambda^{\gamma_{1}} \psi^{k} \rangle dr]. \label{est 81} 
\end{align}
First, we estimate  
\begin{align}
\RomanI_{1} \overset{\eqref{est 79}\eqref{est 50}  }{\lesssim}& \mathbb{E}^{\tilde{P}} [ \int_{s_{l}}^{s} \lVert \tilde{\xi}_{l} \rVert_{\dot{H}_{x}^{\frac{1}{2}}} ( \lVert \Lambda^{\frac{1}{2}} \nabla \psi^{k} \rVert_{L_{x}^{\infty}} \lVert \tilde{\xi}_{l} \rVert_{L_{x}^{2}} + \lVert \nabla \psi^{k} \rVert_{L_{x}^{\infty}} \lVert \tilde{\xi}_{l} \rVert_{\dot{H}_{x}^{\frac{1}{2}}}) \label{est 81a} \\
&  + \lVert \tilde{\xi}_{l} \rVert_{\dot{H}_{x}^{\frac{1}{2}}} \lVert \psi \rVert_{W_{x}^{2,\infty}} \lVert\tilde{\xi}_{l} \rVert_{\dot{H}_{x}^{\frac{1}{2}}} + \lVert\tilde{\xi}_{l} \rVert_{\dot{H}_{x}^{\frac{1}{2}}} dr]  \lesssim \mathbb{E}^{\tilde{P}}[ \int_{s_{l}}^{s}\lVert \tilde{\xi}_{l} \rVert_{\dot{H}_{x}^{\frac{1}{2}}}^{2} + \lVert \tilde{\xi}_{l} \rVert_{\dot{H}_{x}^{\frac{1}{2}}} dr] \overset{\eqref{est 76}}{\to} 0 \nonumber
\end{align}
as $l\to\infty$ due to the hypothesis that $s_{l} \to s$ as $l\to\infty$. For $\RomanI_{2}$ of \eqref{est 81}, we can estimate 
\begin{align}
\RomanI_{2} & \overset{\eqref{est 79}\eqref{est 50}}{\lesssim}  \mathbb{E}^{\tilde{P}}[ \int_{s}^{t} \sum_{i,j=1}^{2} \lVert \tilde{\xi}_{l,i} - \tilde{\xi}_{i} \rVert_{\dot{H}_{x}^{\frac{1}{2}}} ( \lVert \Lambda^{\frac{1}{2}} \partial_{i} \psi_{j}^{k} \rVert_{L_{x}^{\infty}}\lVert \tilde{\xi}_{l,j} \rVert_{L_{x}^{2}}+ \lVert \partial_{i} \psi_{j}^{k} \rVert_{L_{x}^{\infty}} \lVert \Lambda^{\frac{1}{2}} \tilde{\xi}_{l,j} \rVert_{L_{x}^{2}}) \label{est 81b} \\
& \hspace{3mm} + \lVert \tilde{\xi}_{i} \rVert_{\dot{H}_{x}^{\frac{1}{2}}} (\lVert \Lambda^{\frac{1}{2}} \partial_{i} \psi_{j}^{k} \rVert_{L_{x}^{\infty}} \lVert \tilde{\xi}_{l,j} - \tilde{\xi}_{j} \rVert_{L_{x}^{2}} + \lVert \partial_{i} \psi_{j}^{k} \rVert_{L_{x}^{\infty}} \lVert \tilde{\xi}_{l,j} - \tilde{\xi}_{j} \rVert_{\dot{H}_{x}^{\frac{1}{2}}}) \nonumber\\
&\hspace{3mm} + \lVert \tilde{\xi}_{l,j} - \tilde{\xi}_{j} \rVert_{\dot{H}_{x}^{\frac{1}{2}}} \lVert \psi_{i}^{k} \rVert_{W_{x}^{2,\infty}} \lVert \tilde{\xi}_{l,j} \rVert_{\dot{H}_{x}^{\frac{1}{2}}} + \lVert \tilde{\xi}_{j} \rVert_{\dot{H}_{x}^{\frac{1}{2}}} \lVert \psi_{i}^{k} \rVert_{W_{x}^{2,\infty}} \lVert \tilde{\xi}_{l,j} - \tilde{\xi}_{j} \rVert_{\dot{H}_{x}^{\frac{1}{2}}} + \lVert \tilde{\xi}_{l} - \tilde{\xi} \rVert_{L_{x}^{2}} dr] \overset{\eqref{est 76}}{\to} 0 \nonumber
\end{align} 
as $l\to\infty$. Applying \eqref{est 81a}-\eqref{est 81b} to \eqref{est 81} leads to \eqref{est 78}. Next, for every $t > r \geq s, p \in (1,\infty)$, we estimate from \eqref{est 82} 
\begin{equation}\label{est 84}
 \sup_{l \in \mathbb{N}} \mathbb{E}^{\tilde{P}} [ \lvert M_{t, s_{l}}^{\tilde{\xi}_{l}, k} \rvert^{2p} ] \overset{\eqref{est 79} \eqref{est 50} \eqref{est 76}}{\lesssim_{p}}  \sup_{l\in\mathbb{N}} \mathbb{E}^{P_{l}} [ \lVert \xi(t) \rVert_{L_{x}^{2}}^{2p}  + 1 + t^{2p}(1+ \lVert \xi \rVert_{C([0,t]; \dot{H}_{x}^{\frac{1}{2}})}^{4p}) ] \lesssim_{p} 1.
 \end{equation}  
Next, we can compute starting from \eqref{est 82} 
\begin{align}
\lim_{l\to\infty} \mathbb{E}^{\tilde{P}} [ \lvert M_{t,s_{l}}^{\tilde{\xi}_{l}, k} - M_{t,s}^{\tilde{\xi}, k} \rvert] = 0 \label{est 83} 
\end{align}
due to \eqref{est 80}, \eqref{est 78}, and the hypothesis that $\lim_{l\to\infty} \lVert \xi_{l} - \xi^{\text{in}} \rVert_{\dot{H}_{x}^{\frac{1}{2}}} = 0$. Consequently, for any $g$ that is $\mathbb{R}$-valued, $\mathcal{B}_{r}$-measurable, and continuous on $\mathbb{S}$, 
\begin{equation}
\mathbb{E}^{P} [ (M_{t,s}^{\xi, k} - M_{r,s}^{\xi, k}) g(\xi)]\overset{\eqref{est 76} \eqref{est 83}}{=} \lim_{l\to\infty} \mathbb{E}^{\tilde{P}}[ ( M_{t,s_{l}}^{\tilde{\xi}_{l}, k} - M_{r, s_{l}}^{\tilde{\xi}, k}) g(\tilde{\xi}_{l}) ] \overset{\eqref{est 76}}{=} 0. 
\end{equation} 
This implies that the mapping $t \mapsto M_{t,s}^{k}$ is a $(\mathcal{B}_{t})_{t\geq s}$-martingale under $P$. Next, we can estimate by H$\ddot{\mathrm{o}}$lder's inequality and \eqref{est 84}-\eqref{est 83} 
\begin{equation}\label{est 85} 
 \lim_{l\to\infty} \mathbb{E}^{\tilde{P}} [ \lvert M_{t, s_{l}}^{\tilde{\xi}_{l}, k} - M_{t,s}^{\tilde{\xi}, k} \rvert^{2} ] \leq \lim_{l\to\infty} \left( \mathbb{E}^{\tilde{P}}[ \lvert M_{t,s_{l}}^{\tilde{\xi}_{l}, k} - M_{t,s}^{\tilde{\xi}, k} \rvert ] \right)^{\frac{1}{2}} \left( \mathbb{E} [ \lvert M_{t,s_{l}}^{\tilde{\xi}_{l},k} - M_{t,s}^{\tilde{\xi}, k} \rvert^{3} ] \right)^{\frac{1}{2}} =0. 
\end{equation} 
It follows that 
\begin{equation}\label{est 408}
\mathbb{E}^{P} \left[ \left(( M_{t,s}^{\xi,k})^{2} - (M_{r,s}^{\xi,k})^{2} - \int_{r}^{t} \lVert G(\xi(\lambda))^{\ast} \psi^{k} \rVert_{U}^{2} d \lambda \right) g(\xi) \right]  \overset{\eqref{est 76} \eqref{est 85} \eqref{est 60}} = 0 
\end{equation} 
which implies that $(M_{t,s}^{\xi,k})^{2} - \int_{s}^{t} \lVert G(\xi(\lambda))^{\ast} \psi^{k} \rVert_{U}^{2} d\lambda$ is a $(\mathcal{B}_{t})_{t\geq s}$-martingale under $P$ so that 
\begin{equation}\label{est 86}
\langle \langle M_{t,s}^{\xi,k} \rangle \rangle = \int_{s}^{t} \lVert G(\xi(\lambda))^{\ast} \psi^{k} \rVert_{U}^{2} d\lambda. 
\end{equation} 
This leads to by Burkholder-Davis-Gundy inequality (e.g., \cite[p. 166]{KS91}) 
\begin{equation}
\mathbb{E}^{P} [ \lvert M_{t,s}^{\xi,k} \rvert^{2} ] \overset{\eqref{est 86}}{\lesssim}  \mathbb{E}^{P} [ \int_{s}^{t} \lVert G(\xi(\lambda))^{\ast} \psi^{k} \rVert_{U}^{2} d\lambda ] \overset{\eqref{est 60} }{\lesssim} \lvert t-s \rvert \mathbb{E}^{P} [ 1 + \lVert \xi \rVert_{C([s,t]; \dot{H}_{x}^{\frac{1}{2}})}^{2} ] \lesssim 1
\end{equation} 
and hence $M_{t,s}^{k}$ is square-integrable. Finally, the proof of (M3) follows from defining $R(t,s,\xi) \triangleq \sup_{r \in [0,t]} \lVert \xi(r) \rVert_{\dot{H}_{x}^{\frac{1}{2}}}^{2q} + \int_{s}^{t} \lVert \xi(r) \rVert_{\dot{H}_{x}^{\frac{1}{2} + \iota}}^{2} dr$ for any fixed $q \in \mathbb{N}$ and $t\geq s$, and relying on the fact that the mapping $\xi \mapsto R(t,s,\xi)$ is lower semicontinuous on $\mathbb{S}$.

\subsection{Proof of Proposition \ref{Proposition 4.4}}
Following \cite[p. 84]{DZ14} we define 
\begin{equation}\label{est 116}
Y(s) \triangleq \frac{\sin (\pi \alpha)}{\pi} \int_{0}^{s} e^{- (-\Delta)^{\frac{\gamma_{1}}{2}} (s-r)}(s-r)^{-\alpha} \mathbb{P} d B(r), \hspace{3mm} \alpha \in \left(0, \frac{1}{2}\right), 
\end{equation} 
so that due to \eqref{est 114}  
\begin{equation}\label{est 117} 
\int_{0}^{t} (t-s)^{\alpha -1} e^{- (-\Delta)^{\frac{\gamma_{1}}{2} } (t-s)} Y(s) ds  = z(t)
\end{equation} 
(see \cite[p. 131]{DZ14}). Then, for any $l \in \mathbb{N}$ and $\eta \geq 0$ such that 
\begin{equation}\label{est 120}
-\frac{4\eta}{\gamma_{1}} + \frac{8}{\gamma_{1}} + \frac{4\sigma}{\gamma_{1}} > 2 \alpha, 
\end{equation}
we can estimate by the Gaussian hypercontractivity theorem (e.g., \cite[Theorem 3.50]{J97}), and It$\hat{\mathrm{o}}$'s isometry (e.g., \cite[equation (4) on p. 28]{D13}), 
\begin{equation}\label{est 118}
\mathbb{E}^{\textbf{P}}[ \lVert (-\Delta)^{\eta} Y(s) \rVert_{L_{x}^{2}}^{2l} ] \lesssim_{l} ( \mathbb{E}^{\textbf{P}} [ \lVert (-\Delta)^{\eta} Y(s) \rVert_{L_{x}^{2}}^{2} ] )^{l} 
\lesssim_{l} 1.
\end{equation} 
We integrate \eqref{est 118} over $[0,T]$ and use Fubini's theorem to deduce for all $l \in \mathbb{N}$ and $\eta \geq 0$ that satisfies \eqref{est 120}, 
\begin{equation}\label{est 119} 
\mathbb{E}^{\textbf{P}} \left[ \int_{0}^{T} \lVert (-\Delta)^{\eta} Y(s) \rVert_{L_{x}^{2}}^{2l} ds \right] \lesssim_{l} 1. 
\end{equation} 
This allows us to take $l > \frac{1}{2\alpha}$ and deduce by \eqref{est 117} 
\begin{equation}\label{est 427} 
\mathbb{E}^{\textbf{P}}[  \lVert (-\Delta)^{\eta} z \rVert_{C_{T}L_{x}^{2}}^{2l}] = \mathbb{E}^{\textbf{P}} \left[ \sup_{t \in [0,T]} \lVert (-\Delta)^{\eta} [ \int_{0}^{t} (t-s)^{\alpha -1} e^{- (-\Delta)^{\frac{\gamma_{1}}{2}}(t-s)} Y(s) ds] \rVert_{L_{x}^{2}}^{2l} \right] \overset{\eqref{est 119}}{\lesssim}_{l} 1.
\end{equation} 
Then, by taking $\alpha \in \left(0, \frac{3\sigma}{2\gamma_{1}} \wedge \frac{1}{2} \right)$, we can choose $\eta = 2 + \frac{\sigma}{4}$ in \eqref{est 427} and still satisfy \eqref{est 120} which implies $\mathbb{E}^{\textbf{P}}[ \lVert z \rVert_{C_{T} \dot{H}_{x}^{4+ \frac{\sigma}{2}}}^{2l}] < \infty$, which is the first claim in \eqref{est 115}. Next, in order to prove the second claim in \eqref{est 115}, we take $\eta = \frac{8+  \sigma - \gamma_{1}}{4}$ in \eqref{est 119} so that \eqref{est 120} is satisfied for any $\alpha \in (0, \frac{1}{2})$, granting us 
\begin{equation}\label{est 121} 
\mathbb{E}^{\textbf{P}} \left[ \int_{0}^{T} \lVert (-\Delta)^{\frac{8 + \sigma - \gamma_{1}}{4}} Y(s) \rVert_{L_{x}^{2}}^{2l} ds\right] < \infty.
\end{equation} 
We take $l > \frac{1}{2\alpha}$, rely on \cite[Proposition A.1.1 (ii)]{DZ92} similarly to the proof of \cite[Proposition 34]{D13} to deduce that the mapping $Y \mapsto \int_{0}^{t} (t-s)^{\alpha -1} e^{- (-\Delta)^{\frac{\gamma_{1}}{2}}(t-s)} Y(s) ds \overset{\eqref{est 117}}{=} z(t)$ is a bounded linear operator from $L^{2l} (0,T; H_{x}^{\frac{8 + \sigma - \gamma_{1}}{2}})$ to $C^{\delta} ([0,T]; H_{x}^{\frac{8 + \sigma - \gamma_{1}}{2}})$ for any $\delta \in (0, \alpha - \frac{1}{2l})$; this, together with \eqref{est 121}, leads to the second claim in \eqref{est 115} identically to previous works. 

\subsection{Proof of Proposition \ref{Proposition 4.5}}
The stopping time $\mathfrak{t}$ in the statement of Theorem \ref{Theorem 2.1} is $T_{L}$ from \eqref{est 128} for $L > 0$ sufficiently large and thus by Theorem \ref{Theorem 2.1} we know that there exists a process $v$ that is a weak solution to \eqref{est 30} on $[0, T_{L}]$ such that \eqref{est 129} holds. Hence, we see that $v(\cdot\wedge T_{L}) \in \Omega_{0}$. Now due to \eqref{est 130}, \eqref{est 122}, \eqref{est 30}, and \eqref{est 114}
\begin{equation}\label{est 131}
Z^{v}(t) = z(t) \hspace{3mm} \forall \hspace{1mm} t \in [0, T_{L}] \hspace{1mm} \textbf{P}\text{-a.s.}
\end{equation} 
By Proposition \ref{Proposition 4.4} we know that $z \in C_{T} \dot{H}_{x}^{4+ \frac{\sigma}{2}} \cap C_{\text{loc}}^{\frac{1}{2} - \delta} \dot{H}_{x}^{4- \frac{\gamma_{1}}{2} + \frac{\sigma}{2}}$ $\textbf{P}$-a.s. and thus the trajectories $t \mapsto \lVert z(t) \rVert_{\dot{H}_{x}^{4+ \frac{\sigma}{2}}}$ and $t \mapsto \lVert z \rVert_{C_{t}^{\frac{1}{2} - 2 \delta} \dot{H}_{x}^{4-\frac{\gamma_{1}}{2} + \frac{\sigma}{2}}}$ where $\delta \in (0,\frac{1}{4})$ are $\textbf{P}$-a.s. continuous. It follows by \eqref{est 126b}, \eqref{est 128}, and \eqref{est 131} that 
\begin{equation}\label{est 132} 
\tau_{L}(v) = T_{L} \hspace{3mm} \textbf{P}\text{-a.s.}
\end{equation}  
Next, we verify that $P$ is a martingale solution to \eqref{est 30} on $[0, \tau_{L}]$ according to Definition \ref{Definition 4.2}. The verification of (M1) follows from \eqref{est 60} and \eqref{est 129} while that of (M3) follows by writing $v= y + z$, choosing $C_{t,q}, q \in \mathbb{N}, $ in the Definition \ref{Definition 4.2} to satisfy $(2\pi C_{L} + 2 \pi  L^{\frac{1}{4}})^{2q} + (t \wedge \tau_{L}) (2\pi C_{L} + 2 \pi L^{\frac{1}{4}})^{2} \leq C_{t,q}$, and relying on \eqref{est 137} and \eqref{est 136}. Finally, in order to verify (M2), we let $t \geq s$ and $g$ be bounded, $\mathbb{R}$-valued, $\mathcal{B}_{s}$-measurable, and continuous on $\Omega_{0}$. By Theorem \ref{Theorem 2.1} we know that $v(t\wedge \tau_{L})$ is $(\mathcal{F}_{t})_{t\geq 0}$-adapted so that $g(v(\cdot\wedge\tau_{L}(v)))$ is $\mathcal{F}_{s}$-measurable. This leads to $\mathbb{E}^{P} [ M_{t \wedge \tau_{L}, 0}^{i} g] =\mathbb{E}^{P} [ M_{s \wedge \tau_{L}, 0}^{i} g]$ where $M_{t \wedge \tau_{L},0}^{i} = \langle M_{t \wedge \tau_{L}, 0}, \psi^{i} \rangle$  and hence $M_{t \wedge \tau_{L}, 0}^{i}$ is a $(\mathcal{B}_{t})_{t\geq 0}$-martingale under $\textbf{P}$. Similarly, using the fact that $\langle \langle M_{t\wedge \tau_{L}(v), 0}^{v,i} \rangle \rangle = (t\wedge \tau_{L}(v)) \lVert G \psi^{i} \rVert_{L_{x}^{2}}^{2}$ for all $\psi^{i} \in C^{\infty} (\mathbb{T}^{2}) \cap \dot{H}_{\sigma}^{\frac{1}{2}}$, we can show $\mathbb{E}^{P} [ ( ( M_{t \wedge \tau_{L}, 0}^{i})^{2} - (t \wedge \tau_{L}) \lVert G \psi^{i} \rVert_{L_{x}^{2}}^{2} ) g]  = \mathbb{E}^{P} \left[ \left( (M_{s \wedge \tau_{L}, 0}^{i})^{2} - (s \wedge \tau_{L}) \lVert G \psi^{i} \rVert_{L_{x}^{2}}^{2} \right) g \right]$ which implies that $(M_{t\wedge \tau_{L}, 0}^{i})^{2} - (t \wedge \tau_{L}) \lVert G \psi^{i} \rVert_{L_{x}^{2}}^{2}$ is a $(\mathcal{B}_{t})_{t\geq 0}$-martingale under $P$. It also has a consequence that $\langle \langle M_{t \wedge \tau_{L}, 0}^{i} \rangle \rangle = \int_{0}^{t \wedge \tau_{L}} \lVert G \psi^{i} \rVert_{L_{x}^{2}}^{2} dr$, and this also shows that $M_{t \wedge \tau_{L}, 0}^{i}$ is square-integrable, completing the proof that  $P$ satisfies (M2). Hence, $P$ is a martingale solution to \eqref{est 30} on $[0, \tau_{L}]$. 

\subsection{Proof of Proposition \ref{Proposition 4.6}}
Because $\tau_{L}$ is a $(\mathcal{B}_{t})_{t\geq 0}$-stopping time that is bounded by $L$ due to \eqref{est 126b}  while $P$ is a martingale solution on $[0, \tau_{L}]$ due to Proposition \ref{Proposition 4.5}, we see that Lemma \ref{Lemma 4.3} completes this proof once we verify \eqref{est 94}. First, by relying on \eqref{est 131}, \eqref{est 132}, and \eqref{est 115}, we can show that there exists a $P$-measurable set $\mathcal{N} \subset \Omega_{0,\tau_{L}}$ such that $P(\mathcal{N}) = 0$ and for all $\omega \in \Omega_{0} \setminus \mathcal{N}$ and $\delta \in (0, \frac{1}{2})$, $Z^{\omega} (\cdot \wedge \tau_{L}(\omega)) \in C_{T} \dot{H}_{x}^{4+ \frac{\sigma}{2}} \cap C_{\text{loc}}^{\frac{1}{2} - \delta} \dot{H}_{x}^{4- \frac{\gamma_{1}}{2} + \frac{\sigma}{2}}$. For every $\omega' \in \Omega_{0}$ and $\omega \in \Omega_{0} \setminus \mathcal{N}$ we define 
\begin{equation}\label{est 138}
\mathbb{Z}_{\tau_{L} (\omega)}^{\omega'} (t) \triangleq M_{t,0}^{\omega'} - e^{- (t- t \wedge \tau_{L} (\omega)) \Lambda^{\gamma_{1}}} M_{t \wedge \tau_{L} (\omega), 0}^{\omega'} - \int_{t \wedge \tau_{L}(\omega)}^{t} \mathbb{P} \Lambda^{\gamma_{1}} e^{-(t-s) \Lambda^{\gamma_{1}}} M_{s,0}^{\omega'} ds 
\end{equation} 
so that because $\nabla \cdot M_{t,0}^{\omega} = 0$ for any $\omega \in \Omega_{0}$ due to \eqref{est 122}, we see that 
\begin{equation}\label{est 139} 
 \mathbb{Z}_{\tau_{L} (\omega)}^{\omega'}(t) = M_{t,0}^{\omega'} - M_{t \wedge \tau_{L}(\omega), 0}^{\omega'} - \int_{t \wedge \tau_{L}(\omega)}^{t} \mathbb{P} \Lambda^{\gamma_{1}} e^{-(t-s) \Lambda^{\gamma_{1}}} \left( M_{s,0}^{\omega'} - M_{s \wedge \tau_{L}(\omega), 0}^{\omega'} \right) ds.  
\end{equation} 
Together with \eqref{est 123}, this leads us to 
\begin{equation}\label{est 140}
Z^{\omega'}(t) - Z^{\omega'}(t \wedge \tau_{L}(\omega)) = \mathbb{Z}_{\tau_{L}(\omega)}^{\omega'} (t) + \left( e^{- (t - t \wedge \tau_{L} (\omega)) \Lambda^{\gamma_{1}}} - \Id \right) Z^{\omega'} (t \wedge \tau_{L} (\omega)). 
\end{equation}  
It follows from \eqref{est 139} that $\mathbb{Z}_{\tau_{L} (\omega)}^{\omega'}$ is $\mathcal{B}^{\tau_{L} (\omega)}$-measurable because $M_{t, 0}^{\omega'} - M_{t\wedge \tau_{L}(\omega), 0}^{\omega'}$ is $\mathcal{B}^{\tau_{L} (\omega)}$-measurable, and that $Q_{\omega}$ from Lemma \ref{Lemma 4.2} satisfies 
\begin{align}
&  Q_{\omega} ( \{ \omega' \in \Omega_{0}: Z^{\omega'} (\cdot) \in C_{T} \dot{H}_{x}^{4+ \frac{\sigma}{2}} \cap C_{\text{loc}}^{\frac{1}{2} - \delta} \dot{H}_{x}^{4 - \frac{\gamma_{1}}{2} + \frac{\sigma}{2}} \}) \nonumber \\
=& \delta_{\omega} ( \{ \omega' \in \Omega_{0}: Z^{\omega'} (\cdot \wedge \tau_{L} (\omega)) \in C_{T} \dot{H}_{x}^{4+ \frac{\sigma}{2}} \cap C_{\text{loc}}^{\frac{1}{2} - \delta} \dot{H}_{x}^{4 - \frac{\gamma_{1}}{2} + \frac{\sigma}{2}} \}) \nonumber\\
& \otimes_{\tau_{L} (\omega)} R_{\tau_{L} (\omega), \xi(\tau_{L} (\omega), \omega)} ( \{ \omega' \in \Omega_{0}: \mathbb{Z}_{\tau_{L} (\omega)}^{\omega'} (\cdot) \in C_{T} \dot{H}_{x}^{4+ \frac{\sigma}{2}} \cap C_{\text{loc}}^{\frac{1}{2} - \delta} \dot{H}_{x}^{4 - \frac{\gamma_{1}}{2} + \frac{\sigma}{2}} \}) \label{est 142}
\end{align}
where for all $\omega \in \Omega \setminus \mathcal{N}$, $\delta_{\omega} ( \{ \omega' \in \Omega_{0}: Z^{\omega'} ( \cdot \wedge \tau_{L} (\omega)) \in C_{T} \dot{H}_{x}^{4+ \frac{\sigma}{2}} \cap C_{\text{loc}}^{\frac{1}{2} - \delta} \dot{H}_{x}^{4 - \frac{\gamma_{1}}{2} + \frac{\sigma}{2}} \})  = 1$. Next, we rewrite 
\begin{equation}
 \int_{0}^{t} \mathbb{P} e^{- (t-s) \Lambda^{\gamma_{1}}} d( M_{s,0}^{\omega'} - M_{s \wedge \tau_{L} (\omega), 0}^{\omega'}) \overset{\eqref{est 139} }{=} \mathbb{Z}_{\tau_{L} (\omega)}^{\omega'} (t)
\end{equation} 
and observe that Proposition \ref{Proposition 4.4} implies that for any $\delta \in (0, \frac{1}{2})$,  
\begin{equation}\label{est 141}
R_{\tau_{L} (\omega), \xi(\tau_{L} (\omega), \omega)} ( \{ \omega' \in \Omega_{0}: \mathbb{Z}_{\tau_{L} (\omega)}^{\omega'} (\cdot) \in C_{T} \dot{H}_{x}^{4+ \frac{\sigma}{2}} \cap C_{\text{loc}}^{\frac{1}{2} - \delta} \dot{H}_{x}^{4- \frac{\gamma_{1}}{2} + \frac{\sigma}{2}} \}) = 1. 
\end{equation} 
Hence, applying this to \eqref{est 142} now allows us to conclude that for all $\omega \in (\Omega_{0} \setminus \mathcal{N}) \cap \{ \xi(\tau) \in \dot{H}_{\sigma}^{\frac{1}{2}}\}$, $Q_{\omega} ( \{ \omega' \in \Omega_{0}: Z^{\omega'} (\cdot) \in C_{T} \dot{H}_{x}^{4+ \frac{\sigma}{2}} \cap C_{\text{loc}}^{\frac{1}{2} - \delta} \dot{H}_{x}^{4 - \frac{\gamma_{1}}{2} + \frac{\sigma}{2}} \}) = 1$; i.e., for all $\omega \in (\Omega_{0} \setminus \mathcal{N}) \cap \{\xi(\tau) \in \dot{H}_{\sigma}^{\frac{1}{2}}\}$ there exists a measurable set $N_{\omega}$ such that $Q_{\omega} (N_{\omega}) = 0$ and for all $\omega' \in \Omega_{0} \setminus N_{\omega}$, the mapping $t \mapsto Z^{\omega'} (t)$ lies in $C_{T} \dot{H}_{x}^{4+ \frac{\sigma}{2}} \cap C_{\text{loc}}^{\frac{1}{2} - \delta} \dot{H}_{x}^{4 - \frac{\gamma_{1}}{2} + \frac{\sigma}{2}}$. This implies by \eqref{est 126b} that for all $\omega \in \Omega_{0} \setminus \mathcal{N}$, 
\begin{equation}\label{est 145} 
\tau_{L} (\omega') = \bar{\tau}_{L} (\omega') \hspace{3mm} \forall \hspace{1mm} \omega' \in \Omega_{0} \setminus N_{\omega} 
\end{equation}   
if we define for $\delta \in (0, \frac{1}{4})$  
\begin{align}
\bar{\tau}_{L} (\omega') \triangleq& \inf \{t \geq 0: C_{S_{1}} \lVert Z^{\omega'}(t) \rVert_{\dot{H}_{x}^{4+ \frac{\sigma}{2}}} \geq L^{\frac{1}{4}} \} \nonumber\\
\wedge& \inf\{t \geq 0: C_{S_{1}} \lVert Z^{\omega'} \rVert_{C_{t}^{\frac{1}{2} - 2 \delta} \dot{H}_{x}^{4 - \frac{\gamma_{1}}{2} + \frac{\sigma}{2}}} \geq L^{\frac{1}{2}} \} \wedge L. \label{est 144} 
\end{align}
This leads us to, for all $\omega \in (\Omega_{0} \setminus \mathcal{N}) \cap \{\xi(\tau) \in \dot{H}_{\sigma}^{\frac{1}{2}}\}$ with $P(\{ \xi(\tau) \in \dot{H}_{\sigma}^{\frac{1}{2}} \}) = 1$, 
\begin{equation}\label{est 143} 
Q_{\omega} (\{ \omega ' \in \Omega_{0}: \tau_{L} (\omega') = \tau_{L} (\omega) \}) = 1 
\end{equation} 
by the identical arguments to \cite{HZZ19}. 

\subsection{Further preliminaries}
We list various previous results which played crucial roles in the proofs of Theorems \ref{Theorem 2.1}-\ref{Theorem 2.2}. The following standard product estimate is convenient and can be readily proved via Fourier analysis:
\begin{equation}\label{est 79}
\lVert \Lambda^{s} (fg) \rVert_{L_{x}^{2}}\lesssim \lVert \Lambda^{s}f \rVert_{L_{x}^{\infty}} \lVert g \rVert_{L_{x}^{2}} + \lVert f \rVert_{L_{x}^{\infty}} \lVert \Lambda^{s} g \rVert_{L_{x}^{2}} \hspace{3mm} \text{ for any } s \geq 0. 
\end{equation} 

\begin{proposition}\label{Proposition 6.3}
\rm{(Calder$\acute{\mathrm{o}}$n commutator; e.g., \cite[Lemma A.5]{BSV19}, \cite[Lemma 2.2]{M08}, and \cite[Theorem 10.3]{L02})} Let $p \in (1,\infty)$. 
\begin{enumerate} 
\item If $\phi \in W^{1,\infty}(\mathbb{T}^{2})$, then for any $f \in L^{p}(\mathbb{T}^{2})$ that is mean-zero, 
\begin{equation}\label{est 49}
\lVert [\Lambda, \phi] f \rVert_{L_{x}^{p}} \lesssim_{p} \lVert \phi \rVert_{W_{x}^{1,\infty}} \lVert f \rVert_{L_{x}^{p}};
\end{equation} 
moreover, if $s \in [0,1], \phi \in W^{2,\infty}(\mathbb{T}^{2})$, then for any $f \in H^{s}(\mathbb{T}^{2})$ that is mean-zero, 
\begin{equation}\label{est 50} 
\lVert [\Lambda, \phi] f \rVert_{H_{x}^{s}} \lesssim_{s} \lVert \phi \rVert_{W_{x}^{2,\infty}} \lVert f \rVert_{H_{x}^{s}}. 
\end{equation} 
\item Let $\gamma_{2} \in (1, 2)$. If $\phi \in C^{2-\gamma_{2}} (\mathbb{T}^{2})$ such that $\Lambda^{2-\gamma_{2}} \phi \in L^{\infty} (\mathbb{T}^{2})$, then for all $f \in L^{p}(\mathbb{T}^{2})$ that is mean-zero, 
\begin{equation}\label{est 51} 
\lVert [\Lambda^{2-\gamma_{2}}, \phi] f \rVert_{L_{x}^{p}} \lesssim_{p} ( \lVert \phi \rVert_{C_{x}^{2-\gamma_{2}}} + \lVert \Lambda^{2- \gamma_{2}} \phi \rVert_{L_{x}^{\infty}}) \lVert f \rVert_{L_{x}^{p}};
\end{equation} 
moreover, if $s \in [0,1], \phi \in C^{1, 2-\gamma_{2}} (\mathbb{T}^{2})$ such that $\Lambda^{2-\gamma_{2}} \phi \in W^{1, \infty} (\mathbb{T}^{2})$, then for any $f \in H^{s}(\mathbb{T}^{2})$ that is mean-zero, 
\begin{equation}\label{est 52}
\lVert [\Lambda^{2-\gamma_{2}}, \phi] f \rVert_{H_{x}^{s}} \lesssim_{s} (\lVert \phi \rVert_{C_{x}^{1, 2-\gamma_{2}}} + \lVert \Lambda^{2-\gamma_{2}} \phi \rVert_{W_{x}^{1, \infty}}) \lVert f \rVert_{H_{x}^{s}}.
\end{equation} 
\end{enumerate} 
\end{proposition}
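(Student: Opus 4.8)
\textbf{Proof proposal for Proposition \ref{Proposition 6.3} (the Calderón commutator estimate).}

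The plan is to prove the four inequalities \eqref{est 49}--\eqref{est 52} by the standard Calderón-commutator machinery, reducing everything to a kernel whose singularity has been improved by one derivative thanks to the cancellation in the commutator. First I would set up the kernel representation: on $\mathbb{T}^2$, for $s\in(0,2)$ the operator $\Lambda^s$ acting on mean-zero functions is a convolution (with the periodized kernel) whose kernel $K_s$ behaves like $|x|^{-2-s}$ away from the diagonal and is a principal value distribution. For a multiplier $\phi$, the commutator $[\Lambda^s,\phi]f(x)$ has kernel $K_s(x-y)\big(\phi(y)-\phi(x)\big)$, so the factor $\phi(y)-\phi(x)$ cancels one order of the singularity: if $\phi\in W^{1,\infty}$ then $|\phi(x)-\phi(y)|\lesssim \|\phi\|_{W^{1,\infty}}|x-y|$, turning the $|x-y|^{-2-s}$ singularity into $|x-y|^{-1-s}$, which is exactly the borderline behavior of a Calderón--Zygmund operator of order $s-1$ when $s=1$ (or a fractional-integration-type operator with a CZ correction when $s<1$). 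I would then invoke the $T1$ / Calderón--Zygmund theory (or, more simply for the periodic torus, cite the references already listed: \cite[Lemma A.5]{BSV19}, \cite[Lemma 2.2]{M08}, \cite[Theorem 10.3]{L02}) to get the $L^p$ bounds \eqref{est 49} and \eqref{est 51}; the only point to check for \eqref{est 51} is that the hypothesis ``$\phi\in C^{2-\gamma_2}$ and $\Lambda^{2-\gamma_2}\phi\in L^\infty$'' is the right replacement for ``$\phi\in W^{1,\infty}$'': the Hölder modulus $|\phi(x)-\phi(y)|\lesssim\|\phi\|_{C^{2-\gamma_2}}|x-y|^{2-\gamma_2}$ already improves $|x-y|^{-2-(2-\gamma_2)}$ to $|x-y|^{-2}$ (a genuine CZ kernel), while the extra hypothesis $\Lambda^{2-\gamma_2}\phi\in L^\infty$ controls the ``diagonal'' term and yields the clean constant $\|\phi\|_{C^{2-\gamma_2}}+\|\Lambda^{2-\gamma_2}\phi\|_{L^\infty}$.

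For the $H^s$ estimates \eqref{est 50} and \eqref{est 52} with $s\in[0,1]$, I would argue by interpolation. The endpoint $s=0$ is exactly \eqref{est 49}, resp.\ \eqref{est 51}, with $p=2$. For the endpoint $s=1$, I would commute a derivative past $[\Lambda^s,\phi]$: using $\partial_i[\Lambda^s,\phi]f=[\Lambda^s,\phi]\partial_i f+[\Lambda^s,\partial_i\phi]f$, the first term is handled by the $s=0$ case applied to $\partial_i f$ (costing one derivative on $f$, which is available since $f\in H^1$), and the second term is handled by the $s=0$ case with $\phi$ replaced by $\partial_i\phi$ (costing $\|\partial_i\phi\|_{W^{1,\infty}}$, i.e.\ $\|\phi\|_{W^{2,\infty}}$, resp.\ $\|\partial_i\phi\|_{C^{2-\gamma_2}}+\|\Lambda^{2-\gamma_2}\partial_i\phi\|_{L^\infty}$, i.e.\ $\|\phi\|_{C^{1,2-\gamma_2}}+\|\Lambda^{2-\gamma_2}\phi\|_{W^{1,\infty}}$). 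This gives the $s=1$ bound; then complex interpolation between $H^0$ and $H^1$ yields \eqref{est 50} and \eqref{est 52} for all $s\in[0,1]$ with the stated constants. One has to be mildly careful that the commutator depends linearly on $f$ and that the bilinear-in-$(\phi,f)$ structure is compatible with interpolating only in the $f$-variable, keeping the $\phi$-norm fixed; this is routine.

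The main obstacle I anticipate is \emph{not} any single estimate but making the kernel/cancellation argument for part (2) fully rigorous on the torus in the regime $\gamma_2\in(1,2)$, i.e.\ $s=2-\gamma_2\in(0,1)$: here $\Lambda^s$ is a genuinely non-local operator of positive order but less than one, so its kernel is not locally integrable near the diagonal and the ``$\phi(y)-\phi(x)$'' cancellation only barely restores integrability (the improved kernel $\sim|x-y|^{-2}$ is still a borderline CZ kernel, not an integrable one). Getting the constant to come out as $\|\phi\|_{C^{2-\gamma_2}}+\|\Lambda^{2-\gamma_2}\phi\|_{L^\infty}$ rather than something involving higher norms of $\phi$ requires splitting $[\Lambda^{2-\gamma_2},\phi]f$ into a near-diagonal piece, estimated using the Hölder modulus of $\phi$ and the Calderón--Zygmund bound, and a far piece absorbed via $\Lambda^{2-\gamma_2}\phi\in L^\infty$. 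Since the desired statement is already recorded in the literature for exactly this setting (the generalized SQG / modified SQG works cite analogues of \eqref{est 51}--\eqref{est 52}), I would lean on \cite{M08, L02} for the hard analytic core and present the torus adaptation and the $H^s$ interpolation step in detail, flagging that part (1) is classical (\cite[Lemma A.5]{BSV19}) and only part (2) requires the generalization.
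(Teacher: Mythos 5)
Your overall architecture matches the paper for the routine parts: part (1) is simply quoted from \cite[Lemma A.5]{BSV19}, and your treatment of \eqref{est 52} via the identity $\nabla[\Lambda^{2-\gamma_{2}},\phi]f=[\Lambda^{2-\gamma_{2}},\phi]\nabla f+[\Lambda^{2-\gamma_{2}},\nabla\phi]f$ followed by interpolation between $H^{0}$ and $H^{1}$ is exactly what the paper does. The gap is in \eqref{est 51}, which is the genuinely new content of the proposition (the paper advertises it as a generalization of the Calder\'on commutator to fractional Laplacians, precisely because \cite{M08} and \cite{L02} only treat $[\Lambda,\phi]$). You propose to ``lean on \cite{M08, L02} for the hard analytic core,'' but those references do not contain the case $\gamma_{2}\in(1,2)$; and your fallback mechanism --- splitting into a near-diagonal piece estimated by the H\"older modulus and a far piece ``absorbed via $\Lambda^{2-\gamma_{2}}\phi\in L^{\infty}$'' --- cannot close, because the improved kernel $K(x,y)=C_{\gamma_{2}}(\phi(x)-\phi(y))|x-y|^{\gamma_{2}-4}$ is only of Calder\'on--Zygmund size $|x-y|^{-2}$, which is not locally integrable; no absolute-value (size-based) estimate of the near-diagonal part yields $L^{p}$ boundedness, and on the torus the far part is already harmless using only $\|\phi\|_{L^{\infty}}$, so $\Lambda^{2-\gamma_{2}}\phi\in L^{\infty}$ is not playing the role you assign to it.

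What is actually needed --- and what the paper carries out --- is the full verification of the $T(1)$ theorem of David--Journ\'e for $T=[\Lambda^{2-\gamma_{2}},\phi]$: (i) the kernel representation via \cite[Proposition 2.1]{CC04} and the standard size and H\"older ($\delta\in(0,2-\gamma_{2})$) estimates, which you do anticipate; (ii) the weak boundedness property, which follows from the antisymmetry $K(x,y)=-K(y,x)$; and, crucially, (iii) the computation $T^{\ast}=-T$ and $T(1)=-\sum_{k=1}^{2}\mathcal{R}_{k}^{2}\Lambda^{2-\gamma_{2}}\phi$, so that $T(1),T^{\ast}(1)\in BMO$ because Calder\'on--Zygmund operators map $L^{\infty}$ into $BMO$. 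Step (iii) is the one place where the hypothesis $\Lambda^{2-\gamma_{2}}\phi\in L^{\infty}$ enters, and it is the single real idea of the proof; your proposal mentions ``invoke the $T1$ theory'' as an option but never identifies $T(1)$ or checks the $BMO$ and WBP conditions, so as written the $L^{2}$ (hence $L^{p}$) boundedness in \eqref{est 51} is not established. Once (i)--(iii) are supplied, $T$ is a Calder\'on--Zygmund operator with norm controlled by $\|\phi\|_{C_{x}^{2-\gamma_{2}}}+\|\Lambda^{2-\gamma_{2}}\phi\|_{L_{x}^{\infty}}$, the $L^{p}$ bound follows, and your interpolation argument for \eqref{est 52} goes through unchanged.
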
 

\begin{proof}[Proof of Proposition \ref{Proposition 6.3}]
Proposition \ref{Proposition 6.3} (1) is a statement from \cite[Lemma A.5]{BSV19}; thus, we only sketch the proof for Proposition \ref{Proposition 6.3} (2) following the argument of \cite[Lemma A.5]{BSV19} and \cite[Lemma 2.2]{M08}. First, we work on the case $x \in \mathbb{R}^{2}$ because it leads to the case $x \in \mathbb{T}^{2}$ by the same argument on \cite[p. 1866]{BSV19}. We fix $\phi \in C^{2-\gamma_{2}}(\mathbb{T}^{2})$ such that $\Lambda^{2- \gamma_{2}} \phi \in L^{\infty} (\mathbb{T}^{2})$ and define $T \triangleq [\Lambda^{2-\gamma_{2}}, \phi]$. We will verify the hypothesis of T(1) theorem from \cite[Theorem 1 on p. 373]{DJ84} (see also \cite[Theorem 10.2 on p. 95]{L02}), which consists of 
\begin{enumerate}
\item $T$ is a continuous operator from $\mathcal{S}(\mathbb{R}^{2})$ to its dual $\mathcal{S}'(\mathbb{R}^{2})$ where $\mathcal{S}(\mathbb{R}^{2})$ is the Schwartz space, associated with some singular integral operator (SIO) $K$; i.e., $K$ is a continuous function defined on $\mathbb{R}^{2} \times \mathbb{R}^{2} \setminus \triangle$ where $\triangle \triangleq \{(x,y) \in \mathbb{R}^{2}\times \mathbb{R}^{2}: x= y \}$ for which there exist two constants $\delta \in (0,1]$ and $C_{K} > 0$ such that 
\begin{subequations}
\begin{align}
&\langle Tf, g \rangle = \int_{\mathbb{R}^{2}} \int_{\mathbb{R}^{2}} K(x,y) f(y) g(x) dy dx \hspace{1mm} \forall \hspace{1mm} f, g \in C_{c}^{\infty} (\mathbb{R}^{2}) \text{ with disjoint supports}, \label{est 426} \\
& \lvert K(x,y) \rvert \leq C_{K} \lvert x-y \rvert^{-2} \hspace{1mm}\forall \hspace{1mm} (x,y) \in \mathbb{R}^{2} \times \mathbb{R}^{2} \setminus \triangle, \label{est 425} \\ 
&\lvert K(x',y) - K(x,y) \rvert + \lvert K(y, x') - K(y,x) \rvert \leq C_{K} \frac{ \lvert x' - x \rvert^{\delta}}{\lvert x-y \rvert^{2+ \delta}}  \label{est 45} \\
&\hspace{32mm}  \forall \hspace{1mm} x, x', y \text{ such that }  \lvert x' - x \rvert < \frac{1}{2} \lvert x-y \rvert, \label{est 44} 
\end{align}
\end{subequations} 
\item  $T1, T^{\ast} 1 \in BMO(\mathbb{R}^{2})$ where $T^{\ast}$ is defined by $\langle T^{\ast} f, g \rangle = \langle Tg, f\rangle$ (see \cite[p. 372]{DJ84}), 
\item $T$ has the weak boundedness property (WBP) (see \cite[p. 373]{DJ84}); i.e., for any bounded subset $\mathcal{B} \subset C_{c}^{\infty} (\mathbb{R}^{2})$, there exists a constant $C$ such that for all $\psi, \tilde{\psi} \in \mathcal{B}$, all $x_{0} \in \mathbb{R}^{2}$, and $R> 0$, $\lvert \int_{\mathbb{R}^{2}} T\psi \left( \frac{y-x_{0}}{R} \right) \tilde{\psi} \left(\frac{y-x_{0}}{R} \right) dy \rvert \leq C R^{2}$.
\end{enumerate} 
First, due to \cite[Proposition 2.1 on p. 513]{CC04} we may write for all $f \in C_{c}^{\infty} (\mathbb{R}^{2})$  
\begin{equation}\label{est 403}
Tf(x)= [ \Lambda^{2-\gamma_{2}}, \phi] f(x) = C_{\gamma_{2}} \pv \int_{\mathbb{R}^{2}} \frac{ [\phi(x) - \phi(y)]}{\lvert x-y \rvert^{4-\gamma_{2}}} f(y) dy. 
\end{equation} 
Therefore, off the diagonal $\triangle$, the kernel $K(x,y)$ associated to $T = [\Lambda^{2-\gamma_{2}}, \phi]$ is 
\begin{equation}\label{est 404} 
K(x,y) \triangleq C_{\gamma_{2}} \frac{ \phi(x) - \phi(y)}{\lvert x-y \rvert^{4-\gamma_{2}}}, 
\end{equation} 
from which \eqref{est 426} can be immediately verified by \eqref{est 403}-\eqref{est 404}; moreover, we can immediately estimate 
\begin{equation*}
\lvert K(x,y) \rvert \leq \lvert C_{\gamma_{2}} \rvert \lVert \phi \rVert_{C_{x}^{2- \gamma_{2}}} \lvert x-y \rvert^{-2} \text{ for all } (x,y) \in \mathbb{R}^{2} \times \mathbb{R}^{2} \setminus \triangle 
\end{equation*}
and conclude \eqref{est 425}. Next, to prove \eqref{est 45}, we first notice that $\lvert K(x',y) - K(x,y) \rvert =\lvert K(y, x') - K(y,x) \rvert$ by \eqref{est 404}. Now we take $\delta \in (0, 2-\gamma_{2})$ so that $\delta < 1$ and estimate using \eqref{est 44}
\begin{equation}\label{est 46}
\lvert K(x',y) - K(x,y) \rvert \leq \lvert C_{\gamma_{2}} \rvert [ \lvert \phi(x') - \phi(y) \rvert \frac{(4-\gamma_{2}) 2^{5-\gamma_{2}} \lvert x'-x \rvert}{\lvert x-y\rvert^{5-\gamma_{2}}} + \frac{ \lVert \phi \rVert_{C_{x}^{2-\gamma_{2}}} \lvert x' - x \rvert^{2-\gamma_{2}}}{\lvert x-y \rvert^{4-\gamma_{2}}} ].
\end{equation}  
Now using \eqref{est 44}, one can consider the following three cases and readily verify \eqref{est 45} starting from \eqref{est 46}:
\begin{enumerate} 
\item $1 \leq \lvert x' - x \rvert < \frac{1}{2} \lvert x-y \rvert$, 
\item $\lvert x' - x \rvert \leq 1 < \frac{1}{2} \lvert x-y \rvert$ or $\lvert x' - x \rvert < 1 \leq \frac{1}{2} \lvert x-y \rvert$, 
\item $\lvert x' - x \rvert < \frac{1}{2} \lvert x-y \rvert \leq 1$. 
\end{enumerate}  
This completes the proof that $T$ is a SIO and thus the first hypothesis of the T(1) theorem. 

To verify the second hypothesis, we observe that $T^{\ast} = -[\Lambda^{2-\gamma_{2}}, \phi] = -T$ which implies $-T^{\ast}(1) = T(1) = -\sum_{k=1}^{2} \mathcal{R}_{k}^{2} \Lambda^{2-\gamma_{2}} \phi$ where $\mathcal{R}_{k}^{2}$ are Calder$\acute{\mathrm{o}}$n-Zygmund operators (e.g., \cite[p. 106]{L02}) and thus bounded from $L^{\infty}(\mathbb{R}^{2})$ to BMO$(\mathbb{R}^{2})$ (e.g., \cite[Theorem 6.8 on p. 54]{L02}) indicating that $\mathcal{R}_{k}^{2}\Lambda^{2-\gamma_{2}} \phi \in BMO(\mathbb{R}^{2})$ so that $T^{\ast}(1), T(1) \in BMO(\mathbb{R}^{2})$. Finally, the fact that $T$ has the WBP can be readily proven via a criteria in \cite[Section IV]{DJ84} because $K(x,y) = -K(y,x)$. By T(1) theorem from \cite{DJ84} we now conclude that $T$ is bounded from $L^{2}(\mathbb{R}^{2})$ to itself. Thus, by definition on \cite[p. 372]{DJ84} (see also \cite[Definition 6.4 on p. 53]{L02}), we deduce that it is a Calder$\acute{\mathrm{o}}$n-Zygmund operator with a norm less than or equal to $\lVert \phi \rVert_{C_{x}^{2-\gamma_{2}}} + \lVert \Lambda^{2-\gamma_{2}} \phi \rVert_{L_{x}^{\infty}}$ (e.g., see \cite[Theorem 8.3.3]{G09} concerning the norm). By \cite[Theorem 6.8 on p. 54]{L02} this allows us to deduce that $T$ is bounded from $L^{p}(\mathbb{R}^{2})$ to $L^{p}(\mathbb{R}^{2})$ for all $p \in (1,\infty)$ and hence conclude \eqref{est 51}. To prove \eqref{est 52}, we observe that 
\begin{equation}
\nabla [\Lambda^{2-\gamma_{2}}, \phi] f = [\Lambda^{2-\gamma_{2}}, \phi] \nabla f + [\Lambda^{2-\gamma_{2}}, \nabla \phi] f.
\end{equation} 
Because $\phi \in C^{1, 2-\gamma_{2}} (\mathbb{T}^{2})$ such that $\Lambda^{2-\gamma_{2}} \phi \in W^{1, \infty} (\mathbb{T}^{2})$ by hypothesis, we can apply \eqref{est 51} with $p=2$ to $\lVert [\Lambda^{2-\gamma_{2}}, \phi] \nabla f \rVert_{L_{x}^{2}}$ and $\lVert [\Lambda^{2-\gamma_{2}}, \nabla \phi] f \rVert_{L_{x}^{2}}$ and interpolate to deduce \eqref{est 52}. 
\end{proof}  

\begin{lemma}\label{Lemma 6.4}
\rm{(\cite[Lemma A.2]{BSV19}, \cite[Proposition D.1]{BDIS15})} 
Within this lemma, we denote $D_{t}\triangleq \partial_{t} + u \cdot \nabla$. Consider a smooth solution $f$ to \eqref{est 402} with a given $u, f_{0}$, and $g$, and recall that $\Phi =X^{-1}$ with $X$ defined by \eqref{flux}. Then, for $t > t_{0}$,
\begin{subequations}\label{est 197}
\begin{align}
& \lVert f (t) \rVert_{C_{x}} \leq \lVert f_{0} \rVert_{C_{x}} + \int_{t_{0}}^{t} \lVert g(\tau) \rVert_{C_{x}} d\tau, \label{est 197a} \\
& \lVert Df (t) \rVert_{C_{x}} \leq \lVert D f_{0} \rVert_{C_{x}} e^{( t - t_{0}) \lVert Du \rVert_{C_{t, x}}} + \int_{t_{0}}^{t} e^{(t- \tau)  \lVert D u \rVert_{C_{t, x}}} \lVert Dg(\tau) \rVert_{C_{x}} d\tau;\label{est 197b} 
\end{align}
\end{subequations}  
more generally, for any $N \in \mathbb{N} \setminus \{1\}$, there exists a constant $C = C(N)$ such that 
\begin{align}
\lVert D^{N} f(t) \rVert_{C_{x}}  \leq& ( \lVert D^{N} f_{0} \rVert_{C_{x}} + C(t-t_{0})  \lVert D^{N} u \rVert_{C_{t,x}} \lVert Df_{0} \rVert_{C_{x}} ) e^{C(t-t_{0})  \lVert D u \rVert_{C_{t,x}}} \nonumber\\
&+ \int_{t_{0}}^{t} e^{C(t-\tau)  \lVert Du \rVert_{C_{t,x}}} (  \lVert D^{N} g(\tau) \rVert_{C_{x}} + C(t-\tau)  \lVert D^{N} u \rVert_{C_{t,x}} \lVert Dg(\tau) \rVert_{C_{x}} )d\tau. \label{est 198} 
\end{align} 
Moreover,
\begin{subequations}\label{est 199}
\begin{align}
&\lVert D\Phi(t) - \Id \rVert_{C_{x}} \leq e^{(t-t_{0}) \lVert Du \rVert_{C_{t,x}}} - 1 \leq (t- t_{0}) \lVert Du \rVert_{C_{t,x}} e^{(t- t_{0}) \lVert Du \rVert_{C_{t,x}}}, \label{est 199a}\\
& \lVert D^{N} \Phi(t) \rVert_{C_{x}} \leq C(t-t_{0})  \lVert D^{N} u \rVert_{C_{t,x}} e^{C(t-t_{0})  \lVert Du \rVert_{C_{t,x}}} \hspace{3mm} \forall \hspace{1mm} N \in \mathbb{N}\setminus \{1\}. \label{est 199b}
\end{align}
\end{subequations}
\end{lemma}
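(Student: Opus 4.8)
\textbf{Proof proposal for Lemma \ref{Lemma 6.4}.}
The plan is to reduce all of \eqref{est 197}--\eqref{est 199} to two elementary facts: integration along the characteristics $s\mapsto X(s,x)$ of $u$ defined by \eqref{flux}, and the commutator identity $[D_{t},\partial_{i}]=-(\partial_{i}u)\cdot\nabla$. Since the statement is quoted verbatim from \cite[Lemma A.2]{BSV19} and \cite[Proposition D.1]{BDIS15}, I will only indicate the steps. Throughout I use that, for a smooth vector field $u$ on the compact domain $\mathbb{T}^{2}$, the flow $X$ of \eqref{flux} exists globally and $X(t,\cdot)$ is a diffeomorphism of $\mathbb{T}^{2}$ for every $t$, so that $\sup_{x}\lvert h(t,X(t,x))\rvert=\lVert h(t)\rVert_{C_{x}}$ for any continuous $h$.

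For \eqref{est 197a} I would note that $\frac{d}{ds}[f(s,X(s,x))]=(D_{t}f)(s,X(s,x))=g(s,X(s,x))$ by \eqref{est 402} and \eqref{flux}, integrate in $s$ from $t_{0}$ to $t$, and take the supremum over $x$. For \eqref{est 197b}, applying $\partial_{i}$ to $D_{t}f=g$ and using $[D_{t},\partial_{i}]=-(\partial_{i}u)\cdot\nabla$ gives $D_{t}(\partial_{i}f)=\partial_{i}g-(\partial_{i}u)\cdot\nabla f$, so each component of $Df$ solves a transport equation of the form \eqref{est 402} whose forcing is bounded in $C_{x}$ by $\lVert Dg(\tau)\rVert_{C_{x}}+\lVert Du\rVert_{C_{t,x}}\lVert Df(\tau)\rVert_{C_{x}}$; applying \eqref{est 197a} to it and then the integral form of Gr\"onwall's inequality yields \eqref{est 197b}. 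The estimate \eqref{est 198} follows by induction on $N$: applying $D^{N}$ to $D_{t}f=g$ and expanding by the Leibniz rule produces $D_{t}(D^{N}f)=D^{N}g-\sum_{1\leq k\leq N}c_{N,k}\,(D^{k}u)\cdot\nabla(D^{N-k}f)$, where the $k=1$ term $\sim Du\cdot D^{N}f$ becomes the Gr\"onwall coefficient, the terms $2\leq k\leq N-1$ are controlled by the inductive bounds on $\lVert D^{j}f\rVert_{C_{x}}$, $1\leq j\leq N-1$, and the $k=N$ term $\sim(D^{N}u)\cdot Df$ is the only one coupling back to the lowest-order derivative. After one further use of \eqref{est 197a} and Gr\"onwall this last term reappears precisely as the source contribution $C(t-t_{0})\lVert D^{N}u\rVert_{C_{t,x}}\lVert Df_{0}\rVert_{C_{x}}$ in \eqref{est 198}.

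For the flow map I would first observe that $\Phi$ itself solves $D_{t}\Phi=0$ with $\Phi(t_{0},x)=x$: differentiating $\Phi(t,X(t,x))=x$ in $t$ gives $(\partial_{t}\Phi)(t,X(t,x))+(\nabla\Phi)(t,X(t,x))\cdot u(t,X(t,x))=0$, which, since $X(t,\cdot)$ is onto, holds at every point. Hence $\nabla\Phi$ solves $D_{t}(\nabla\Phi)=-(\nabla u)\cdot\nabla\Phi$ (schematically) with $\nabla\Phi(t_{0})=\Id$, so by \eqref{est 197a}
\begin{equation*}
\lVert\nabla\Phi(\tau)\rVert_{C_{x}}\leq 1+\int_{t_{0}}^{\tau}\lVert Du\rVert_{C_{t,x}}\lVert\nabla\Phi(s)\rVert_{C_{x}}\,ds,
\end{equation*}
and Gr\"onwall gives $\lVert\nabla\Phi(\tau)\rVert_{C_{x}}\leq e^{(\tau-t_{0})\lVert Du\rVert_{C_{t,x}}}$; substituting this into $\lVert\nabla\Phi(t)-\Id\rVert_{C_{x}}\leq\int_{t_{0}}^{t}\lVert Du\rVert_{C_{t,x}}\lVert\nabla\Phi(s)\rVert_{C_{x}}\,ds$ yields \eqref{est 199a}. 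Finally \eqref{est 199b} follows by applying \eqref{est 198} with $f=\Phi$, $g\equiv 0$, and $f_{0}(x)=x$, for which $D^{N}f_{0}=0$ and $\lVert Df_{0}\rVert_{C_{x}}=1$ for $N\geq 2$. The main obstacle is purely combinatorial: in the inductive step for \eqref{est 198} one must verify that the Leibniz expansion of $D^{N}(u\cdot\nabla f)$ couples $D^{N}u$ only with $Df$ (and not with $D^{N}f$), so that the Gr\"onwall loop closes with the stated constants; this is exactly the chain-rule bookkeeping quoted as \cite[equation (130)]{BDIS15}.
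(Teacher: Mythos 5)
The paper itself gives no proof of this lemma (it is quoted from \cite{BSV19} and \cite{BDIS15}), so the benchmark is the standard argument in those references, and your outline follows it: characteristics for \eqref{est 197a}, differentiating the equation and using the commutator $[D_{t},\partial_{i}]=-(\partial_{i}u)\cdot\nabla$ plus Gr\"onwall for \eqref{est 197b}, the observation that $\Phi$ solves $D_{t}\Phi=0$ with $\Phi(t_{0},x)=x$ for \eqref{est 199a}, and the specialization $f=\Phi$, $g\equiv 0$, $f_{0}(x)=x$ of \eqref{est 198} for \eqref{est 199b}. Those parts are correct as written.

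The one genuine gap is in your justification of \eqref{est 198}. The issue is not whether $D^{N}u$ ``couples only with $Df$'' -- that is true of the single extreme term $(D^{N}u)\cdot\nabla f$ -- but what happens to the intermediate terms $(D^{k}u)\cdot\nabla D^{N-k}f$, $2\le k\le N-1$, in the Leibniz expansion. If you control these ``by the inductive bounds on $\lVert D^{j}f\rVert_{C_{x}}$, $1\le j\le N-1$,'' then those inductive bounds import the norms $\lVert D^{j}u\rVert_{C_{t,x}}$ for intermediate $j$, and the resulting estimate is \emph{not} \eqref{est 198}, which contains only the extreme norms $\lVert Du\rVert_{C_{t,x}}$ and $\lVert D^{N}u\rVert_{C_{t,x}}$ (and only $Df_{0},D^{N}f_{0},Dg,D^{N}g$). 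To collapse the intermediate products to this extremes-only form one must invoke the interpolation (convexity) inequality for H\"older norms, schematically $\lVert D^{k}u\rVert_{C_{x}}\lesssim \lVert Du\rVert_{C_{x}}^{\frac{N-k}{N-1}}\lVert D^{N}u\rVert_{C_{x}}^{\frac{k-1}{N-1}}$ and its analogues for $f$ and $g$, combined with Young's inequality for products; in \cite{BDIS15} this is exactly the role of equations (129)--(130), and it is this interpolation step, not the Leibniz bookkeeping, that produces \eqref{est 198} (and hence \eqref{est 199b}) in the stated form. With that ingredient inserted your induction closes and reproduces the cited proof; without it, the argument as written proves a weaker statement involving all intermediate derivatives of $u$.
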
 

\section*{Acknowledgments}
The author expresses deep gratitude to Prof. Andrei Tarfulea and Prof. Zachary Bradshaw for valuable discussions and Dr. Raj Beekie for insightful discussions on convex integration. This work was supported by a grant from the Simons Foundation (962572, KY).

\end{document}